\documentclass[a4paper,11pt,intlimits,oneside]{amsart}

\usepackage{enumerate, color}
\usepackage{amsfonts,amsmath}

\usepackage{latexsym,amssymb}

\usepackage{mathrsfs}
\usepackage{hyperref}

\allowdisplaybreaks

\newcommand{\comment}[1]{}

\newcommand{\R}{{\mathbb R}}

\newcommand{\f}{\frac}
\newcommand{\vc}{\infty}

\def\G{{\mathcal G}}

\def\K{{\mathcal K}}

\def\X{{\mathcal X}}

\def\a{\mathfrak a}

\def\m{\mathfrak m}

\def\M{{\mathcal M}}

\def\X{{\mathcal X}}

\def\sign{{\mbox{\rm  sign}}\,}

\newcommand{\ackname}{Acknowledgements}
\makeatletter
\if@titlepage
\newenvironment{acknowledgement}{%
	\titlepage
	\null\vfil
	\@beginparpenalty\@lowpenalty
	\begin{center}%
		\bfseries \ackname
		\@endparpenalty\@M
\end{center}}%
{\par\vfil\null\endtitlepage}
\else
\newenvironment{acknowledgement}{%
	\if@twocolumn
	\section*{\ackname}%
	\else
	\begin{center}%
		{\bfseries \ackname\vspace{0em}\vspace{\z@}}%
	\end{center}%
	\quotation
	\fi}
\fi
\makeatother

\begin{document}
	
	\title[Commutators of singular integral operators]{Endpoint estimates for commutators of singular integral operators associated with admissible functions}         
	
	\author{The Anh Bui}  
	\address{Department of Mathematics, Macquarie University, NSW 2109, Australia} 
	\email{{\tt the.bui@mq.edu.au}}
	\author{Xuan Thinh Duong}  
	\address{Department of Mathematics, Macquarie University, NSW 2109, Australia} 
	\email{{\tt xuan.duong@mq.edu.au}}
	\author{Luong Dang Ky$^*$} 
	\address{Department of Education, Quy Nhon University, 170 An Duong Vuong, South Quy Nhon, Gia Lai, Vietnam} 
	\email{{\tt luongdangky@qnu.edu.vn}}

	\keywords{Spaces of homogeneous type, singular integral operators, commutators, admissible functions, Hardy spaces, BMO spaces}
	\subjclass[2020]{42B20, 42B30, 42B35}
	\thanks{$^*$Corresponding author}

	\begin{abstract}
		Let $(\mathcal X, d, \mu)$ be a space of homogeneous type and let $\rho$ be an admissible function on $\mathcal X$. In this paper, we introduce a new class of singular integral operators associated with $\rho$, including a wide range of operators arising in harmonic analysis. For such an operator $T$ and a function $b$ belonging to suitable localized $\mathrm{BMO}$ spaces associated with $\rho$, which are strictly larger than the classical space $\mathrm{BMO}(\mathcal X)$, we establish the boundedness of the commutator $[b, T]$ from the Hardy space $H^1_\rho(\mathcal X)$ into $L^{1,\infty}(\mathcal X)$, $L^1(\mathcal X)$, and $H^1_\rho(\mathcal X)$. Moreover, the boundedness of $[b,T]$ on $H^1_\rho(\mathcal X)$ is characterized by necessary and sufficient conditions. We then apply this to investigate the boundedness of commutators of singular integrals in various settings, including Schr\"odinger operators on stratified Lie groups and Laguerre operators of convolution type. Our results are new even for Schr\"odinger operators on $\mathbb R^n$.	
	\end{abstract}

	\maketitle
	\newtheorem{theorem}{Theorem}[section]
	\newtheorem{lemma}{Lemma}[section]
	\newtheorem{proposition}{Proposition}[section]
	\newtheorem{remark}{Remark}[section]
	\newtheorem{corollary}{Corollary}[section]
	\newtheorem{definition}{Definition}[section]
	\newtheorem{example}{Example}[section]
	\numberwithin{equation}{section}
	\newtheorem{Theorem}{Theorem}[section]
	\newtheorem{Lemma}{Lemma}[section]
	\newtheorem{Proposition}{Proposition}[section]
	\newtheorem{Remark}{Remark}[section]
	\newtheorem{Corollary}{Corollary}[section]
	\newtheorem{Definition}{Definition}[section]
	\newtheorem{Example}{Example}[section]
	\newtheorem*{theoremjj}{Theorem J-J}
	\newtheorem*{theorema}{Theorem A}
	\newtheorem*{theoremb}{Theorem B}
	\newtheorem*{theoremc}{Theorem C}
	\newtheorem*{conjecture}{Conjecture}
	\newtheorem*{open question}{Open question}

	\section{Introduction and statement of the results}

	Given a locally integrable function $b$ on $\mathbb{R}^n$ and a classical Calderón–Zygmund operator $T$, we consider the linear commutator $[b, T]$ defined for smooth, compactly supported functions $f$ by
	$$[b, T](f) = bT(f) - T(bf).$$
	A classical result of Coifman, Rochberg, and Weiss (see \cite{CRW}) states that the commutator $[b, T]$ is bounded on $L^p(\mathbb{R}^n)$ for $1 < p < \infty$ when $b \in \mathrm{BMO}(\mathbb{R}^n)$. Unlike the classical Calder\'on--Zygmund theory, where weak-type $(1,1)$ estimates often play a key role in endpoint results, the proofs of boundedness for commutators do not rely on such weak-type estimates. Instead, dedicated endpoint theories have been developed for such commutators (see, for example, \cite{HST, Ky13, LKY, Pe}). These classical results motivate the study of commutators in the modern setting of singular integral operators associated with Schr\"odinger operators of the form 
	$$L = -\Delta + V$$
	 on $\mathbb{R}^n$ with $n \geq 3$, where $V$ is a nonnegative potential belonging to the reverse H\"older class $\mathrm{RH}_{n/2}(\mathbb{R}^n)$. When $b \in \mathrm{BMO}(\mathbb{R}^n)$ and $T$ is one of the Riesz transforms
	 $\nabla L^{-1/2}$, $\nabla^2 L^{-1}$, or $V^{1/2} L^{-1/2}$, it was shown in
	 \cite{LP} that, in general, the commutator $[b,T]$ fails to be bounded from the
	 Hardy space $H_L^1(\mathbb{R}^n)$ to $L^1(\mathbb{R}^n)$. Instead, it is only
	 bounded from $H_L^1(\mathbb{R}^n)$ to the weak space
	 $L^{1,\infty}(\mathbb{R}^n)$, where $H_L^1(\mathbb{R}^n)$ denotes the Hardy space
	 associated with $L$ introduced in \cite{DZ99}.
	 
	 Thus, a natural question is whether there exists a non-trivial proper subspace of
	 $\mathrm{BMO}(\mathbb{R}^n)$ such that, whenever $b$ belongs to this subspace, the
	 commutator $[b,T]$ is bounded from $H_L^1(\mathbb{R}^n)$ into
	 $L^1(\mathbb{R}^n)$, or even from $H_L^1(\mathbb{R}^n)$ into
	 $H_L^1(\mathbb{R}^n)$.
	 
	 An answer to this question was given in \cite{BHS,Ky15}. More precisely, the
	 third-named author in \cite{Ky15} constructed the largest subspace
	 \[
	 H^{1}_{L,b}(\mathbb{R}^n) \subset H^{1}_{L}(\mathbb{R}^n)
	 \]
	 such that all commutators of the class of so-called
	 Schr\"odinger--Calder\'on--Zygmund operators and the associated Riesz transforms
	 are bounded from
	 \[
	 H^{1}_{L,b}(\mathbb{R}^n) \quad \text{into} \quad L^{1}(\mathbb{R}^n).
	 \]
	 In addition, a characterization of those functions $b \in
	 \mathrm{BMO}(\mathbb{R}^n)$ for which
	 \[
	 H^{1}_{L,b}(\mathbb{R}^n) \equiv H^{1}_{L}(\mathbb{R}^n)
	 \]
	 was also obtained in \cite{Ky15}.
	 
	 \bigskip
	 
	 The main aim of this paper is to establish and characterise boundedness of commutators of suitable BMO functions and a general class of operators on spaces
	  of homogeneous type without the presence of the underlying operator. To state our results precisely, we would like to set up the paper's framework. 
	  Let $(\X,d)$ be a quasi-metric space, that is, a function $d:\X\times \X\to [0,\infty)$ satisfying
	 \begin{enumerate}[(a)]
	 	\item $d(x,y)=d(y,x)$ for all $x,y\in\X$,
	 	\item $d(x,y)>0$ if and only if $x\ne y$,
	 	\item there exists a constant $\kappa\geq 1$ such that 
	 	\begin{equation}\label{16:19, 05/10/2023}
	 		d(x,z)\leq \kappa(d(x,y)+ d(y,z))\quad\text{for all } x,y,z\in \mathcal X.
	 	\end{equation}
	 \end{enumerate}
	 A triple $(\mathcal X, d,\mu)$ is called a space of homogeneous type in the sense of Coifman and Weiss \cite{CW} if $\mu$  is a regular Borel measure satisfying the doubling condition: there exists a constant $C>1$ such that
	 \begin{equation}\label{doubling property}
	 	\mu(B(x,2r))\leq C\mu(B(x,r))\quad\text{for all }x\in \X\text{ and }r>0.
	 \end{equation}

	 Following the geometric structure of spaces of homogeneous type (see \cite[Theorem 2]{MS}), throughout this paper we assume that there exist a geometric constant $\alpha_0\in (0,1]$ and a constant $C>0$ such that
	 \begin{equation}\label{Macias-Segovia}
	 	|d(x,z)-d(y,z)|\leq C (d(x,z)+d(y,z))^{1-\alpha_0} d(x,y)^{\alpha_0}\quad\text{for all }x,y,z\in \X.
	 \end{equation}
We recall the notion of \emph{admissible functions} introduced in \cite{YZ}.

\begin{definition}\label{def:admissible}
	A positive function $\rho$ on $\mathcal X$ is said to be \emph{admissible} if there exist constants $k_0>0$ and $c_0>0$ such that
	\begin{equation}\label{admissible function}
		\frac{1}{\rho(x)} \le  \frac{c_0}{\rho(y)}
		\Bigl(1+\frac{d(x,y)}{\rho(y)}\Bigr)^{k_0}
		\qquad \text{for all } x,y\in\mathcal X.
	\end{equation}
	The \emph{admissibility index} of $\rho$ is defined by
	\begin{equation}\label{eq:admissible-index}
		i(\rho)
		:= \inf\Bigl\{k_0>0:\ \eqref{admissible function} \text{ holds for some constant }
		c_0=c_0(k_0)>0\Bigr\}.
	\end{equation}
\end{definition}

\begin{remark}\label{20:49, 02/12/2023}
	If $\rho$ is admissible and \eqref{admissible function} holds for some $k_0>0$, then there exists a constant $c_1>1$ such that
	\[
	\Bigl(1+\frac{d(x,y)}{\rho(y)}\Bigr)^{\frac{1}{k_0+1}}
	\le c_1 \Bigl(1+\frac{d(x,y)}{\rho(x)}\Bigr)
	\qquad \text{for all } x,y\in\mathcal X.
	\]
\end{remark}

The concept of admissible functions was first introduced in the setting of Schr\"odinger operators on $\mathbb R^n$ in \cite{F} (see also \cite{Sh}) and later extended to spaces of homogeneous type in \cite{YZ}.

A simple example of an admissible function is $\rho\equiv 1$. Moreover, one of the most important classes of admissible functions arises from weights satisfying a reverse H\"older inequality. Recall that a nonnegative locally integrable function $w$ is said to belong to the reverse H\"older class $\mathrm{RH}_q(\mathcal X)$ with $q>1$ if there exists a constant $C>0$ such that
\begin{equation}\label{def:reverse-holder-class}
	\Biggl(\frac{1}{\mu(B)}\int_B w(x)^q\,d\mu(x)\Biggr)^{1/q}
	\le C\,\frac{1}{\mu(B)}\int_B w(x)\,d\mu(x)
\end{equation}
for all balls $B\subset \mathcal X$. It is well known that if $w\in \mathrm{RH}_q(\mathcal X)$, then $w$ is a Muckenhoupt weight; see \cite{ST}.

Now assume that $V\in \mathrm{RH}_q(\mathcal X)$ for some $q>1$. Following \cite{Sh,YZ}, we define
\begin{equation}\label{eq:critical-function}
	\rho(x)
	:= \sup\Biggl\{r>0:\ 
	\frac{r^2}{\mu(B(x,r))}
	\int_{B(x,r)} V(y)\,d\mu(y)
	\le 1\Biggr\}.
\end{equation}
In the Euclidean setting $\mathcal X=\mathbb R^n$, it was proved in \cite{Sh,YZ} that the function $\rho$ defined by \eqref{eq:critical-function} is an admissible function provided that $q>\max\{1,n/2\}$.

Associated with an admissible function $\rho$, we now define Hardy spaces $H^1_\rho(\mathcal X)$ via atomic decompositions.

\begin{definition}\label{def:Hrho-atom}
	Let $q\in(1,\infty]$. A measurable function $\mathfrak a$ is called an $(H^1_\rho,q)$-atom associated with a ball $B=B(x_0,r)$ if $0<r<\rho(x_0)$ and the following conditions hold:
	\begin{enumerate}[\rm (a)]
		\item $\operatorname{supp}\mathfrak a \subset B$;
		\item $\|\mathfrak a\|_{L^q(\mathcal X)} \le \mu(B)^{\frac{1}{q}-1}$;
		\item if $0<r<\rho(x_0)/4$, then
		\[
		\int_{\mathcal X} \mathfrak a(x)\,d\mu(x)=0.
		\]
	\end{enumerate}
\end{definition}

For $q\in(1,\infty]$, the Hardy space $H^{1,q}_\rho(\mathcal X)$ is defined as the set of all functions $f\in L^1(\mathcal X)$ such that
\[
f=\sum_{j=1}^\infty \lambda_j \mathfrak a_j
\quad \text{in } L^1(\mathcal X),
\]
where each $\mathfrak a_j$ is an $(H^1_\rho,q)$-atom and $\{\lambda_j\}_{j=1}^\infty\in \ell^1$. The norm is defined by
\[
\|f\|_{H^{1,q}_\rho(\mathcal X)}
:= \inf \Biggl\{\sum_{j=1}^\infty |\lambda_j|:\ 
f=\sum_{j=1}^\infty \lambda_j \mathfrak a_j\Biggr\},
\]
where the infimum is taken over all atomic decompositions of $f$.

\begin{remark}
	It was proved in \cite{YZ} that
	\[
	H^{1,q}_\rho(\mathcal X)=H^{1,\infty}_\rho(\mathcal X)
	\quad \text{for all } q\in(1,\infty].
	\]
	Therefore, the Hardy space $H^1_\rho(\mathcal X)$ may be defined as any $H^{1,q}_\rho(\mathcal X)$ with $q\in(1,\infty]$.
\end{remark}

	Here and in what follows, for any ball $B\subset \X$ and $f\in L^1_{\rm loc}(\X)$, we denote 
	\begin{equation}
		f_B:= \frac{1}{\mu(B)}\int_B f(y)d\mu(y)\quad\text{and}\quad \mathrm{MO}(f,B):= \frac{1}{\mu(B)}\int_B |f(x)- f_B|d\mu(x).
	\end{equation}	
	Given $\theta\in [0,\infty)$, following \cite{BHS}, we define the spaces $\mathrm{BMO}_{\rho,\theta}(\X)$ and $\mathrm{BMO}^{\log}_{\rho,\theta}(\X)$ as follows. A locally integrable function $f$ belongs to $\mathrm{BMO}_{\rho,\theta}(\X)$ if
	\begin{equation}\label{eq-BMO rho}
		\|f\|_{\mathrm{BMO}_{\rho,\theta}}=\sup_{B} \frac{1}{\left(1+\frac{r}{\rho(x_0)}\right)^\theta} \mathrm{MO}(f,B) <\infty,
	\end{equation}
	where the supremum is taken over all balls $B=B(x_0,r)\subset\X$. Clearly, $\mathrm{BMO}(\X)\subsetneq \mathrm{BMO}_{\rho,\theta}(\X)$ for $\theta>0$.
	
	A locally integrable function $f$ belongs to $\mathrm{BMO}^{\log}_{\rho,\theta}(\X)$ if
	\begin{equation}
		\|f\|_{\mathrm{BMO}^{\log}_{\rho,\theta}}=\sup_{B} \frac{\log\left(e+\frac{\rho(x_0)}{r}\right)}{\left(1+\frac{r}{\rho(x_0)}\right)^\theta} \mathrm{MO}(f,B)<\infty,
	\end{equation}
	where the supremum is taken over all balls $B=B(x_0,r)\subset\X$. When $\theta=0$, we write $\mathrm{BMO}^{\log}_{\rho}(\X)$ instead of $\mathrm{BMO}^{\log}_{\rho,0}(\X)$.
	
	\begin{remark}\label{15:41, 31/12/2025}
		\begin{enumerate}[\rm (i)]
			\item For every $\theta \in [0,\infty)$ and every $f\in \mathrm{BMO}^{\log}_{\rho,\theta}(\X)$, we have
			\[\|f\|_{\mathrm{BMO}_{\rho,\theta}}\leq \|f\|_{\mathrm{BMO}^{\log}_{\rho,\theta}}.\]
			
			\item The space $\mathrm{BMO}_{\rho,0}(\X)$ is just the classical space $\mathrm{BMO}(\X)$. Moreover, whenever $0 \leq \theta_1 < \theta_2 < \infty$, we have
			\[\mathrm{BMO}_{\rho,\theta_1}(\X)\subset \mathrm{BMO}_{\rho,\theta_2}(\X)\quad\text{and}\quad \mathrm{BMO}^{\log}_{\rho,\theta_1}(\X)\subset \mathrm{BMO}^{\log}_{\rho,\theta_2}(\X).\]
			Furthermore, these inclusions are strict in general  $($see \cite[Remark 2.8]{Ky15}$)$.
		\end{enumerate}
	\end{remark}
	
	Let $\delta\in (0,1]$. Following \cite{MSTZ},  we say that $T$ is a $(\delta,\rho)$-Calder\'on--Zygmund operator if $T$ is bounded on $L^2(\X)$ and its kernel $K(x,y)$ satisfies the following conditions: for each $N>0$ there exists a constant $C=C(N)>0$ such that, for all $x\ne y$,
	\begin{equation}\label{11:06, 13/12/2023}
		|K(x,y)|\leq  \f{C}{\mu(B(x,d(x,y)))}  \left(1+\frac{d(x,y)}{\rho(x)}\right)^{-N};
	\end{equation}
	and there exists a constant $C>0$ such that
	\begin{equation}\label{20:56, 15/12/2023}
		|K(x,y)-K(x',y)|+|K(y,x)-K(y,x')|\leq \left[\frac{d(x,x')}{d(x,y)}\right]^\delta \frac{C}{\mu(B(x,d(x,y)))}, 
	\end{equation}
	whenever  $d(x,x')\leq \frac{1}{2\kappa}d(x,y)$, where $\kappa$ is the quasi-metric constant of $d$ in \eqref{16:19, 05/10/2023}.
	
	\bigskip

	Let $s\in (1,\infty)$ and $\delta\in (0,1]$. Following \cite{BDK, GLP} (see also  \cite{BPQ, BHQ, BLL, DLPV}), we say that $T$ is an $(s,\delta,\rho)$-Calder\'on--Zygmund operator if $T$ is bounded from $L^s(\X)$ into $L^{s,\infty}(\X)$ and its kernel $K(x,y)$ satisfies the following conditions: for each $N>0$ there exists a constant $C(N)>0$ such that, for all $d(y,x_0)\leq \frac{R}{2\kappa}$,
	\begin{equation}\label{11:08, 13/12/2023}
		\Big(\int_{R\leq d(x,x_0)< 2\kappa R} \left|K(x,y)\right|^s  d\mu(x)\Big)^{\frac{1}{s}}\leq   \frac{C(N)}{\mu(B(x_0,R))^{1-\frac{1}{s}}}\Big(1+\frac{R}{\rho(x_0)}\Big)^{-N};
	\end{equation}
	and there exists a constant $C>0$ such that, for all $d(y,x_0)\leq \frac{R}{2\kappa}$,
	\begin{align}\label{11:09, 13/12/2023}
		\Big(\int_{R\leq d(x,x_0)< 2\kappa R} |K(y,x)-K(x_0,x)|^s d\mu(x)\Big)^{\frac{1}{s}}\leq \left[\frac{d(y,x_0)}{R}\right]^\delta \frac{C}{\mu(B(x_0,R))^{1-\frac{1}{s}}}.
	\end{align}
	
	We now introduce a new class of operators which is more general than the class of $(s,\delta,\rho)$-Calder\'on--Zygmund operators defined above.
	
	\begin{definition}\label{07:59, 31/12/2025}
		Let $\theta\in [0,\infty)$ and $q\in (1,\infty]$. A bounded sublinear operator $T: L^q(\X)\to L^q(\X)$ belongs to the class $\K_{\rho,\theta,q}$ if the following two conditions hold:
		\begin{enumerate}[\rm (i)]
			\item $T$ extends to a bounded operator from $H^1_\rho(\X)$ into $L^1(\X)$;
			
			\item there exists a constant $C>0$ such that
			$$\|(b-b_B)T\a\|_{L^1}\leq C \|b\|_{\mathrm{BMO}_{\rho,\theta}}$$
			for all $b\in \mathrm{BMO}_{\rho,\theta}(\X)$ and all $(H^1_\rho,q)$-atoms $\a$ related to the balls $B$.
		\end{enumerate}
	\end{definition}

	\begin{remark}\label{rem 1.4}
		\begin{enumerate}[\rm (i)]
			\item\label{10:19, 22/12/2023} If $T$ is an $(s,\delta,\rho)$-Calder\'on--Zygmund operator with $s\in (1,\infty)$ and $\delta\in (0,1]$, then the following properties hold:
			\begin{enumerate}[$\bullet$]
				\item $T$ is of weak type $(1,1)$, and thus is bounded on $L^q(\X)$ for all $q\in (1,s)$;
				
				\item $T$ is a $(q,\delta,\rho)$-Calder\'on--Zygmund operator for all $q\in (1,s)$;
				
				\item from \cite[Theorem 5.1]{Ky25} and Lemma \ref{15:47, 09/12/2023}\eqref{09:32, 15/11/2025} below, $T$ belongs to the class $\K_{\rho,\theta,s}$ for all $\theta\in [0,\frac{\delta}{i(\rho)+1})$.
			\end{enumerate}
			
			\item\label{06:22, 26/12/2023} If $T$ is a $(\delta,\rho)$-Calder\'on--Zygmund operator with $\delta\in (0,1]$, then $T$ and $T^*$ are  $(s,\delta,\rho)$-Calder\'on--Zygmund operators for all $s\in (1,\infty)$. Thus:
			\begin{enumerate}[$\bullet$]
				\item $T$ and $T^*$ are of weak type $(1,1)$, and are bounded on $L^q(\X)$ for all $q\in (1,\infty)$;
				
				\item $T$ and $T^*$ belong to the class $\K_{\rho,\theta,s}$ for all $\theta\in [0,\frac{\delta}{i(\rho)+1})$ and all $s\in (1,\infty)$.
			\end{enumerate}
		
		\item Let $L = -\Delta + V$ be a Schr\"odinger operator on $\mathbb{R}^n$ with $n \geq 3$, where $V$  is a nonnegative potential belonging to the reverse H\"older class $\mathrm{RH}_{q}(\mathbb{R}^n)$ for some $q\in (n/2,n)$. The Riesz transform $\nabla L^{-1/2}$ may not be a $(\delta,\rho)$-Calder\'on--Zygmund operator; in fact, it is only a $(2, \delta, \rho)$-Calder\'on--Zygmund operator for some $\delta \in (0,1]$.	See for example \cite{Sh}.
		\end{enumerate}
		
	\end{remark}
	
	Let $\theta\in [0,\infty)$, $q\in (1,\infty]$, $b\in \mathrm{BMO}_{\rho,\theta}(\X)$, and $T\in \K_{\rho,\theta,q}$. As usual, the (sublinear) commutator $[b, T]$ is defined for suitable functions $f$ by
	\begin{equation}
		[b,T](f)(x):= b(x)Tf(x) - T(bf)(x)  \quad\text{for all } x\in\X.
	\end{equation}

\medskip

In this paper, we aim to investigate the boundedness of the commutator $[b,T]$ with $T \in \mathcal{K}_{\rho,\theta,q}$ and 
$b \in \mathrm{BMO}_{\rho,\theta}(\mathcal{X})$, 
where $\mathrm{BMO}_{\rho,\theta}(\mathcal{X})$, defined by \eqref{eq-BMO rho}, is a larger space than the classical $\mathrm{BMO}(\mathcal{X})$. 
The space $\mathrm{BMO}_{\rho,\theta}(\mathcal{X})$ was introduced in \cite{BHS} to study the $L^p$-boundedness of the commutator of the Riesz transforms associated with the Schr\"odinger operator and functions 
$b \in \mathrm{BMO}_{\rho,\theta}(\mathcal{X})$. 
Using similar methods, one can also establish the $L^p$-boundedness of the commutator $[b,T]$ for 
$T \in \mathcal{K}_{\rho,\theta,q}$ and 
$b \in \mathrm{BMO}_{\rho,\theta}(\mathcal{X})$ whenever $1<p<\vc$. 
Therefore, in this article we restrict our attention to the endpoint case $p=1$.

Our first result shows that the commutator is bounded from the Hardy space $H^1_\rho(\X)$ to the weak space $L^{1,\vc}(\X)$. 
More precisely, we have the following theorem.

\begin{theorem}\label{thm-main thm 1 from H1 to weak L1}
	Let $\theta\in [0,\frac{1}{i(\rho)+1})$, $q\in (1,\infty]$, and $b\in \mathrm{BMO}_{\rho,\theta}(\X)$. Then the commutator $[b, T]$ is bounded from $H^1_\rho(\X)$ into $L^{1,\infty}(\X)$ for all $T\in \K_{\rho,\theta,q}$ that are of weak type $(1,1)$.
\end{theorem}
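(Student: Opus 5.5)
Write $f=\sum_j\lambda_j\a_j$ with $(H^1_\rho,q)$-atoms $\a_j$ supported in balls $B_j=B(x_j,r_j)$, $r_j<\rho(x_j)$, and $\sum_j|\lambda_j|\lesssim\|f\|_{H^1_\rho}$. The plan is the classical decomposition of Pérez / Ky for commutators on $H^1$. On each atom we split
\[
[b,T]\a_j=(b-b_{B_j})T\a_j-T\bigl((b-b_{B_j})\a_j\bigr).
\]
This is exact for linear $T$. For sublinear $T$ we would instead use the pointwise inequality $|[b,T]\a_j|\le |b-b_{B_j}||T\a_j|+|T((b-b_{B_j})\a_j)|$, which is how the commutator must be read in that case. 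Summing over $j$ gives
\[
[b,T]f=\sum_j\lambda_j (b-b_{B_j})T\a_j-T\Bigl(\sum_j\lambda_j(b-b_{B_j})\a_j\Bigr)=:F_1-F_2 .
\]
Justifying this identity for $f\in H^1_\rho$ requires a density or limiting argument. We first take finite sums, where everything is in $L^q$, and pass to the limit at the end using the a priori bound.

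\emph{First term.} Definition \ref{07:59, 31/12/2025}(ii) gives directly
\[
\|F_1\|_{L^1}\le\sum_j|\lambda_j|\,\|(b-b_{B_j})T\a_j\|_{L^1}\le C\|b\|_{\mathrm{BMO}_{\rho,\theta}}\|f\|_{H^1_\rho}.
\]
This term is therefore even in $L^1$, and in particular in $L^{1,\infty}$.

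\emph{Second term.} Since $T$ is of weak type $(1,1)$, it suffices to show that $g:=\sum_j\lambda_j(b-b_{B_j})\a_j$ lies in $L^1$ with
\[
\|g\|_{L^1}\lesssim\|b\|_{\mathrm{BMO}_{\rho,\theta}}\|f\|_{H^1_\rho}.
\]
For each atom, Hölder's inequality with $q'$ gives
\[
\|(b-b_{B_j})\a_j\|_{L^1}\le\|\a_j\|_{L^q}\,\|b-b_{B_j}\|_{L^{q'}(B_j)}\le\mu(B_j)^{\frac1q-1}\,\mu(B_j)^{\frac1{q'}}\Bigl(\frac1{\mu(B_j)}\int_{B_j}|b-b_{B_j}|^{q'}\Bigr)^{1/q'} .
\]
The remaining factor is bounded by $C\|b\|_{\mathrm{BMO}_{\rho,\theta}}(1+r_j/\rho(x_j))^{\theta'}$, which is at most $C2^{\theta'}\|b\|_{\mathrm{BMO}_{\rho,\theta}}$ because $r_j<\rho(x_j)$. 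This bound comes from a John–Nirenberg inequality for $\mathrm{BMO}_{\rho,\theta}$, valid for subcritical balls. If $q=\infty$, only the $L^1$ mean oscillation is needed, and it is bounded by $2^\theta\|b\|_{\mathrm{BMO}_{\rho,\theta}}$ directly.

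Presumably the paper's Lemma \ref{15:47, 09/12/2023} supplies such a John–Nirenberg estimate. Failing that, we would prove it by noting that on balls $B$ with radius $<\rho(x_0)$, all subballs $B'\subset B$ have radius comparable to or below $\rho$ of their center. By admissibility (Remark \ref{20:49, 02/12/2023}), $\rho$ is comparable across $B$. Hence $b$ restricted to $B$ has uniformly bounded local mean oscillation, and the standard Calderón–Zygmund stopping-time proof of John–Nirenberg applies inside $B$. The hypothesis $\theta<\frac1{i(\rho)+1}$ guarantees that the weights $(1+r/\rho)^\theta$ stay controlled when passing between comparable centers. I expect this to be the main technical point.

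Granting the John–Nirenberg step, we obtain $\|g\|_{L^1}\lesssim\|b\|_{\mathrm{BMO}_{\rho,\theta}}\sum_j|\lambda_j|$. The weak type $(1,1)$ of $T$ then gives
\[
\|F_2\|_{L^{1,\infty}}\lesssim\|b\|_{\mathrm{BMO}_{\rho,\theta}}\|f\|_{H^1_\rho}.
\]
Combining this with the bound on $F_1$ via the quasi-triangle inequality in $L^{1,\infty}$, and taking the infimum over atomic decompositions, yields the theorem.
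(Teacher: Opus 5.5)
Your proposal is correct and is essentially the paper's proof. It uses the same splitting $|[b,T]f|\le\sum_j|\lambda_j|\,|b-b_{B_j}|\,|T\a_j|+\bigl|T\bigl(\sum_j\lambda_j(b-b_{B_j})\a_j\bigr)\bigr|$, condition (ii) of $\K_{\rho,\theta,q}$ for the first piece, and the weak type $(1,1)$ of $T$ together with $\|(b-b_{B_j})\a_j\|_{L^1}\lesssim\|b\|_{\mathrm{BMO}_{\rho,\theta}}$ for the second.

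The John--Nirenberg-type bound you need is Lemma~\ref{17:53, 11/10/2023}\eqref{14:01, 12/10/2023} with $s=1$, which rests on Lemma~\ref{14:51, 06/10/2023} (not Lemma~\ref{15:47, 09/12/2023}). That bound holds for every $\theta\ge0$, so the hypothesis $\theta<\frac{1}{i(\rho)+1}$ is not what controls the weights; it is not used anywhere in this argument. You could even skip John--Nirenberg entirely: decompose into $(H^1_\rho,\infty)$-atoms, which are also $(H^1_\rho,q)$-atoms, so that only the $L^1$ mean oscillation enters, exactly as in your $q=\infty$ remark.
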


In Theorem~\ref{thm-main thm 1 from H1 to weak L1}, it is impossible to replace $L^{1,\infty}(\X)$ by $L^{1}(\X)$, even in the case $\theta=0$ (where $\mathrm{BMO}_{\rho,\theta}(\X)=\mathrm{BMO}(\X)$). Indeed, to obtain boundedness from $H^1_\rho(\X)$ into $L^{1}(\X)$, one must assume that $b\in \mathrm{BMO}^{\log}_{\rho,\theta}(\mathcal{X})\subsetneq \mathrm{BMO}_{\rho,\theta}(\X)$. 

The following theorem not only establishes this stronger boundedness for functions 
$b\in \mathrm{BMO}^{\log}_{\rho,\theta}(\mathcal{X})$, but also provides a characterization of the space 
$\mathrm{BMO}^{\log}_{\rho,\theta}(\mathcal{X})$ in terms of the boundedness of commutators of operators in 
$\mathcal{K}_{\rho,\theta,q}$ from the Hardy space $H^{1}_{\rho}(\mathcal{X})$ into $L^{1}(\mathcal{X})$.

\begin{theorem}\label{thm-main thm 2 characterization BMO log}
	There exists a constant $\sigma_0\in (0,1]$ such that if $\theta\in \big[0,\frac{\sigma_0}{i(\rho)+1}\big)$, $q\in (1,\infty]$, and $b\in \mathrm{BMO}_{\rho,\theta}(\X)$, then the following statements are equivalent:
		\begin{enumerate}[\rm (i)]
			\item\label{item:BMO-log-condition} $b\in \mathrm{BMO}_{\rho,\theta}^{\log}(\X)$;
			
			\item\label{item:commutator-L1-bounded}  The commutator $[b, T]$ is bounded from $H^1_\rho(\X)$ into $L^1(\X)$ for all $T\in \K_{\rho,\theta,q}$.
		\end{enumerate}
\end{theorem}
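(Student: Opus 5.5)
For (i)$\Rightarrow$(ii), I would work atom by atom. Since $T$ is sublinear and bounded from $H^1_\rho$ to $L^1$, it suffices to prove $\|[b,T]\a\|_{L^1}\le C\|b\|_{\mathrm{BMO}^{\log}_{\rho,\theta}}$ uniformly over $(H^1_\rho,q)$-atoms $\a$ supported in $B=B(x_0,r)$ with $r<\rho(x_0)$. The passage to general $f\in H^1_\rho$ then uses the standard argument: a finite atomic decomposition on a dense subspace, plus the $L^q$-boundedness of $[b,T]$ to make $T(b\sum\lambda_j\a_j)$ converge. For a single atom I write
\[
[b,T]\a=(b-b_B)T\a-T\big((b-b_B)\a\big).
\]
The first term is controlled by Definition~\ref{07:59, 31/12/2025}(ii) together with Remark~\ref{15:41, 31/12/2025}(i).

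For the second term I would show that $(b-b_B)\a$ is a multiple of an $H^1_\rho$ function, and then apply Definition~\ref{07:59, 31/12/2025}(i). Choose $1<\tilde q<q$. Since $1+r/\rho(x_0)\le 2$, John--Nirenberg on $B$ (valid for $\mathrm{BMO}_{\rho,\theta}$ restricted to subballs of comparable scale) gives
\[
\|(b-b_B)\a\|_{L^{\tilde q}}\lesssim\|b\|_{\mathrm{BMO}_{\rho,\theta}}\,\mu(B)^{1/\tilde q-1}.
\]
If $r\ge\rho(x_0)/4$, this already makes $(b-b_B)\a$ a constant multiple of an atom, and we are done.

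If $r<\rho(x_0)/4$, the function $(b-b_B)\a$ does not have vanishing mean. I subtract its mean, $\lambda:=\int(b-b_B)\a\,d\mu$, multiplied by a normalized bump of mean one on the enlarged ball. More precisely:
\begin{itemize}
\item Take the dyadic enlargements $B_k=2^kB$ up to the first index $k_0$ with $2^{k_0}r\sim\rho(x_0)$, so $k_0\sim\log(\rho(x_0)/r)$.
\item Telescope $\chi_B/\mu(B)-\chi_{B_{k_0}}/\mu(B_{k_0})$ into differences of consecutive normalized indicators. Each difference has mean zero and is, up to a constant, an atom.
\item This gives a decomposition of $\lambda\chi_B/\mu(B)$ with $H^1_\rho$ norm at most $C|\lambda|\log(e+\rho(x_0)/r)$. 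The last piece $\chi_{B_{k_0}}/\mu(B_{k_0})$ is an atom without a cancellation requirement, since its radius is comparable to $\rho(x_0)$. The admissibility of $\rho$ keeps $\rho$ comparable across $B_{k_0}$.
\end{itemize}
Finally, $|\lambda|\le\mathrm{MO}(b,B)\,\mu(B)\|\a\|_\infty$ in the $L^\infty$ case (Hölder otherwise), so $|\lambda|\lesssim\mathrm{MO}(b,B)$. Hence $|\lambda|\log(e+\rho(x_0)/r)\lesssim\|b\|_{\mathrm{BMO}^{\log}_{\rho,\theta}}$. This is exactly where the log condition enters.

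For (ii)$\Rightarrow$(i), I argue by contrapositive, constructing an explicit operator in $\K_{\rho,\theta,q}$. Fix a ball $B=B(x_0,r)$. The balls with $r\ge\rho(x_0)$ are already controlled by $\|b\|_{\mathrm{BMO}_{\rho,\theta}}$, since there the log factor is bounded. So assume $r<\rho(x_0)$. I would take $T$ to be a fixed "$\rho$-localized mean operator"
\[
Tf(x)=\sum_Q\chi_Q(x)\,\frac{1}{\mu(Q)}\int_Q f\,d\mu,
\]
summed over a covering of $\X$ by critical balls $Q$ of radius $\sim\rho$ with bounded overlap, as in \cite{YZ}. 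This $T$ is bounded on $L^q$ and from $H^1_\rho$ to $L^1$, and condition (ii) of Definition~\ref{07:59, 31/12/2025} reduces to a comparison of $b_B$ with $b_Q$. That comparison costs $\log(\rho(x_0)/r)$ times the oscillation, which is too much in general; so instead I would take
\[
T_\sigma=\sum_Q\chi_Q\,(\text{averaging at scale }\sim\rho)\ \text{composed with a smooth regularizing kernel.}
\]
Here the constant $\sigma_0$ comes from the Hölder regularity exponent available on spaces of homogeneous type (via \eqref{Macias-Segovia}, e.g. $\sigma_0=\alpha_0$-related), so that $T_\sigma$ is a $(\delta,\rho)$-Calderón--Zygmund operator. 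By Remark~\ref{rem 1.4}, it then lies in $\K_{\rho,\theta,q}$ for $\theta<\sigma_0/(i(\rho)+1)$.

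Testing on the normalized atom $\a=\mu(B)^{-1}(\chi_{B}-\chi_{B'})$, with $B'$ an adjacent ball, gives $[b,T]\a=(b-b_B)T\a-T((b-b_B)\a)$. On the critical ball containing $B$, $T$ reproduces averages, and the main term becomes
\[
\Big(\int(b-b_B)\a\Big)\cdot(\text{bump of mass}\sim1).
\]
Boundedness then forces a bound on $\mathrm{MO}$ only, without the log factor. To extract the log, I would instead sum atoms over the dyadic chain $B\subset 2B\subset\cdots\subset B_{k_0}$, with coefficients chosen so that the contributions add coherently to $\log(\rho(x_0)/r)\,\mathrm{MO}(b,B)$.

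This necessity direction, specifically choosing the test operator and test functions so that the logarithm genuinely appears, is where I expect the main difficulty. The fallback plan is to use the duality $(H^1_\rho)^*=\mathrm{BMO}_\rho$. Writing $\int[b,T]\a\,g=\int \a\,(bT^*g-T^*(bg))$, I would choose $g=\chi_{\text{critical ball}}$. Then $T^*g$ is essentially $1$ on that ball, and the quantity $\int\a\,T^*(bg)$ evaluates to $b_B-b_{B_{k_0}}$. The size of $b_B-b_{B_{k_0}}$ is comparable to $\log(\rho(x_0)/r)\,\mathrm{MO}(b,B)$ on extremal configurations, which is what (ii) must control.
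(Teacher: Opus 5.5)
Your sufficiency direction (i)$\Rightarrow$(ii) is sound and is in substance the paper's argument. The paper also splits $[b,T]\a=(b-b_B)T\a-T((b-b_B)\a)$, uses Definition~\ref{07:59, 31/12/2025}(ii) for the first term, and bounds the second via $\|(b-b_B)\a\|_{H^1_\rho}\lesssim\|b\|_{\mathrm{BMO}^{\log}_{\rho,\theta}}$. You prove that bound by subtracting the mean and telescoping over $B\subset 2B\subset\cdots$; the paper packages the same telescoping into log-type atoms (Lemmas~\ref{07:01, 14/10/2023} and~\ref{07:01, 12/10/2023}).

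Two small corrections. First, work with $(H^1_\rho,\infty)$-atoms, which are also $(H^1_\rho,q)$-atoms. For $q<\infty$, H\"older gives an $L^{q'}$-oscillation rather than $\mathrm{MO}(b,B)$, and you would then need the estimate of Lemma~\ref{14:50, 06/10/2023}. Second, the passage from atoms to all of $H^1_\rho$ should go through \cite[Proposition 3.2]{YZ}. It cannot go through an $L^q$-bound for $[b,T]$, since that is not among the hypotheses on $T\in\K_{\rho,\theta,q}$.

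The necessity direction has a genuine gap, and the test operators you propose cannot close it. Both the localized mean operator and $T_\sigma$ (an average at scale $\rho$ composed with a smoothing kernel) are bounded on $L^1(\X)$. For any $L^1$-bounded $T\in\K_{\rho,\theta,q}$ and any atom, Lemma~\ref{17:53, 11/10/2023}(i) gives
\[
\|[b,T]\a\|_{L^1}\le\|(b-b_B)T\a\|_{L^1}+\|T\|_{L^1\to L^1}\|(b-b_B)\a\|_{L^1}\lesssim\|b\|_{\mathrm{BMO}_{\rho,\theta}}.
\]
So $[b,T]$ is bounded from $H^1_\rho(\X)$ to $L^1(\X)$ for \emph{every} $b\in\mathrm{BMO}_{\rho,\theta}(\X)$. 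No choice of test atoms or dyadic chains can then produce the factor $\log(e+\rho(x_0)/r)$.

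Your duality fallback fails as well, even granting that it reduces to bounding $|b_B-b_{B_{k_0}}|$. For bounded $b$ every such difference is at most $2\|b\|_{L^\infty}$. But a function with $|b|\le1$ oscillating on $B(x_0,r)$ with $\mathrm{MO}(b,B)\simeq1$ has $\|b\|_{\mathrm{BMO}^{\log}_{\rho,\theta}}\gtrsim\log(e+\rho(x_0)/r)$.

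The missing idea is to test with an operator whose $L^1$-norm \emph{controls} the $H^1_\rho$-norm. The paper takes $T=S^+_\rho$, the radial maximal operator built from the Auscher--Hyt\"onen approximation of the identity of Lemma~\ref{lem:exp-ATI}. Here $\sigma_0$ is the supremum of the H\"older exponents $\sigma$ available in that lemma. The condition $\theta<\sigma/(i(\rho)+1)$ is what makes the annular decay $(2\kappa)^{-j\sigma}\mu((2\kappa)^jB)^{-1}$ of $S^+_\rho\a$ beat the growth $(j+1)(2\kappa)^{j(k_0+1)\theta}$ of $b-b_B$. This gives $S^+_\rho\in\K_{\rho,\theta,q}$ (Proposition~\ref{prop:radial-maximal-Krho-class}). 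Theorem~\ref{21:55, 19/12/2023} and sublinearity then give, uniformly over atoms,
\[
\|(b-b_B)\a\|_{H^1_\rho}\simeq\|S^+_\rho((b-b_B)\a)\|_{L^1}\le\|[b,S^+_\rho]\a\|_{L^1}+\|(b-b_B)S^+_\rho\a\|_{L^1}\lesssim\|[b,S^+_\rho]\|_{H^1_\rho\to L^1}+\|b\|_{\mathrm{BMO}_{\rho,\theta}}.
\]

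The logarithm is then extracted by duality applied to $\a_0(b-b_B)$, not to the commutator. Set $h=\sign(b-b_B)$ and $\a_0=\frac{1}{2\mu(B)}(h-h_B)\chi_B$, so that $\mathrm{MO}(b,B)=2\int\a_0(b-b_B)\,d\mu$. Pair this against $g_{x_0}$ from \eqref{17:18, 11/10/2023}, which satisfies $\|g_{x_0}\|_{\mathrm{BMO}_\rho}\lesssim1$ and yet $g_{x_0}\ge\frac12\log(e+\rho(x_0)/r)$ on $B$. The error term $\int|\a_0||b-b_B||g_{x_0}-(g_{x_0})_B|\,d\mu$ is $\lesssim\|b\|_{\mathrm{BMO}_{\rho,\theta}}$ by Lemma~\ref{17:53, 11/10/2023}(ii). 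This is Lemma~\ref{07:01, 12/10/2023}. The test function in the duality has to be this logarithmic $\mathrm{BMO}_\rho$ function, not an indicator of a critical ball.
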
	

\begin{remark}
	In Theorem \ref{thm-main thm 2 characterization BMO log}, if $\X$ is a metric space, then one can choose $\sigma_0 = 1$.
\end{remark}

We now consider consequences of Theorems \ref{thm-main thm 1 from H1 to weak L1} and \ref{thm-main thm 2 characterization BMO log}. Following \cite{YYZ}, we say that a function $f\in \mathrm{BMO}(\X)$ belongs to $\mathrm{BMO}_{\rho}(\X)$ if
\begin{equation}\label{eq-BMO}\|f\|_{\mathrm{BMO}_{\rho}}= \|f\|_{\mathrm{BMO}}+ \sup_{B} |f|_B <\infty,
\end{equation}
where the supremum is taken over all balls $B=B(x_0,r)\subset\X$ with $r\geq \rho(x_0)$. It is known (see \cite[Theorem 2.1]{YYZ}) that $\mathrm{BMO}_{\rho}(\X)$ is the dual space of $H^1_\rho(\X)$. As consequences of Theorems \ref{thm-main thm 1 from H1 to weak L1} and \ref{thm-main thm 2 characterization BMO log}, together with 
Remark \ref{rem 1.4}, we obtain the following two corollaries.

	\begin{corollary}\label{cor:commutator-sCZ-H1-L1}
		Let $s\in (1,\infty)$, $\delta\in (0,1]$, $\theta\in \big[0,\frac{\delta}{i(\rho)+1}\big)$, and let $T$ be an $(s,\delta,\rho)$-Calder\'on--Zygmund operator. Then:
		\begin{enumerate}[\rm (i)]
			\item If $b\in \mathrm{BMO}_{\rho,\theta}(\X)$, then the commutator $[b, T]$ is bounded from $H^1_\rho(\X)$ into $L^{1,\infty}(\X)$.
			
			\item If $b\in \mathrm{BMO}_{\rho,\theta}^{\log}(\X)$, then the commutator $[b, T]$ is bounded from $H^1_\rho(\X)$ into $L^1(\X)$.
		\end{enumerate} 
	\end{corollary}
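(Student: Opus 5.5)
The corollary follows by checking the hypotheses of Theorems~\ref{thm-main thm 1 from H1 to weak L1} and~\ref{thm-main thm 2 characterization BMO log} for $T$, using Remark~\ref{rem 1.4}\eqref{10:19, 22/12/2023}. Fix $s\in(1,\infty)$, $\delta\in(0,1]$ and $\theta\in[0,\frac{\delta}{i(\rho)+1})$. By Remark~\ref{rem 1.4}\eqref{10:19, 22/12/2023}, $T$ is of weak type $(1,1)$. Moreover, $T$ belongs to $\K_{\rho,\theta,s}$, and also to $\K_{\rho,\theta,q}$ for every $q\in(1,s)$, since $T$ is a $(q,\delta,\rho)$-Calder\'on--Zygmund operator for such $q$. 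In particular, $T$ is bounded on $L^q(\X)$ for some $q\in(1,s)$, which is the requirement in Definition~\ref{07:59, 31/12/2025}. So I would work with a fixed $q\in(1,s)$ throughout, to avoid any question about whether $T$ is strongly bounded on $L^s$ and not merely weak-type $(s,s)$.

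\emph{Part (i).} Since $\delta\le1$, we have $\theta<\frac{\delta}{i(\rho)+1}\le\frac{1}{i(\rho)+1}$. Hence Theorem~\ref{thm-main thm 1 from H1 to weak L1} applies with this $q$ and $T\in\K_{\rho,\theta,q}$ of weak type $(1,1)$, and it gives that $[b,T]$ is bounded from $H^1_\rho(\X)$ into $L^{1,\infty}(\X)$.

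\emph{Part (ii).} Here I would not invoke the full equivalence of Theorem~\ref{thm-main thm 2 characterization BMO log}. That theorem carries the restriction $\theta<\frac{\sigma_0}{i(\rho)+1}$, which need not hold when $\sigma_0<\delta$. What is needed is only the implication (i)$\Rightarrow$(ii), and I expect its proof to use just the two defining properties of $\K_{\rho,\theta,q}$ together with the log-BMO condition, with no restriction involving $\sigma_0$. The argument would run as follows. Take $b\in\mathrm{BMO}^{\log}_{\rho,\theta}(\X)\subset\mathrm{BMO}_{\rho,\theta}(\X)$ (Remark~\ref{15:41, 31/12/2025}(i)) and an $(H^1_\rho,q)$-atom $\a$ supported in $B$. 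Write
\[
[b,T]\a=(b-b_B)T\a-T((b-b_B)\a).
\]
The first term is bounded in $L^1$ by property (ii) of the class. For the second term, one uses property (i) and shows that $(b-b_B)\a$ belongs to $H^1_\rho$, with norm at most $C\|b\|_{\mathrm{BMO}^{\log}_{\rho,\theta}}$. Since $r<\rho(x_0)$ for atoms, this reduces to the standard estimate that the logarithmic factor controls the mean of $(b-b_B)\a$ over the dilates of $B$ up to scale $\rho(x_0)$.

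The main obstacle is the case $\sigma_0<\delta$. If the sufficiency direction in the paper's proof of Theorem~\ref{thm-main thm 2 characterization BMO log} is stated only for $\theta<\frac{\sigma_0}{i(\rho)+1}$, I would re-examine that direction and confirm that the only restriction it actually uses is the one defining $\K_{\rho,\theta,q}$. That restriction is already guaranteed here by $\theta<\frac{\delta}{i(\rho)+1}$, via Remark~\ref{rem 1.4}.
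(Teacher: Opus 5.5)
Your proposal is correct and is essentially the paper's argument. Part (i) comes from Remark~\ref{rem 1.4}\eqref{10:19, 22/12/2023} together with Theorem~\ref{thm-main thm 1 from H1 to weak L1}. Part (ii) comes from the sufficiency direction of Theorem~\ref{thm-main thm 2 characterization BMO log}, namely the splitting $[b,T]\a=(b-b_B)T\a-T((b-b_B)\a)$ combined with Lemma~\ref{07:01, 12/10/2023}. Your concern about $\sigma_0$ is justified and resolves as you expect. The paper's proof of \eqref{item:BMO-log-condition}$\Rightarrow$\eqref{item:commutator-L1-bounded} uses only $T\in\K_{\rho,\theta,q}$, Lemma~\ref{07:01, 12/10/2023}, and \cite[Proposition 3.2]{YZ}; $\sigma_0$ enters only the converse direction, through Proposition~\ref{prop:radial-maximal-Krho-class}. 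The one step you leave implicit is the passage from uniform bounds on atoms to boundedness on all of $H^1_\rho(\X)$, which is exactly what \cite[Proposition 3.2]{YZ} supplies.
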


	\begin{corollary}\label{cor:commutator-CZ-H1-BMO}
		Let $\delta\in (0,1]$, $\theta\in \big[0,\frac{\delta}{i(\rho)+1}\big)$, and let $T$ be a $(\delta,\rho)$-Calder\'on--Zygmund operator. Then:
		\begin{enumerate}[\rm (i)]
			\item If $b\in \mathrm{BMO}_{\rho,\theta}(\X)$, then the commutators $[b, T]$ and $[b,T^*]$ are bounded from $H^1_\rho(\X)$ into $L^{1,\infty}(\X)$.
			
			\item If $b\in \mathrm{BMO}_{\rho,\theta}^{\log}(\X)$, then the commutators $[b, T]$ and $[b,T^*]$ are bounded from $H^1_\rho(\X)$ into $L^1(\X)$, and also from $L^\infty(\X)$ into $\mathrm{BMO}_\rho(\X)$.		
		\end{enumerate} 
	\end{corollary}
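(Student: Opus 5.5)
Corollary \ref{cor:commutator-CZ-H1-BMO} follows by combining Remark \ref{rem 1.4}\eqref{06:22, 26/12/2023} with Theorems \ref{thm-main thm 1 from H1 to weak L1} and \ref{thm-main thm 2 characterization BMO log}, and then using a duality argument for the $L^\infty\to\mathrm{BMO}_\rho$ statement. First fix $\theta\in[0,\frac{\delta}{i(\rho)+1})$. By Remark \ref{rem 1.4}\eqref{06:22, 26/12/2023}, both $T$ and $T^*$ are of weak type $(1,1)$, bounded on every $L^s(\X)$, $1<s<\infty$, and belong to $\K_{\rho,\theta,s}$ for every such $s$. Since $\delta\le 1$, we have $\theta<\frac{1}{i(\rho)+1}$. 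Applying Theorem \ref{thm-main thm 1 from H1 to weak L1} (with $q=s$) to $T$ and $T^*$ then gives (i).

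For the $H^1_\rho\to L^1$ part of (ii), use the implication (i)$\Rightarrow$(ii) of Theorem \ref{thm-main thm 2 characterization BMO log}. Here one caveat arises: that theorem requires $\theta<\frac{\sigma_0}{i(\rho)+1}$, whereas the corollary only assumes $\theta<\frac{\delta}{i(\rho)+1}$. The cleanest route is not to quote the equivalence itself, but to reuse the sufficiency half of its proof. That half should only need property (ii) of Definition \ref{07:59, 31/12/2025} together with the $H^1_\rho\to L^1$ bound, and both are available for the whole range $\theta<\frac{\delta}{i(\rho)+1}$ by Remark \ref{rem 1.4}. The constant $\sigma_0$ presumably enters only in the necessity direction, where one must construct suitable test operators. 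Concretely, for an $(H^1_\rho,q)$-atom $\a$ supported in $B$ one writes
\[
[b,T]\a=(b-b_B)T\a-T((b-b_B)\a).
\]
The first term is controlled by Definition \ref{07:59, 31/12/2025}(ii). For the second, one shows that $(b-b_B)\a$ lies in $H^1_\rho$ modulo a multiple $c\,\mu(B)^{-1}\chi_B$, with $|c|\lesssim \mathrm{MO}(b,B)$; that multiple is a local $\log$-type piece whose $L^1$ image is bounded exactly when $b\in\mathrm{BMO}^{\log}_{\rho,\theta}$. Summing over an atomic decomposition then finishes this part.

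For the $L^\infty\to\mathrm{BMO}_\rho$ claim, argue by duality, using $(H^1_\rho)^*=\mathrm{BMO}_\rho$ from \cite[Theorem 2.1]{YYZ}. For $f\in L^\infty$ and $g$ a finite combination of atoms, formally
\[
\int [b,T]f\cdot g\,d\mu=-\int f\,[b,T^*]g\,d\mu .
\]
Hence
\[
\Bigl|\int [b,T]f\, g\,d\mu\Bigr|\le \|f\|_{L^\infty}\|[b,T^*]g\|_{L^1}\lesssim\|f\|_{L^\infty}\|g\|_{H^1_\rho},
\]
by the $H^1_\rho\to L^1$ bound just proved for $T^*$. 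The same argument with $T$ and $T^*$ swapped handles $[b,T^*]$. The main technical obstacle is justifying this duality identity: $[b,T]f$ must be well defined for $f\in L^\infty$ (for example, modulo constants, or via truncations $f\chi_{B(x_0,R)}$ with a limit argument). One also needs density of finite atomic sums in $H^1_\rho$, so that the functional extends boundedly and identifies $[b,T]f$ with an element of $\mathrm{BMO}_\rho$. I would handle this by working with $f\chi_{B(x_0,R)}\in L^2$, obtaining uniform $\mathrm{BMO}_\rho$ bounds, and passing to the limit via weak-$*$ compactness.
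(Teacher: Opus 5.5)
Your proposal is correct and follows the paper's route: Remark~\ref{rem 1.4}\eqref{06:22, 26/12/2023} with Theorem~\ref{thm-main thm 1 from H1 to weak L1} for (i), the sufficiency half of Theorem~\ref{thm-main thm 2 characterization BMO log} for the $H^1_\rho\to L^1$ bound, and duality $(H^1_\rho)^*=\mathrm{BMO}_\rho$ together with $[b,T]^*=-[b,T^*]$ for the $L^\infty\to\mathrm{BMO}_\rho$ bound. You are right that the sufficiency half uses only $T\in\K_{\rho,\theta,q}$ and Lemma~\ref{07:01, 12/10/2023}, which are available for all $\theta<\frac{\delta}{i(\rho)+1}$, so $\sigma_0$ plays no role there; the paper leaves this point implicit.

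One correction to your ``concrete'' sketch: the bound $|c|\lesssim\mathrm{MO}(b,B)$ holds for $(H^1_\rho,\infty)$-atoms but fails in general for $q<\infty$, because the $L^{q'}$-oscillation of $b$ on $B$ is not controlled by its $L^1$-oscillation on $B$ alone. The bound you actually need is $|c|\lesssim\|b\|_{\mathrm{BMO}^{\log}_{\rho,\theta}}/\log(e+\rho(x_0)/r)$, which follows from Lemma~\ref{14:50, 06/10/2023}. With it, Lemma~\ref{07:01, 12/10/2023} shows directly that $\mathfrak{a}(b-b_B)$ is a multiple of a log-type atom, so splitting off $c\,\mu(B)^{-1}\chi_B$ is unnecessary.
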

	
	\begin{remark}		
		 Corollaries \ref{cor:commutator-sCZ-H1-L1} and \ref{cor:commutator-CZ-H1-BMO} extend several earlier results in \cite{HW, LP, LHD}. Further details and additional applications are discussed in Section \ref{16:59, 30/12/2025}. 
	\end{remark}

	As mentioned earlier, Theorem~\ref{thm-main thm 2 characterization BMO log} characterizes the space $\mathrm{BMO}^{\log}_{\rho,\theta}(\mathcal{X})$ in terms of the $H^{1}$--$L^{1}$ boundedness of commutators of all operators in $\mathcal{K}_{\rho,\theta,q}$. A natural question is whether this characterization can be obtained by testing only a selected family of operators, rather than the entire class $\mathcal{K}_{\rho,\theta,q}$. 
	To provide an affirmative answer, we introduce the following class of operators.
	
	Let $\{T_j\}_{j=1}^n$ be $(s,\delta,\rho)$-Calder\'on--Zygmund operators  for some $s\in (1,\infty)$ and $\delta\in (0,1]$. We say that the Hardy space $H^1_\rho(\X)$ admits a characterization in terms of  $\{T_j\}_{j=1}^n$ if
	\[H^1_\rho(\X)=\left\{f\in L^1(\X): T_j(f)\in L^1(\X) \text{ for all } j=1,\ldots, n\right\}\]
	and there exists a constant $C>1$ such that 
	$$C^{-1}\|f\|_{H^1_\rho}\leq \|f\|_{L^1}+\sum_{j=1}^{n} \|T_j(f)\|_{L^1}\leq C \|f\|_{H^1_\rho}\quad\text{for all } f\in H^1_\rho(\X).$$

	Our next result may be viewed as an improved version of Theorem~\ref{thm-main thm 2 characterization BMO log}, since we are able to characterize the space $\mathrm{BMO}^{\log}_{\rho,\theta}(\mathcal{X})$ by using only a finite family of operators $\{T_j\}_{j=1}^n$, which already characterizes the Hardy space $H^1_\rho(\X)$ as described above, instead of relying on all operators in $\mathcal{K}_{\rho,\theta,q}$ as required in Theorem~\ref{thm-main thm 2 characterization BMO log}. 
	
	\begin{theorem}\label{thm-main thm 3 chracterize BMO log by Tj}
		Let $\delta\in (0,1]$ and $\theta\in \big[0,\frac{\delta}{i(\rho)+1}\big)$. Let $\{T_j\}_{j=1}^n$ be $(s,\delta,\rho)$-Calder\'on--Zygmund operators  for some $s\in (1,\infty)$.  If $H^1_\rho(\X)$ admits a characterization in terms of $\{T_j\}_{j=1}^n$, and if $b\in \mathrm{BMO}_{\rho,\theta}(\X)$ is such that all commutators $[b, T_j]$ are bounded from $H^1_\rho(\X)$ to $L^1(\X)$, then $b\in \mathrm{BMO}_{\rho,\theta}^{\log}(\X)$. Moreover, there exists a constant $C>1$, independent of $b$, such that
			\[
			C^{-1}\|b\|_{{\rm BMO}_{\rho,\theta}^{\log}}\leq \|b\|_{{\rm BMO}_{\rho,\theta}} + \sum_{j=1}^{n} \|[b, T_j]\|_{H^1_\rho\to L^1}\leq C \|b\|_{{\rm BMO}_{\rho,\theta}^{\log}}.
			\]
	\end{theorem}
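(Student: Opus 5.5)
The upper bound is immediate from what is already available. By Remark~\ref{rem 1.4}\eqref{10:19, 22/12/2023}, each $T_j$ belongs to $\K_{\rho,\theta,s}$ because $\theta<\frac{\delta}{i(\rho)+1}$. Also $\|b\|_{\mathrm{BMO}_{\rho,\theta}}\le \|b\|_{\mathrm{BMO}^{\log}_{\rho,\theta}}$ by Remark~\ref{15:41, 31/12/2025}(i). So the implication (i)$\Rightarrow$(ii) of Theorem~\ref{thm-main thm 2 characterization BMO log}, with its quantitative bound, gives $\|[b,T_j]\|_{H^1_\rho\to L^1}\lesssim \|b\|_{\mathrm{BMO}^{\log}_{\rho,\theta}}$. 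I expect that direction to be proved by an argument that does not need $\theta<\sigma_0/(i(\rho)+1)$, only $\theta<\delta/(i(\rho)+1)$, through the kernel estimates of $(s,\delta,\rho)$-Calder\'on--Zygmund operators. If it does need the smaller range, I would repeat its argument for the concrete $T_j$.

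The real content is the lower bound. Fix a ball $B=B(x_0,r)$. If $r\ge \rho(x_0)$ (or $r\ge c\rho(x_0)$), then $\log(e+\rho(x_0)/r)$ is bounded, and the $\mathrm{BMO}^{\log}$ quantity is controlled by $\|b\|_{\mathrm{BMO}_{\rho,\theta}}$. So assume $r<\rho(x_0)/4$. Since $(1+r/\rho(x_0))^\theta\sim1$ here, I must show
\[
\log\Bigl(e+\tfrac{\rho(x_0)}{r}\Bigr)\mathrm{MO}(b,B)\lesssim \|b\|_{\mathrm{BMO}_{\rho,\theta}}+\sum_j\|[b,T_j]\|_{H^1_\rho\to L^1}.
\]
Following the strategy of \cite{Ky15}, I would use the test function
\[
\mathfrak a=\frac{1}{2\mu(B)}\bigl(\mathrm{sign}(b-b_B)-c_B\bigr)\chi_B ,
\]
with $c_B$ chosen so that $\int\mathfrak a=0$. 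This is a bounded multiple of an $(H^1_\rho,\infty)$-atom, and $\int_B (b-b_B)\mathfrak a\,d\mu\sim \mathrm{MO}(b,B)$. Write $(b-b_B)\mathfrak a=[b-b_B,T_j]$-type data: by the characterization hypothesis,
\[
\|(b-b_B)\mathfrak a\|_{H^1_\rho}\lesssim \|(b-b_B)\mathfrak a\|_{L^1}+\sum_j\|T_j((b-b_B)\mathfrak a)\|_{L^1},
\]
and
\[
T_j((b-b_B)\mathfrak a)=(b-b_B)T_j\mathfrak a-[b,T_j]\mathfrak a .
\]
The first term is $\lesssim\|b\|_{\mathrm{BMO}_{\rho,\theta}}$ by property (ii) of the class $\K_{\rho,\theta,s}$. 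The second is $\le \|[b,T_j]\|_{H^1_\rho\to L^1}$. Hence
\[
\|(b-b_B)\mathfrak a\|_{H^1_\rho}\lesssim \|b\|_{\mathrm{BMO}_{\rho,\theta}}+\sum_j\|[b,T_j]\|_{H^1_\rho\to L^1}.
\]

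It remains to extract the logarithm by duality with $\mathrm{BMO}_\rho$ (\cite[Theorem 2.1]{YYZ}). I would build a test function $g_B\in \mathrm{BMO}_\rho(\X)$ with $\|g_B\|_{\mathrm{BMO}_\rho}\lesssim1$ and $g_B\sim\log(\rho(x_0)/r)$ on $B$. A natural choice is
\[
g_B(x)=\chi\bigl(d(x,x_0)\bigr)\log^+\Bigl(\frac{\rho(x_0)}{d(x,x_0)+r}\Bigr),
\]
suitably truncated so that $|g_B|_{B'}\lesssim1$ on balls $B'=B(y,t)$ with $t\ge\rho(y)$. Admissibility of $\rho$ (Remark~\ref{20:49, 02/12/2023}) makes $\rho(y)\sim\rho(x_0)$ near $x_0$, which is what makes this truncation work. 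The $\mathrm{BMO}$ bound for a logarithm of the distance on a space of homogeneous type follows from doubling and \eqref{Macias-Segovia}.

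Then
\[
\Bigl|\int (b-b_B)\mathfrak a\, g_B\Bigr|\lesssim \|(b-b_B)\mathfrak a\|_{H^1_\rho}.
\]
On the other side, $\int (b-b_B)\mathfrak a\, g_B\approx \log(\rho(x_0)/r)\,\mathrm{MO}(b,B)$ up to the error
\[
\int_B |b-b_B|\,|\mathfrak a|\,\bigl|g_B-\log(\rho(x_0)/r)\bigr|\lesssim \mathrm{MO}(b,B)\le\|b\|_{\mathrm{BMO}_{\rho,\theta}},
\]
since $g_B$ varies by $O(1)$ on $B$. This yields the lower bound.

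The main obstacle is this last step: constructing $g_B$ with uniform $\mathrm{BMO}_\rho$ norm (in particular the $\sup|f|_{B'}$ part for $r'\ge\rho$), and making sure the duality pairing is legitimate for the function $(b-b_B)\mathfrak a$, which is only known to lie in $H^1_\rho$ and need not be bounded. For the latter I would truncate $b$, or use that $g_B$ is bounded so the pairing is an honest integral consistent with the $H^1$--$\mathrm{BMO}$ duality.
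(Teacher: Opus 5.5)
Your proposal is correct and is essentially the paper's proof. The paper bounds $\|(b-b_B)\mathfrak a\|_{H^1_\rho}$ exactly as you do (the characterization by $\{T_j\}_{j=1}^n$, Lemma~\ref{15:47, 09/12/2023}\eqref{09:32, 15/11/2025}, and the commutator bound), then invokes Lemma~\ref{07:01, 12/10/2023}, whose converse half is your duality argument with the atom $\frac{1}{2\mu(B)}(h-h_B)\chi_B$, $h=\mathrm{sign}(b-b_B)$; the upper bound comes from the (i)$\Rightarrow$(ii) part of Theorem~\ref{thm-main thm 2 characterization BMO log}, which, as you anticipated, uses only $T_j\in\K_{\rho,\theta,s}$ and not the $\sigma_0$ restriction.

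The step you single out as the main obstacle is supplied by Lemma~\ref{17:37, 11/10/2023}, not by doubling and \eqref{Macias-Segovia} alone, which do not put a logarithm of the distance into $\mathrm{BMO}$. Your $g_B$ without the cutoff $\chi$ (unnecessary, since $\log^+$ already vanishes off $B(x_0,\rho(x_0))$) satisfies $0\le g_B\le g_{x_0}$ and $|g_B-\min\{g_{x_0},1+\log(\rho(x_0)/r)\}|\le 1+\log 2$, so $\|g_B\|_{\mathrm{BMO}_\rho}\lesssim 1$ uniformly. Because $g_B$ is bounded and atomic decompositions converge in $L^1$, the pairing with $(b-b_B)\mathfrak a$ is a genuine integral.
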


	Theorem~\ref{thm-main thm 3 chracterize BMO log by Tj} also provides a sufficient condition on an operator $T$ ensuring that its commutator with a function in $\mathrm{BMO}_{\rho,\theta}^{\log}(\X)$ is bounded from $H^1_\rho(\X)$ to $L^1(\X)$. We now turn to establish a sufficient condition for the boundedness of such commutators on the Hardy space $H^1_\rho(\X)$ itself.
	
	\begin{theorem}\label{thm-main thm 4 chracterize BMO log by Tj on Hardy space}
		Let $\delta\in (0,1]$ and $\theta\in \big[0,\frac{\delta}{i(\rho)+1}\big)$.  If $b\in \mathrm{BMO}_{\rho,\theta}^{\log}(\X)$, then the commutator $[b, T]$ is bounded on $H^1_\rho(\X)$ for every $(s,\delta,\rho)$-Calder\'on--Zygmund operator $T$ with $s\in (1,\infty)$ and $T^*1\in \mathrm{BMO}^{\log}_\rho(\X)$.

		Conversely, let $\{T_j\}_{j=1}^n$ be $(s,\delta,\rho)$-Calder\'on--Zygmund operators  for some $s\in (1,\infty)$ such that $(T_j)^*1\in \mathrm{BMO}^{\log}_\rho(\X)$ for all $j=1,\ldots,n$.  If $H^1_\rho(\X)$ admits a characterization in terms of $\{T_j\}_{j=1}^n$, and if $b\in \mathrm{BMO}_{\rho,\theta}(\X)$ is such that all commutators $[b, T_j]$ are bounded on $H^1_\rho(\X)$, then $b\in \mathrm{BMO}_{\rho,\theta}^{\log}(\X)$. Moreover, there exists a constant $C> 1$, independent of $b$, such that
			\[
			C^{-1}\|b\|_{{\rm BMO}_{\rho,\theta}^{\log}}\leq \|b\|_{{\rm BMO}_{\rho,\theta}} + \sum_{j=1}^{n} \|[b, T_j]\|_{H^1_\rho\to H^1_\rho}\leq C \|b\|_{{\rm BMO}_{\rho,\theta}^{\log}}.
			\]
	\end{theorem}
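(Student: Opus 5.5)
We prove the two directions of Theorem~\ref{thm-main thm 4 chracterize BMO log by Tj on Hardy space} separately. Both rest on Theorem~\ref{thm-main thm 3 chracterize BMO log by Tj}, Corollary~\ref{cor:commutator-sCZ-H1-L1} and the duality $(H^1_\rho)^*=\mathrm{BMO}_\rho$.

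\emph{Sufficiency.} Let $b\in \mathrm{BMO}^{\log}_{\rho,\theta}(\X)$ and let $T$ be an $(s,\delta,\rho)$-Calder\'on--Zygmund operator with $T^*1\in \mathrm{BMO}^{\log}_\rho(\X)$. By Corollary~\ref{cor:commutator-sCZ-H1-L1}(ii), $[b,T]$ maps $H^1_\rho$ into $L^1$.
\begin{enumerate}[(1)]
\item We reduce to atoms. It suffices to show $\|[b,T]\a\|_{H^1_\rho}\le C\|b\|_{\mathrm{BMO}^{\log}_{\rho,\theta}}$ uniformly over $(H^1_\rho,q)$-atoms $\a$ supported in $B=B(x_0,r)$. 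We use the decomposition
\[
[b,T]\a=(b-b_B)T\a - T\bigl((b-b_B)\a\bigr).
\]
\item For the second term, the function $h=(b-b_B)\a$ is supported in $B$. Since $r<\rho(x_0)$, we have $\|h\|_{L^{p}}\lesssim \|b\|_{\mathrm{BMO}}\,\mu(B)^{1/p-1}$ for some $1<p<q$, because on subcritical balls $\mathrm{BMO}_{\rho,\theta}$ behaves like $\mathrm{BMO}$ (John--Nirenberg). However, $h$ does not have vanishing mean. Write $h=(h-h_B\chi_B)+h_B\chi_B$. The first piece is a multiple of an atom. For the second piece, $|h_B|\lesssim \|b\|_{\mathrm{BMO}^{\log}_{\rho,\theta}}/(\mu(B)\log(e+\rho(x_0)/r))$.
\item For the uncancelled piece, we need $\|T\chi_B\|_{H^1_\rho}\lesssim \mu(B)\log(e+\rho(x_0)/r)$. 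We would write $\chi_B=\mu(B)\mu(\widetilde B)^{-1}\chi_{\widetilde B}+\sum_k(\ldots)$ by telescoping through dilates $2^kB$ up to the critical radius, which takes about $\log(\rho(x_0)/r)$ steps. Each telescoping difference is a cancelled atom multiple, and $T$ maps $H^1_\rho\to H^1_\rho$ on cancelled atoms because $T^*1$ has the needed regularity. This is where $T^*1\in\mathrm{BMO}_\rho^{\log}$ enters: the hypothesis should give that the bad pieces carry a $1/\log$ gain, compensating the number of steps. The final top-level ball has radius comparable to $\rho$ and needs no cancellation.
\item The first term $(b-b_B)T\a$ is handled by testing against $g\in\mathrm{BMO}_\rho$ via duality, or by a molecular decomposition. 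Its integral equals
\[
\int (b-b_B)T\a=\int \a\,T^*(b-b_B),
\]
which should be controlled using the $\log$ condition on $b$ together with the kernel decay (the factor $(1+R/\rho)^{-N}$ absorbing the growth $(1+R/\rho)^\theta$). The pieces away from $B$ form molecules.
\end{enumerate}

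\emph{Necessity.} Since $H^1_\rho\hookrightarrow L^1$, boundedness of $[b,T_j]$ on $H^1_\rho$ implies boundedness from $H^1_\rho$ to $L^1$ with smaller norm. Theorem~\ref{thm-main thm 3 chracterize BMO log by Tj} then gives $b\in \mathrm{BMO}^{\log}_{\rho,\theta}$ and the lower estimate. The upper estimate comes from the sufficiency part applied to each $T_j$.

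\emph{Main obstacle.} The hard step is step (3)/(4): proving that $T$ maps the non-cancelled remainder into $H^1_\rho$ with only a logarithmic loss. Our first attempt would be to characterise $H^1_\rho$ through $T^*$-duality, namely $\langle T f,g\rangle=\langle f,T^*g\rangle$ for $g\in\mathrm{BMO}_\rho$. We would then show that $\int_B T^*g$ is bounded by $\mu(B)\log(e+\rho/r)\|g\|_{\mathrm{BMO}_\rho}$, using $T^*1\in\mathrm{BMO}^{\log}_\rho$ and splitting $g=(g-g_{B^*})+g_{B^*}$, where $B^*$ is the critical dilate of $B$.
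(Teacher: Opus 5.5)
Your necessity argument is correct, and more direct than the paper's ``similarly'': $\|f\|_{L^1}\lesssim\|f\|_{H^1_\rho}$ gives $\|[b,T_j]\|_{H^1_\rho\to L^1}\lesssim\|[b,T_j]\|_{H^1_\rho\to H^1_\rho}$, so Theorem~\ref{thm-main thm 3 chracterize BMO log by Tj} applies. The gap is in the sufficiency direction. You use the paper's decomposition $[b,T]\a=(b-b_B)T\a-T((b-b_B)\a)$, but neither term is actually estimated.

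For the second term, what you call the main obstacle follows in two lines from results already available. Lemma~\ref{07:01, 12/10/2023} gives $\|(b-b_B)\a\|_{H^1_\rho}\lesssim\|b\|_{\mathrm{BMO}^{\log}_{\rho,\theta}}$: this function is a multiple of a log-type atom, so there is no need to split off $h_B\chi_B$. Proposition~\ref{prop- bounded on H1 and T1} says precisely that $T^*1\in\mathrm{BMO}^{\log}_\rho$ makes $T$ bounded on $H^1_\rho$.

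Your telescoping can be made to work: each mean-zero piece at a subcritical scale is sent by $T$ to a log-type molecule of norm $O(1)$, since $\int T\a'=\int\a'\,(T^*1-(T^*1)_{B'})=O(1/\log)$. That is what the proposition encodes. However, the accounting in your step (3) is wrong. The roughly $\log(e+\rho(x_0)/r)$ steps each cost $O(1)$. They are paid for by $|h_B|\mu(B)\lesssim\|b\|_{\mathrm{BMO}^{\log}_{\rho,\theta}}/\log(e+\rho(x_0)/r)$, that is, by $b$, not by $T^*1$. Your duality fallback would additionally need a characterization of the $H^1_\rho$-norm of $L^1$ functions by testing against $\mathrm{BMO}_\rho$, and that is not available here.

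For the first term, the identity $\int(b-b_B)T\a=\int\a\,T^*(b-b_B)$ is unjustified. $T^*(b-b_B)$ is not a priori defined, since $b-b_B$ grows at infinity, and the identity is not what is needed anyway. Writing \emph{should be controlled} leaves out the actual estimate.

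What is missing is the direct check that $(b-b_B)T\a$ is, up to $C\|b\|_{\mathrm{BMO}^{\log}_{\rho,\theta}}$, a log-type $(H^1_\rho,p,\varepsilon)$-molecule. Take $q=\frac{s+1}{2}$ and $p=\frac{q+1}{2}$.
\begin{itemize}
\item On each $U_j(B)$, apply H\"older's inequality. Combine $\|T\a\|_{L^q(U_j(B))}\lesssim(2\kappa)^{-j\delta}\mu((2\kappa)^jB)^{1/q-1}$ from Lemma~\ref{15:47, 09/12/2023}(i) with the growth $(j+2)(2\kappa)^{j(k_0+1)\theta}$ of $b-b_B$ from Lemma~\ref{14:51, 06/10/2023}.
\item This gives decay with $\varepsilon=\frac{\delta-(k_0+1)\theta}{2}$, for a suitable $k_0>i(\rho)$. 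Here $\varepsilon>0$ exactly because $\theta<\frac{\delta}{i(\rho)+1}$.
\item For the integral, Lemma~\ref{14:50, 06/10/2023} contributes the extra factor $1/\log\bigl(e+\rho(x_0)/((2\kappa)^{j+1}r)\bigr)$. Lemma~\ref{23:50, 07/10/2023} converts this into $(2\kappa)^{j\varepsilon}/\log(e+\rho(x_0)/r)$.
\item Summing gives $|\int(b-b_B)T\a|\lesssim\|b\|_{\mathrm{BMO}^{\log}_{\rho,\theta}}/\log(e+\rho(x_0)/r)$, and Lemma~\ref{07:01, 14/10/2023} concludes.
\end{itemize}

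The size decay $(1+R/\rho(x_0))^{-N}$ you point to cannot replace this. For a small atom it is $\simeq 1$ on the whole range $r\le R\le\rho(x_0)$. So both the molecular decay and the $1/\log$ gain there must come from the $\delta$-regularity of the kernel and the cancellation of $\a$. Your sketch never uses the restriction on $\theta$, which reflects that this step is missing.
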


	 Since $\mathrm{BMO}_\rho(\X)$ is the dual space of $H^1_\rho(\X)$,  it follows from Theorem~\ref{thm-main thm 4 chracterize BMO log by Tj on Hardy space} and Remark~\ref{rem 1.4}\eqref{06:22, 26/12/2023} that the following corollary holds.
	
	\begin{corollary}\label{14:48, 15/11/2025}
		Let $\delta\in (0,1]$ and $\theta\in [0,\frac{\delta}{i(\rho)+1})$. Assume that $b\in \mathrm{BMO}_{\rho,\theta}^{\log}(\X)$ and that $T$ is a $(\delta,\rho)$-Calder\'on--Zygmund operator satisfying $T1\in \mathrm{BMO}^{\log}_\rho(\X)$. Then, the commutator $[b, T]$ is bounded on $\mathrm{BMO}_\rho(\X)$.
	\end{corollary}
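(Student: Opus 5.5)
The plan is a duality argument. By \cite[Theorem 2.1]{YYZ}, $\mathrm{BMO}_\rho(\X)=(H^1_\rho(\X))^*$, so boundedness of $[b,T]$ on $\mathrm{BMO}_\rho(\X)$ should follow from boundedness of the formal adjoint commutator on $H^1_\rho(\X)$. For $f$ and $g$ nice (say $g$ a finite linear combination of $(H^1_\rho,\infty)$-atoms and $f\in L^\infty$ with compact support), we compute
\[
\int [b,T]f\,g\,d\mu=\int (bTf-T(bf))g\,d\mu=\int f\,(bT^*g-T^*(bg))\,d\mu=\int f\,[b,T^*]g\,d\mu .
\]
The adjoint $T^*$ is again a $(\delta,\rho)$-Calder\'on--Zygmund operator, since the kernel conditions \eqref{11:06, 13/12/2023}--\eqref{20:56, 15/12/2023} are symmetric in the smoothness part. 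The size condition would have to be checked with $\rho(y)$ in place of $\rho(x)$, and Remark \ref{20:49, 02/12/2023} converts one into the other at the cost of a change of $N$. Hence, by Remark~\ref{rem 1.4}\eqref{06:22, 26/12/2023}, $T^*$ is an $(s,\delta,\rho)$-Calder\'on--Zygmund operator for every $s\in(1,\infty)$. Moreover $(T^*)^*1=T1\in\mathrm{BMO}^{\log}_\rho(\X)$ by hypothesis. Theorem~\ref{thm-main thm 4 chracterize BMO log by Tj on Hardy space}, applied to $T^*$, then gives
\[
\|[b,T^*]g\|_{H^1_\rho}\le C\|b\|_{\mathrm{BMO}^{\log}_{\rho,\theta}}\|g\|_{H^1_\rho}.
\]

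Next we define $[b,T]$ on $\mathrm{BMO}_\rho(\X)$ as the transpose of $[b,T^*]$, via $\langle [b,T]f,g\rangle:=\langle f,[b,T^*]g\rangle$ for $f\in\mathrm{BMO}_\rho$ and $g\in H^1_\rho$. This is a bounded functional on $H^1_\rho$ with norm at most $C\|b\|_{\mathrm{BMO}^{\log}_{\rho,\theta}}\|f\|_{\mathrm{BMO}_\rho}$, so by duality it is represented by an element of $\mathrm{BMO}_\rho$ with that norm. It remains to check that this agrees with the pointwise commutator $bTf-T(bf)$ whenever the latter is meaningful, modulo the usual interpretation of $T$ on $\mathrm{BMO}_\rho$ functions. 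Because of the decay $(1+d(x,y)/\rho(x))^{-N}$ in \eqref{11:06, 13/12/2023}, $Tf$ converges absolutely for $f\in L^\infty$, and so does $T(bf)$ for $f\in L^\infty$ with compact support. Hence no subtraction of constants is needed, which is consistent with $\mathrm{BMO}_\rho$ being a space of genuine functions rather than functions modulo constants.

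The main obstacle is this identification step: the pairing identity must be justified for $f\in\mathrm{BMO}_\rho$, not merely for bounded compactly supported $f$. I would first verify the identity for $g$ an $L^\infty$-atom and $f\in L^\infty_c$ using Fubini, which the kernel decay permits. Then I would extend to general $f\in\mathrm{BMO}_\rho$ by truncation $f\chi_{B(x_0,R)}$ with $R\to\infty$, using the $\rho$-decay of $K$ together with the growth bound $|f|_{B}\lesssim\|f\|_{\mathrm{BMO}_\rho}\log(\cdots)$ on large balls and $b\in\mathrm{BMO}_{\rho,\theta}$ with $\theta$ small, so that the tail terms are controlled by the arbitrarily large polynomial decay in $N$. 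Alternatively, as the paper's remark suggests, one may simply take the duality definition as the meaning of $[b,T]$ on $\mathrm{BMO}_\rho$, in which case the corollary is immediate from the two displayed facts.
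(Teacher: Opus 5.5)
Your proposal is the paper's argument: Remark~\ref{rem 1.4}(ii) makes $T^*$ an $(s,\delta,\rho)$-Calder\'on--Zygmund operator with $(T^*)^*1=T1\in\mathrm{BMO}^{\log}_\rho(\X)$, so Theorem~\ref{thm-main thm 4 chracterize BMO log by Tj on Hardy space} bounds $[b,T^*]$ on $H^1_\rho(\X)$, and the duality $(H^1_\rho(\X))^*=\mathrm{BMO}_\rho(\X)$ transfers this to $[b,T]$. Two small corrections, neither affecting the norm bound: the pairing identity is actually $\int [b,T]f\,g\,d\mu=-\int f\,[b,T^*]g\,d\mu$ (with $\bar b$ if $b$ is complex-valued), and in your optional identification step $Tf$ for $f\in L^\infty$ converges absolutely only away from the diagonal, so the local part must be handled through $L^p$-boundedness.
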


	\begin{remark}
		Theorems \ref{thm-main thm 3 chracterize BMO log by Tj} and \ref{thm-main thm 4 chracterize BMO log by Tj on Hardy space}, together with Corollary \ref{14:48, 15/11/2025}, extend and improve several earlier results from \cite{BHS, Ky15}. Further details and additional applications are discussed in Section \ref{16:59, 30/12/2025}.		
	\end{remark}

	The paper is organized as follows. Section \ref{17:02, 30/12/2025} provides several auxiliary lemmas used in the proofs of the main results. Section \ref{17:04, 30/12/2025} is devoted to the proofs of these results. Finally, Section \ref{16:59, 30/12/2025} presents additional examples and applications.

	Throughout the paper, we use $C$ to denote a positive constant, independent of the main parameters involved, whose value may vary from line to line. If $f\leq C g$, we write $f\lesssim g$ and, if $f\lesssim g\lesssim f$, we write $f\simeq g$. For any ball $B=B(x,r)$ and any $t>0$, we use $tB:=B(x,tr)$.  For any set $E\subset \X$, we let $\chi_E$ denote its characteristic function. For any index $q\in [1,\infty]$, we denote by $q'$ its conjugate index, that is, $\frac{1}{q}+\frac{1}{q'}=1$. Finally, we set $\mathbb{Z}^+:=\{1, 2,\ldots\}$ and $\mathbb{Z}^+_0:=\mathbb{Z}^+\cup\{0\}$.

		\section{Some auxiliary lemmas}\label{17:02, 30/12/2025}
	
	The main purpose of this section is to provide several auxiliary lemmas. These lemmas are used in the proofs of the main results from Section \ref{17:04, 30/12/2025} and also for some of the results in Section \ref{16:59, 30/12/2025}. Here and in what follows, for any ball $B$, we denote
	\begin{equation}
		\text{$U_0(B):= 2\kappa B$ and $U_j(B):=(2\kappa)^{j+1}B\setminus (2\kappa)^j B$ for all $j\in\mathbb{Z}^+$.}
	\end{equation}
	
	\begin{definition}\label{10:59, 05/12/2023}
		Let $q\in (1,\infty]$ and $\varepsilon>0$. 
		\begin{enumerate}[\rm (i)]
			\item A measurable function $\mathfrak a$ is called an $(H^1_\rho,q)$-atom of log-type related to the ball $B=B(x_0,r)$ if 
			\begin{enumerate}[\rm (a)]
				\item supp\,$\a\subset B$,
				
				\item $\|\a\|_{L^q}\leq \mu(B)^{1/q-1}$,
				
				\item  $\left|\int_{\X}\a(x)d\mu(x)\right|\leq  \frac{1}{\log\left(e+ \frac{\rho(x_0)}{r}\right)}$.
			\end{enumerate} 
			
			\item\label{09:28, 27/11/2023} A measurable function $\mathfrak m$ is called an $(H^1_\rho,q,\varepsilon)$-molecule of log-type related to the ball $ B=B(x_0,r)$ if 
			\begin{enumerate}[\rm (a)]
				\item $\|\m\|_{L^q(U_j(B))}\leq (2\kappa)^{-j\varepsilon} \mu((2\kappa)^j B)^{1/q-1}$ for all $j\in\mathbb{Z}^+_0$,
				
				\item  $\left|\int_{\X}\m(x)d\mu(x)\right|\leq  \frac{1}{\log\left(e+ \frac{\rho(x_0)}{r}\right)}$.
			\end{enumerate}
		\end{enumerate}
	\end{definition}

	\begin{remark}\label{11:50, 05/12/2023}
		\begin{enumerate}[\rm (i)]
			\item\label{14:29, 12/10/2023} If $\a$ is an $(H^1_\rho,q)$-atom to the ball $B$, then $\frac{1}{\log(e+4)}\a$ is an $(H^1_\rho,q)$-atom of log-type related to the ball $B$.
			
			\item\label{11:51, 05/12/2023} If $\a$ is an $(H^1_\rho,q)$-atom of log-type related to the ball $B$, then $\a$ is an $(H^1_\rho,q,\varepsilon)$-molecule of log-type related to the ball $B$.
		\end{enumerate}
	\end{remark}
	
	The following result is due to Ky \cite[Lemma 3.2]{Ky25}.
	
	\begin{lemma}\label{07:01, 14/10/2023}
		Let $q\in (1,\infty]$ and $\varepsilon>0$. Then there exists a  constant $C>0$ such that, for every $(H^1_\rho,q,\varepsilon)$-molecule of log-type $\m$,
		$$\|\m\|_{H^1_\rho}\leq C.$$
	\end{lemma}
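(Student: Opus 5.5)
The plan is to decompose the molecule $\m$ related to $B=B(x_0,r)$ into a sum of $(H^1_\rho,q)$-atoms (with absolutely summable coefficients) plus a controlled remainder. I will split into two cases according to the size of $r$ relative to $\rho(x_0)$.

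\emph{Case $r\ge \rho(x_0)/4$.} No cancellation is needed. Write $\m=\sum_{j\ge0}\m\chi_{U_j(B)}$. Each piece is supported in $(2\kappa)^{j+1}B$, and condition (a) combined with doubling gives
\[
\|\m\chi_{U_j(B)}\|_{L^q}\lesssim (2\kappa)^{-j\varepsilon}\mu((2\kappa)^{j+1}B)^{1/q-1}.
\]
The supporting balls may have radius exceeding $\rho(x_0)$. I will therefore cover $(2\kappa)^{j+1}B$ by balls $B(y_i,\rho(y_i))$ with bounded overlap; this is the standard covering lemma for admissible functions from \cite{YZ}. Dividing each piece by a partition of unity subordinate to this cover yields atoms without the moment condition. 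Using the admissibility estimate \eqref{admissible function}, Remark~\ref{20:49, 02/12/2023} and doubling, the number of such balls and their measures are controlled polynomially in $(2\kappa)^j$. The polynomial loss is absorbed by $(2\kappa)^{-j\varepsilon}$ if $\varepsilon$ is large. For general $\varepsilon>0$ I will instead bound each piece directly: a function supported in a ball of radius at least comparable to $\rho$ of its centre has $H^1_\rho$ norm bounded by a constant times its $L^1$ norm (the "local" part of $H^1_\rho$). Summing over $j$ then gives $\sum_j (2\kappa)^{-j\varepsilon}<\infty$.

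\emph{Case $r<\rho(x_0)/4$.} Let $j_0$ be the largest integer with $(2\kappa)^{j_0}r<\rho(x_0)/4$, so that $j_0\simeq \log(\rho(x_0)/r)$. Set $m_j=\int\m\chi_{U_j(B)}$ and $\chi_j=\chi_{(2\kappa)^{j}B}/\mu((2\kappa)^jB)$. I will apply the classical telescoping molecule decomposition for $j\le j_0$:
\[
\m=\sum_{j\le j_0}\big(\m\chi_{U_j}-m_j\chi_{j+1}\big)+\sum_{j\le j_0}N_{j}\big(\chi_{j+1}-\chi_{j+2}\big)+N_{j_0+1}\chi_{j_0+2}+\sum_{j>j_0}\m\chi_{U_j},
\]
where $N_j=\sum_{k\ge j}m_k$ is the tail sum. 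The first two families consist of mean-zero functions supported in balls of radius $<\rho(x_0)$, after adjusting $j_0$ by a fixed constant. Their $L^q$ norms are $\lesssim (2\kappa)^{-j\varepsilon}\mu^{1/q-1}$ because $|N_j|\le\sum_{k\ge j}\|\m\|_{L^1(U_k)}\lesssim (2\kappa)^{-j\varepsilon}$. Hence they are summable multiples of atoms.

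The last sum is handled exactly as in the first case. The remaining term $N_{j_0+1}\chi_{j_0+2}$ is supported in a ball of radius about $\rho(x_0)$, which requires no cancellation, and it is bounded by $(2\kappa)^{-j_0\varepsilon}$. This is fine.

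The main obstacle is that the telescoping is only valid once the total integral has been extracted. The coefficients $N_j$ for small $j$ are not small; they are close to $\int\m$. So I will first rewrite $\m=\big(\m-(\int\m)\chi_{0}\big)+(\int\m)\chi_0$. The first part has mean zero and is handled as above; the tail sums then stay bounded by a constant uniformly in $j$. I have not yet checked how this uniform (rather than decaying) bound on the $N_j$ affects the summability of the second family; this needs to be verified.

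For the term $(\int\m)\chi_0$, I will telescope $\chi_0=\sum_{j=0}^{j_0}(\chi_j-\chi_{j+1})+\chi_{j_0+1}$. This produces $j_0+1$ atoms with coefficient $1$ and one local piece. The total cost is $\lesssim |\int\m|\,(j_0+1)\lesssim |\int\m|\log(e+\rho(x_0)/r)\le 1$ by condition (b). This is exactly where the logarithmic moment condition is used, and it yields $\|\m\|_{H^1_\rho}\le C$.
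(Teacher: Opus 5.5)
The paper does not prove this lemma; it quotes it from \cite{Ky25}. Your overall scheme is the standard one: remove $(\int\m)\chi_0$, telescope $\chi_0$ up to scale $\rho(x_0)$ at a cost of $\simeq\log(e+\rho(x_0)/r)$ atoms, and split the mean-zero remainder into atoms below scale $\rho(x_0)$ and local pieces above it. The logarithmic moment condition enters exactly where you say. However, two steps do not hold as written.

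\textbf{First gap: the local claim.} The claim that a function supported in a ball $B^*$ whose radius is $\gtrsim\rho$ of its centre has $H^1_\rho$-norm bounded by its $L^1$-norm is false. The function $\chi_{B(x_0,s)}/\mu(B(x_0,s))$ has $L^1$-norm $1$, yet pairing it with the function $g_{x_0}$ from Section~2 (uniformly bounded in $\mathrm{BMO}_\rho$) shows its $H^1_\rho$-norm is $\gtrsim\log(e+\rho(x_0)/s)$.

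The correct statement, which is also sufficient, is $\|f\|_{H^1_\rho}\lesssim\mu(B^*)^{1-1/q}\|f\|_{L^q}$. To prove it, split $f$ into pieces $f\chi_{E_i}$, where the $E_i\subset B_i=B(y_i,\rho(y_i)/2)$ are disjoint and come from a bounded-overlap cover as in \cite{YZ}. Each piece is the multiple $\lambda_i=\|f\chi_{E_i}\|_{L^q}\mu(B_i)^{1-1/q}$ of an atom, and no moment condition is needed at this scale. Admissibility makes $\rho(y_i)$ at most a constant times the radius of $B^*$, so all relevant $B_i$ lie in a fixed dilate of $B^*$. H\"older on $\sum_i\lambda_i$ then gives $\sum_i\lambda_i\lesssim\|f\|_{L^q}\mu(B^*)^{1-1/q}$. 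There is no polynomial loss in the number of balls, so your concern about small $\varepsilon$ disappears. Applied to $\m\chi_{U_j}$, this gives the bound $(2\kappa)^{-j\varepsilon}$ you wanted.

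\textbf{Second gap: the unverified step.} This step does close; your worry comes from a mix-up. Your displayed identity, with last term $M_{j_0}\chi_{j_0+2}$, is Abel summation with the \emph{head} sums $M_j=\sum_{k\le j}m_k$, not with the tail sums $N_j$ you defined. Subtracting $(\int\m)\chi_0$ alters only $m_0$, because $\chi_0$ is supported in $U_0$. Hence the tail sums $N_j$, $j\ge1$, are unchanged and still satisfy $|N_j|\le\sum_{k\ge j}\|\m\|_{L^1(U_k)}\lesssim(2\kappa)^{-j\varepsilon}$. For the resulting mean-zero function the head sums are $M_j=-N_{j+1}$. So the coefficients of the second family decay geometrically rather than merely staying bounded.

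In fact you can skip the extraction altogether. Without it, $|M_j|\le|\int\m|+|N_{j+1}|$, and the extra cost $(j_0+1)|\int\m|$ is absorbed by the logarithmic moment condition exactly as in your final step.

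With these two repairs, and $j_0$ chosen so that every mean-zero piece lives on a ball of radius $<\rho(x_0)$ (as you indicate), the argument is complete.
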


	\begin{lemma}\label{14:50, 06/10/2023}
		Let $p\in [1,\infty)$, $\theta\in [0,\infty)$, and let $k_0>i(\rho)$ be such that \eqref{admissible function} holds. Then there exists a constant $C>0$ such that
		$$\left(\frac{1}{\mu((2\kappa)^j B)}\int_{(2\kappa)^j B} |f(x)- f_{B}|^p d\mu(x)\right)^{1/p}\leq C (j+1)\frac{\left(1+ \frac{(2\kappa)^j r}{\rho(x_0)}\right)^{(k_0+1)\theta}}{\log\left(e +\frac{\rho(x_0)}{(2\kappa)^j r}\right)}\|f\|_{\mathrm{BMO}^{\rm log}_{\rho,\theta}}$$
		for all $f\in \mathrm{BMO}^{\rm log}_{\rho,\theta}(\X)$, all balls $B= B(x_0,r)$, and all $j\in \mathbb Z^+_0$. 
	\end{lemma}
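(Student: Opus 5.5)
The plan is the classical telescoping argument for BMO functions, adapted to the weights in $\mathrm{BMO}^{\log}_{\rho,\theta}$. Fix $B=B(x_0,r)$ and write $B_i:=(2\kappa)^iB$ for $i\in\mathbb Z^+_0$, with radii $r_i:=(2\kappa)^ir$. Since
\[
|f(x)-f_B|\le |f(x)-f_{B_j}|+\sum_{i=1}^{j}|f_{B_i}-f_{B_{i-1}}|,
\]
Minkowski's inequality reduces the claim to two estimates, one for $\bigl(\frac{1}{\mu(B_j)}\int_{B_j}|f-f_{B_j}|^p\,d\mu\bigr)^{1/p}$ and one for each difference of averages.

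For the differences of averages I use doubling:
\[
|f_{B_i}-f_{B_{i-1}}|\le \frac{1}{\mu(B_{i-1})}\int_{B_{i-1}}|f-f_{B_i}|\,d\mu\lesssim \mathrm{MO}(f,B_i)\le \frac{\bigl(1+\frac{r_i}{\rho(x_0)}\bigr)^{\theta}}{\log\bigl(e+\frac{\rho(x_0)}{r_i}\bigr)}\|f\|_{\mathrm{BMO}^{\log}_{\rho,\theta}}.
\]
Since $r_i\le r_j$ for $i\le j$, the numerator is bounded by $(1+r_j/\rho(x_0))^\theta$ and the logarithm by $\log(e+\rho(x_0)/r_i)\ge\log(e+\rho(x_0)/r_j)$. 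Each of the $j$ terms is therefore controlled by the target quantity, and summing them gives the factor $j$.

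The remaining term is an $L^p$ oscillation on the single ball $B_j$, which needs a John--Nirenberg inequality for $\mathrm{BMO}_{\rho,\theta}$. The relevant fact is that the weight is essentially constant on subballs. If $B'=B(y,s)\subset B_j$, then $d(x_0,y)\lesssim r_j$ and $s\lesssim r_j$. By \eqref{admissible function},
\[
\frac{1}{\rho(y)}\le \frac{c_0}{\rho(x_0)}\Bigl(1+\frac{r_j}{\rho(x_0)}\Bigr)^{k_0},
\]
so
\[
1+\frac{s}{\rho(y)}\lesssim\Bigl(1+\frac{r_j}{\rho(x_0)}\Bigr)^{k_0+1}.
\]
This gives
\[
\sup_{B'\subset B_j}\mathrm{MO}(f,B')\lesssim \Bigl(1+\frac{r_j}{\rho(x_0)}\Bigr)^{(k_0+1)\theta}\|f\|_{\mathrm{BMO}_{\rho,\theta}},
\]
and this is where the exponent $(k_0+1)\theta$ comes from. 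The local John--Nirenberg inequality on spaces of homogeneous type (via a Calder\'on--Zygmund decomposition at the level of the ball, using only subballs) then yields the same bound for the $L^p$ average over $B_j$. This gives the estimate with $\mathrm{BMO}_{\rho,\theta}$ in place of $\mathrm{BMO}^{\log}_{\rho,\theta}$, without the logarithmic gain.

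The main obstacle is recovering the factor $1/\log(e+\rho(x_0)/r_j)$ in this local term, because subballs of $B_j$ can be much smaller and their own log weights are larger. Since the logarithm only increases on smaller balls, the gain is favorable: for $B'\subset B_j$ of radius $s\lesssim r_j$ and centre $y$, one has $\log(e+\rho(y)/s)\gtrsim \log(e+\rho(x_0)/r_j)$. For $r_j\le\rho(x_0)$ this holds because $\rho(y)\gtrsim\rho(x_0)$ by admissibility, and for $r_j>\rho(x_0)$ both logarithms are comparable to constants. Hence every subball satisfies
\[
\mathrm{MO}(f,B')\lesssim \frac{\bigl(1+\frac{r_j}{\rho(x_0)}\bigr)^{(k_0+1)\theta}}{\log\bigl(e+\frac{\rho(x_0)}{r_j}\bigr)}\|f\|_{\mathrm{BMO}^{\log}_{\rho,\theta}}.
\]
Local John--Nirenberg on $B_j$ then gives the $L^p$ bound with exactly this constant. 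Combining it with the telescoping sum yields the factor $(j+1)$ and proves the lemma.
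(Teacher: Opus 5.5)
Your overall plan works once one ingredient is changed. The telescoping step and your admissibility comparison are correct and are essentially what the paper does; the comparison, which yields the exponent $(k_0+1)\theta$ and $\log(e+\rho(y)/s)\gtrsim\log(e+\rho(x_0)/r_j)$, is the paper's Case~2 computation. The gap is the ingredient you cite instead of proving: a local John--Nirenberg inequality on $B_j$ that uses only balls $B'\subset B_j$. On a general space of homogeneous type this is false. A ball need not be a John (chain) domain with respect to its own subballs, and in any Calder\'on--Zygmund or dyadic-cube argument the pieces near the boundary of $B_j$ are controlled only by balls of comparable size that stick out of $B_j$. Concretely, let $\X_\epsilon=\{|x|\le 0.9\}\cup\{|x-p|\le\epsilon\}\cup\{|x|\ge 1\}\subset\R^2$ with $|p|=1-2\epsilon$, Euclidean distance and Lebesgue measure; this space is doubling uniformly for small $\epsilon$. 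A ball $B(y,s)\subset B(0,1)$ must have $s\le 1-|y|$. So subballs centred in the small disc stay inside it, while subballs centred in $\{|x|\le 0.9\}$ that reach the disc have radius at least $0.05$ and hence measure $\gtrsim 1$. Therefore $f=\epsilon^{-2}\chi_{\{|x-p|\le\epsilon\}}$ satisfies $\mathrm{MO}(f,B')\lesssim 1$ for every ball $B'\subset B(0,1)$, yet its $L^2$ oscillation over $B(0,1)$ is $\simeq\epsilon^{-1}$. The implication you invoke therefore has no constant depending only on the doubling data, and your argument as written does not give the $L^p$ bound for $p>1$.

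The repair uses only information you already have. The hypothesis $f\in\mathrm{BMO}^{\log}_{\rho,\theta}(\X)$ controls every ball, not just subballs. Your admissibility computation goes through verbatim for all balls $B(y,s)$ with $d(y,x_0)\le\lambda r_j$ and $s\le\lambda r_j$, for any fixed $\lambda\ge 1$. What you then need is John--Nirenberg with enlargement: if $\mathrm{MO}(f,B(y,s))\le A$ for all balls with $d(y,x_0)\le\lambda r_0$ and $s\le\lambda r_0$, with $\lambda$ large, then the $L^p$ oscillation of $f$ over $B(x_0,r_0)$ is at most $C_pA$. This does hold on spaces of homogeneous type. Cover $B(x_0,r_0)$ by dyadic cubes of side $\simeq r_0$ and run the dyadic John--Nirenberg argument in each; every dyadic subcube lies in a ball of the family with comparable measure. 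Then compare the cube averages with $f_{B(x_0,\lambda r_0)}$.

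The paper reaches the same dilated local estimate differently. It multiplies $f-f_{2B_1}$ by the H\"older cutoff $h$, which is where \eqref{Macias-Segovia} enters, and applies Nakai's global John--Nirenberg inequality to $h(f-f_{2B_1})$. It checks the $\mathrm{BMO}$ norm on balls of radius larger than $r_1$ via the support of $h$. On smaller balls it uses the balls $B(x_2,5\kappa^2r_1)$, which are not contained in $B_1$, and absorbs the resulting factor $\log(e+r_1/r_2)$ with $(r_2/r_1)^{\alpha_0}$. With the enlargement built in, your route is a valid alternative that replaces the cutoff and the global inequality by a dyadic-cube argument.
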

	
	\begin{proof}
		We first claim that there exists a constant $C>0$ such that
		\begin{equation}\label{09:24, 07/12/2023}
			\left(\frac{1}{\mu(B_1)}\int_{B_1} |f(x)- f_{B_1}|^p d\mu(x)\right)^{1/p}\leq C \frac{\left(1+ \frac{r_1}{\rho(x_1)}\right)^{(k_0+1)\theta}}{\log\left(e +\frac{\rho(x_1)}{r_1}\right)}\|f\|_{\mathrm{BMO}^{\rm log}_{\rho,\theta}}
		\end{equation}
		for all balls $B_1= B(x_1,r_1)$. Assume that \eqref{09:24, 07/12/2023} holds for a moment. Then Lemma \ref{14:50, 06/10/2023} holds when $j=0$. For any $j\in \mathbb{Z}^+$, it follows from \eqref{09:24, 07/12/2023} that
		\begin{align*}
			&\left(\frac{1}{\mu((2\kappa)^j B)}\int_{(2\kappa)^j B} |f(y)- f_{B}|^p d\mu(y)\right)^{1/p} \\
			&\hskip0.5cm\leq \left(\frac{1}{\mu((2\kappa)^j B)}\int_{(2\kappa)^j B} |f(y)- f_{(2\kappa)^j B}|^p d\mu(y)\right)^{1/p} + \sum_{i=0}^{j-1} \left|f_{(2\kappa)^{i+1}B}- f_{(2\kappa)^i B}\right|\\
			&\hskip0.5cm\lesssim \frac{\left(1+ \frac{(2\kappa)^j r}{\rho(x_0)}\right)^{(k_0+1)\theta}}{\log\left(e +\frac{\rho(x_0)}{(2\kappa)^j r}\right)}\|f\|_{\mathrm{BMO}^{\rm log}_{\rho,\theta}}+ \sum_{i=0}^{j-1} \frac{\mu((2\kappa)^{i+1}B)}{\mu((2\kappa)^i B)} \frac{\left(1+ \frac{(2\kappa)^{i+1} r}{\rho(x_0)}\right)^{(k_0+1)\theta}}{\log\left(e +\frac{\rho(x_0)}{(2\kappa)^{i+1} r}\right)}\|f\|_{\mathrm{BMO}^{\rm log}_{\rho,\theta}}\\
			&\hskip0.5cm\lesssim (j+1) \frac{\left(1+ \frac{(2\kappa)^j r}{\rho(x_0)}\right)^{(k_0+1)\theta}}{\log\left(e + \frac{\rho(x_0)}{(2\kappa)^j r}\right)}\|f\|_{\mathrm{BMO}^{\rm log}_{\rho,\theta}},
		\end{align*}
		which completes our proof.

		It suffices to  prove \eqref{09:24, 07/12/2023}. To do this, define $h:\X\to [0,1]$ by
		\begin{equation*}
			h(x)=
			\begin{cases}
				1, & x\in B_1,\\
				\frac{2 r_1- d(x,x_1)}{r_1},  & x\in 2 B_1\setminus B_1,\\
				0, & x\notin 2 B_1.
			\end{cases}
		\end{equation*}
		It follows from \cite[Lemma 3.2]{Na} that
		\begin{equation}\label{09:07, 07/12/2023}
			|h(x)- h(y)|\lesssim \left(\frac{d(x,y)}{r_1}\right)^{\alpha_0}
		\end{equation}
		for all $x,y\in\X$,	where $\alpha_0$ is the geometric constant in \eqref{Macias-Segovia}. Let $\widetilde f:= f- f_{2 B_1}$. Then,  \cite[Theorem 3.1]{Na} yields
		\begin{align*}
			\left(\frac{1}{\mu(B_1)}\int_{B_1} |f(x)- f_{B_1}|^p d\mu(x)\right)^{1/p} &= \left(\frac{1}{\mu(B_1)}\int_{B_1} |h(x)\widetilde f(x)- (h\widetilde f)_{B_1}|^p d\mu(x)\right)^{1/p}\\
			& \lesssim \|h\widetilde f\|_{\mathrm{BMO}}.
		\end{align*}
		Thus, \eqref{09:24, 07/12/2023} reduces to proving that
		$$\|h\widetilde f\|_{\mathrm{BMO}}\lesssim \frac{\left(1+ \frac{r_1}{\rho(x_1)}\right)^{(k_0+1)\theta}}{\log\left(e + \frac{\rho(x_1)}{r_1}\right)}\|f\|_{\mathrm{BMO}^{\rm log}_{\rho,\theta}}.$$
		Namely, 
		\begin{equation}\label{09:43, 07/12/2023}
			\mathrm{MO}(h\widetilde f, B_2) \lesssim \frac{\left(1+ \frac{r_1}{\rho(x_1)}\right)^{(k_0+1)\theta}}{\log\left(e + \frac{\rho(x_1)}{r_1}\right)}\|f\|_{\mathrm{BMO}^{\rm log}_{\rho,\theta}}\quad\text{for all balls } B_2= B(x_2,r_2).
		\end{equation}
		
		Since $h(x)=0$ for all $x\notin 2 B_1$, to prove \eqref{09:43, 07/12/2023} it suffices to consider the case $B_2\cap (2 B_1)\ne\emptyset$. To this end, we consider the following two cases:
		
		{\sl Case 1:} $r_2> r_1$. Then $B_2\cap (2 B_1)\ne\emptyset$ implies that $2 B_1\subset 5\kappa^2 B_2$. Therefore, using \eqref{09:07, 07/12/2023} and the fact that $h(x)=0$ for all $x\notin 2 B_1$, we obtain
		\begin{align*}
			\mathrm{MO}(h\widetilde f, B_2) &\leq  2 \frac{1}{\mu(B_2)}\int_{B_2} |h(x) \widetilde f(x)|d\mu(x)\\
			&\leq 2 \frac{\mu(5\kappa^2 B_2)}{\mu(B_2)} \frac{1}{\mu(5\kappa^2 B_2)}\int_{5\kappa^2 B_2} |h(y) \widetilde f(y)|d\mu(y)\\
			&\lesssim  \frac{1}{\mu(2 B_1)}\int_{2 B_1}|f(x)- f_{2 B_1}|d\mu(x)\\
			&\lesssim \frac{\left(1+ \frac{2r_1}{\rho(x_1)}\right)^{\theta}}{\log\left(e +\frac{\rho(x_1)}{2 r_1}\right)}\|f\|_{\mathrm{BMO}^{\rm log}_{\rho,\theta}} \lesssim \frac{\left(1+ \frac{r_1}{\rho(x_1)}\right)^{(k_0+1)\theta}}{\log\left(e +\frac{\rho(x_1)}{ r_1}\right)}\|f\|_{\mathrm{BMO}^{\rm log}_{\rho,\theta}}.
		\end{align*}
		
		{\sl Case 2:} $r_1 \geq r_2$. Then, since $B(x_2,r_2)\cap B(x_1,2 r_1)\ne \emptyset$ and by \eqref{admissible function}, we get		
		\begin{equation}\label{10:31, 07/12/2023}
			B(x_1,2 r_1)\subset B(x_2, 5\kappa^2 r_1)\subset B(x_1, 10\kappa^3 r_1)
		\end{equation}
		and
		$$\frac{r_2}{\rho(x_2)}\leq \frac{r_1}{\rho(x_2)}\lesssim \frac{r_1}{\rho(x_1)}\left(1+ \frac{d(x_2,x_1)}{\rho(x_1)}\right)^{k_0}\lesssim \left(1+ \frac{r_1}{\rho(x_1)}\right)^{k_0+1}.$$
		Consequently,
		\begin{align}\label{10:32, 07/12/2023}
			\mathrm{MO}(f,B(x_2,r_2)) \leq \frac{\left(1+ \frac{r_2}{\rho(x_2)}\right)^{\theta}}{\log \left(e+ \frac{\rho(x_2)}{r_2}\right)}\|f\|_{\mathrm{BMO}^{\rm log}_{\rho,\theta}}
			&\lesssim \frac{\left(1+ \left(1+ \frac{r_1}{\rho(x_1)}\right)^{k_0+1}\right)^{\theta}}{\log\left(e +\frac{\rho(x_1)}{r_1}\left(1+\frac{r_1}{\rho(x_1)}\right)^{-k_0}\right)}\|f\|_{\mathrm{BMO}^{\rm log}_{\rho,\theta}}\nonumber\\
			&\lesssim \frac{\left(1+ \frac{r_1}{\rho(x_1)}\right)^{(k_0+1)\theta}}{\log\left(e +\frac{\rho(x_1)}{r_1}\right)}\|f\|_{\mathrm{BMO}^{\rm log}_{\rho,\theta}},
		\end{align}
		and
		\begin{align}\label{10:33, 07/12/2023}
			\mathrm{MO}(f, B(x_2,5\kappa^2 r_1)) \leq \frac{\left(1+ \frac{5\kappa^2 r_1}{\rho(x_2)}\right)^{\theta}}{\log \left(e+ \frac{\rho(x_2)}{5\kappa^2 r_1}\right)}\|f\|_{\mathrm{BMO}^{\rm log}_{\rho,\theta}} &\lesssim \frac{\left(1+ \left(1+ \frac{r_1}{\rho(x_1)}\right)^{k_0+1}\right)^{\theta}}{\log\left(e +\frac{\rho(x_1)}{r_1}\left(1+\frac{r_1}{\rho(x_1)}\right)^{-k_0}\right)}\|f\|_{\mathrm{BMO}^{\rm log}_{\rho,\theta}}\nonumber\\
			& \lesssim \frac{\left(1+ \frac{r_1}{\rho(x_1)}\right)^{(k_0+1)\theta}}{\log\left(e +\frac{\rho(x_1)}{r_1}\right)}\|f\|_{\mathrm{BMO}^{\rm log}_{\rho,\theta}}.
		\end{align}
		Thus, since $r_1\geq r_2$, a standard argument yields
		\begin{equation}\label{11:16, 07/12/2023}
			|f_{B(x_2,r_2)}- f_{B(x_2, 5\kappa^2 r_1)}|\lesssim \log\left(e+ \frac{r_1}{r_2}\right) \frac{\left(1+ \frac{r_1}{\rho(x_1)}\right)^{(k_0+1)\theta}}{\log\left(e +\frac{\rho(x_1)}{r_1}\right)}\|f\|_{\mathrm{BMO}^{\rm log}_{\rho,\theta}}.
		\end{equation}
		
		Finally, combining \eqref{09:07, 07/12/2023}  and \eqref{10:31, 07/12/2023}--\eqref{11:16, 07/12/2023}, we obtain that
		\begin{align*}\label{log-generalized BHS 3}
			\mathrm{MO}(h \widetilde f, B_2)	&\leq 2 \frac{1}{\mu(B_2)}\int_{B_2} |h(x) \widetilde f(x)- h_{B_2} \widetilde f_{B_2}|d\mu(x) \\
			&\leq 2 \frac{1}{\mu(B_2)}\int_{B_2} |h(x)(\widetilde f(x)- \widetilde f_{B_2})| d\mu(x)+\\
			&\hskip1cm + 2 |\widetilde f_{B_2}| \frac{1}{\mu(B_2)^2} \int_{B_2} \int_{B_2} |h(y)- h(x)|d\mu(x)d\mu(y)\\
			& \lesssim \frac{1}{\mu(B_2)}\int_{B_2} |f(x)- f_{B_2}|d\mu(x) +  \left(\frac{r_2}{r_1}\right)^{\alpha_0} |f_{B_2} - f_{2 B_1}|\\
			&\lesssim \mathrm{MO}(f,B_2) +   \left(\frac{r_2}{r_1}\right)^{\alpha_0} |f_{B(x_2,r_2)} - f_{B(x_2,5\kappa^2 r_1)}| +\\
			&\hskip1cm  + \left(\frac{r_2}{r_1}\right)^{\alpha_0} |f_{B(x_2,5\kappa^2 r_1)}- f_{B(x_1,2 r_1)}|\\
			&\lesssim \left(1+ \left(\frac{r_2}{r_1}\right)^{\alpha_0} \log\left(e+ \frac{r_1}{r_2}\right)\right) \frac{\left(1+ \frac{r_1}{\rho(x_1)}\right)^{(k_0+1)\theta}}{\log\left(e +\frac{\rho(x_1)}{r_1}\right)}\|f\|_{\mathrm{BMO}^{\rm log}_{\rho,\theta}} + \\
			&\hskip1cm + \frac{\mu(B(x_2, 5\kappa^2 r_1))}{\mu(B(x_1,10\kappa^3 r_1))}\frac{\mu(B(x_1,10\kappa^3 r_1))}{\mu(B(x_1,2 r_1))} \mathrm{MO}(f, B(x_2,5\kappa^2 r_1))\\
			&\lesssim \frac{\left(1+ \frac{r_1}{\rho(x_1)}\right)^{(k_0+1)\theta}}{\log\left(e +\frac{\rho(x_1)}{r_1}\right)}\|f\|_{\mathrm{BMO}^{\rm log}_{\rho,\theta}},
		\end{align*}
		where in the last inequality we used the estimate $$\left(\frac{r_2}{r_1}\right)^{\alpha_0}\log\left(e+ \frac{r_1}{r_2}\right)\leq \sup_{0<t\leq 1}t^{\alpha_0}\log\left(e+ \frac{1}{t}\right)\lesssim 1. 
		$$
		This proves \eqref{09:43, 07/12/2023}, and thus completes the proof of Lemma \ref{14:50, 06/10/2023}.
	\end{proof}

	Since the proof is similar to that of Lemma \ref{14:50, 06/10/2023} but uses an easier argument, we obtain the following result, whose proof is omitted.
	
	\begin{lemma}\label{14:51, 06/10/2023}
		Let $p\in [1,\infty)$, $\theta\in [0,\infty)$, and let $k_0>i(\rho)$ be such that \eqref{admissible function} holds. Then, there exists a constant $C>0$ such that
		$$\left(\frac{1}{\mu((2\kappa)^j B)}\int_{(2\kappa)^j B} |f(x)- f_{B}|^p d\mu(x)\right)^{1/p}\leq C (j+1) \left(1+ \frac{(2\kappa)^j r}{\rho(x_0)}\right)^{(k_0+1)\theta}\|f\|_{\mathrm{BMO}_{\rho,\theta}}$$
		for all $f\in \mathrm{BMO}_{\rho,\theta}(\X)$, all balls $B= B(x_0,r)$, and all $j\in \mathbb Z^+_0$. 
	\end{lemma}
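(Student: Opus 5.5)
We follow the two-step structure of the proof of Lemma \ref{14:50, 06/10/2023}, dropping the logarithmic factor throughout. The first step is the single-ball estimate
\begin{equation*}
\left(\frac{1}{\mu(B_1)}\int_{B_1} |f(x)- f_{B_1}|^p d\mu(x)\right)^{1/p}\lesssim \left(1+ \frac{r_1}{\rho(x_1)}\right)^{(k_0+1)\theta}\|f\|_{\mathrm{BMO}_{\rho,\theta}}
\end{equation*}
for every ball $B_1=B(x_1,r_1)$. The second step is a telescoping argument along the chain $B\subset 2\kappa B\subset\cdots\subset(2\kappa)^jB$. Write $f-f_B=(f-f_{(2\kappa)^jB})+\sum_{i=0}^{j-1}(f_{(2\kappa)^{i+1}B}-f_{(2\kappa)^iB})$. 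Each difference of averages is bounded by
\[
\frac{\mu((2\kappa)^{i+1}B)}{\mu((2\kappa)^iB)}\,\mathrm{MO}(f,(2\kappa)^{i+1}B)\lesssim \Bigl(1+\tfrac{(2\kappa)^{i+1}r}{\rho(x_0)}\Bigr)^{\theta}\|f\|_{\mathrm{BMO}_{\rho,\theta}},
\]
using doubling and the definition of $\mathrm{BMO}_{\rho,\theta}$. Each of these factors is at most $\bigl(1+\tfrac{(2\kappa)^{j}r}{\rho(x_0)}\bigr)^{(k_0+1)\theta}$. Together with the first step applied to $B_1=(2\kappa)^jB$, summing the $j$ differences gives the factor $(j+1)$ and hence the lemma.

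For the single-ball estimate we take $h$ and $\widetilde f=f-f_{2B_1}$ exactly as before. By \cite[Theorem 3.1]{Na} (John--Nirenberg), the $L^p$ oscillation of $f$ on $B_1$ is controlled by $\|h\widetilde f\|_{\mathrm{BMO}}$, so it suffices to bound $\mathrm{MO}(h\widetilde f,B_2)$ uniformly over balls $B_2=B(x_2,r_2)$ meeting $2B_1$.

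\emph{Case $r_2>r_1$.} We have $2B_1\subset 5\kappa^2B_2$, so
\[
\mathrm{MO}(h\widetilde f,B_2)\lesssim \mathrm{MO}(f,2B_1)\lesssim\Bigl(1+\tfrac{r_1}{\rho(x_1)}\Bigr)^{\theta}\|f\|_{\mathrm{BMO}_{\rho,\theta}}.
\]

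\emph{Case $r_2\le r_1$.} Admissibility gives
\[
\frac{r_2}{\rho(x_2)}\le\frac{5\kappa^2r_1}{\rho(x_2)}\lesssim\Bigl(1+\tfrac{r_1}{\rho(x_1)}\Bigr)^{k_0+1}.
\]
Hence both $\mathrm{MO}(f,B_2)$ and $\mathrm{MO}(f,B(x_2,5\kappa^2r_1))$ are $\lesssim (1+r_1/\rho(x_1))^{(k_0+1)\theta}\|f\|_{\mathrm{BMO}_{\rho,\theta}}$. We then split
\[
\mathrm{MO}(h\widetilde f,B_2)\lesssim \mathrm{MO}(f,B_2)+(r_2/r_1)^{\alpha_0}|f_{B_2}-f_{2B_1}|,
\]
using the H\"older bound \eqref{09:07, 07/12/2023} on $h$. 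The difference $|f_{B_2}-f_{2B_1}|$ is handled through the intermediate ball $B(x_2,5\kappa^2r_1)$. Dyadic telescoping from $B_2$ up to $B(x_2,5\kappa^2r_1)$ costs a factor $\log(e+r_1/r_2)$. In this step we must check that every intermediate ball $B(x_2,t)$ with $r_2\le t\le 5\kappa^2r_1$ still satisfies $t/\rho(x_2)\lesssim(1+r_1/\rho(x_1))^{k_0+1}$; this holds by the same admissibility bound. The step from $B(x_2,5\kappa^2 r_1)$ to $2B_1$ is controlled by doubling, exactly as in the previous proof.

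The only step needing care is absorbing the logarithmic loss, which is done by $\sup_{0<t\le1}t^{\alpha_0}\log(e+1/t)<\infty$. This is identical to the argument for Lemma \ref{14:50, 06/10/2023}, and is in fact simpler since no logarithmic weight has to be tracked in the denominators.
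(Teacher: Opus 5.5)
Your proposal is correct and matches the paper's intended argument. The paper omits this proof, saying only that it follows the proof of Lemma \ref{14:50, 06/10/2023} with easier estimates. You carry out exactly that argument with the logarithmic weights removed: the cutoff $h$, John--Nirenberg via \cite[Theorem 3.1]{Na}, the two cases $r_2>r_1$ and $r_2\le r_1$ (admissibility plus absorbing the $\log(e+r_1/r_2)$ loss into $(r_2/r_1)^{\alpha_0}$), and the final telescoping along $(2\kappa)^iB$.
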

	
	\begin{lemma}\label{17:53, 11/10/2023}
		Let $\theta,\eta\in [0,\infty)$, $s\in [1,\infty)$, and $q\in (s,\infty]$. Then, there exists a constant $C>0$ such that for all $(H^1_\rho,q)$-atoms $\a$ related to the balls $B$, we have
		\begin{enumerate}[\rm (i)]
			\item\label{14:01, 12/10/2023} for all $f\in \mathrm{BMO}_{\rho,\theta}(\X)$, 
			$$\|\a(f-f_B)\|_{L^s}\leq C\mu(B)^{\frac{1}{s}-1}\|f\|_{\mathrm{BMO}_{\rho,\theta}};$$
			
			\item\label{14:00, 12/10/2023} for all $f\in \mathrm{BMO}_{\rho,\theta}(\X)$ and all $g\in \mathrm{BMO}_{\rho,\eta}(\X)$, 
			$$\|\a(f-f_B)(g-g_B)\|_{L^1}\leq C \|f\|_{\mathrm{BMO}_{\rho,\theta}}\|g\|_{\mathrm{BMO}_{\rho,\eta}}.$$
		\end{enumerate}
	\end{lemma}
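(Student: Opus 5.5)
Both parts reduce to Hölder's inequality on the ball $B=B(x_0,r)$ combined with the case $j=0$ of Lemma \ref{14:51, 06/10/2023}. The point that needs care is the support radius: by Definition \ref{def:Hrho-atom} we have $r<\rho(x_0)$. Hence the factor $\bigl(1+\frac{r}{\rho(x_0)}\bigr)^{(k_0+1)\theta}$ appearing in Lemma \ref{14:51, 06/10/2023} is at most $2^{(k_0+1)\theta}$, a constant. This is exactly why no growth in $\theta$ or $\eta$ survives in the final bounds.

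For \eqref{14:01, 12/10/2023}, since $\operatorname{supp}\a\subset B$ and $s<q$, we choose $p\in[1,\infty)$ with $\frac1s=\frac1q+\frac1p$. When $q=\infty$ we take $p=s$. Hölder's inequality gives
\[
\|\a(f-f_B)\|_{L^s}\le \|\a\|_{L^q}\,\|(f-f_B)\chi_B\|_{L^p}\le \mu(B)^{\frac1q-1}\mu(B)^{\frac1p}\Bigl(\frac{1}{\mu(B)}\int_B|f-f_B|^p\,d\mu\Bigr)^{1/p}.
\]
By Lemma \ref{14:51, 06/10/2023} with $j=0$ and the remark on $r<\rho(x_0)$, the last average is $\lesssim \|f\|_{\mathrm{BMO}_{\rho,\theta}}$. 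The powers of $\mu(B)$ combine to $\mu(B)^{\frac1q+\frac1p-1}=\mu(B)^{\frac1s-1}$, which is the claimed bound.

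For \eqref{14:00, 12/10/2023}, we pick exponents $p_1,p_2\in[1,\infty)$ with $\frac1q+\frac1{p_1}+\frac1{p_2}=1$. This is possible because $q>s\ge1$, so $\frac{1}{q'}=1-\frac1q>0$; for example, take $p_1=p_2=2q'$, or $p_1=p_2=2$ if $q=\infty$. The three-function Hölder inequality on $B$ gives
\[
\|\a(f-f_B)(g-g_B)\|_{L^1}\le \mu(B)^{\frac1q-1}\,\mu(B)^{\frac1{p_1}+\frac1{p_2}}\Bigl(\frac{1}{\mu(B)}\int_B|f-f_B|^{p_1}\Bigr)^{\frac1{p_1}}\Bigl(\frac{1}{\mu(B)}\int_B|g-g_B|^{p_2}\Bigr)^{\frac1{p_2}}.
\]
The powers of $\mu(B)$ cancel exactly. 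Applying Lemma \ref{14:51, 06/10/2023} with $j=0$ to $f$ with parameter $\theta$, and separately to $g$ with parameter $\eta$, bounds the product by $C\|f\|_{\mathrm{BMO}_{\rho,\theta}}\|g\|_{\mathrm{BMO}_{\rho,\eta}}$. Again $r<\rho(x_0)$ makes both admissibility factors constants.

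The only thing to watch is that the constants coming from Lemma \ref{14:51, 06/10/2023} depend only on $p$, $\theta$, $\eta$, and the fixed $k_0$. They do not depend on the atom, since the bounding factor $2^{(k_0+1)\theta}$ is uniform over all atoms. No cancellation of $\a$ is used in either part, so the estimates hold for every $(H^1_\rho,q)$-atom, whether or not the moment condition (c) applies.
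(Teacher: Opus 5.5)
Your proof is correct and follows the paper's argument essentially verbatim: Hölder's inequality on $B$ (with $\frac1s=\frac1q+\frac1p$ in (i), and exponents $q,2q',2q'$ in (ii)), combined with Lemma \ref{14:51, 06/10/2023} at $j=0$. The bound $r<\rho(x_0)$ keeps the admissibility factor uniformly bounded. Your explicit remark about the factor $2^{(k_0+1)\theta}$ just spells out what the paper absorbs into $\lesssim$.
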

	
	\begin{proof}
		Since $\a$ is an $(H^1_\rho,q)$-atom related to the ball $B=B(x_0,r)$, we have
		\begin{equation}\label{13:45, 12/10/2023}
			\text{supp\,$\a\subset B$\quad and \quad $0<r<\rho(x_0)$}.
		\end{equation}	
		
		\eqref{14:01, 12/10/2023} Let $p\in [s,\infty)$ be such that $\frac{1}{s}=\frac{1}{q}+\frac{1}{p}$. Then, by the   H\"older inequality, Lemma \ref{14:51, 06/10/2023} and \eqref{13:45, 12/10/2023}, 
		\begin{align*}
			\|\a(f-f_B)\|_{L^s}&=\|\a(f-f_B)\|_{L^s(B)}\\
			&\leq \|\a\|_{L^q(B)}\|f-f_B\|_{L^p(B)}\\
			&\lesssim \mu(B)^{\frac{1}{q}-1}\mu(B)^{\frac{1}{p}}\|f\|_{\mathrm{BMO}_{\rho,\theta}}= \mu(B)^{\frac{1}{s}-1}\|f\|_{\mathrm{BMO}_{\rho,\theta}}.
		\end{align*}
		
		\eqref{14:00, 12/10/2023} It follows, from the  H\"older inequality, Lemma \ref{14:51, 06/10/2023} and \eqref{13:45, 12/10/2023}, that
		\begin{align*}
			\|\a(f-f_B)(g-g_B)\|_{L^1}&=\|\a(f-f_B)(g-g_B)\|_{L^1(B)}\\
			&\leq \|\a\|_{L^q(B)}\|f-f_B\|_{L^{2q'}(B)}\|g-g_B\|_{L^{2q'}(B)}\\
			&\lesssim \mu(B)^{\frac{1}{q}-1} \mu(B)^{\frac{1}{2q'}}\|f\|_{\mathrm{BMO}_{\rho,\theta}} \mu(B)^{\frac{1}{2q'}}\|g\|_{\mathrm{BMO}_{\rho,\eta}}\\
			&\lesssim \|f\|_{\mathrm{BMO}_{\rho,\theta}}\|g\|_{\mathrm{BMO}_{\rho,\eta}}.
		\end{align*}
		
	\end{proof}

	For any $x_0\in\X$, we define $g_{x_0}:\X\to \R$ by
	\begin{equation}\label{17:18, 11/10/2023}
		g_{x_0}(x)=\max\left\{0, 1+ \log\frac{\rho(x_0)}{d(x,x_0)}\right\}.
	\end{equation}
	Then, we have the following (see \cite[Lemma 3.5]{Ky25}).
	
	\begin{lemma}\label{17:37, 11/10/2023}
		There exists a constant $C>0$ such that 
		$$\|g_{x_0}\|_{\mathrm{BMO}_\rho}\leq C\quad\text{for all } x_0\in\X.$$
		Moreover, if $0<r<\rho(x_0)$ then $\log\left(e+\frac{\rho(x_0)}{r}\right)\leq 2\, g_{x_0}(x)$ for all $x\in B(x_0,r)$.
	\end{lemma}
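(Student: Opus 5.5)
The statement to prove is Lemma \ref{17:37, 11/10/2023}. Its second assertion is elementary, so we dispose of it first. Fix $0<r<\rho(x_0)$ and $x\in B(x_0,r)$. Then $d(x,x_0)<r$, so
\[
g_{x_0}(x)\ge 1+\log\frac{\rho(x_0)}{r}=\log\frac{e\rho(x_0)}{r}.
\]
Set $t=\rho(x_0)/r>1$. We need $\log(e+t)\le 2\log(et)=\log(e^2t^2)$, which is equivalent to $e+t\le e^2t^2$. This holds for $t\ge1$, since $e^2t^2\ge e^2t\ge e+t$.

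For the $\mathrm{BMO}_\rho$ bound we must control two quantities: $\|g_{x_0}\|_{\mathrm{BMO}}$, and $|g_{x_0}|_B$ for balls $B=B(y,r)$ with $r\ge\rho(y)$.

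For the BMO part, write $g_{x_0}=\max\{0,\,1+\log\rho(x_0)-\log d(\cdot,x_0)\}$. Truncation $u\mapsto\max\{0,u\}$ is $1$-Lipschitz and does not increase BMO norms, and additive constants are irrelevant. So it suffices to know that $\log d(\cdot,x_0)\in\mathrm{BMO}(\X)$ with a norm independent of $x_0$. This is a standard fact on spaces of homogeneous type, and we would prove it directly. Take a ball $B=B(y,s)$ and consider two cases.
\begin{enumerate}[(1)]
\item If $d(y,x_0)\ge 2\kappa s$, then on $B$ the quantity $d(x,x_0)$ is comparable to $d(y,x_0)$ up to constants depending on $\kappa$. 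Hence the oscillation of the logarithm on $B$ is $O(1)$.
\item If $d(y,x_0)<2\kappa s$, then $B\subset B(x_0,Cs)$. Compare with the constant $\log s$ and estimate
\[
\frac{1}{\mu(B)}\int_B\Bigl|\log\frac{d(x,x_0)}{s}\Bigr|\,d\mu(x).
\]
The part where $d(x,x_0)\ge s$ contributes $O(1)$. For the part where $d(x,x_0)<s$, split $B(x_0,Cs)$ dyadically into annuli where $d(x,x_0)\sim 2^{-k}s$. The $k$-th annulus contributes $k\,\mu(B(x_0,2^{-k}s))$. Since $\mu(B(x_0,2^{-k}s))\le\mu(B(x_0,2^{-k}Cs))$, the reverse doubling property gives geometric decay in $k$, provided $\X$ has no atoms and is unbounded or the balls are controlled. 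If reverse doubling is unavailable, we would instead use that doubling yields $\mu(B(x_0,2^{-k}s))\le\mu(B(x_0,s))$ together with the John--Nirenberg-type argument from \cite{Ky25}.
\end{enumerate}
This step, obtaining a uniform bound on the logarithmic singularity in a general homogeneous space, is the main obstacle. If it becomes delicate, we would simply cite \cite[Lemma 3.5]{Ky25}, where exactly this is proved.

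For the averages, take $B=B(y,r)$ with $r\ge\rho(y)$. Note that $0\le g_{x_0}\le 1+\log\frac{\rho(x_0)}{d(\cdot,x_0)}$, and $g_{x_0}$ is supported in $B(x_0,e\rho(x_0))$.
\begin{enumerate}[(1)]
\item If $B$ does not meet $B(x_0,e\rho(x_0))$, the average is $0$.
\item Otherwise $d(y,x_0)\lesssim r+\rho(x_0)$. By Remark \ref{20:49, 02/12/2023}, $\rho(x_0)\lesssim\rho(y)(1+d(x_0,y)/\rho(y))^{k_0/(k_0+1)}$. Combined with $r\ge\rho(y)$, this gives $\rho(x_0)\lesssim r$.
\end{enumerate}
In case (2), $|g_{x_0}|_B\le\mu(B)^{-1}\int_{B(x_0,e\rho(x_0))}g_{x_0}\,d\mu$. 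Using the same dyadic decomposition, this integral is at most $C\mu(B(x_0,\rho(x_0)))\sum_k(k+1)\,c^{-k}$ in the presence of decay. In any case it is $\lesssim\mu(B(x_0,\rho(x_0)))$, which is $\lesssim\mu(B)$ by doubling because $B(x_0,\rho(x_0))\subset CB$. This gives a uniform bound and completes the plan.
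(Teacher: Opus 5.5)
\paragraph{Where the proof breaks.} Your proof of the pointwise inequality $\log(e+\rho(x_0)/r)\le 2g_{x_0}$ on $B(x_0,r)$ is correct. So are the reduction via the $1$-Lipschitz truncation and the case $d(y,x_0)\ge 2\kappa s$.

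The gap is the step you flag yourself, and the way around it that you propose does not work. Both the BMO estimate in your case (2) and the bound on averages (where you assert ``in any case it is $\lesssim\mu(B(x_0,\rho(x_0)))$'') rest on a uniform decay estimate of the form
\[
\sum_{k\ge0}(k+1)\,\mu\bigl(B(x_0,2^{-k}s)\bigr)\le C\,\mu\bigl(B(x_0,s)\bigr).
\]
This is a reverse-doubling property, and it does not follow from doubling. Your fallback cannot supply it:
\begin{itemize}
\item the monotonicity bound $\mu(B(x_0,2^{-k}s))\le\mu(B(x_0,s))$ only gives the divergent series $\sum_k k$;
\item the John--Nirenberg inequality runs the other way: it turns a BMO bound into exponential integrability, and cannot produce the BMO bound.
\end{itemize}
Also, ``no atoms and unbounded'' is not the right proviso.

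\paragraph{Why doubling alone cannot suffice.} The statement genuinely needs more than doubling, so no cleverer argument from doubling alone will close the gap.
\begin{itemize}
\item If $\mu(\{x_0\})>0$, for instance on $\mathbb Z$ with counting measure, then $g_{x_0}=\infty$ on a set of positive measure.
\item Without atoms, take $\X=\{0\}\cup\bigcup_{j\ge1}[b_j,2b_j]\subset\mathbb R$ with the Euclidean distance. Let $b_1=1/8$, $b_{j+1}\le b_j/8$, $b_j/b_{j+1}\to\infty$, and $d\mu=2^{-j}b_j^{-1}\,dx$ on $[b_j,2b_j]$.
\end{itemize}
In the second example, $\mu$ is doubling and non-atomic, and \eqref{Macias-Segovia} holds with $\alpha_0=1$. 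Appending $[1,\infty)$ with Lebesgue measure makes the space unbounded without changing anything near $0$. Take $\rho\equiv1$. The ball $B(0,3b_j)$ carries mass $2^{-j}$ on $[b_j,2b_j]$, where $g_0\le 1+\log(1/b_j)$. It carries the same mass $2^{-j}$ on $(0,2b_{j+1}]$, where $g_0\ge 1+\log(1/(2b_{j+1}))$. Hence $\mathrm{MO}(g_0,B(0,3b_j))\ge\frac14\log\frac{b_j}{2b_{j+1}}\to\infty$.

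\paragraph{What you need to assume.} You must impose a reverse doubling condition: $\mu(B(x,r))\le C\gamma^k\mu(B(x,2^kr))$ with $\gamma<1$ for $2^kr$ below a fixed multiple of $\mathrm{diam}\,\X$. This holds in RD-spaces, the setting of \cite{YZ} and of all the examples treated in the paper. Under it, your dyadic-annulus argument closes both the BMO part and the bound on averages. For $s$ beyond the diameter, compare with $\log\min\{s,\mathrm{diam}\,\X\}$ instead of $\log s$.

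\paragraph{Comparison with the paper.} The paper gives no proof of this lemma; it simply quotes \cite[Lemma 3.5]{Ky25}. Your last-resort citation therefore coincides with the paper's treatment. However, the self-contained argument you sketch is incomplete exactly at its core step, and the citation is only valid under the structural hypotheses of that source.
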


	\begin{lemma}\label{07:01, 12/10/2023}
		Let $q\in (1,\infty]$, $\theta\in [0,\infty)$, and $f\in \mathrm{BMO}_{\rho,\theta}(\X)$. Then, $f\in \mathrm{BMO}^{\log}_{\rho,\theta}(\X)$ if and only if $\a(f-f_{B})\in H^1_\rho(\X)$ for all $(H^1_\rho,q)$-atoms $\a$ related to the balls $B$. Moreover,
		$$\|f\|_{\mathrm{BMO}_{\rho,\theta}^{\log}}\simeq \|f\|_{\mathrm{BMO}_{\rho,\theta}}+ \sup_{\text{\rm $(H^1_\rho,q)$-atoms $\a$ related to $B$}} \|\a(f-f_{B})\|_{H^1_\rho}.$$		
	\end{lemma}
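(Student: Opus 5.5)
The plan is to prove the two directions separately, working with a fixed $(H^1_\rho,q)$-atom $\a$ supported in $B=B(x_0,r)$ with $0<r<\rho(x_0)$.

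\emph{Sufficiency.} Assume $f\in \mathrm{BMO}^{\log}_{\rho,\theta}(\X)$. I will show that a constant multiple of $\a(f-f_B)$ is an $(H^1_\rho,\tilde q)$-molecule (in fact an atom) of log-type for some $\tilde q\in(1,q)$, and then apply Lemma \ref{07:01, 14/10/2023} together with Remark \ref{11:50, 05/12/2023}\eqref{11:51, 05/12/2023}.
\begin{enumerate}[\rm (1)]
\item The support condition is clear, since $\operatorname{supp}\a(f-f_B)\subset B$.
\item For the size condition, apply Lemma \ref{17:53, 11/10/2023}\eqref{14:01, 12/10/2023} with $s=\tilde q<q$. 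This gives $\|\a(f-f_B)\|_{L^{\tilde q}}\lesssim \mu(B)^{1/\tilde q-1}\|f\|_{\mathrm{BMO}_{\rho,\theta}}$.
\item For the moment condition, use Hölder's inequality and Lemma \ref{14:50, 06/10/2023} with $j=0$ and $p=q'$. Since $r<\rho(x_0)$, the factor $(1+r/\rho(x_0))^{(k_0+1)\theta}$ is at most $2^{(k_0+1)\theta}$. Hence
$$\Big|\int \a(f-f_B)\,d\mu\Big|\le \|\a\|_{L^q}\|f-f_B\|_{L^{q'}(B)}\lesssim \frac{\|f\|_{\mathrm{BMO}^{\log}_{\rho,\theta}}}{\log(e+\rho(x_0)/r)}.$$
\end{enumerate}
Dividing by $C\|f\|_{\mathrm{BMO}^{\log}_{\rho,\theta}}$ therefore produces a log-type atom. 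This yields $\|\a(f-f_B)\|_{H^1_\rho}\lesssim \|f\|_{\mathrm{BMO}^{\log}_{\rho,\theta}}$, and the bound $\|f\|_{\mathrm{BMO}_{\rho,\theta}}\le \|f\|_{\mathrm{BMO}^{\log}_{\rho,\theta}}$ comes from Remark \ref{15:41, 31/12/2025}.

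\emph{Necessity.} Assume the supremum $S$ of $\|\a(f-f_B)\|_{H^1_\rho}$ over atoms is finite. Fix a ball $B=B(x_0,r)$ and bound $\log(e+\rho(x_0)/r)\,\mathrm{MO}(f,B)(1+r/\rho(x_0))^{-\theta}$.
\begin{enumerate}[\rm (1)]
\item If $r\ge \rho(x_0)/4$ (say), the log factor is bounded, and the quantity is controlled by $\|f\|_{\mathrm{BMO}_{\rho,\theta}}$.
\item If $r<\rho(x_0)/4$, use duality. Set $\a:=\frac{1}{2\mu(B)}(\sign(f-f_B)-c)\chi_B$, with $c$ chosen so that $\int\a=0$. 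Then $|c|\le1$, and $\a$ is an $(H^1_\rho,\infty)$-atom related to $B$.
\item Pair against $g_{x_0}$, which lies in $\mathrm{BMO}_\rho$ with uniform norm by Lemma \ref{17:37, 11/10/2023}. Since $\mathrm{BMO}_\rho=(H^1_\rho)^*$ (\cite[Theorem 2.1]{YYZ}),
$$\Big|\int \a(f-f_B)g_{x_0}\,d\mu\Big|\lesssim S.$$
\item On $B$ we have $g_{x_0}\ge \frac12\log(e+\rho(x_0)/r)$, again by Lemma \ref{17:37, 11/10/2023}. Expanding the integral gives
$$\int \a(f-f_B)g_{x_0}=\frac{1}{2\mu(B)}\int_B|f-f_B|\,g_{x_0}-\frac{c}{2\mu(B)}\int_B(f-f_B)g_{x_0}.$$
The first term is at least $\frac14\log(e+\rho(x_0)/r)\,\mathrm{MO}(f,B)$.
\end{enumerate}

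The main obstacle is the second term, because it is not automatically small. I will handle it by writing
$$\int_B(f-f_B)g_{x_0}=\int_B(f-f_B)\big(g_{x_0}-(g_{x_0})_B\big).$$
This is bounded by $\mu(B)\|f-f_B\|_{L^2(B)\text{-avg}}\,\|g_{x_0}\|_{\mathrm{BMO}}$. By the John--Nirenberg inequality, or by Lemma \ref{14:51, 06/10/2023} with $p=2$, that is $\lesssim \mu(B)\|f\|_{\mathrm{BMO}_{\rho,\theta}}$, using $r<\rho(x_0)$.

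Combining the two steps,
$$\log(e+\rho(x_0)/r)\,\mathrm{MO}(f,B)\lesssim S+\|f\|_{\mathrm{BMO}_{\rho,\theta}}.$$
Since $(1+r/\rho(x_0))^{\theta}\ge1$, this gives $\|f\|_{\mathrm{BMO}^{\log}_{\rho,\theta}}\lesssim \|f\|_{\mathrm{BMO}_{\rho,\theta}}+S$, which completes the equivalence. For $q<\infty$ the same test atom works, since $L^\infty$-atoms are also $(H^1_\rho,q)$-atoms up to normalization.
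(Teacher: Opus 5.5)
Your proposal is correct and follows essentially the same route as the paper. For sufficiency you show that $\a(f-f_B)$ is a constant multiple of an $(H^1_\rho,s)$-atom of log-type for some $s\in(1,q)$, via Lemma \ref{17:53, 11/10/2023}\eqref{14:01, 12/10/2023}, H\"older's inequality and Lemma \ref{14:50, 06/10/2023}; for necessity you test the sign atom $\frac{1}{2\mu(B)}(h-h_B)\chi_B$, $h=\sign(f-f_B)$, against $g_{x_0}$ through the duality $(H^1_\rho)^*=\mathrm{BMO}_\rho$. The only difference is cosmetic: you use the pointwise bound $g_{x_0}\geq \frac12\log(e+\rho(x_0)/r)$ on $B$ and remove the constant part of the atom with $\int_B(f-f_B)\,d\mu=0$ and Cauchy--Schwarz, whereas the paper factors out $(g_{x_0})_{B_0}$ and bounds $\int|\a_0|\,|f-f_{B_0}|\,|g_{x_0}-(g_{x_0})_{B_0}|\,d\mu$ by Lemma \ref{17:53, 11/10/2023}\eqref{14:00, 12/10/2023}.
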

	
	\begin{proof}
		Suppose that $f\in \mathrm{BMO}_{\rho,\theta}^{\log}(\X)$. Let $s:= \min\{2,\frac{q+1}{2}\}\in (1,q)$. Then, for any $(H^1_\rho,q)$-atom $\a$ related to the ball $B=B(x_0,r)$, Lemma \ref{17:53, 11/10/2023}\eqref{14:01, 12/10/2023} gives
		\begin{equation}\label{06:55, 12/10/2023}
			\|\a(f-f_{B})\|_{L^{s}}\lesssim \|f\|_{\mathrm{BMO}_{\rho,\theta}}\mu(B)^{\frac{1}{s}-1}\lesssim \|f\|_{\mathrm{BMO}_{\rho,\theta}^{\log}}\mu(B)^{\frac{1}{s}-1}.
		\end{equation}
		Moreover, by the H\"older inequality and Lemma \ref{14:50, 06/10/2023}, we obtain
		\begin{align*}
			\left|\int_{\X} \a(x)(f(x)-f_B)d\mu(x)\right|&\leq \|\a(f-f_B)\|_{L^1(B)}\\
			&\leq \|\a\|_{L^q(B)}\|f-f_B\|_{L^{q'}(B)}\\
			&\lesssim \mu(B)^{\frac{1}{q}-1} \mu(B)^{\frac{1}{q'}} \frac{1}{\log\left(e+\frac{\rho(x_0)}{r}\right)}\|f\|_{\mathrm{BMO}_{\rho,\theta}^{\log}}\\
			&\lesssim \|f\|_{\mathrm{BMO}_{\rho,\theta}^{\log}}\frac{1}{\log\left(e+\frac{\rho(x_0)}{r}\right)}.
		\end{align*}
		This, together with \eqref{06:55, 12/10/2023} and supp$\,\left(\a(f-f_B)\right)\subset B$, implies that $\a(f-f_B)$  is, up to a multiplicative constant bounded by
		$C\|f\|_{\mathrm{BMO}_{\rho,\theta}^{\log}}$, an $(H^1_\rho,s)$-atom of log-type related to the ball $B$. Consequently, by Remark \ref{11:50, 05/12/2023}\eqref{11:51, 05/12/2023} and Lemma \ref{07:01, 14/10/2023}, we have		
		$$\|\a(f-f_B)\|_{H^1_\rho}\lesssim \|f\|_{\mathrm{BMO}_{\rho,\theta}^{\log}}.$$
		Thus,
		$$\|f\|_{\mathrm{BMO}_{\rho,\theta}}+ \sup_{\text{$(H^1_\rho,q)$-atoms $\a$ related to $B$}} \|\a(f-f_{B})\|_{H^1_\rho}\lesssim \|f\|_{\mathrm{BMO}_{\rho,\theta}^{\log}}.$$
		
		Conversely, let $f\in \mathrm{BMO}_{\rho,\theta}(\X)$ such  that $\a(f-f_{B})\in H^1_\rho(\X)$ for all $(H^1_\rho,q)$-atoms $\a$ related to the balls $B$. It suffices to prove that
		\begin{equation}\label{12:10, 04/12/2023}
			\frac{\log\left(e+\frac{\rho(x_0)}{r}\right)}{\left(1+\frac{r}{\rho(x_0)}\right)^{\theta}} \mathrm{MO}(f, B_0)\lesssim \|f\|_{\mathrm{BMO}_{\rho,\theta}}+ \sup_{\text{$(H^1_\rho,q)$-atoms $\a$ related to $B$}} \|\a(f-f_{B})\|_{H^1_\rho}
		\end{equation}
		for all balls $B_0=B(x_0,r)$. 
		
		Indeed, the case of $r\geq \rho(x_0)$ is trivial since $\frac{\mathrm{MO}(f, B_0)}{\left(1+\frac{r}{\rho(x_0)}\right)^{\theta}}\leq \|f\|_{\mathrm{BMO}_{\rho,\theta}}$. When $0<r<\rho(x_0)$, let $g_{x_0}:\X\to \R$ be as in \eqref{17:18, 11/10/2023} and define $$\a_0=\frac{1}{2\mu(B_0)}(h-h_{B_0})\chi_{B_0}$$ 
		with $h=\sign (f-f_{B_0})$, then it is easy to see that $\a_0$ is an $(H^1_\rho,q)$-atom related to the ball $B_0$. Moreover, by Lemma \ref{17:37, 11/10/2023}, Lemma \ref{17:53, 11/10/2023}\eqref{14:00, 12/10/2023}, and \cite[Theorem 2.1]{YYZ}, we obtain 
		\begin{align*}
			\frac{\log\left(e+\frac{\rho(x_0)}{r}\right)}{\left(1+\frac{r}{\rho(x_0)}\right)^{\theta}} \mathrm{MO}(f, B_0)&=2 \frac{\log\left(e+\frac{\rho(x_0)}{r}\right)}{\left(1+\frac{r}{\rho(x_0)}\right)^{\theta}} \int_{\X} \a_0(x) (f(x)-f_{B_0}) d\mu(x)\\
			&\leq 4 (g_{x_0})_{B_0} \int_{\X} \a_0(x) (f(x)-f_{B_0}) d\mu(x)\\
			&\lesssim \int_{\X} |\a_0(x)| |f(x)-f_{B_0}| |g_{x_0}(x)- (g_{x_0})_{B_0}| d\mu(x) +\\
			&\hskip2cm +\left|\int_{\X} \a_0(x) (f(x)-f_{B_0}) g_{x_0}(x) d\mu(x)\right| \\
			&\lesssim \|f\|_{\mathrm{BMO}_{\rho,\theta}}\|g_{x_0}\|_{\mathrm{BMO}} + \|\a_0(f-f_{B_0})\|_{H^1_\rho} \|g_{x_0}\|_{\mathrm{BMO}_\rho}\\
			&\lesssim \|f\|_{\mathrm{BMO}_{\rho,\theta}} + \sup_{\text{$(H^1_\rho,q)$-atom $\a$ related to $B$}} \|\a(f-f_{B})\|_{H^1_\rho}.
		\end{align*}
		This proves \eqref{12:10, 04/12/2023}, and thus completes the proof of Lemma \ref{07:01, 12/10/2023}.
		
	\end{proof}

	\section{Proofs of the main results}\label{17:04, 30/12/2025}

	In this section, we present the proofs of the main results, Theorems \ref{thm-main thm 1 from H1 to weak L1}--\ref{thm-main thm 4 chracterize BMO log by Tj on Hardy space}. We begin by recalling the following result in \cite[Lemma 10.1]{AH}.

\begin{lemma}\label{lem:exp-ATI}
	There exist constants
	$C,\nu\in(0,\infty)$, $a\in(0,1]$, and $\Lambda, \sigma\in(0,1)$  such that, for any $k\in\mathbb{Z}$, the kernel of the operator $S_k$, still denoted by $S_k$ and viewed as a function on $\X\times \X$, satisfies the following properties:
	
	\begin{enumerate}[\rm (i)]

		\item
		For any $x,y\in \X$,
		\[
		|S_k(x,y)|
		\le C \,
		\frac{1}{\mu(B(x,\Lambda^k))\mu(B(y,\Lambda^k))}
		\exp\!\left[-\nu\left(\frac{d(x,y)}{\Lambda^k}\right)^a\right]=:E_k(x,y).
		\]
		
		\item
		For any $x,x',y\in \X$ with $d(x,x')\le \Lambda^k$,
		\[
		|S_k(x,y)-S_k(x',y)| + |S_k(y,x)-S_k(y,x')|
		\le C \left(\frac{d(x,x')}{\Lambda^k}\right)^{\sigma} E_k(x,y).
		\]
		
		
		\item
		For any $x,y\in \X$,
		\[
		\int_{\X} S_k(x,z)\,d\mu(z) = 1
		\quad \text{and} \quad
		\int_{\X} S_k(z,y)\,d\mu(z) = 1 .
		\]
	\end{enumerate}
\end{lemma}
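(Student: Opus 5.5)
This lemma is quoted from \cite[Lemma 10.1]{AH}, so in the paper it can simply be cited. If I had to prove it, I would follow the Auscher--Hytönen construction of an exponentially decaying approximation of the identity, built from their spline wavelet system and then symmetrised by Coifman's normalisation trick. The plan has three steps.

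\emph{Step 1: splines.} Fix the dyadic parameter $\Lambda\in(0,1)$ and a system of dyadic reference points $\{x^k_\alpha\}_\alpha$ at each scale $k$, with
\[
d(x^k_\alpha,x^k_\beta)\ge \Lambda^k \quad (\alpha\neq\beta), \qquad \min_\alpha d(x,x^k_\alpha)<C\Lambda^k \quad \text{for all } x\in\X .
\]
Using the random dyadic systems of Hytönen--Kairema and averaging characteristic functions over the random choices, construct splines $s^k_\alpha$ with the following properties:
\begin{enumerate}[\rm (a)]
\item $\sum_\alpha s^k_\alpha\equiv 1$;
\item $0\le s^k_\alpha(x)\lesssim \exp\bigl(-\nu(d(x,x^k_\alpha)/\Lambda^k)^a\bigr)$, with $s^k_\alpha\gtrsim 1$ near $x^k_\alpha$;
\item $s^k_\alpha$ is Hölder continuous of order $\sigma$ at scale $\Lambda^k$, with the same exponential decay.
\end{enumerate}
This is where the exponential (rather than polynomial) decay comes from: a point at distance $\gg\Lambda^k$ from $x^k_\alpha$ is separated from it by many independent random dyadic levels, so the probability of landing in the same cube decays exponentially. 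The Hölder estimate uses the smallness of dyadic boundaries together with \eqref{Macias-Segovia}.

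\emph{Step 2: a first averaging kernel.} Define
\[
P_k(x,y):=\sum_\alpha \frac{s^k_\alpha(x)\,s^k_\alpha(y)}{\mu(B(x^k_\alpha,\Lambda^k))}.
\]
Since only exponentially few terms are non-negligible, and doubling gives
\[
\mu(B(x^k_\alpha,\Lambda^k))\simeq \mu(B(x,\Lambda^k))\,\exp\bigl(C\,d(x,x^k_\alpha)/\Lambda^k\bigr)^{o(1)},
\]
which is absorbed by slightly decreasing $\nu$, we get the size bound $P_k\lesssim E_k$ and the Hölder bound of (ii), as well as a matching lower bound near the diagonal. Summing the splines, the marginals $m_k(x):=\int P_k(x,y)\,d\mu(y)$ satisfy $m_k\simeq 1$, and $m_k$ inherits $\sigma$-Hölder regularity at scale $\Lambda^k$.

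\emph{Step 3: Coifman normalisation.} Following Coifman's construction, let $M_k$ denote multiplication by $1/m_k$. Set $W_k:=(P_kM_kP_k\mathbf 1)^{-1}$ and
\[
S_k:=M_kP_kW_kP_kM_k .
\]
Direct computation gives $\int S_k(x,z)\,d\mu(z)=1$ for every $x$. Symmetry of $P_k$ makes $S_k$ symmetric, so the other marginal also equals $1$, which is (iii). Properties (i)--(ii) follow because composing two kernels bounded by $E_k$ produces again a kernel bounded by $E_k$ with a smaller $\nu$; this uses the quasi-triangle inequality in the form
\[
\Bigl(\tfrac{d(x,y)}{\Lambda^k}\Bigr)^a\lesssim \Bigl(\tfrac{d(x,z)}{\Lambda^k}\Bigr)^a+\Bigl(\tfrac{d(z,y)}{\Lambda^k}\Bigr)^a
\]
together with the fact that $\int E_k(x,z)\,\mu(B(z,\Lambda^k))\,d\mu(z)\lesssim 1$. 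Meanwhile the multiplications by $1/m_k$ and $W_k$, which are bounded above and below and Hölder at scale $\Lambda^k$, preserve the estimates.

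\emph{Main obstacle.} The hard part is Step 1: obtaining genuinely exponential decay together with Hölder regularity of the splines in a general quasi-metric space. I would lift this directly from the Auscher--Hytönen probabilistic estimate on the probability that two points lie in the same random dyadic cube, rather than reprove it. Steps 2 and 3 are routine kernel bookkeeping.
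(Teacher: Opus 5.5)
Citing \cite[Lemma 10.1]{AH} is exactly what the paper does. Your sketch, however, is not the Auscher--Hyt\"onen argument, and it misplaces the source of the exponential decay.

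In \cite{AH} every random cube satisfies $B(x^k_\alpha,c\Lambda^k)\subset Q^k_\alpha(\omega)\subset B(x^k_\alpha,C\Lambda^k)$. So the splines $s^k_\alpha(x)=\mathbb P(x\in\overline{Q^k_\alpha(\omega)})$ vanish outside $B(x^k_\alpha,C\Lambda^k)$: a far-away point has probability zero, not an exponentially small probability, of lying in the cube. The randomisation is used only for H\"older continuity (thin boundary layers).

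The operators in \cite[Lemma 10.1]{AH} are the orthogonal projections onto the spline spaces. Their kernels are built from the inverse of the banded Gram matrix of the splines. The exponential off-diagonal decay of that inverse, converted into $\exp[-\nu(d(x,y)/\Lambda^k)^a]$ through the quasi-triangle inequality, is what gives (i). Property (iii) reflects that the projections reproduce the splines, hence $\sum_\alpha s^k_\alpha=1$, together with self-adjointness.

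Your route is a genuinely different one: a positive quasi-interpolant followed by Coifman's two-sided normalisation. It needs no Gram matrix and produces kernels supported in $\{d(x,y)\le C\Lambda^k\}$, which is stronger than (i). Consequently the ``main obstacle'' you name does not arise, and the remarks about exponentially many terms should be replaced by bounded overlap. Given \eqref{Macias-Segovia}, even the bumps $\phi(d(x,y)/\Lambda^k)$ would do in place of splines; this is the classical Coifman approximation of the identity. What \cite{AH} buys in return is extra structure: nested spaces, orthogonality, and $P_k-P_{k-1}$ forming an exponentially decaying approximation of the identity with a reproducing formula. The lemma itself asks only for (i)--(iii), which your family has. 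The extra structure matters only if downstream results, such as the maximal characterisation quoted from \cite{BDK18}, were established for the \cite{AH} operators specifically.

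Two steps must be repaired.

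First, the normalisation. Since $M_kP_k\mathbf 1=\mathbf 1$, your $W_k=(P_kM_kP_k\mathbf 1)^{-1}$ is just $1/m_k$, and then $S_k\mathbf 1=M_kP_kM_kP_k(1/m_k)\neq\mathbf 1$ in general. Coifman's choice is $W_k:=[P_k(1/m_k)]^{-1}$, a function $\simeq 1$ that is H\"older at scale $\Lambda^k$. With it, $W_kP_kM_k\mathbf 1=\mathbf 1$, hence $S_k\mathbf 1=M_kP_k\mathbf 1=\mathbf 1$, and symmetry gives the other marginal.

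Second, the bound (i) as printed has $\mu(B(x,\Lambda^k))\mu(B(y,\Lambda^k))$ in the denominator; this is a misprint for $\sqrt{\mu(B(x,\Lambda^k))\mu(B(y,\Lambda^k))}$. Integrating the printed bound in $z$ and using (iii) gives $1\lesssim\mu(B(x,\Lambda^k))^{-1}$ for all $k$, which is impossible as $k\to-\infty$ when $\mu(\X)=\infty$. Your claim $P_k\lesssim E_k$ and your estimate $\int E_k(x,z)\mu(B(z,\Lambda^k))\,d\mu(z)\lesssim 1$ are tuned to the printed $E_k$. For that $E_k$, the composition of two kernels bounded by $E_k$ is not bounded by $E_k$: an extra factor $\mu(B(x,\Lambda^k))^{-1}$ appears.

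With the correct normalisation (by doubling, equivalent to $\mu(B(x,\Lambda^k))^{-1}\exp[-\nu'(d(x,y)/\Lambda^k)^a]$), compact support gives $|S_k(x,y)|\lesssim\mu(B(x,\Lambda^k))^{-1}\chi_{\{d(x,y)\le C\Lambda^k\}}$ directly. Property (ii) then follows from the H\"older continuity of the $s^k_\alpha$, of $1/m_k$ and of $W_k$. Indeed, for $d(x,x')\le\Lambda^k$, both $S_k(x,y)$ and $S_k(x',y)$ vanish once $d(x,y)$ exceeds a large multiple of $\Lambda^k$.
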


We define
\begin{equation}\label{def:sigma0}
	\sigma_0:=\sup\left\{\sigma\in (0,1): \text{the conclusions of Lemma \ref{lem:exp-ATI} hold with exponent } \sigma\right\}.
\end{equation}

\begin{remark}
	Without loss of generality, we may assume that $\Lambda = 1/2$. Indeed, for each $k\in \mathbb Z$, there exists a unique $\eta(k)\in \mathbb Z$ such that
	\[
	\Lambda^{k+1}\le 2^{-\eta(k)} < \Lambda^{k}.
	\]
	Then, the family $\{\widetilde S_{k}:  \widetilde S_{k} = S_{\eta(k)}, k\in \mathbb Z\}$ satisfies all conditions (i)-(iii) in Lemma \ref{lem:exp-ATI} with $\Lambda = 1/2$.
	
	Furthermore, when $\X$ is a metric space, the exponent $\sigma\in (0,1)$ in Lemma \ref{lem:exp-ATI} can be chosen arbitrarily (see, for example, \cite{AH}). In this case, we have $\sigma_0=1$.
\end{remark}

	\begin{definition}
		Let $\{S_k\}_{k\in \mathbb Z}$ be the family as in Lemma \ref{lem:exp-ATI}. For $f\in L^1_{\rm loc}(\X)$, the radial maximal function $S^+_{\rho}(f)$ associated with $\rho$ is defined by
		$$S^+_{\rho}(f)(x)=\sup_{\{i\in\mathbb Z: 2^{-i}<\rho(x)\}} \left|S_i(f)(x)\right|,\quad x\in \X .$$
	\end{definition}

	\begin{remark}\label{20:52, 19/12/2023}
	It is straightforward to see that 
	\[
	S^+_{\rho}(f)\lesssim \mathcal Mf \quad \text{for all } f\in L^1_{\rm loc}(\X),
	\]
	where $\mathcal M$ is the classical Hardy--Littlewood maximal function.
	\end{remark}
	
	The following characterization is taken from \cite[Remark 5.8]{BDK18} (see also \cite[Theorem 4.1]{YZ}).
	
	\begin{theorem}\label{21:55, 19/12/2023}
		Let $\{S_k\}_{k\in \mathbb Z}$ be the family as in Lemma \ref{lem:exp-ATI}. Then  $f\in H^1_\rho(\X)$ if and only if $f \in L^1(\X)$ and  $S^+_{\rho}(f)\in L^1(\X)$. Moreover,
		$$\|f\|_{H^1_\rho}\simeq \|S^+_{\rho}(f)\|_{L^1}\quad\text{for all } f\in H^1_\rho(\X).$$ 
	\end{theorem}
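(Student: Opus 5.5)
The plan is to prove the two inclusions separately: first that atoms have uniformly integrable radial maximal functions, then the converse by localizing to critical balls and invoking a local Hardy space theory.

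\emph{Step 1: $H^1_\rho(\X)$ embeds in $\{f\in L^1: S^+_\rho(f)\in L^1\}$.} By the atomic definition and sublinearity of $S^+_\rho$, it suffices to show that $\|S^+_\rho(\a)\|_{L^1}\lesssim 1$ uniformly over $(H^1_\rho,q)$-atoms $\a$ supported in $B=B(x_0,r)$ with $0<r<\rho(x_0)$. One then passes to $f=\sum_j\lambda_j\a_j$ using $L^1$ convergence and the pointwise bound $S^+_\rho(f)\le\sum_j|\lambda_j|S^+_\rho(\a_j)$. I would split $\X$ into $(2\kappa)^2B$ and its complement.
\begin{itemize}
\item On $(2\kappa)^2B$, Remark \ref{20:52, 19/12/2023} gives $S^+_\rho(\a)\lesssim\mathcal M\a$. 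Then the $L^q$-boundedness of $\mathcal M$ and H\"older's inequality give a bound by $\mu(B)^{1-1/q}\|\a\|_{L^q}\le1$.
\item For $x$ with $d(x,x_0)\ge(2\kappa)^2r$ there are two cases.
\begin{itemize}
\item If $r<\rho(x_0)/4$, the atom has vanishing mean. Write $S_i\a(x)=\int(S_i(x,y)-S_i(x,x_0))\a(y)\,d\mu(y)$. When $2^{-i}\ge r$, Lemma \ref{lem:exp-ATI}(ii) applies. When $2^{-i}<r$, the size bound (i) gives exponential decay, since $d(x,y)\simeq d(x,x_0)\gg 2^{-i}$. Summing over $i$ yields $S^+_\rho(\a)(x)\lesssim (r/d(x,x_0))^{\sigma}\mu(B(x_0,d(x,x_0)))^{-1}$, which is integrable over the annuli $U_j(B)$ with sum $\lesssim\sum_j(2\kappa)^{-j\sigma}$.
\item If $\rho(x_0)/4\le r<\rho(x_0)$, there is no cancellation. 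Here only indices with $2^{-i}<\rho(x)$ enter. By \eqref{admissible function} and Remark \ref{20:49, 02/12/2023}, $\rho(x)\lesssim\rho(x_0)^{1/(k_0+1)}d(x,x_0)^{k_0/(k_0+1)}$ when $d(x,x_0)\ge\rho(x_0)$, so $d(x,x_0)/2^{-i}\gtrsim (d(x,x_0)/\rho(x_0))^{1/(k_0+1)}$. Then the exponential factor in (i) gives decay faster than any power of $d(x,x_0)/r$, and the annular sum again converges.
\end{itemize}
\end{itemize}

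\emph{Step 2: the converse.} Let $f\in L^1$ with $S^+_\rho(f)\in L^1$. Using admissibility, I would choose points $\{x_k\}$ such that the critical balls $B_k=B(x_k,\rho(x_k))$ cover $\X$ with bounded overlap, and on these balls $\rho\simeq\rho(x_k)$. I would take a Lipschitz partition of unity $\{\psi_k\}$ subordinate to $\{2B_k\}$ with $|\psi_k(x)-\psi_k(y)|\lesssim(d(x,y)/\rho(x_k))^{\alpha_0}$, as in \eqref{09:07, 07/12/2023}. Each $\psi_kf$ is then decomposed, via a Calder\'on--Zygmund decomposition at levels $2^\ell$ of a local grand maximal function truncated at scale $\rho(x_k)$, into
\begin{itemize}
\item atoms with vanishing moment on balls of radius $<\rho(x_k)/4$, and
\item a remainder supported in $2B_k$ with $L^\infty$ norm $\lesssim\mu(B_k)^{-1}\|\psi_kf\|_{L^1}$, which is a multiple of a non-cancellative $(H^1_\rho,\infty)$-atom (after splitting $2B_k$ into boundedly many balls of radius $<\rho$).
\end{itemize}
This gives $\|\psi_kf\|_{H^1_\rho}\lesssim\int_{cB_k}(\mathcal G_{\rho}f+|f|)$ for a suitable local grand maximal function $\mathcal G_\rho$. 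Summing over $k$ with bounded overlap gives $\|f\|_{H^1_\rho}\lesssim\|\mathcal G_\rho f\|_{L^1}+\|f\|_{L^1}$. Finally, $\|f\|_{L^1}\le\|S^+_\rho f\|_{L^1}$ because $S_if\to f$ a.e.\ as $i\to\infty$, by Lemma \ref{lem:exp-ATI}(iii) and the size bound.

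\emph{Main obstacle.} The hard step is showing $\|\mathcal G_\rho f\|_{L^1}\lesssim\|S^+_\rho f\|_{L^1}$, i.e.\ that the local radial maximal function built from the particular approximation of identity $\{S_k\}$ controls the local grand maximal function. I would first try the Grafakos--Liu--Yang scheme for spaces of homogeneous type. This means expanding a test function via the discrete Calder\'on reproducing formula built from $D_k=S_k-S_{k-1}$, with almost-orthogonality estimates coming from Lemma \ref{lem:exp-ATI}(ii)--(iii). One then bounds $\mathcal G_\rho f$ pointwise by a Peetre-type maximal function of $\{S_if\}_{2^{-i}<\rho}$, and controls that maximal function in $L^1$ by $\mathcal M\big((S^+_\rho f)^{r}\big)^{1/r}$ for some $r<1$.

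Two complications arise from the localization. The scales near $\rho(x)$ must be handled by the admissibility comparison $\rho(y)\simeq\rho(x)$ for $d(x,y)\lesssim\rho(x)$. The smoothness exponent available is only $\sigma<\sigma_0$, which limits the admissible regularity of the test functions in $\mathcal G_\rho$ but suffices since $\alpha_0$-H\"older test functions can be arranged.
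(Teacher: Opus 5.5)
\textbf{Verdict: genuine gap.} Your Step~2 does not yet prove the hard direction, and the mechanism you sketch for it fails as stated.

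\textbf{What is fine.} Your Step~1 is complete and correct. It is essentially the annulus estimate for $S^+_\rho(\a)$ that the paper carries out in the proof of Proposition~\ref{prop:radial-maximal-Krho-class}. The first half of your Step~2 (localization to critical balls plus a Calder\'on--Zygmund decomposition of a local grand maximal function) is the standard proof of the grand maximal characterization. The paper already records that result as Theorem~\ref{thm:H1-equiv}, from \cite[Theorem 3.2]{YZ}, so you may simply quote it.

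\textbf{What is missing.} The paper does not prove this statement; it imports it from \cite[Remark 5.8]{BDK18} (see also \cite[Theorem 4.1]{YZ}). The nontrivial content behind that citation is exactly the estimate you leave open, $\|\mathcal G_\rho f\|_{L^1}\lesssim\|S^+_\rho f\|_{L^1}$. As written, your proposal therefore does not establish that $f\in L^1$ and $S^+_\rho f\in L^1$ imply $f\in H^1_\rho(\X)$; it only outlines a strategy.

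\textbf{Why the sketched strategy fails.} The obstruction is the variable truncation. Fix a test function $\phi$ centred at $x$ of scale $r<\rho(x)$. To expand $\langle f,\phi\rangle$ you need an inhomogeneous discrete Calder\'on formula with coarsest scale $2^{-k}\simeq r$, since scales above $\rho$ are invisible to $S^+_\rho$. This formula produces values of $S_kf$ and $D_if$, with $2^{-i}\le 2^{-k}$, at points $y_Q$ spread over all of $\X$, with decay $\bigl(2^{-k}/(2^{-k}+d(x,y_Q))\bigr)^{\gamma}$. Admissibility only gives $\rho(y)\gtrsim\rho(x)\bigl(1+d(x,y)/\rho(x)\bigr)^{-k_0}$. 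So once $d(x,y_Q)\gg\rho(x)$, the scale $2^{-k}$ may exceed $\rho(y_Q)$. Those values are not dominated by $S^+_\rho f$ near $y_Q$, so your pointwise bound of the Peetre-type maximal function by $\mathcal M\bigl((S^+_\rho f)^{r}\bigr)^{1/r}$ does not follow. The comparability $\rho(y)\simeq\rho(x)$ for $d(x,y)\lesssim\rho(x)$, which you invoke, handles only the near part, and only after lowering the base scale by a fixed factor.

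\textbf{How to close it.} You need a separate treatment of the far part. Two options:
\begin{enumerate}
\item Bound the far part crudely by
\[
\int_\X \frac{1}{\mu(B(x,\rho(x)))+\mu(B(x,d(x,z)))}\Bigl(\frac{\rho(x)}{\rho(x)+d(x,z)}\Bigr)^{\gamma}|f(z)|\,d\mu(z).
\]
Remark~\ref{20:49, 02/12/2023} gives $\rho(x)/d(x,z)\lesssim(\rho(z)/d(x,z))^{1/(k_0+1)}$ when $d(x,z)\ge\rho(z)$. Hence this kernel is integrable in $x$ uniformly in $z$, and the far part costs only $\|f\|_{L^1}\le\|S^+_\rho f\|_{L^1}$.
\item Localize already at the level of $S^+_\rho$. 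For $x\in CB_k$ and $2^{-i}<c\rho(x_k)$,
\[
|S_i(\psi_kf)(x)-\psi_k(x)S_if(x)|\lesssim(2^{-i}/\rho(x_k))^{\alpha_0}\int_\X\widetilde E_i(x,y)|f(y)|\,d\mu(y),
\]
where $\widetilde E_i$ is a slightly weaker exponential kernel. This reduces matters to the radial characterization of local Hardy spaces with a fixed truncation scale, with constants uniform in that scale.
\end{enumerate}
Either way, this step is the real content of the theorem and has to be supplied or cited.
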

	
	The following proposition plays a key role in the proof of Theorem \ref{thm-main thm 2 characterization BMO log}.

	\begin{proposition}\label{prop:radial-maximal-Krho-class}		
		Let $\sigma_0$ be as defined in \eqref{def:sigma0} and $\theta\in\big[0, \frac{\sigma_0}{i(\rho)+1}\big)$. Then, there exists an exponent $\sigma \in (0,\sigma_0)$ such that $\theta < \frac{\sigma}{i(\rho)+1}$ and a family $\{S_k\}_{k\in \mathbb Z}$ satisfies the conclusions of Lemma \ref{lem:exp-ATI} with this $\sigma$. Moreover, the radial maximal function operator $S^+_{\rho}$ associated with $\{S_k\}_{k\in \mathbb Z}$ belongs to the class $\K_{\rho,\theta,q}$ for all $q\in (1,\infty]$.
	\end{proposition}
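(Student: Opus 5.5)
The first part is immediate from the definition of $\sigma_0$ in \eqref{def:sigma0}. Since $\theta(i(\rho)+1)<\sigma_0$, we choose $\sigma\in(\theta(i(\rho)+1),\sigma_0)$. By the definition of the supremum, there is $\sigma'\in(\sigma,\sigma_0]$ for which Lemma \ref{lem:exp-ATI} holds. Because $d(x,x')\le\Lambda^k$, we have $(d(x,x')/\Lambda^k)^{\sigma'}\le(d(x,x')/\Lambda^k)^{\sigma}$, so the conclusions also hold with exponent $\sigma$. After this we fix $k_0>i(\rho)$ with $\theta(k_0+1)<\sigma$ such that \eqref{admissible function} holds.

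We now show $S^+_\rho\in\K_{\rho,\theta,q}$. It is sublinear, and it is bounded on $L^q$ by Remark \ref{20:52, 19/12/2023}. For the case $q=\infty$ we use $\|S^+_\rho f\|_\infty\lesssim\|f\|_\infty$ directly from Lemma \ref{lem:exp-ATI}(i). Condition (i) of Definition \ref{07:59, 31/12/2025} follows from Theorem \ref{21:55, 19/12/2023}, since $\|S^+_\rho f\|_{L^1}\simeq\|f\|_{H^1_\rho}$.

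The main work is condition (ii). Let $\a$ be an $(H^1_\rho,q)$-atom supported in $B=B(x_0,r)$ with $r<\rho(x_0)$, and write $\|(b-b_B)S^+_\rho\a\|_{L^1}$ as a sum over the annuli $U_j(B)$.

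\emph{Near part ($j\le 1$).} On $(2\kappa)^2B$ we use Hölder's inequality with $L^q$-boundedness, taking $q'$ finite when $q=\infty$. Then Lemma \ref{14:51, 06/10/2023} gives a bound $\lesssim\|b\|_{\mathrm{BMO}_{\rho,\theta}}$, because $(1+r/\rho(x_0))^{(k_0+1)\theta}\lesssim1$. If $q=\infty$ we first pass to some finite $q_1$, since an $\infty$-atom is also a $q_1$-atom.

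\emph{Far part ($j\ge2$).} Take $x\in U_j(B)$; by Remark \ref{20:49, 02/12/2023}, $\rho(x)\lesssim\rho(x_0)(1+d(x,x_0)/\rho(x_0))^{k_0/(k_0+1)}$. The estimate splits into two cases.
\begin{itemize}
\item \emph{Cancellation case, $r<\rho(x_0)/4$.} Write $S_i\a(x)=\int[S_i(x,y)-S_i(x,x_0)]\a(y)\,d\mu(y)$. When $2^{-i}\ge r$, Lemma \ref{lem:exp-ATI}(ii) gives $(r/2^{-i})^\sigma E_i(x,x_0)$. When $2^{-i}<r$, the size bound (i) together with $d(x,y)\gtrsim(2\kappa)^jr$ gives exponential decay in $(2\kappa)^jr/2^{-i}$. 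Taking the supremum over $i$ yields
\[
S^+_\rho\a(x)\lesssim\frac{1}{\mu((2\kappa)^jB)}(2\kappa)^{-j\sigma}\quad\text{on }U_j(B).
\]
\item \emph{Non-cancellation case, $\rho(x_0)/4\le r<\rho(x_0)$.} Here we exploit the restriction $2^{-i}<\rho(x)$. Since $d(x,x_0)\approx(2\kappa)^jr\gtrsim(2\kappa)^j\rho(x_0)$, we get $\rho(x)\lesssim\rho(x_0)(2\kappa)^{jk_0/(k_0+1)}$. The factor $\exp[-\nu((2\kappa)^jr/2^{-i})^a]$ is then at most $\exp[-c(2\kappa)^{ja/(k_0+1)}]$, which is faster than any power of $(2\kappa)^{-j}$.
\end{itemize}
In both cases we multiply by $b-b_B$ and integrate over $(2\kappa)^{j+1}B$ using Lemma \ref{14:51, 06/10/2023}. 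The sum is then
\[
\sum_j (j+1)(2\kappa)^{-j\sigma}\bigl(1+(2\kappa)^jr/\rho(x_0)\bigr)^{(k_0+1)\theta}\|b\|_{\mathrm{BMO}_{\rho,\theta}}
\le\sum_j(j+1)(2\kappa)^{-j(\sigma-(k_0+1)\theta)}\|b\|_{\mathrm{BMO}_{\rho,\theta}},
\]
which converges precisely because $\theta(k_0+1)<\sigma$.

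The main obstacle is the cancellation case. There one must handle uniformly in $i$ the scales $2^{-i}$ that are large compared with $d(x,x_0)$, and also ensure that the smoothness condition $d(y,x_0)\le 2^{-i}$ applies. For scales $2^{-i}<r$, smoothness is not available and one relies on decay of $E_i$. One must also check that the doubling factors $\mu(B(x,2^{-i}))^{-1}$ combine to give $\mu((2\kappa)^jB)^{-1}$; this follows from $\mu(B(x,t))\gtrsim(t/d)^{\text{dim}}\mu(B(x,d))$ absorbed into the exponential, or for large $2^{-i}$ by monotonicity.
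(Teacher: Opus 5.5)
Your proof is correct and follows the paper's route. Like the paper, you choose $\sigma$ and $k_0>i(\rho)$ with $\sigma-(k_0+1)\theta>0$, and you handle the part near $B$ by H\"older's inequality and $S^+_\rho\lesssim\mathcal M$. You obtain the decay $(2\kappa)^{-j\sigma}\mu((2\kappa)^jB)^{-1}$ on $U_j(B)$ in the cancellative case $r<\rho(x_0)/4$, and faster decay when $\rho(x_0)/4\le r<\rho(x_0)$ via $2^{-i}<\rho(x)$ and admissibility of $\rho$; the sum then closes with the $\mathrm{BMO}_{\rho,\theta}$ oscillation lemma.

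Your monotonicity argument for passing from $\sigma'$ to $\sigma$ makes explicit a point the paper leaves implicit. So does your split into $2^{-i}\ge r$ and $2^{-i}<r$, since the smoothness estimate needs $d(y,x_0)\le 2^{-i}$.
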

	
	\begin{proof}
		By the definition of $\sigma_0$ in \eqref{def:sigma0}, there exists an exponent $\sigma \in (0,\sigma_0)$ such that $\theta < \frac{\sigma}{i(\rho)+1}$ and a family $\{S_k\}_{k\in \mathbb Z}$ satisfies the conclusions of Lemma \ref{lem:exp-ATI} with this $\sigma$.
		
		We now prove that $S^+_{\rho}\in \K_{\rho,\theta,q}$. Indeed, by Theorem \ref{21:55, 19/12/2023},  $S^+_{\rho}$ is bounded from $H^1_\rho(\X)$ into $L^1(\X)$. Hence, it suffices to prove that
		\begin{equation}\label{21:41, 19/12/2023}
			\|(b-b_B)S^+_{\rho}(\a)\|_{L^1}\lesssim \|b\|_{\mathrm{BMO}_{\rho,\theta}}
		\end{equation}
		for all $b\in \mathrm{BMO}_{\rho,\theta}(\X)$ and all $(H^1_\rho,q)$-atoms $\a$ related to the balls $B$. To see this, note that since $0\leq\theta<\frac{\sigma}{i(\rho)+1}$ and $\a$ is an $(H^1_\rho,q)$-atom related to the ball $B=B(x_0,r_0)$, there exists $k_0>i(\rho)$ such that \eqref{admissible function} holds and
		\begin{equation}\label{21:42, 19/12/2023}		
			\varepsilon:=  \sigma  -(k_0+1)\theta>0,\quad 0<r_0<\rho(x_0).
		\end{equation}		
		For any $j\in\mathbb Z^+$ and $x\in U_j(B)$, we consider the following two cases:
		
	\medskip

	\noindent 	\textit{ Case 1: $0<r_0<\frac{\rho(x_0)}{4}$.}  In this situation,   $\displaystyle \int_{\X} \a(y)d\mu(y)=0$. Therefore, by Lemma \ref{lem:exp-ATI}(ii), $d(x,y)\simeq d(x,x_0)\simeq (2\kappa)^j r_0$, and $$\mu(B(x,d(x,y)))\simeq\mu(B(x, d(x,x_0)))\simeq \mu(B(x_0, d(x,x_0)))\simeq \mu((2\kappa)^j B) \ \ \text{for all $y\in B$,}
	$$ we obtain
		\begin{align*}
			\left|S_i(\a)(x) \right|&=\left|\int_{B} (S_i(x,y)-S_i(x,x_0))\a(y)d\mu(y)\right|\\
			&\lesssim \int_{B}|\a(y)|  \left(\frac{d(y,x_0)}{2^{-i}+d(x,y)}\right)^{\sigma} \frac{1}{\mu(B(x, d(x,y)))} d\mu(y)\\
			&\lesssim (2\kappa)^{-j \sigma} \frac{1}{\mu((2\kappa)^j B)}
		\end{align*}
		for all $i\in\mathbb Z$. This implies that
		\begin{equation}\label{21:43, 19/12/2023}
			S^+_{\rho}(\a)(x)=\sup_{\{i\in\mathbb Z: 2^{-i}<\rho(x)\}}|S_i(\a)(x)|\lesssim  \frac{(2\kappa)^{-j \sigma}}{\mu((2\kappa)^j B)}.
		\end{equation}
		
\medskip

	\noindent	\textit{{Case 2:} $\frac{\rho(x_0)}{4}\leq r_0<\rho(x_0)$.}  Then, $d(x,y)\simeq d(x,x_0)\simeq (2\kappa)^j r_0\simeq (2\kappa)^j \rho(x_0)$ for all $y\in B=B(x_0,r_0)$. Therefore, by Lemma \ref{lem:exp-ATI}(i), the fact $$\mu(B(x, d(x,x_0)))\simeq \mu(B(x_0, d(x,x_0)))\simeq \mu((2\kappa)^j B),$$ and Remark \ref{20:49, 02/12/2023}, we obtain, for $2^{-i}<\rho(x)$,
	\begin{align*}
		\left|S_i(\a)(x) \right|&=\left|\int_{B} S_i(x,y)\a(y) d\mu(y)\right|\\
		&\lesssim \int_{B}|\a(y)|  \frac{1}{\mu(B(x, d(x,y)))} \left(\frac{2^{-i}}{2^{-i}+d(x,y)}\right)^{N} d\mu(y),
	\end{align*}
	where $N>0$ is a positive large number which will be fixed later.
	
	This, together with the fact $2^{-i}\le \rho(x)$, further implies 
		\begin{align*}
			\left|S_i(\a)(x) \right| &\lesssim \int_{B}|\a(y)| \frac{1}{\mu(B(x, d(x,x_0)))} \left(\frac{1}{1+\frac{d(x,x_0)}{\rho(x)}}\right)^{N} d\mu(y) \\
			&\lesssim \frac{1}{\mu((2\kappa)^j B)} \left(1+\frac{d(x,x_0)}{\rho(x_0)}\right)^{-\frac{N}{k_0+1}} \ \ \ \text{(since $\|\mathfrak{a}\|_{L^1(\X)}\le 1$)}\\
			& \lesssim(2\kappa)^{-j\frac{N}{k_0+1}} \frac{1}{\mu((2\kappa)^j B)}.
		\end{align*}

		Hence,
		\begin{equation}\label{21:44, 19/12/2023}
			S^+_{\rho}(\a)(x)=\sup_{\{i\in\mathbb Z: 2^{-i}<\rho(x)\}}|S_i(\a)(x)|\lesssim  \frac{(2\kappa)^{-j \frac{N}{k_0+1}}}{\mu((2\kappa)^j B)}.
		\end{equation}

		Taking $N$ such that $\f{N}{k_0+1}>\sigma$, by the H\"older inequality, Remark \ref{20:52, 19/12/2023}, the $L^q$-boundedness of $\mathcal M$, \eqref{21:43, 19/12/2023}, \eqref{21:44, 19/12/2023}, Lemma \ref{14:51, 06/10/2023}, and \eqref{21:42, 19/12/2023}, we get
		\begin{align*}
			\|(b-b_B) S^+_{\rho}(\a)\|_{L^1}&=\sum_{j=0}^{\infty} \|(b-b_B) S^+_{\rho}(\a)\|_{L^1(U_j)}\\
			&\lesssim \|b-b_B\|_{L^{q'}(U_0)}\|\mathcal M\a\|_{L^q(U_0)}  + \sum_{j=1}^{\infty} \|(b-b_B)\|_{L^1(U_j)}\frac{(2\kappa)^{-j\sigma}}{\mu((2\kappa)^j B)}\\
			&\lesssim \|b\|_{\mathrm{BMO}_{\rho,\theta}}+ \|b\|_{\mathrm{BMO}_{\rho,\theta}}\sum_{j=1}^{\infty} (j+2) (2\kappa)^{-j\varepsilon}\\
			& \lesssim \|b\|_{\mathrm{BMO}_{\rho,\theta}}.
		\end{align*}
		This proves \eqref{21:41, 19/12/2023}, and thus $S^+_{\rho}$ belongs to the class $\K_{\rho,\theta,q}$.
		
	\end{proof}
	
	We are now ready to give the proofs of Theorems \ref{thm-main thm 1 from H1 to weak L1} and \ref{thm-main thm 2 characterization BMO log}.
	\begin{proof}
		[Proof of Theorem \ref{thm-main thm 1 from H1 to weak L1}] For any $f\in H^1_\rho(\X)$, we can write  $f$  as  $f= \sum_{j=1}^\infty \lambda_j \a_j$, where the $\{a_j\}_{j=1}^\infty$ are $(H^1_\rho,q)$-atoms related to the balls $\{B_j\}_{j=1}^\infty$ and $\sum_{j=1}^{\infty} |\lambda_j|\leq 2 \|f\|_{H^1_\rho}$. Therefore,
		\begin{align*}
			\left|[b,T](f)(x)\right|&= \left|T\left(\left(b(x)-b(\cdot)\right)\sum_{j=1}^\infty \lambda_j \a_j(\cdot)\right)(x)\right|\\
			&\leq \sum_{j=1}^{\infty} |\lambda_j|\left|(b(x)-b_{B_j})T(\a_j)(x)\right| +\left|T\left(\sum_{j=1}^{\infty} \lambda_j (b(x)-b_{B_j})\a_j(x)\right)\right|,
		\end{align*}
		and thus, by $T\in\K_{\rho,\theta,q}$, $T$ is of weak type $(1,1)$ and Lemma \ref{17:53, 11/10/2023}\eqref{14:01, 12/10/2023},
		\begin{align*}
			\|[b,T](f)\|_{L^{1,\infty}}&\leq \sum_{j=1}^{\infty} |\lambda_j| \|(b-b_{B_j})T(\a_j)\|_{L^1} + \|T\|_{L^1\to L^{1,\infty}}\sum_{j=1}^{\infty} |\lambda_j|\|(b-b_{B_j})\a_j\|_{L^1}\\
			&\lesssim \|b\|_{\mathrm{BMO}_{\rho,\theta}}\sum_{j=1}^{\infty} |\lambda_j| + \|T\|_{L^1\to L^{1,\infty}}\|b\|_{\mathrm{BMO}_{\rho,\theta}}\sum_{j=1}^{\infty} |\lambda_j|\\
			&\lesssim \|b\|_{\mathrm{BMO}_{\rho,\theta}} \|f\|_{H^1_\rho}.
		\end{align*}
		This proves that $[b,T]$ is bounded from $H^1_\rho(\X)$ into $L^{1,\infty}(\X)$.
		
	\end{proof}
	
	\begin{proof}[Proof of Theorem \ref{thm-main thm 2 characterization BMO log}]
		Let $\sigma_0\in (0,1]$ be defined by \eqref{def:sigma0}, $\theta\in\big[0,\frac{\sigma_0}{i(\rho)+1}\big)$, $q\in (1,\infty]$, and $b\in \mathrm{BMO}_{\rho,\theta}(\X)$.
		
		\eqref{item:BMO-log-condition} $\Rightarrow $ \eqref{item:commutator-L1-bounded}: Suppose that $b\in \mathrm{BMO}^{\log}_{\rho,\theta}(\X)$. For any $(H^1_\rho,q)$-atom $\a$ related to a ball $B$, it follows from $T\in \K_{\rho,\theta,q}$ and Lemma \ref{07:01, 12/10/2023} that
		\begin{align*}
			\|[b,T](\a)\|_{L^1}&\leq \|(b-b_B)T\a\|_{L^1}+\|T(\a(b-b_B))\|_{L^1}\\
			&\lesssim \|b\|_{\mathrm{BMO}_{\rho,\theta}} + \|T\|_{H^1_\rho\to L^1}\|\a(b-b_B)\|_{H^1_\rho}\lesssim \|b\|_{\mathrm{BMO}^{\log}_{\rho,\theta}}.
		\end{align*}
		This, together with  \cite[Proposition 3.2]{YZ}, allows us to conclude that the commutator $[b,T]$ is bounded from $H^1_\rho(\X)$ into $L^1(\X)$.
		
		\eqref{item:commutator-L1-bounded} $\Rightarrow $ \eqref{item:BMO-log-condition}: Conversely, suppose that the commutator $[b,T]$ is bounded from $H^1_\rho(\X)$ into $L^1(\X)$ for all $T\in \K_{\rho,\theta,q}$. Let $\sigma\in (0,\sigma_0)$ and let $\{S_k\}_{k\in \mathbb Z}$ be as in Proposition \ref{prop:radial-maximal-Krho-class}.
		Then the associated radial maximal operator $S^+_{\rho}$ belongs to $\K_{\rho,\theta,q}$. Hence, by Theorem \ref{21:55, 19/12/2023},
		\begin{align*}
			\|\a(b-b_B)\|_{H^1_\rho}&\lesssim \|S^+_{\rho}(\a(b-b_B))\|_{L^1}\\
			&\leq \|[b,S^+_{\rho}](\a)\|_{L^1}+ \|(b-b_B) S^+_{\rho}(\a)\|_{L^1}\\
			&\lesssim \|[b, S^+_{\rho}]\|_{H^1_\rho\to L^1} + \|b\|_{\mathrm{BMO}_{\rho,\theta}}
		\end{align*}
		for all $(H^1_\rho,q)$-atoms $\a$ related to balls $B$. This, together with Lemma \ref{07:01, 12/10/2023}, implies that
		$$\|b\|_{\mathrm{BMO}^{\log}_{\rho,\theta}}\lesssim \|[b, S^+_{\rho}]\|_{H^1_\rho\to L^1} + \|b\|_{\mathrm{BMO}_{\rho,\theta}}.$$
		This completes our proof.
	\end{proof}

	In order to prove Theorems \ref{thm-main thm 3 chracterize BMO log by Tj} and \ref{thm-main thm 4 chracterize BMO log by Tj on Hardy space}, we also need the following two lemmas.
	
	\begin{lemma}\label{15:47, 09/12/2023}
		Let $1<q<s<\infty$, $\delta\in (0,1]$, $\theta\in [0,\frac{\delta}{i(\rho)+1})$, and let $T$ be an $(s,\delta,\rho)$-Calder\'on--Zygmund operator. Then, there exists a constant $C>0$ such that 
		\begin{enumerate}[\rm (i)]
			\item\label{09:31, 15/11/2025} for all $(H^1_\rho,q)$-atoms $\a$ related to the balls $B=B(x_0,r)$, and all  $j\in\mathbb Z^+_0$,
			\[\|T\a\|_{L^q(U_j(B))}\leq C (2\kappa)^{-j\delta} \mu((2\kappa)^{j}B)^{\frac{1}{q}-1};\]
			
			\item\label{09:32, 15/11/2025} for all $(H^1_\rho,s)$-atoms $\a$ related to the balls $B=B(x_0,r)$, and all $b\in \mathrm{BMO}_{\rho,\theta}(\X)$,
			\[\|(b-b_B)T\a\|_{L^1}\leq C \|b\|_{\mathrm{BMO}_{\rho,\theta}}.\]
		\end{enumerate}
	\end{lemma}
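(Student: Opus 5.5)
For (i), I would split into the local part $j=0$ and the far parts $j\ge1$. For $j=0$, since $q<s$, Remark \ref{rem 1.4}\eqref{10:19, 22/12/2023} shows that $T$ is bounded on $L^q(\X)$. Hence
\[
\|T\a\|_{L^q(U_0(B))}\lesssim\|\a\|_{L^q}\le\mu(B)^{1/q-1}\simeq\mu(2\kappa B)^{1/q-1}.
\]

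For $j\ge1$ I would split into the two atom cases of Definition \ref{def:Hrho-atom}.

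\emph{Cancellative case} ($r<\rho(x_0)/4$). Here $\int\a=0$, so
\[
T\a(x)=\int_B (K(x,y)-K(x,x_0))\a(y)\,d\mu(y).
\]
The region $U_j(B)$ has the form $R\le d(x,x_0)<2\kappa R$ with $R=(2\kappa)^j r$, and $d(y,x_0)<r\le R/(2\kappa)$. I would apply Minkowski's integral inequality and then the $L^q$ version of the smoothness condition \eqref{11:09, 13/12/2023}; the $L^q$ version follows from the $L^s$ one by H\"older, since $q<s$. Writing $K(x,y)-K(x,x_0)$ in the transposed form required by \eqref{11:09, 13/12/2023} (equivalently, the condition is read with the roles of variables as stated), this gives
\[
\|T\a\|_{L^q(U_j)}\lesssim\|\a\|_{L^1}\,(2\kappa)^{-j\delta}\mu((2\kappa)^jB)^{1/q-1}.
\]
Since $\|\a\|_{L^1}\le1$, this is the claimed bound.

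\emph{Non-cancellative case} ($\rho(x_0)/4\le r<\rho(x_0)$). I would use the size condition \eqref{11:08, 13/12/2023}, again passed from $L^s$ to $L^q$ by H\"older, with $N=\delta$. Since $R/\rho(x_0)\gtrsim(2\kappa)^j$, this gives the factor $(2\kappa)^{-j\delta}$ directly.

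The one delicate point in (i) is matching the variable order of $K$ in \eqref{11:09, 13/12/2023} with $T\a(x)=\int K(x,y)\a(y)\,d\mu(y)$. I expect it to be resolved by the symmetric form in which the condition is written.

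For (ii), I would apply (i) with $q$ replaced by $s$; the same argument works with $j=0$ handled by the $L^s\to L^{s,\infty}$ bound. On $U_0$ the weak bound is not enough for an $L^1$ estimate, so there I would first replace $s$ by some $\tilde q\in(1,s)$. Since an $(H^1_\rho,s)$-atom is also an $(H^1_\rho,\tilde q)$-atom, part (i) with $\tilde q$ gives
\[
\|T\a\|_{L^{\tilde q}(U_j(B))}\lesssim(2\kappa)^{-j\delta}\mu((2\kappa)^jB)^{1/\tilde q-1}.
\]
Then I would use H\"older on each $U_j$, together with Lemma \ref{14:51, 06/10/2023} with $p=\tilde q'$:
\[
\|(b-b_B)T\a\|_{L^1(U_j)}\le\|b-b_B\|_{L^{\tilde q'}((2\kappa)^{j+1}B)}\|T\a\|_{L^{\tilde q}(U_j)}\lesssim (j+2)\Bigl(1+\tfrac{(2\kappa)^{j+1}r}{\rho(x_0)}\Bigr)^{(k_0+1)\theta}(2\kappa)^{-j\delta}\|b\|_{\mathrm{BMO}_{\rho,\theta}}.
\]
Because $r<\rho(x_0)$, the bracket is at most $\lesssim(2\kappa)^{j(k_0+1)\theta}$.

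The hypothesis $\theta<\delta/(i(\rho)+1)$ lets me choose $k_0>i(\rho)$ with $\varepsilon:=\delta-(k_0+1)\theta>0$. Summing $\sum_j(j+2)(2\kappa)^{-j\varepsilon}<\infty$ then gives the result.

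The main obstacle is precisely this balance: the polynomial growth of $b-b_B$ permitted by $\mathrm{BMO}_{\rho,\theta}$ must be beaten by the decay $(2\kappa)^{-j\delta}$. The admissibility exponent $k_0+1$ enters through Lemma \ref{14:51, 06/10/2023}, which is what forces the restriction on $\theta$.
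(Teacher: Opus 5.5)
Your proposal is correct and follows the paper's proof essentially step for step. The $j=0$ term comes from $L^q$-boundedness, and for $j\ge1$ you split into the two atom cases using the $L^q$ versions of \eqref{11:09, 13/12/2023} and \eqref{11:08, 13/12/2023}; your Minkowski step is equivalent to the paper's H\"older--Fubini step. For (ii) you pass to an exponent $\tilde q\in(1,s)$ (the paper takes $\tilde q=\frac{s+1}{2}$), then apply H\"older on each $U_j(B)$ and Lemma \ref{14:51, 06/10/2023}, and sum $\sum_j (j+2)(2\kappa)^{-j\varepsilon}$. On the variable-order point, \eqref{11:09, 13/12/2023} is not actually symmetric as written, but the paper reads it exactly as you do, namely as a bound on the $L^s(U_j(B))$-norm of $K(\cdot,y)-K(\cdot,x_0)$.
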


	\begin{proof}	
		\eqref{09:31, 15/11/2025} The case $j=0$ is trivial since $T$ is bounded on $L^q(\X)$ (see Remark \ref{rem 1.4}\eqref{10:19, 22/12/2023}). For $j\in\mathbb Z^+$, we consider the following two cases:
		
		\medskip
		
		{\sl Case 1:} $0<r<\frac{\rho(x_0)}{4}$. Then, $\int_{\X} \a(y)d\mu(y)=0$. Thus, by the H\"older inequality, the fact that $\|\a\|_{L^1}\leq 1$, the Fubini theorem, Remark \ref{rem 1.4}\eqref{10:19, 22/12/2023}, and Condition \eqref{11:09, 13/12/2023}, we obtain
		\begin{align}\label{22:41, 29/10/2023}
			\|T\a\|_{L^q(U_j(B))}&=  \left[\int_{(2\kappa)^j r\leq d(x,x_0)<(2\kappa)^{j+1} r} \left|\int_B (K(x,y)-K(x,x_0))\a(y)d\mu(y)\right|^q d\mu(x)\right]^{\frac{1}{q}}\nonumber\\
			&\leq \left[\int_B |\a(y)|d\mu(y) \int_{(2\kappa)^j r\leq d(x,x_0)<(2\kappa)^{j+1} r} |K(x,y)-K(x,x_0)|^q d\mu(x) \right]^{\frac{1}{q}}\nonumber\\
			&\lesssim (2\kappa)^{-j\delta} \mu((2\kappa)^j B)^{\frac{1}{q}-1}.
		\end{align}
		
		{\sl Case 2:} $\frac{\rho(x_0)}{4}\leq r< \rho(x_0)$. Then, by  H\"older's inequality, the fact that $\|\a\|_{L^1}\leq 1$, Fubini's theorem, Remark \ref{rem 1.4}\eqref{10:19, 22/12/2023}, Condition \eqref{11:08, 13/12/2023}, and the comparability $\rho(x_0) \simeq r_0$, we obtain
		\begin{align}\label{22:42, 29/10/2023}
			\|T\a\|_{L^q(U_j(B))}&\leq \left[\int_{(2\kappa)^j r\leq d(x,x_0)<(2\kappa)^{j+1} r} \left(\int_B |K(x,y)| |\a(y)|d\mu(y)\right)^q d\mu(x)\right]^{\frac{1}{q}}\nonumber\\
			&\leq \left[\int_B |\a(y)|d\mu(y) \int_{(2\kappa)^j r\leq d(x,x_0)<(2\kappa)^{j+1} r} |K(x,y)|^q d\mu(x) \right]^{\frac{1}{q}}\nonumber\\
			&\lesssim \frac{\left(1+\frac{(2\kappa)^j r}{\rho(x_0)}\right)^{-\delta}}{\mu((2\kappa)^j B)^{1-\frac{1}{q}}} \lesssim (2\kappa)^{-j\delta} \mu((2\kappa)^j B)^{\frac{1}{q}-1}.
		\end{align}
		
		Combining \eqref{22:41, 29/10/2023} and \eqref{22:42, 29/10/2023}, we deduce that
		\begin{equation}\label{09:44, 15/11/2025}
			\|T\a\|_{L^q(U_j(B))}\lesssim (2\kappa)^{-j\delta} \mu((2\kappa)^j B)^{\frac{1}{q}-1}.
		\end{equation}

		\bigskip

		\noindent \eqref{09:32, 15/11/2025} Let $q:=\frac{s+1}{2}\in (1,s)$. Then, since $\theta\in [0,\frac{\delta}{i(\rho)+1})$ and $\a$ is an $(H^1_\rho,q)$-atom related to the ball $B=B(x_0,r)$, there exists $k_0>i(\rho)$ such that \eqref{admissible function} holds and
		\begin{equation}\label{09:40, 15/11/2025}		
			\varepsilon:= \delta -(k_0+1)\theta>0,\quad 0<r<\rho(x_0).
		\end{equation}	
		
		It follows from H\"older's inequality, Lemma \ref{14:51, 06/10/2023}, \eqref{09:44, 15/11/2025}, and \eqref{09:40, 15/11/2025} that
		\begin{align*}
			&\|(b-b_B)T\a\|_{L^1}=  \sum_{j=0}^\infty \|(b-b_B)T\a\|_{L^1(U_j(B))}\\
			&\hskip2cm\leq \sum_{j=0}^\infty \|b-b_B\|_{L^{q'}(U_j(B))}\|T\a\|_{L^q(U_j(B))}\\
			&\hskip2cm\lesssim \sum_{j=0}^\infty \mu((2\kappa)^{j+1} B)^{\frac{1}{q'}}(j+2)(2\kappa)^{(j+1) (k_0+1)\theta}\|b\|_{\mathrm{BMO}_{\rho,\theta}} (2\kappa)^{-j\delta}\mu((2\kappa)^j B)^{\frac{1}{q}-1}\\
			&\hskip2cm\lesssim \|b\|_{\mathrm{BMO}_{\rho,\theta}} \sum_{j=0}^\infty (j+2) (2\kappa)^{-j\varepsilon} \lesssim \|b\|_{\mathrm{BMO}_{\rho,\theta}}.
		\end{align*}
		This completes our proof.
	\end{proof}
	
	\begin{lemma}\label{23:50, 07/10/2023}
		Let $\varepsilon>0$. Then, there exists a constant $C_{\kappa,\varepsilon}>0$ such that
		$$j+2\leq C_{\kappa,\varepsilon} (2\kappa)^{j\varepsilon}$$
		and 
		$$\log\left(e+(2\kappa)^{j+1} t\right)\leq C_{\kappa,\varepsilon} (2\kappa)^{j\varepsilon} \log\left(e+ t\right)$$
		for all $j\in\mathbb{Z}^+_0$ and all $t>0$.
	\end{lemma}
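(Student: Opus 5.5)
The two inequalities are elementary, so the plan is to prove each directly. In both, the constant is allowed to depend on $\kappa$ and $\varepsilon$ only; here $\kappa\ge 1$, so $2\kappa\ge 2$ and $(2\kappa)^{\varepsilon}>1$.

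\emph{First inequality.} Since $(2\kappa)^{j\varepsilon}=e^{j\varepsilon\log(2\kappa)}$ with $\varepsilon\log(2\kappa)>0$, the ratio $(j+2)(2\kappa)^{-j\varepsilon}$ tends to $0$ as $j\to\infty$. A sequence tending to zero is bounded, so
\[
C_1:=\sup_{j\in\mathbb Z^+_0}(j+2)(2\kappa)^{-j\varepsilon}<\infty .
\]
If an explicit constant is wanted, use $e^u\ge 1+u$ to get $(2\kappa)^{j\varepsilon}\ge 1+j\varepsilon\log 2$, hence $j+2\le \max\{2,(\varepsilon\log 2)^{-1}\}\,(1+j\varepsilon\log 2)$.

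\emph{Second inequality.} Put $A:=(2\kappa)^{j+1}\ge 1$. Since $A\ge 1$ and $e\ge 1$, we have $e+At\le A(e+t)$, and therefore
\[
\log(e+At)\le \log A+\log(e+t)\le \bigl(\log A+1\bigr)\log(e+t),
\]
where the last step uses $\log(e+t)\ge 1$. Now
\[
\log A+1=(j+1)\log(2\kappa)+1\le (j+2)\bigl(\log(2\kappa)+1\bigr),
\]
and applying the first inequality to the factor $j+2$ gives a bound $\lesssim (2\kappa)^{j\varepsilon}$.

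Taking $C_{\kappa,\varepsilon}:=C_1\bigl(\log(2\kappa)+1\bigr)$, which is at least $C_1$, makes a single constant work for both inequalities. The only point needing care is that $t>0$ is arbitrary, including $t\to0$ and $t\to\infty$. This is exactly why I factor $\log(e+t)\ge1$ out of the additive $\log A$ term rather than comparing growth rates in $t$; the argument then holds uniformly in $t$.
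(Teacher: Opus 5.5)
Your proof is correct and takes essentially the same approach as the paper. Both arguments use that $(j+2)(2\kappa)^{-j\varepsilon}\to 0$, split $\log\bigl(e+(2\kappa)^{j+1}t\bigr)$ additively into $\log(e+t)$ plus a $j$-dependent term, and absorb that term using $\log(e+t)\ge 1$. The only cosmetic difference is that you use $e+At\le A(e+t)$ and reduce $\log A$ to the first inequality, whereas the paper uses $\log(e+At)\le \log(e+t)+\log(e+A)$ and notes directly that $\log\bigl(e+(2\kappa)^{j+1}\bigr)(2\kappa)^{-j\varepsilon}\to 0$.
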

	
	\begin{proof}
		Since $\lim\limits_{i\to\infty} \left(\frac{i+2}{(2\kappa)^{i\varepsilon}}+\frac{\log\left(e+(2\kappa)^{i+1}\right)}{(2\kappa)^{i\varepsilon}}\right)=0$, we obtain that
		$$j+2\lesssim (2\kappa)^{j\varepsilon}$$
		and
		$$\log\left(e+(2\kappa)^{j+1} t\right)\leq \log\left(e+ t\right) + \log\left(e+(2\kappa)^{j+1}\right)\lesssim (2\kappa)^{j\varepsilon} \log\left(e+ t\right),$$
		as desired.
		
	\end{proof}
	
The following characterization of the boundedness of $(s,\delta,\rho)$-Calder\'on--Zygmund operators on $H^1_\rho(\X)$ is due to Ky \cite[Theorem 5.2]{Ky25}.	

\begin{proposition}\label{prop- bounded on H1 and T1}
	Let $s\in (1,\infty)$, $\delta\in (0,1]$, and let $T$ be an $(s,\delta,\rho)$-Calder\'on--Zygmund operator. Then $T$ is bounded on $H^1_\rho(\X)$ if and only if $T^*1\in \mathrm{BMO}^{\log}_{\rho}(\X)$.
\end{proposition}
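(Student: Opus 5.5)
The plan is to prove the two implications separately. We work with the natural interpretation of $T^*1$ as a linear functional defined modulo constants: for every $f\in L^\infty$ with bounded support and $\int f\,d\mu=0$, we set $\langle T^*1,f\rangle:=\int_{\X}Tf\,d\mu$. This integral converges because, by Lemma \ref{15:47, 09/12/2023}\eqref{09:31, 15/11/2025}, $Tf\in L^1$ whenever $f$ is a multiple of an $(H^1_\rho,q)$-atom with $1<q<s$. Consequently $\mathrm{MO}(T^*1,B)$ and the condition $T^*1\in\mathrm{BMO}^{\log}_\rho(\X)$ make sense through the usual duality
\[
\mathrm{MO}(F,B)\simeq\sup\Bigl|\int F\,\a\Bigr|,
\]
where the supremum is over mean-zero $\a$ supported in $B$ with $\|\a\|_{L^\infty}\le \mu(B)^{-1}$. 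Note also that for $\theta=0$ the weight in the $\mathrm{BMO}^{\log}_\rho$ norm is $\log(e+\rho(x_0)/r)$, which is comparable to $1$ as soon as $r\ge\rho(x_0)/4$.

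\emph{Sufficiency.} Assume $T^*1\in\mathrm{BMO}^{\log}_\rho(\X)$, fix $1<q<s$, and let $\a$ be an $(H^1_\rho,q)$-atom related to $B=B(x_0,r)$. The aim is to show that $C^{-1}T\a$ is an $(H^1_\rho,q,\delta)$-molecule of log-type.
\begin{enumerate}[(1)]
\item The size condition (a) is exactly Lemma \ref{15:47, 09/12/2023}\eqref{09:31, 15/11/2025}.
\item For the mean condition when $r<\rho(x_0)/4$, the atom $\a$ has vanishing integral. Then $\int T\a=\langle T^*1,\a\rangle=\int \a\,(T^*1-(T^*1)_B)$. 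By H\"older and Lemma \ref{14:50, 06/10/2023} with $\theta=0$ and $j=0$, this is bounded by $C\|T^*1\|_{\mathrm{BMO}^{\log}_\rho}/\log(e+\rho(x_0)/r)$.
\item When $\rho(x_0)/4\le r<\rho(x_0)$, the logarithm is comparable to $1$, and $|\int T\a|\le\|T\a\|_{L^1}\lesssim 1$ by summing the molecular bounds over $j$.
\end{enumerate}
Lemma \ref{07:01, 14/10/2023} then gives $\|T\a\|_{H^1_\rho}\lesssim 1$ uniformly in the atom. Boundedness on all of $H^1_\rho(\X)$ follows from the standard passage from atoms to the whole space. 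As in the proof of Theorem \ref{thm-main thm 2 characterization BMO log}, this uses \cite[Proposition 3.2]{YZ}, or the fact that $T$ is already bounded from $H^1_\rho$ to $L^1$, so that the atomic sums converge.

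\emph{Necessity.} Assume $T$ is bounded on $H^1_\rho(\X)$ and fix a ball $B_0=B(x_0,r)$. The argument splits into three steps.
\begin{enumerate}[(1)]
\item \emph{Bounded mean oscillation for all balls.} Any mean-zero $\a$ supported in $B_0$ with $\|\a\|_{L^\infty}\le\mu(B_0)^{-1}$ is a classical atom and therefore has $H^1_\rho$ norm $\lesssim1$. Hence $|\int T\a|\le\|T\a\|_{L^1}\le\|T\a\|_{H^1_\rho}\lesssim 1$, and duality gives $\mathrm{MO}(T^*1,B_0)\lesssim1$. This settles all balls with $r\ge\rho(x_0)/4$, where the logarithmic weight is harmless.
\item \emph{Choice of test atom for small balls.} For $r<\rho(x_0)/4$, take $\a_0=\frac{1}{2\mu(B_0)}(h-h_{B_0})\chi_{B_0}$ with $h=\sign(T^*1-(T^*1)_{B_0})$, as in the proof of Lemma \ref{07:01, 12/10/2023}. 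This gives $\mathrm{MO}(T^*1,B_0)=2\int T\a_0$.
\item \emph{Extracting the logarithm.} By Lemma \ref{17:37, 11/10/2023}, $\log(e+\rho(x_0)/r)\le 2(g_{x_0})_{B_0}$. We then split
\[
(g_{x_0})_{B_0}\int T\a_0=\int T\a_0\, g_{x_0}-\int T\a_0\,\bigl(g_{x_0}-(g_{x_0})_{B_0}\bigr).
\]
The first term is bounded by $\|T\a_0\|_{H^1_\rho}\|g_{x_0}\|_{\mathrm{BMO}_\rho}\lesssim1$, using the duality \cite[Theorem 2.1]{YYZ}. For the second term, decompose over the annuli $U_j(B_0)$ and apply H\"older. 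The factor $\|T\a_0\|_{L^q(U_j)}$ is controlled by Lemma \ref{15:47, 09/12/2023}\eqref{09:31, 15/11/2025}. The factor $\|g_{x_0}-(g_{x_0})_{B_0}\|_{L^{q'}(U_j)}\lesssim (j+1)\mu((2\kappa)^jB_0)^{1/q'}$ follows from Lemma \ref{14:51, 06/10/2023} with $\theta=0$, since $g_{x_0}\in\mathrm{BMO}$. The resulting series is $\sum_j(j+1)(2\kappa)^{-j\delta}\lesssim1$.
\end{enumerate}
Altogether $\log(e+\rho(x_0)/r)\,\mathrm{MO}(T^*1,B_0)\lesssim 1$, which is the required bound.

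The main obstacle I expect is the rigorous meaning of $T^*1$ and of the pairing $\int T\a=\langle T^*1,\a\rangle$, especially for the non-cancellative atoms with $r\ge\rho(x_0)/4$ and for the pairing with the unbounded function $g_{x_0}$. The first thing I would try is to define $T^*1$ via its action on finite combinations of cancellative atoms. The remaining justification is then the convergence of the annular sums above, which follows from the molecular decay $(2\kappa)^{-j\delta}$.
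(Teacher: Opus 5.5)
Your proof is correct. The paper does not actually prove this proposition; it quotes it from \cite[Theorem 5.2]{Ky25}, so the citation is the only route in the paper. Your argument is a self-contained proof built from exactly the tools the paper imports from that source, and it is not circular: none of Lemmas \ref{07:01, 14/10/2023}, \ref{14:50, 06/10/2023}, \ref{17:37, 11/10/2023}, \ref{15:47, 09/12/2023} depends on the proposition.

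Your sufficiency half is the log-type molecule scheme the paper uses for $(b-b_B)T\a$ in the proof of Theorem \ref{thm-main thm 4 chracterize BMO log by Tj on Hardy space}. The only change is that the bound on $\int T\a\,d\mu$ now comes from $T^*1\in\mathrm{BMO}^{\log}_\rho(\X)$ rather than from $b$.

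Your necessity half is the $g_{x_0}$-duality argument from the converse part of Lemma \ref{07:01, 12/10/2023}, with $\a_0(f-f_{B_0})$ replaced by $T\a_0$. Your annular estimate for the second term is just Lemma \ref{15:47, 09/12/2023}\eqref{09:32, 15/11/2025} with $b=g_{x_0}$ and $\theta=0$; this applies because $\a_0$ is bounded, hence also an $(H^1_\rho,s)$-atom.

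The obstacle you anticipate largely disappears if you define $T^*1$ by duality with $H^1_\rho(\X)$ instead of on mean-zero bounded functions. By Remark \ref{rem 1.4}\eqref{10:19, 22/12/2023}, or directly from Lemma \ref{15:47, 09/12/2023}\eqref{09:31, 15/11/2025}, $T$ is bounded from $H^1_\rho(\X)$ to $L^1(\X)$. So $f\mapsto\int_\X Tf\,d\mu$ is a bounded functional on $H^1_\rho(\X)$, and by \cite[Theorem 2.1]{YYZ} $T^*1$ is a genuine function in $\mathrm{BMO}_\rho(\X)$. This gives you three things:
\begin{enumerate}
\item The identity $\int T\a\,d\mu=\int \a\,T^*1\,d\mu$ holds for every $(H^1_\rho,q)$-atom, not only bounded ones. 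With your definition, sufficiency step (2) would otherwise need a density argument or a reduction to $L^\infty$-atoms via $H^{1,q}_\rho=H^{1,\infty}_\rho$.
\item The test function $\sign(T^*1-(T^*1)_{B_0})$ is meaningful.
\item The pairing with the unbounded $g_{x_0}$ never involves $T^*1$. It pairs $T\a_0\in H^1_\rho\cap L^q$ with $g_{x_0}\in\mathrm{BMO}_\rho\cap L^{q'}$, which has bounded support, and is handled exactly as in Lemma \ref{07:01, 12/10/2023}.
\end{enumerate}

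One small justification is also available inside the paper. In necessity step (1), a mean-zero bounded function on a ball with $r\ge\rho(x_0)$ is not an $(H^1_\rho,q)$-atom. It is, however, an $(H^1_\rho,q,\varepsilon)$-molecule of log-type, since Definition \ref{10:59, 05/12/2023} places no restriction on $r$. Lemma \ref{07:01, 14/10/2023} then gives the bound $\|\a\|_{H^1_\rho}\lesssim 1$ that you use.
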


	We now give the proofs of Theorems \ref{thm-main thm 3 chracterize BMO log by Tj} and \ref{thm-main thm 4 chracterize BMO log by Tj on Hardy space}.
	
	\begin{proof}
		[Proof of Theorem \ref{thm-main thm 3 chracterize BMO log by Tj}] Since $H^1_\rho(\X)$ admits a characterization in terms of $(s,\delta,\rho)$-Calder\'on--Zygmund operators $\{T_j\}_{j=1}^n$, we have
		\[H^1_\rho(\X)=\left\{f\in L^1(\X): T_j(f)\in L^1(\X) \text{ for all } 1\leq j\leq n\right\}\]
		and
		\begin{equation}\label{10:43, 15/11/2025}
			\|f\|_{H^1_\rho}\simeq \|f\|_{L^1}+\sum_{j=1}^{n} \|T_j(f)\|_{L^1}\quad\text{for all } f\in H^1_\rho(\X).
		\end{equation}   
		
		For every $1\leq j\leq n$ and every $(H^1_\rho,s)$-atom $\a$ related to a ball $B$, Lemma \ref{15:47, 09/12/2023}\eqref{09:32, 15/11/2025}, combined with the boundedness of the commutator $[b,T_j]$ from $H^1_\rho(\X)$ to $L^1(\X)$, implies that
		\begin{align}\label{10:41, 15/11/2025}
			\left\|T_j\left((b-b_B)\a\right)\right\|_{L^1}&\leq \|(b-b_B)T_j(\a)\|_{L^1} + \|[b,T_j](\a)\|_{L^1}\nonumber\\
			&\lesssim \|b\|_{\mathrm{BMO}_{\rho,\theta}}+\|[b,T_j]\|_{H^1_\rho\to L^1}\|\a\|_{H^1_\rho}\lesssim \|b\|_{\mathrm{BMO}_{\rho,\theta}} +\|[b,T_j]\|_{H^1_\rho\to L^1}.
		\end{align}
		Moreover, by Lemma \ref{17:53, 11/10/2023}\eqref{14:01, 12/10/2023},
		\begin{equation}\label{09:01, 02/01/2026}
			\|(b-b_B)\a\|_{L^1}\lesssim \|b\|_{\mathrm{BMO}_{\rho,\theta}}.
		\end{equation}
		
		Combining \eqref{10:43, 15/11/2025}--\eqref{09:01, 02/01/2026}, we obtain
		\[
		\left\|(b-b_B)\a\right\|_{H^1_\rho}\lesssim \|b\|_{\mathrm{BMO}_{\rho,\theta}} + \sum_{j=1}^{n}\|[b,T_j]\|_{H^1_\rho\to L^1}.
		\]

		This, together with Lemma \ref{07:01, 12/10/2023} and Theorem \ref{thm-main thm 2 characterization BMO log}, implies  that $b \in \mathrm{BMO}_{\rho,\theta}^{\log}(\X)$; moreover, we obtain the norm equivalence
		\[\|b\|_{\mathrm{BMO}_{\rho,\theta}^{\log}}\simeq \|b\|_{\mathrm{BMO}_{\rho,\theta}}+ \sum_{j=1}^{n}\|[b,T_j]\|_{H^1_\rho\to L^1},\]
		where the constants are independent of $b$. This completes the proof of Theorem \ref{thm-main thm 3 chracterize BMO log by Tj}.
		
	\end{proof}
	
	\begin{proof}[Proof of Theorem \ref{thm-main thm 4 chracterize BMO log by Tj on Hardy space}]
	 Let $q:=\frac{s+1}{2}\in (1,s)$. Then, by \cite[Proposition 3.2]{YZ}, it suffices to prove that
		\begin{equation}\label{07:39, 10/12/2023}
			\|[b,T](\a)\|_{H^1_\rho}\lesssim \|b\|_{\mathrm{BMO}^{\log}_{\rho,\theta}}
		\end{equation}
		for every $(H^1_\rho,q)$-atom $\a$ related to a ball $B$. Indeed, since $\theta\in \big[0,\frac{\delta}{i(\rho)+1}\big)$ and $\a$ is an $(H^1_\rho,q)$-atom related to the ball $B=B(x_0,r)$, there exists $k_0>i(\rho)$ such that \eqref{admissible function} holds and
		\begin{equation}\label{06:57, 06/12/2023}		
			\varepsilon:= \frac{\delta -(k_0+1)\theta}{2}>0, \quad 0<r<\rho(x_0).
		\end{equation}		
		
		Let $p:=\frac{q+1}{2}\in (1,\infty)$ and $m:=\frac{q(q+1)}{q-1}\in (1,\infty)$, then $\frac{1}{p}=\frac{1}{m} +\frac{1}{q}$. For any $j\in\mathbb{Z}^+_0$, by the  H\"older inequality, Lemma \ref{14:51, 06/10/2023}, Lemma \ref{15:47, 09/12/2023}\eqref{09:31, 15/11/2025}, and the estimate $(j+2)\lesssim (2\kappa)^{j\varepsilon}$ (see Lemma \ref{23:50, 07/10/2023}), we obtain
		\begin{align}\label{09:58, 08/10/2023}
			&\|(b-b_B)T\a\|_{L^p(U_j(B))}\leq \|b-b_B\|_{L^m(U_j(B))} \|T\a\|_{L^q(U_j(B))}\nonumber\\
			&\hskip1cm\lesssim \mu((2\kappa)^{j+1} B)^{\frac{1}{m}}(j+2) (2\kappa)^{(j+1) (k_0+1)\theta}\|b\|_{\mathrm{BMO}_{\rho,\theta}} (2\kappa)^{-j\delta}\mu((2\kappa)^j B)^{\frac{1}{q}-1}\nonumber\\
			&\hskip1cm\lesssim (2\kappa)^{-j\varepsilon} \mu((2\kappa)^j B)^{\frac{1}{p}-1} \|b\|_{\mathrm{BMO}^{\log}_{\rho,\theta}}.
		\end{align}
		
		On the other hand, by the H\"older inequality, Lemma \ref{15:47, 09/12/2023}\eqref{09:31, 15/11/2025}, Lemma \ref{14:50, 06/10/2023}, the estimate $\log\left(e+(2\kappa)^{j+1} t\right)\lesssim (2\kappa)^{j\varepsilon} \log\left(e+ t\right)$ (see Lemma \ref{23:50, 07/10/2023}), and \eqref{06:57, 06/12/2023}, we obtain
		\begin{align*}
			&\left|\int_{\X} (b(x)-b_B)T\a(x)d\mu(x)\right|\leq \sum_{j=0}^\infty \int_{U_j(B)} |b(x)-b_B| |T\a(x)|d\mu(x)\\
			&\hskip1cm\leq \sum_{j=0}^\infty \|b-b_B\|_{L^{q'}(U_j(B))} \|T\a\|_{L^q(U_j(B))}\\
			&\hskip1cm\lesssim \sum_{j=0}^\infty  \mu((2\kappa)^{j+1} B)^{\frac{1}{q'}}(j+2) \frac{(2\kappa)^{(j+1)(k_0+1)\theta}}{\log\left(e+\frac{\rho(x_0)}{(2\kappa)^{j+1} r}\right)} \|b\|_{\mathrm{BMO}^{\log}_{\rho,\theta}} (2\kappa)^{-j\delta}\mu((2\kappa)^j B)^{\frac{1}{q}-1}\\
			&\hskip1cm\lesssim \frac{\|b\|_{\mathrm{BMO}^{\log}_{\rho,\theta}}}{\log\left(e+\frac{\rho(x_0)}{r}\right)}\sum_{j=0}^\infty (j+2)(2\kappa)^{-j\varepsilon} \\
			&\hskip1cm\lesssim  \frac{\|b\|_{\mathrm{BMO}^{\log}_{\rho,\theta}}}{\log\left(e+\frac{\rho(x_0)}{r}\right)}.
		\end{align*}
		This, together with \eqref{09:58, 08/10/2023}, shows that $(b-b_B)T a$ is, up to the factor $C \|b\|_{\mathrm{BMO}^{\log}_{\rho,\theta}}$, an $(H^1_\rho,p,\varepsilon)$-molecule related to the ball $B=B(x_0,r)$. Therefore, by Lemma \ref{07:01, 14/10/2023},
		$$\|(b-b_B)T\a\|_{H^1_\rho}\lesssim \|b\|_{\mathrm{BMO}^{\log}_{\rho,\theta}}.$$
		Thus, by Lemma \ref{07:01, 12/10/2023} we have
		\begin{align*}
			\|[b,T](\a)\|_{H^1_\rho}&\leq \|(b-b_B)T\a\|_{H^1_\rho}+ \|T(\a(b-b_B))\|_{H^1_\rho}\\
			&\lesssim \|b\|_{\mathrm{BMO}^{\log}_{\rho,\theta}} + \|T\|_{H^1_\rho\to H^1_\rho} \|\a(b-b_B))\|_{H^1_\rho} \lesssim \|b\|_{\mathrm{BMO}^{\log}_{\rho,\theta}},
		\end{align*}
		where in the last inequality we used Proposition \ref{prop- bounded on H1 and T1}.
		
		This proves \eqref{07:39, 10/12/2023}, and thus completes our proof of the boundedness of the commutator $[b,T]$ on the Hardy space $H^1_\rho(\X)$.

		\medskip
		The reverse direction can be proved similarly to the proof of Theorem \ref{thm-main thm 3 chracterize BMO log by Tj}, and hence we omit the details.
		
		This completes the proof.

	\end{proof}

	\section{Some examples and applications}\label{16:59, 30/12/2025}
In this section, we present several examples of operators that fall within the scope of our theory. Although the list is not exhaustive, it demonstrates the effectiveness and applicability of our approach.

	\subsection{Sub-Laplacian Schr\"odinger operators on stratified Lie groups}\label{19:33, 01/02/2026}
	
	Let $\mathbb G$ be a stratified Lie group and let $\mathfrak{g}$ be its Lie algebra of dimension $n$. Namely, $\mathbb G$ is nilpotent, connected and simply connected, and the Lie algebra $\mathfrak{g}$ admits a vector space decomposition $\mathfrak{g} = V_1 \oplus \cdots \oplus V_m$ such that $[V_1, V_k] = V_{k+1}$ for $1 \leq k < m$ and $[V_1, V_m] = 0$. Let $X = \{X_1, \dots, X_{d_1}\}$, where $d_1=\dim V_1$, be left invariant vector fields on $\mathbb G$ satisfying the H\"ormander condition. Namely, $X$, together with its commutators of order $\leq m$, generates the tangent space of $\mathbb G$ at each point of $\mathbb G$. Let $d$ be the Carnot-Carath\'eodory (control) metric on $\mathbb G$ associated to $X$. The Haar measure $\mu$ on $\mathbb G$ is simply the Lebesgue measure on $\R^n$ under the identification of $\mathbb G$ with $\mathfrak{g}$ and the identification of $\mathfrak{g}$ with $\R^n$. Then $(\mathbb G, d,\mu)$ is a space of homogeneous type, and in fact an RD-space in the sense of Han, M\"uller and Yang \cite{HMY}.	Moreover,
	\begin{equation}
		\mu(B(x,r))\simeq r^Q\quad\text{for all $x\in\mathbb G$ and all $r>0$},
	\end{equation}
	where $Q=\sum_{k=1}^{m} k \dim V_k$ is the homogeneous dimension of $\mathbb G$ (see, for example, \cite[Subsection 4.2]{LHD}). Throughout this subsection, we assume that $Q\geq 3$.
	
	The sub-Laplacian is given by $\Delta_{\mathbb G} = \sum_{j=1}^{d_1} X_j^2$. The gradient operator $\nabla_{\mathbb G}$ is denoted by $\nabla_{\mathbb G} = (X_1, \dots, X_{d_1})$. Let $L = -\Delta_{\mathbb G} + V$ be a Schr\"odinger operator on $\mathbb G$, where $V$ is a nonnegative potential belonging to the reverse H\"older class $\mathrm{RH}_{Q/2}(\mathbb G, d,\mu)$; see \eqref{def:reverse-holder-class}. The reverse H\"older index of $V$ is then defined by
	\begin{equation}\label{14:47, 30/11/2023}
		\tau(V):= \sup\left\{r>1: V\in \mathrm{RH}_{r}(\mathbb G, d,\mu)\right\} \in (q,\infty].
	\end{equation}
	
	\begin{remark}
		If $V \in \mathrm{RH}_{q}(\mathbb G, d,\mu)$ for some $q \in (1, \infty)$, then $V \in \mathrm{RH}_{s}(\mathbb G, d,\mu)$ for every $s \in (1, q)$, and there exists $r \in (q,\infty)$ such that $V \in \mathrm{RH}_{r}(\mathbb G, d,\mu)$. See, for example, \cite[Chapter I]{ST}. 
	\end{remark}

	Following \cite{Sh, YZ}, for any $V\in \mathrm{RH}_{Q/2}(\mathbb G, d,\mu)$, we define
	\begin{equation}\label{eq-rho Lie group}
		\rho(x)=\sup\left\{r>0: \frac{1}{r^{Q-2}}\int_{B(x,r)} V(y) d\mu(y)\leq 1 \right\}
	\end{equation}
	for all $x\in \mathbb G$. Then, by \cite[Proposition 2.1]{YZ}, $\rho$ is an	admissible function on $\mathbb G$. Following \cite{LLL, YZ}, we define the Hardy space $H^1_{L}(\mathbb G)$ as the set of functions $f\in L^1(\mathbb G)$ such that 
	\[\|f\|_{H^1_{L}(\mathbb G)}:= \|\mathbb M_{L} f\|_{L^1(\mathbb G)}<\infty,\]
	where $\mathbb M_{L} f(x)= \sup_{t>0} \left|e^{- t L} f(x)\right|$ for all $x\in\mathbb G$. Then, by \cite[Theorem 5.2]{YZ}, we have
	\begin{equation}
		H^1_{L}(\mathbb G)=H^{1}_{\rho}(\mathbb G)
	\end{equation}
	with equivalent norms. Moreover, the space $H^1_{L}(\mathbb G)$ admits a characterization in terms of singular integral operators (see \cite[Section 7]{LLL}), as stated in the following theorem.
	
	\begin{theorem}\label{thm:H1L-Riesz-characterization}
		Let $V\in \mathrm{RH}_{Q/2}(\mathbb G, d,\mu)$ and let $R_j^{L}=X_j L^{-1/2}$ ($j = 1, \dots, d_1$) be the Riesz transforms. Then, a function $f$ belongs to $H^1_{L}(\mathbb G)$ if and only if $f\in L^1(\mathbb G)$ and $R_j^{L} f\in L^1(\mathbb G)$ for all $j = 1, \dots, d_1$. Moreover,
		\[\|f\|_{H^1_{L}(\mathbb G)}\simeq \|f\|_{L^1(\mathbb G)}+\sum_{j=1}^{d_1} \|R_j^{L} f\|_{L^1(\mathbb G)}\quad\text{for all } f\in H^1_{L}(\mathbb G).\]
	\end{theorem}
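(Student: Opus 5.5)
The plan is to combine three ingredients: the identification $H^1_L(\mathbb G)=H^1_\rho(\mathbb G)$ from \cite[Theorem 5.2]{YZ}, a characterization of $H^1_\rho(\mathbb G)$ by the Riesz transforms, and the boundedness of $R_j^L$ from $H^1_\rho$ into $L^1$. By the identification, it suffices to prove
\[
\|f\|_{H^1_\rho}\simeq \|f\|_{L^1}+\sum_{j=1}^{d_1}\|R_j^Lf\|_{L^1}.
\]

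\emph{Upper bound.} Here I would show that each $R_j^L$ is an $(s,\delta,\rho)$-Calder\'on--Zygmund operator for some $s\in(1,\infty)$ and $\delta\in(0,1]$. The needed kernel bounds come from the subordination formula
\[
L^{-1/2}=\pi^{-1/2}\int_0^\infty e^{-tL}\,t^{-1/2}\,dt,
\]
together with Gaussian upper bounds for $X_j e^{-tL}$ that carry the extra decay $(1+\sqrt t/\rho(x))^{-N}$; these estimates are available for $V\in \mathrm{RH}_{Q/2}$. The $L^2$-boundedness of $R_j^L$ follows from $\|X_jf\|_{L^2}\le\|L^{1/2}f\|_{L^2}$.

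Given this, Remark \ref{rem 1.4}\eqref{10:19, 22/12/2023} places $R_j^L$ in $\K_{\rho,0,s}$, so $R_j^L$ is bounded from $H^1_\rho$ into $L^1$. Combined with $\|f\|_{L^1}\le\|f\|_{H^1_\rho}$, this gives the upper bound.

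\emph{Lower bound.} This is the main obstacle. Take $f\in L^1$ with all $R_j^Lf\in L^1$. I would follow the approach of \cite[Section 7]{LLL}, which adapts the Euclidean Dziuba\'nski--Zienkiewicz argument. The key steps are:
\begin{enumerate}[(a)]
\item Localize with a partition of unity $\{\psi_k\}$ subordinate to critical balls $B(x_k,\rho(x_k))$, which have bounded overlap.
\item On each critical ball, compare $R_j^L$ with the classical Riesz transform $X_j(-\Delta_{\mathbb G})^{-1/2}$. The difference kernel is integrable locally, using the bound $\int_{B(x,r)}V\lesssim r^{Q-2}(r/\rho(x))^{\delta_0}$ for $r\le\rho(x)$.
\item Away from the ball, use the decay of the kernel to control the tails.
\item This shows that $\psi_k f$ has local classical Riesz transforms in $L^1$, with bounds summable in $k$.
\item Apply the local Riesz-transform characterization of the localized Hardy space on stratified groups, due to Folland--Stein, to get that each $\psi_k f$ is a sum of atoms on $B(x_k,\rho(x_k))$.
\item Summing over $k$ gives an $H^1_\rho$ decomposition of $f$ with $\|f\|_{H^1_\rho}\lesssim \|f\|_{L^1}+\sum_j\|R_j^Lf\|_{L^1}$.
\end{enumerate}

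Within the lower bound, I expect the hardest single step to be controlling the commutator terms $[R_j^L,\psi_k]$ in $L^1$, uniformly in $k$. For this I would use the H\"older regularity of the kernel of $R_j^L$ together with $\|\nabla_{\mathbb G}\psi_k\|_\infty\lesssim\rho(x_k)^{-1}$.
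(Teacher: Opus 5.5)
Your architecture matches the source behind the statement. The paper proves nothing here itself: it uses $H^1_L(\mathbb G)=H^1_\rho(\mathbb G)$ from \cite{YZ} and cites \cite[Section 7]{LLL}. That argument is an upper bound from $H^1_\rho\to L^1$ estimates for $R_j^L$, plus a Dziuba\'nski--Zienkiewicz localization for the lower bound.

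\textbf{The gap.} The ingredient you propose for the kernel estimates is false. Pointwise Gaussian bounds for the kernel of $X_je^{-tL}$ do not follow from $V\in\mathrm{RH}_{Q/2}$. Take $\mathbb R^n$ with $n\ge 3$ and $V(x)=|x|^{-a}$, $1<a<2$. Then $V\in\mathrm{RH}_{Q/2}$, with $\tau(V)=n/a<n$. Since $\Delta_xp_t(\cdot,y)=V\,p_t(\cdot,y)+\partial_tp_t(\cdot,y)$, with $\partial_tp_t(\cdot,y)$ bounded and $p_t(0,y)>0$, the gradient $\nabla_xp_t(x,y)$ blows up like $|x|^{1-a}$ at the origin. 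In general, when $\tau(V)<Q$, gradients of $L$-harmonic functions are only locally in $L^p$ for $p<p_0$. This is why the paper, in the proposition after the theorem, only claims that $R_j^L$ is an $(s,\delta,\rho)$-Calder\'on--Zygmund operator for $s<p_0$. It is also why Remark~\ref{rem 1.4}(iii) warns that such Riesz transforms need not be $(\delta,\rho)$-Calder\'on--Zygmund operators.

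\textbf{The repair.} Use only averaged kernel bounds: $L^s$ estimates ($s<p_0$) of $K(\cdot,y)$ and of its increments on dyadic annuli. These come from Caccioppoli-type and fundamental-solution estimates, as in \cite{HQLi,JLiu,GLP}. They are exactly \eqref{11:08, 13/12/2023}--\eqref{11:09, 13/12/2023}, so Remark~\ref{rem 1.4}\eqref{10:19, 22/12/2023} then gives your upper bound. The lower bound needs the same care.
\begin{enumerate}[(a)]
\item In step (b), bound the difference kernel in $L^1(d\mu(x))$ through $e^{t\Delta_{\mathbb G}}-e^{-tL}=\int_0^te^{(t-u)\Delta_{\mathbb G}}Ve^{-uL}\,du$. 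Then $X_j$ falls only on the sub-Laplacian heat kernel, which does have pointwise Gaussian derivative bounds.
\item For the commutators $[R_j^L,\psi_k]$ you do not need, and do not have, regularity of the kernel in $x$; size is enough. One has $\sum_k|\psi_k(x)-\psi_k(y)|\lesssim\min\{1,d(x,y)/\rho(y)\}$. By H\"older and \eqref{11:08, 13/12/2023}, $\int_{2^i\le d(x,y)<2^{i+1}}|K(x,y)|\,d\mu(x)\lesssim(1+2^i/\rho(y))^{-N}$. Together these give $\sup_y\int|K(x,y)|\min\{1,d(x,y)/\rho(y)\}\,d\mu(x)<\infty$.
\item Step (e) rests on the Riesz-transform characterization of $H^1$ on stratified groups, which is usually credited to Christ and Geller rather than Folland--Stein. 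The local, scale-$\rho(x_k)$ version you use is obtained from it by dilation and a Goldberg-type argument.
\end{enumerate}
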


	We now present applications of our main results to singular integral operators associated to $L$. We first consider the Riesz transforms $R^L_j, j=1\ldots, d_1$. By combining \cite[TH\'EOR\`EME C]{HQLi}, the kernel estimates for $R_j^{L}$ established in \cite[Lemma 12]{JLiu}, and an argument similar to that in the proof of \cite[Theorem 1(iii)]{GLP}, we obtain the following: 
	\begin{proposition}
			Let $V\in \mathrm{RH}_{Q/2}(\mathbb{G}, d,\mu)$,   $0<\delta<\min\left\{2-\frac{Q}{\tau(V)},1\right\}$, and $1<s< p_0$, where
		\[p_0:=\begin{cases}
			\frac{Q \tau(V)}{Q-\tau(V)}&\text{if } \tau(V)\in (Q/2, Q),\\
			\infty &\text{if } \tau(V)\in [Q,\infty],
		\end{cases}\]
		  and $\rho$ be as in \eqref{eq-rho Lie group}. Then, for each $j=1,\ldots,d_1$, the Riesz transform $R_j^{L}$ is an $(s,\delta,\rho)$-Calder\'on--Zygmund operator and is bounded on
		$H^1_{L}(\mathbb G)$. 
		\end{proposition}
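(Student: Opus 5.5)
The statement has two parts: $R_j^L$ is an $(s,\delta,\rho)$-Calder\'on--Zygmund operator, and $R_j^L$ is bounded on $H^1_L(\mathbb G)$. I would verify the three defining conditions of an $(s,\delta,\rho)$-Calder\'on--Zygmund operator in turn, and then get the $H^1$ boundedness from Proposition \ref{prop- bounded on H1 and T1}.

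\textbf{Step 1: boundedness on $L^s$.} By \cite[TH\'EOR\`EME C]{HQLi}, $R_j^L$ is bounded on $L^p(\mathbb G)$ for $1<p<p_0$, with $p_0$ as in the statement. In particular $R_j^L$ maps $L^s$ into $L^{s,\infty}$ for every $1<s<p_0$.

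\textbf{Step 2: the size condition \eqref{11:08, 13/12/2023}.} I would use the pointwise kernel estimates of \cite[Lemma 12]{JLiu}, following the structure of \cite[Theorem 1(iii)]{GLP}. The kernel of $R_j^L$ should satisfy
$$|K(x,y)|\lesssim_N \frac{\left(1+\frac{d(x,y)}{\rho(x)}\right)^{-N}}{d(x,y)^{Q-1}}\left(\int_{B(x,d(x,y)/4)}\frac{V(z)}{d(x,z)^{Q-1}}\,d\mu(z)+\frac{1}{d(x,y)}\right).$$
The potential term is handled by integrating in $x$ over the annulus $R\le d(x,x_0)<2\kappa R$ in $L^s$. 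The $V$-integral is a fractional integral $I_1(V\chi_{B})$, and by the reverse H\"older property $V\in \mathrm{RH}_{\tau}$ with $\tau<\tau(V)$ close to $\tau(V)$ it lies in $L^t$ with $\frac1t=\frac1\tau-\frac1Q$. This is where the restriction $s<p_0$ enters. The doubling estimate $\int_{B(x,R)}V\lesssim R^{Q-2}\left(1+R/\rho(x)\right)^{C}$ then controls the size of $V$, and the extra polynomial factor is absorbed by taking $N$ larger. Finally, $\rho(x)\simeq\rho(x_0)$ up to admissibility factors, so \eqref{admissible function} converts the decay in $\rho(x)$ into decay in $\rho(x_0)$.

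\textbf{Step 3: the regularity condition \eqref{11:09, 13/12/2023}.} This is in the first variable, $K(y,x)-K(x_0,x)$, which is the difficult variable for $\nabla L^{-1/2}$ because the gradient falls on it. I would write $R_j^L=X_j\int_0^\infty e^{-tL}t^{-1/2}\,dt$. The H\"older continuity of $\nabla$ of the fundamental solution of $L$ in \cite{JLiu} costs the exponent $\delta<2-Q/\tau(V)$. To establish it, I would compare $\Gamma_L$ with the fundamental solution $\Gamma_0$ of $-\Delta_{\mathbb G}$ through $\Gamma_L=\Gamma_0-\Gamma_0 V\Gamma_L$, and bound the difference term using H\"older's inequality with $V\in \mathrm{RH}_\tau$. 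After taking the $L^s$ average in $x$, this gives the factor $(d(y,x_0)/R)^\delta$, again requiring $s<p_0$. \textbf{This is the main obstacle.} If only a pointwise bound in terms of $I_1(V)$ is available, the $L^s$-integration is carried out exactly as in Step 2.

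\textbf{Step 4: boundedness on $H^1_L(\mathbb G)$.} Since $H^1_L(\mathbb G)=H^1_\rho(\mathbb G)$, Proposition \ref{prop- bounded on H1 and T1} reduces this to showing $(R_j^L)^*1\in \mathrm{BMO}^{\log}_\rho$. Alternatively, one can use the atom criterion \cite[Proposition 3.2]{YZ} directly, as in the proof of Theorem \ref{thm-main thm 4 chracterize BMO log by Tj on Hardy space}. This requires two bounds for an atom $\a$ on a ball $B$:
\begin{enumerate}[\rm (i)]
\item the molecule decay $\|R_j^L\a\|_{L^q(U_k(B))}\lesssim (2\kappa)^{-k\delta}\mu((2\kappa)^kB)^{1/q-1}$, which is Lemma \ref{15:47, 09/12/2023}\eqref{09:31, 15/11/2025};
\item the mean bound $\left|\int R_j^L\a\right|\lesssim \log(e+\rho(x_0)/r)^{-1}$.
\end{enumerate}
For (ii), write $\int R_j^L\a=\langle \a,(R_j^L)^*1\rangle$ and use the formal identity $(R_j^L)^*1=-L^{-1/2}X_j 1=0$, justified through truncations. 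For atoms with $r\sim\rho(x_0)$, the size estimate of Step 2 makes the integral bounded. Then Lemma \ref{07:01, 14/10/2023} shows that $R_j^L\a$ is a log-type molecule, which gives the $H^1$ boundedness.
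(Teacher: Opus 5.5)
Your Steps 1, 2 and 4 follow the paper's route: H.-Q. Li's Th\'eor\`eme C for the $L^s$ bound, the kernel bounds of \cite[Lemma 12]{JLiu} integrated over annuli as in \cite{GLP}, and $(R_j^L)^*1=0$ together with Proposition \ref{prop- bounded on H1 and T1}. Step 3, however, has a genuine gap. You place the H\"older difference in the slot on which $X_j$ acts, and you propose to show that the horizontal gradient of the kernel is H\"older in that variable with exponent $\delta<2-Q/\tau(V)$.

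That claim is false. Take $\mathbb G=\mathbb R^n$ (a step-one stratified group, $Q=n$) and $V(x)=|x|^{-a}$ with $1<a<2$. Then $V\in\mathrm{RH}_{n/2}$ and $\tau(V)=n/a<n$. The gradient of a positive solution of $Lu=0$ blows up like $u(0)|x|^{1-a}$ at the origin; for radial $u$ this follows at once from $(r^{n-1}u')'=r^{n-1-a}u$. The exponent $2-Q/\tau(V)$ measures the H\"older regularity of solutions themselves, not of their gradients.

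In this example the estimate you aim for fails outright for $R_j^L$. Take $|x_0|=\epsilon$, $y=-x_0$ and $R$ fixed. The left-hand side grows like $\epsilon^{1-a}$, while the right-hand side tends to $0$ like $\epsilon^{\delta}$. So no manipulation of $\Gamma_L=\Gamma_0-\Gamma_0V\Gamma_L$ can produce it.

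Your fallback (``the $L^s$-integration is carried out exactly as in Step 2'') fails for a reason visible in your own Step 2. There the $V$-integral is centred at the gradient variable. In your reading of Step 3, that variable is the fixed point $y$ or $x_0$, not the variable of integration, so integrating in $x$ gains nothing.

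What the paper actually uses puts the difference in the second, non-gradient slot and takes the $L^s$ norm in the gradient slot. Case 1 of Lemma \ref{15:47, 09/12/2023}\eqref{09:31, 15/11/2025} writes $T\a(x)=\int_B(K(x,y)-K(x,x_0))\a(y)\,d\mu(y)$, so the condition needed is
\[
\Big(\int_{R\le d(x,x_0)<2\kappa R}|K(x,y)-K(x,x_0)|^s\,d\mu(x)\Big)^{1/s}\lesssim\Big[\frac{d(y,x_0)}{R}\Big]^{\delta}\mu(B(x_0,R))^{\frac1s-1},\qquad d(y,x_0)\le\frac{R}{2\kappa}.
\]
The printed \eqref{11:09, 13/12/2023} has the two arguments transposed, and this is the version you should verify.

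This is the kind of bound the kernel estimates of \cite[Lemma 12]{JLiu} provide, the stratified analogue of those in \cite{GLP}. Write $L^{-1/2}=\frac1\pi\int_0^\infty\lambda^{-1/2}(L+\lambda)^{-1}\,d\lambda$, and let $\Gamma_\lambda$ be the kernel of $(L+\lambda)^{-1}$. For fixed $x$, the function $y\mapsto X_{j,x}\Gamma_\lambda(x,y)$ solves $(L+\lambda)u=0$ away from $x$. It is therefore H\"older of any order $\delta<\min\{2-Q/\tau(V),1\}$ in $y$. Consequently $|K(x,y)-K(x,y')|$ is bounded by $(d(y,y')/d(x,y))^{\delta}$ times your Step 2 majorant, whose $V$-integral is still centred at $x$. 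Step 3 then reduces to the same $L^t$ bound for $I_1(V\chi_B)$ as in Step 2, which is where $s<p_0$ enters. No regularity of the gradient is needed.
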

		
		This, together with  Remark \ref{rem 1.4}\eqref{10:19, 22/12/2023}, implies that  $R_j^{L} \in \K_{\rho,\theta,s}$ for all $\theta\in \bigl[0,\frac{\delta}{i(\rho)+1}\bigr)$. In addition, $(R_j^{L})^*1 =0$, and hence $(R_j^{L})^*1\in \mathrm{BMO}_{\rho}^{\log}(\mathbb G)$.

	Consequently, from Theorems \ref{thm-main thm 1 from H1 to weak L1}, \ref{thm-main thm 4 chracterize BMO log by Tj on Hardy space}, and \ref{thm:H1L-Riesz-characterization}, we obtain the following result.
	
	\begin{theorem}\label{07:53, 30/01/2026}
		Let $V\in \mathrm{RH}_{Q/2}(\mathbb{G}, d,\mu)$, $0\leq \theta<\min\left\{\frac{2-\frac{Q}{\tau(V)}}{i(\rho)+1},\frac{1}{i(\rho)+1}\right\}$ and $\rho$ be as in \eqref{eq-rho Lie group}. Then the following assertions hold:
		\begin{enumerate}[\rm (i)]
			\item If $b\in \mathrm{BMO}_{\rho,\theta}(\mathbb{G})$, then the commutator $[b, R_j^{L}]$ is bounded from $H^1_{L}(\mathbb{G})$ into $L^{1,\infty}(\mathbb{G})$ for all $j=1,\ldots, d_1$.
			
			\item\label{19:06, 28/01/2026} If $b\in \mathrm{BMO}_{\rho,\theta}^{\log}(\mathbb{G})$, then the commutator $[b, R_j^{L}]$ is bounded on $H^1_{L}(\mathbb{G})$ for all $j=1,\ldots, d_1$.
			
			\item Conversely, if $b\in \mathrm{BMO}_{\rho,\theta}(\mathbb{G})$ is such that all commutators $[b, R_j^{L}]$ are bounded on $H^1_{L}(\mathbb{G})$, then $b\in \mathrm{BMO}_{\rho,\theta}^{\log}(\mathbb{G})$. Moreover,		
			\[
			\|b\|_{{\rm BMO}_{\rho,\theta}^{\log}(\mathbb{G})}\simeq \|b\|_{{\rm BMO}_{\rho,\theta}(\mathbb{G})} + \sum_{j=1}^{d_1} \|[b,R_j^{L}]\|_{H^1_{L}(\mathbb{G})\to H^1_{L}(\mathbb{G})}.
			\]
		\end{enumerate}
	\end{theorem}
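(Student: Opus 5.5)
The statement follows from the abstract results once the Riesz transforms $R_j^L$ are placed in the right classes. Fix $\theta$ as in the hypothesis. Since the inequality is strict, we can choose $\delta$ with
\[
(i(\rho)+1)\theta<\delta<\min\{2-\tfrac{Q}{\tau(V)},1\}.
\]
We also fix some $s\in(1,p_0)$, say $s=\min\{2,\frac{1+p_0}{2}\}$ (with $s=2$ if $p_0=\infty$). By the preceding Proposition, each $R_j^L$ is then an $(s,\delta,\rho)$-Calder\'on--Zygmund operator that is bounded on $H^1_L(\mathbb G)=H^1_\rho(\mathbb G)$. By Remark \ref{rem 1.4}\eqref{10:19, 22/12/2023}, $R_j^L$ is of weak type $(1,1)$ and belongs to $\K_{\rho,\theta,s}$, because $\theta<\frac{\delta}{i(\rho)+1}$.

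For (i), we apply Theorem \ref{thm-main thm 1 from H1 to weak L1} with $q=s$. The condition $\theta<\frac{1}{i(\rho)+1}$ holds by hypothesis, $R_j^L\in\K_{\rho,\theta,s}$, and $R_j^L$ is of weak type $(1,1)$. Hence $[b,R_j^L]$ is bounded from $H^1_\rho(\mathbb G)$ into $L^{1,\infty}(\mathbb G)$. Since $H^1_L(\mathbb G)=H^1_\rho(\mathbb G)$ with equivalent norms, this gives (i).

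For (ii), we use the first part of Theorem \ref{thm-main thm 4 chracterize BMO log by Tj on Hardy space}. This needs $(R_j^L)^*1\in\mathrm{BMO}^{\log}_\rho(\mathbb G)$, which the paper notes holds because $(R_j^L)^*1=0$. Alternatively, the $H^1_\rho$-boundedness from the Proposition gives it through Proposition \ref{prop- bounded on H1 and T1}, and this avoids having to interpret $T^*1$ as a distribution. The theorem then yields boundedness of $[b,R_j^L]$ on $H^1_\rho(\mathbb G)=H^1_L(\mathbb G)$, together with the upper bound $\|[b,R_j^L]\|_{H^1_L\to H^1_L}\lesssim\|b\|_{\mathrm{BMO}^{\log}_{\rho,\theta}}$.

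For (iii), we use the converse part of Theorem \ref{thm-main thm 4 chracterize BMO log by Tj on Hardy space} with $T_j=R_j^L$, $j=1,\dots,d_1$. Its remaining hypothesis is that $H^1_\rho(\mathbb G)$ admits a characterization in terms of $\{R_j^L\}$, and this is exactly Theorem \ref{thm:H1L-Riesz-characterization} combined with $H^1_L=H^1_\rho$. The theorem gives $b\in\mathrm{BMO}^{\log}_{\rho,\theta}(\mathbb G)$ with the lower norm bound, and part (ii) supplies the upper bound, so the two-sided equivalence follows.

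The only point needing care is that $\delta$ and $s$ must be chosen \emph{once}, so that the same Calder\'on--Zygmund parameters are used simultaneously in the $\K_{\rho,\theta,s}$ membership, in Theorem \ref{thm-main thm 4 chracterize BMO log by Tj on Hardy space}, and in the Riesz characterization. Once $\delta$ is fixed between $(i(\rho)+1)\theta$ and $\min\{2-\frac{Q}{\tau(V)},1\}$, every hypothesis is checked directly. The rest is a matter of translating between $H^1_L$ and $H^1_\rho$ norms, which only costs constants.
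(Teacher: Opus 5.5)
Your proposal is correct and is essentially the paper's argument. The paper likewise places each $R_j^L$ in the $(s,\delta,\rho)$-Calder\'on--Zygmund class, hence in $\K_{\rho,\theta,s}$ via Remark \ref{rem 1.4}\eqref{10:19, 22/12/2023}, and then reads off (i)--(iii) from Theorems \ref{thm-main thm 1 from H1 to weak L1} and \ref{thm-main thm 4 chracterize BMO log by Tj on Hardy space} together with the Riesz characterization in Theorem \ref{thm:H1L-Riesz-characterization}; the only cosmetic difference is that the paper checks the $T^*1$ hypothesis via $(R_j^L)^*1=0$, while your alternative through $H^1_L$-boundedness and Proposition \ref{prop- bounded on H1 and T1} works equally well.
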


	Let $\mathrm{BMO}_{L}(\mathbb{G}):=\mathrm{BMO}_{\rho}(\mathbb{G})$ be defined as \eqref{eq-BMO}, which is  the dual space of $H^1_{L}(\mathbb{G})$.   As a consequence of Theorem \ref{07:53, 30/01/2026}, we obtain the following corollary.

	\begin{corollary}\label{14:35, 03/02/2026}
		Let $V\in \mathrm{RH}_{Q/2}(\mathbb{G}, d,\mu)$, $0\leq \theta<\min\left\{\frac{2-\frac{Q}{\tau(V)}}{i(\rho)+1},\frac{1}{i(\rho)+1}\right\}$  and $\rho$ be as in \eqref{eq-rho Lie group}. Then the following assertions hold:
		\begin{enumerate}[\rm (i)]		
			\item\label{14:28, 03/02/2026} If $b\in \mathrm{BMO}_{\rho,\theta}^{\log}(\mathbb{G})$, then the commutator $[b, (R_j^{L})^*]$ is bounded on $\mathrm{BMO}_{L}(\mathbb{G})$ for all $j=1,\ldots, d_1$.
			
			\item Conversely, if $b\in \mathrm{BMO}_{\rho,\theta}(\mathbb{G})$ satisfies the boundedness condition in \eqref{14:28, 03/02/2026}, then $b\in \mathrm{BMO}_{\rho,\theta}^{\log}(\mathbb{G})$. Moreover,		
			\[
			\|b\|_{{\rm BMO}_{\rho,\theta}^{\log}(\mathbb{G})}\simeq \|b\|_{{\rm BMO}_{\rho,\theta}(\mathbb{G})} + \sum_{j=1}^{d_1} \|[b, (R_j^{L})^*]\|_{\mathrm{BMO}_{L}(\mathbb{G})\to \mathrm{BMO}_{L}(\mathbb{G})}.
			\]
		\end{enumerate}
	\end{corollary}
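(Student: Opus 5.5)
The plan is to obtain Corollary \ref{14:35, 03/02/2026} from Theorem \ref{07:53, 30/01/2026} by duality, using that $\mathrm{BMO}_L(\mathbb G)=\mathrm{BMO}_\rho(\mathbb G)$ is the dual of $H^1_L(\mathbb G)=H^1_\rho(\mathbb G)$ (\cite[Theorem 2.1]{YYZ}). The basic identity is the formal relation
\[
\langle [b,R_j^L]f, g\rangle = \langle f, -[b,(R_j^L)^*]g\rangle ,
\]
valid for $f$ a finite linear combination of $(H^1_\rho,q)$-atoms and $g\in L^\infty$ with bounded support (so that all pairings are absolutely convergent). This follows because $b$ is locally in every $L^p$ by Lemma \ref{14:51, 06/10/2023}, and $R_j^L$ is bounded on $L^p$ for suitable $p$. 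Thus $[b,(R_j^L)^*]$ is, up to sign, the adjoint of $[b,R_j^L]$.

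For part (i), take $b\in \mathrm{BMO}^{\log}_{\rho,\theta}(\mathbb G)$. Theorem \ref{07:53, 30/01/2026}\eqref{19:06, 28/01/2026} gives $\|[b,R_j^L]f\|_{H^1_L}\lesssim \|b\|_{\mathrm{BMO}^{\log}_{\rho,\theta}}\|f\|_{H^1_L}$. Consequently, for $g$ in $\mathrm{BMO}_L$ (first truncated or localized to make the pairing meaningful), we get
\[
|\langle f,[b,(R_j^L)^*]g\rangle|\le \|[b,R_j^L]\|_{H^1_L\to H^1_L}\|f\|_{H^1_L}\|g\|_{\mathrm{BMO}_L}.
\]
Since finite atomic combinations are dense in $H^1_\rho$ and $\mathrm{BMO}_\rho=(H^1_\rho)^*$, this shows $[b,(R_j^L)^*]g\in \mathrm{BMO}_L$ with norm bounded by $\|[b,R_j^L]\|\,\|g\|_{\mathrm{BMO}_L}$. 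In fact, the commutator on $\mathrm{BMO}_L$ is \emph{defined} as the Banach adjoint of $[b,R_j^L]$, which agrees with the pointwise formula wherever that formula makes sense. This mirrors how Corollary \ref{14:48, 15/11/2025} was derived.

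For part (ii), reverse the argument. If $[b,(R_j^L)^*]$ is bounded on $\mathrm{BMO}_L$, then its preadjoint restricted to $H^1_L$ is $-[b,R_j^L]$. Testing against $g\in \mathrm{BMO}_L$ and taking the supremum over $\|g\|_{\mathrm{BMO}_L}\le1$, we obtain $\|[b,R_j^L]f\|_{H^1_L}\le \|[b,(R_j^L)^*]\|\,\|f\|_{H^1_L}$. Here one uses that the $H^1$ norm of an $L^1$ function is recovered by duality with $\mathrm{BMO}_\rho$, or more precisely, that the weak-$*$ preadjoint lands in $H^1_\rho$. Theorem \ref{07:53, 30/01/2026}(iii) then yields $b\in \mathrm{BMO}^{\log}_{\rho,\theta}$, and the norm equivalence follows because the operator norms of $[b,R_j^L]$ and $[b,(R_j^L)^*]$ coincide.

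The main obstacle is making the duality rigorous: showing that the operator on $\mathrm{BMO}_L$ really is the adjoint, and in (ii) that the dual operator of a bounded map on $\mathrm{BMO}_\rho=(H^1_\rho)^*$ is weak-$*$ continuous and so has a preadjoint on $H^1_\rho$. I would handle this by working on the dense class of finite atomic sums paired against compactly supported bounded functions. On that class, the norming property $\|f\|_{H^1_\rho}\simeq\sup\{|\langle f,g\rangle|: g\in L^\infty_c,\ \|g\|_{\mathrm{BMO}_\rho}\le1\}$ should follow from the $H^1$–$\mathrm{BMO}$ duality.
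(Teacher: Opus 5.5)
Your proposal follows the paper's route: the paper gets this corollary in one line from Theorem~\ref{07:53, 30/01/2026}, using the duality $(H^1_L(\mathbb G))^*=\mathrm{BMO}_L(\mathbb G)$ and the identity $[b,(R_j^L)^*]=-[b,R_j^L]^*$, and it does not discuss the rigor issues you raise. One caveat on your fix for (ii): testing against $L^\infty_c\cap\mathrm{BMO}_\rho$ does not recover $\|[b,R_j^L]f\|_{H^1_\rho}$ from $(H^1_\rho)^*=\mathrm{BMO}_\rho$ alone, because $[b,R_j^L]f$ is not known a priori to lie in $H^1_\rho$ (that would need a predual-type result), but you can avoid the issue entirely: testing only against $g\in L^\infty_c$ with $\|g\|_{L^\infty}\le 1$ (so that $\|g\|_{\mathrm{BMO}_\rho}\le 3$) gives $\sup_{\a}\|[b,R_j^L]\a\|_{L^1}\lesssim\|[b,(R_j^L)^*]\|_{\mathrm{BMO}_L\to\mathrm{BMO}_L}$ over atoms $\a$, and this atom bound is all that the argument of Theorem~\ref{thm-main thm 3 chracterize BMO log by Tj}, combined with Theorem~\ref{thm:H1L-Riesz-characterization}, needs to conclude $b\in\mathrm{BMO}^{\log}_{\rho,\theta}(\mathbb G)$ with the stated norm bound.
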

	
	\begin{remark}	
		Theorem \ref{07:53, 30/01/2026} and Corollary \ref{14:35, 03/02/2026}
		are new and extend \cite[Theorem 3.6]{Ky15} and
		\cite[Theorem 2]{BHS}, respectively.
	\end{remark}

	For the Riesz transform $V^\alpha L^{-\alpha}$ with $\alpha\in (0,1]$, by combining the heat kernel estimates for $L$ from \cite[Subsection 6.1]{BDK18} and an argument similar to that in the proof of \cite[Theorem 1.7]{BLL}, we obtain:

	\begin{proposition} 
		Let $V\in \mathrm{RH}_{Q/2}(\mathbb G, d,\mu)$, $\alpha\in (0,1]$, $0<\delta<\min\left\{\alpha\left(2-\frac{Q}{\tau(V)}\right),1\right\}$, $s\in \big(1,\frac{\tau(V)}{\alpha}\big)$   and $\rho$ be as in \eqref{eq-rho Lie group}. Then, the Riesz transform $V^\alpha L^{-\alpha}$ is an $(s,\delta,\rho)$-Calder\'on--Zygmund operator and is bounded on $H^1_{L}(\mathbb G)$. 
	\end{proposition}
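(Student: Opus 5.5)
The proposition asserts two things about $T=V^\alpha L^{-\alpha}$: it is an $(s,\delta,\rho)$-Calder\'on--Zygmund operator, and it is bounded on $H^1_L(\mathbb G)=H^1_\rho(\mathbb G)$. I would prove the second assertion via Proposition \ref{prop- bounded on H1 and T1}, so most of the work is in the first. Throughout I write the operator through subordination,
\[
V^\alpha L^{-\alpha}f(x)=\frac{1}{\Gamma(\alpha)}\int_0^\infty V(x)^\alpha e^{-tL}f(x)\,t^{\alpha-1}\,dt ,
\]
so its kernel is $K(x,y)=\Gamma(\alpha)^{-1}V(x)^\alpha\int_0^\infty p_t(x,y)t^{\alpha-1}\,dt$. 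The inputs on the heat kernel $p_t$ of $L$ are taken from \cite[Subsection 6.1]{BDK18}:
\begin{enumerate}[\rm (a)]
\item Gaussian upper bounds with arbitrary decay $(1+\sqrt t/\rho(x))^{-N}$;
\item H\"older regularity of order $\delta_0<2-Q/\tau(V)$ in either variable, also with extra $\rho$-decay.
\end{enumerate}
Integrating these in $t$ gives, for $d(x,y)\ge 2\kappa\,d(y,y_0)$,
\[
|K(x,y)|\lesssim \frac{V(x)^\alpha}{d(x,y)^{Q-2\alpha}}\Big(1+\frac{d(x,y)}{\rho(y)}\Big)^{-N},
\qquad
|K(x,y)-K(x,y_0)|\lesssim \frac{V(x)^\alpha}{d(x,y)^{Q-2\alpha}}\Big(\frac{d(y,y_0)}{d(x,y)}\Big)^{\delta_0}\Big(1+\frac{d(x,y)}{\rho(y)}\Big)^{-N}.
\]
The regularity is only needed in the second variable, in the form of the adjoint-type condition \eqref{11:09, 13/12/2023}; no regularity of $V$ itself is required.

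\textbf{Step 1: the size condition \eqref{11:08, 13/12/2023}.} On the annulus $\{R\le d(x,x_0)<2\kappa R\}$ I take the $L^s$ norm of the first bound. Write $B_R:=B(x_0,2\kappa R)$, so $\mu(B_R)\simeq R^Q$. This requires controlling $\big(\int_{B_R}V^{\alpha s}\big)^{1/s}$. For $\alpha s<\tau(V)$ the reverse H\"older inequality gives
\[
\Big(\int_{B_R}V^{\alpha s}\Big)^{1/s}\lesssim R^{Q/s}\big(V_{B_R}\big)^{\alpha}.
\]
The standard Shen-type estimate from \cite{Sh,YZ} then gives
\[
R^2\,V_{B_R}\lesssim \Big(1+\frac{R}{\rho(x_0)}\Big)^{k_1}
\]
for some $k_1$. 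This polynomial growth is absorbed by the arbitrary decay in $N$. The powers of $R$ then combine to $R^{-Q(1-1/s)}$, which is exactly the required bound.

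\textbf{Step 2: the regularity condition \eqref{11:09, 13/12/2023}.} This is the step I expect to be the main obstacle. The difficulty is that the integral has the form $\int|K(y,x)-K(x_0,x)|^s\,d\mu(x)$, which involves the factors $V(y)^\alpha$ and $V(x_0)^\alpha$ at the two \emph{base} points. So smoothness of $K$ in its first variable is needed, and $V$ is not smooth.

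I would therefore first check which order the definition requires. If the definition is meant for $T^*$, i.e.\ with $V^\alpha$ sitting on the free integration variable $x$, then the factor $V(x)^\alpha$ is harmless. In that case the argument is the following:
\begin{enumerate}[\rm (a)]
\item the $p_t$ H\"older bound in the variable $y$ gives $(d(y,x_0)/R)^{\delta_0}$;
\item the same reverse-H\"older treatment of $V^{\alpha s}$ as in Step 1 handles the factor $V(x)^\alpha$.
\end{enumerate}
Interpolating the difference bound against the size bound (taking a power $\delta/\delta_0$ of the difference estimate) then yields any $\delta<\min\{\alpha(2-Q/\tau(V)),1\}$. The factor $\alpha$ enters because $V^{\alpha}$ carries only the $\alpha$-fraction of the scaling. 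I would follow \cite[Theorem 1.7]{BLL} to get this bookkeeping exactly right.

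\textbf{Step 3: boundedness on $H^1_L(\mathbb G)$.} The weak type $(s,s)$ bound comes from $L^s$-boundedness of $V^\alpha L^{-\alpha}$ for $s<\tau(V)/\alpha$ (Shen's argument / \cite{BLL}). This completes the proof that $T$ is an $(s,\delta,\rho)$-Calder\'on--Zygmund operator. For $H^1_L$-boundedness, by Proposition \ref{prop- bounded on H1 and T1} it suffices to show $T^*1\in\mathrm{BMO}^{\log}_\rho(\mathbb G)$. Since $T^*=L^{-\alpha}V^\alpha$, one has formally
\[
T^*1=L^{-\alpha}V^\alpha=c\int_0^\infty e^{-tL}(V^\alpha)\,t^{\alpha-1}\,dt .
\]
I would show that this is a bounded function with H\"older-type oscillation control at scales below $\rho$, using $L^{-1}V$-type identities and the heat kernel bounds. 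Such control gives mean oscillation $\lesssim (r/\rho)^{\delta}$ on small balls, which is far stronger than the required $1/\log(e+\rho/r)$. Alternatively, I would verify the equivalent atom criterion directly: for every atom $\mathfrak a$, the image $T\mathfrak a$ is a log-type molecule as in Lemma \ref{07:01, 14/10/2023}. The decay part follows from Step 1; the mean condition requires estimating $\int T\mathfrak a=\int\mathfrak a\,T^*1$, which again reduces to the oscillation of $T^*1$ on $B$.
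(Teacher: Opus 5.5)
Your proposal is correct and follows essentially the route the paper indicates. The paper gives no details beyond combining the heat kernel bounds of \cite[Subsection 6.1]{BDK18}, the argument of \cite[Theorem 1.7]{BLL}, and Proposition \ref{prop- bounded on H1 and T1}. Your reading of the regularity condition is also the right one: in Lemma \ref{15:47, 09/12/2023}\eqref{09:31, 15/11/2025} the paper applies \eqref{11:09, 13/12/2023} as a bound on $\int_{U_j(B)}|K(x,y)-K(x,x_0)|^q\,d\mu(x)$, so $V(x)^\alpha$ sits on the integration variable. Literally, with $V(y)^\alpha$ at the base points, the condition would fail for this operator.

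One remark on your bookkeeping: your kernel bounds already give the regularity for every $\delta<\min\{1,2-Q/\tau(V)\}$. So no interpolation is needed, and the stated range follows a fortiori. The factor $\alpha$ only shows up in your Step 3, as the exponent in the local oscillation bound for $T^*1$ on $B(x_0,r)$, namely $(r^2V_{B(x_0,r)})^\alpha\lesssim (r/\rho(x_0))^{\alpha(2-Q/q)}$ for $q\in(Q/2,\tau(V))$. That bound is still far better than the required $1/\log(e+\rho(x_0)/r)$.
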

	
	Consequently, by Remark \ref{rem 1.4}(i) and Proposition \ref{prop- bounded on H1 and T1}, we have $V^\alpha L^{-\alpha} \in \K_{\rho,\theta,s}$ for all $\theta\in \bigl[0,\frac{\delta}{i(\rho)+1}\bigr)$, and $(V^\alpha L^{-\alpha})^*1\in \mathrm{BMO}_{\rho}^{\log}(\mathbb G)$. Therefore, applying Theorems \ref{thm-main thm 1 from H1 to weak L1} and \ref{thm-main thm 4 chracterize BMO log by Tj on Hardy space}, we deduce the following.
	
	\begin{theorem}\label{07:44, 30/01/2026}
		Let $V\in \mathrm{RH}_{Q/2}(\mathbb G, d,\mu)$, $\alpha\in (0,1]$,  $0\leq \theta<\min\left\{\frac{\alpha\left(2-\frac{Q}{\tau(V)}\right)}{i(\rho)+1},\frac{1}{i(\rho)+1}\right\}$   and $\rho$ be as in \eqref{eq-rho Lie group}. Then the following assertions hold:
		\begin{enumerate}[\rm (i)]
			\item If $b\in \mathrm{BMO}_{\rho,\theta}(\mathbb G)$, then the commutator $[b, V^{\alpha}L^{-\alpha}]$ is bounded from $H^1_{L}(\mathbb G)$ into $L^{1,\infty}(\mathbb G)$ .
			
			\item If $b\in \mathrm{BMO}^{\log}_{\rho,\theta}(\mathbb G)$, then the commutator $[b, V^{\alpha}L^{-\alpha}]$ is bounded on $H^1_{L}(\mathbb G)$.
		\end{enumerate}
		
	\end{theorem}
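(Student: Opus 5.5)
The statement is an application of Theorems \ref{thm-main thm 1 from H1 to weak L1} and \ref{thm-main thm 4 chracterize BMO log by Tj on Hardy space}, so the plan is to check their hypotheses for $T=V^\alpha L^{-\alpha}$. By hypothesis on $\theta$, first choose $\delta$ with
\[
\theta(i(\rho)+1)<\delta<\min\Bigl\{\alpha\Bigl(2-\tfrac{Q}{\tau(V)}\Bigr),1\Bigr\},
\]
which is possible because the inequality on $\theta$ is strict. Also fix any $s\in\bigl(1,\frac{\tau(V)}{\alpha}\bigr)$; this interval is nonempty since $\tau(V)>Q/2\ge 3/2>\alpha$. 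The preceding proposition then says $T$ is an $(s,\delta,\rho)$-Calder\'on--Zygmund operator bounded on $H^1_L(\mathbb G)=H^1_\rho(\mathbb G)$. The identification of these two spaces with equivalent norms, from \cite[Theorem 5.2]{YZ}, lets every statement about $H^1_\rho$ transfer verbatim to $H^1_L$.

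For (i), we use Remark \ref{rem 1.4}\eqref{10:19, 22/12/2023}. It gives that $T$ is of weak type $(1,1)$ and that $T\in\K_{\rho,\theta,s}$, because $\theta<\frac{\delta}{i(\rho)+1}$. Moreover $\theta<\frac{1}{i(\rho)+1}$ holds by assumption, which is the range required in Theorem \ref{thm-main thm 1 from H1 to weak L1}. That theorem, applied with $q=s$, then gives boundedness of $[b,T]$ from $H^1_\rho$ into $L^{1,\infty}$ for every $b\in\mathrm{BMO}_{\rho,\theta}$.

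For (ii), apply Proposition \ref{prop- bounded on H1 and T1}. Since $T$ is an $(s,\delta,\rho)$-Calder\'on--Zygmund operator bounded on $H^1_\rho$, we get $T^*1\in\mathrm{BMO}^{\log}_\rho(\mathbb G)$. The first part of Theorem \ref{thm-main thm 4 chracterize BMO log by Tj on Hardy space} then applies with this $\delta$ and $\theta\in[0,\frac{\delta}{i(\rho)+1})$, and yields boundedness of $[b,T]$ on $H^1_\rho$ for $b\in\mathrm{BMO}^{\log}_{\rho,\theta}$.

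The only genuine work is the preceding proposition, i.e.\ the kernel conditions \eqref{11:08, 13/12/2023}--\eqref{11:09, 13/12/2023} for $V^\alpha L^{-\alpha}$ together with its $L^s\to L^{s,\infty}$ bound; here we take that proposition as given. If we had to prove it, the main obstacle would be the smoothness estimate \eqref{11:09, 13/12/2023} in the first variable. The factor $V(y)^\alpha$ has no regularity, so that estimate must be derived in $L^s$-averaged form. The route would be to write $L^{-\alpha}=c\int_0^\infty e^{-tL}t^{\alpha-1}\,dt$ and use the H\"older regularity of the heat kernel of $L$ with exponent below $2-Q/\tau(V)$. Then one controls the $L^s$ averages of $V^\alpha$ on annuli via the reverse H\"older property, which is exactly why $s<\tau(V)/\alpha$ is needed.
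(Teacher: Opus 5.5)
Your proposal is correct and follows the paper's own argument: you choose $\delta$ strictly between $\theta(i(\rho)+1)$ and $\min\{\alpha(2-Q/\tau(V)),1\}$, take the preceding proposition on $V^\alpha L^{-\alpha}$ as given, use the identification $H^1_L(\mathbb G)=H^1_\rho(\mathbb G)$, and then apply Remark~\ref{rem 1.4}\eqref{10:19, 22/12/2023} with Theorem~\ref{thm-main thm 1 from H1 to weak L1} for (i) and Proposition~\ref{prop- bounded on H1 and T1} with Theorem~\ref{thm-main thm 4 chracterize BMO log by Tj on Hardy space} for (ii). Your closing heuristic is not needed, since the paper also imports that proposition from \cite{BDK18,BLL}; note, though, that the smoothness actually used (e.g.\ in Lemma~\ref{15:47, 09/12/2023}) is in the variable not carrying $V^\alpha$, with the $L^s$ average taken over the variable that does carry it.
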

	
	\begin{remark}
		Theorem \ref{07:44, 30/01/2026} is new and extends \cite[Theorem 1.2]{HW}.
	\end{remark}
	
	We next take care of the second-order Riesz transform $\nabla_{\mathbb G}^2 L^{-1}$. By combining the heat kernel estimates for $L$ from \cite[Subsection 6.1]{BDK18} and an argument similar to that in the proof of \cite[Theorem 1.6]{BLL}, we have:

	\begin{proposition} 	
		Let $V\in \mathrm{RH}_{Q/2}(\mathbb G, d,\mu)$, $0<\delta<\min\left\{2-\frac{Q}{\tau(V)},1\right\}$,  $1<s<\tau(V)$   and $\rho$ be as in \eqref{eq-rho Lie group}. Then, the Riesz transform $\nabla_{\mathbb G}^2 L^{-1}$ is an $(s,\delta,\rho)$-Calder\'on--Zygmund operator and is bounded on $H^1_{L}(\mathbb G)$.
	\end{proposition}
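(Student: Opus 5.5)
The proposition has two parts: that $T:=\nabla_{\mathbb G}^2 L^{-1}$ is an $(s,\delta,\rho)$-Calder\'on--Zygmund operator, and that it is bounded on $H^1_L(\mathbb G)=H^1_\rho(\mathbb G)$. I will verify the first part through the three defining conditions, and then reduce the second part to Proposition \ref{prop- bounded on H1 and T1}.

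\textbf{The $L^s$ bound.} I write $\nabla^2_{\mathbb G}L^{-1}=\nabla_{\mathbb G}(\nabla_{\mathbb G}L^{-1/2})L^{-1/2}$, or equivalently use $\Delta_{\mathbb G}L^{-1}=I-VL^{-1}$. The second form together with the $L^s$-boundedness of $VL^{-1}$ for $1<s<\tau(V)$ (the case $\alpha=1$ of the preceding proposition's argument) gives $\|\Delta_{\mathbb G}L^{-1}f\|_{L^s}\lesssim\|f\|_{L^s}$. Folland's subelliptic estimates $\|X_iX_jg\|_{L^s}\lesssim\|\Delta_{\mathbb G}g\|_{L^s}$ then yield boundedness of $T$ on $L^s$, hence from $L^s$ into $L^{s,\infty}$.

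\textbf{Kernel size condition \eqref{11:08, 13/12/2023}.} Following \cite[Theorem 1.6]{BLL}, I write $L^{-1}=\int_0^\infty e^{-tL}\,dt$. I then use the heat kernel bounds of \cite[Subsection 6.1]{BDK18}: Gaussian upper bounds with the extra decay factor $(1+\sqrt t/\rho(x))^{-N}$, and H\"older regularity of order $\delta<2-Q/\tau(V)$. The second derivatives cannot be handled pointwise, since $V$ only lies in a reverse H\"older class. Instead I estimate $X_iX_j$ applied to the kernel in $L^s$ over the annulus $\{R\le d(x,x_0)<2\kappa R\}$, for $y$ near $x_0$. The tool is the local identity $\Delta_{\mathbb G}u=Vu$ away from the pole for $u=\Gamma(\cdot,y)$, where $\Gamma$ is the fundamental solution of $L$. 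Interior subelliptic $L^s$ estimates on balls of radius comparable to $R$ give
\[
\|X_iX_ju\|_{L^s(\text{annulus})}\lesssim \|Vu\|_{L^s(\text{enlarged annulus})}+R^{-2}\|u\|_{L^s(\text{enlarged annulus})}.
\]
Combining this with $|u(x)|\lesssim d(x,y)^{2-Q}(1+d(x,y)/\rho(x))^{-N}$ and the reverse H\"older bound
\[
\Bigl(\frac{1}{R^Q}\int_{B}V^s\Bigr)^{1/s}\lesssim R^{-2}\Bigl(\frac{R}{\rho}\Bigr)^{C},
\]
valid for $s<\tau(V)$, gives \eqref{11:08, 13/12/2023}. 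The polynomial growth $(R/\rho)^{C}$ is absorbed by the arbitrary-order decay in $N$.

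\textbf{Regularity condition \eqref{11:09, 13/12/2023}.} This condition concerns $K(y,x)-K(x_0,x)$, i.e.\ regularity in the first (differentiated) variable. I would instead use $T^*$ together with the regularity in the non-differentiated variable. That regularity comes from the H\"older estimate $|\Gamma(x,y)-\Gamma(x,x_0)|\lesssim (d(y,x_0)/R)^\delta R^{2-Q}$, with $\delta<2-Q/\tau(V)$, and is then fed through the same interior $L^s$ estimate. Extracting the exact required form is the \textbf{main obstacle} of the proof. I would first try to mimic \cite[Theorem 1.6]{BLL} verbatim: apply the interior estimate to $u=\Gamma(\cdot,y)-\Gamma(\cdot,x_0)$, which solves the same equation $\Delta_{\mathbb G}u=Vu$ on the annulus.

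\textbf{Boundedness on $H^1_L$.} By Proposition \ref{prop- bounded on H1 and T1}, it suffices to show $T^*1\in\mathrm{BMO}^{\log}_\rho(\mathbb G)$. I would show that $T^*1$ behaves like $VL^{-1}1$ plus a bounded correction. The bound $\mathrm{MO}(T^*1,B)\lesssim (r/\rho(x_0))^{\delta'}$ for $r<\rho(x_0)$ then follows from the local regularity bounds above. This bound is stronger than the $1/\log(e+\rho(x_0)/r)$ decay required for $\mathrm{BMO}^{\log}_\rho(\mathbb G)$, so it gives the claim.
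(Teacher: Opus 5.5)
Your $L^s$ bound and your size estimate follow the route the paper points to: heat kernel bounds from \cite{BDK18} combined with the interior-estimate argument of \cite[Theorem 1.6]{BLL}. The step that does not work is the last one, boundedness on $H^1_L(\mathbb G)$.

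Your heuristic for $T^*1$ is off. Read literally, $VL^{-1}1=V\cdot L^{-1}1$ behaves like $V\rho^2$, which is not even in $\mathrm{BMO}$ when $V$ has a local singularity (e.g.\ $|x|^{-1}$ near $0$ in $\mathbb R^3$, which lies in $\mathrm{RH}_q$ for all $q<3$). More fundamentally, the size and $L^s$-H\"ormander bounds you derived cannot by themselves give $\mathrm{MO}(T^*1,B)\lesssim (r/\rho(x_0))^{\delta'}$. They are also satisfied by paraproducts localized below scale $\rho$ whose symbol is a log-type function such as $g_{x_0}$ in \eqref{17:18, 11/10/2023}, and $g_{x_0}$ is in $\mathrm{BMO}_\rho$ but not in $\mathrm{BMO}^{\log}_\rho$. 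That is why Proposition \ref{prop- bounded on H1 and T1} is an equivalence rather than automatic. You also only treat balls with $r<\rho(x_0)$.

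The missing ingredient is an exact cancellation, $T^*1=0$. Take $f\in L^s$ supported near $x_0$ and set $u=L^{-1}f$. Let $\phi_R$ be a dilate of a fixed bump, equal to $1$ on $B(x_0,R)$ and supported in $B(x_0,2R)$, so that $|X_jX_i\phi_R|\lesssim R^{-2}$. Since $X_i^*=-X_i$ with respect to Haar measure,
\[
\int_{\mathbb G}\phi_R\,X_iX_ju\,d\mu=\int_{\mathbb G}(X_jX_i\phi_R)\,u\,d\mu
=O\Bigl(R^{-2}\,R^{Q}\,R^{2-Q}\bigl(1+R/\rho(x_0)\bigr)^{-N}\Bigr)\longrightarrow 0 .
\]
Here you use your own bound $|u(x)|\lesssim d(x,x_0)^{2-Q}(1+d(x,x_0)/\rho(x_0))^{-N}$ on the annulus. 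Since $X_iX_ju=Tf\in L^1$ by your kernel bounds, dominated convergence gives $\int_{\mathbb G}Tf\,d\mu=0$, i.e.\ $T^*1=0$. This is the same observation the paper records for the first-order Riesz transforms. Proposition \ref{prop- bounded on H1 and T1} then gives boundedness on $H^1_L(\mathbb G)$. You cannot shortcut this by quoting the paper's later remark that $(\nabla^2_{\mathbb G}L^{-1})^*1\in\mathrm{BMO}^{\log}_\rho(\mathbb G)$: the paper deduces that from the $H^1_L$-boundedness, so it would be circular.

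On \eqref{11:09, 13/12/2023}, you do not need to pass to $T^*$. What the paper actually uses (proof of Lemma \ref{15:47, 09/12/2023}, Case 1) is the difference $K(x,y)-K(x,x_0)$ in the second, non-differentiated variable, integrated in $x$. That is exactly what your interior estimate applied to $u=\Gamma(\cdot,y)-\Gamma(\cdot,x_0)$ controls. The literal first-variable version generally fails for $\nabla^2_{\mathbb G}L^{-1}$, because second derivatives of $\Gamma(\cdot,x)$ blow up at singularities of $V$.

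In that computation, the H\"older bound you quote for $\Gamma$ must carry the factor $(1+R/\rho(x_0))^{-N}$. The right-hand side of \eqref{11:09, 13/12/2023} has no decay, while $\|V\|_{L^s}$ over the annulus is only bounded by $R^{Q/s-2}(1+R/\rho(x_0))^{C}$. Without that factor, your bound for $\|Vu\|_{L^s}$ does not close when $R>\rho(x_0)$. The heat-kernel H\"older estimates carry the same $(1+\sqrt t/\rho)^{-N}$ factor as the size estimates, so this is easily repaired.
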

	
	Consequently, by Remark \ref{rem 1.4}(i) and Proposition \ref{prop- bounded on H1 and T1}, we have $\nabla_{\mathbb G}^2 L^{-1} \in \K_{\rho,\theta,s}$ for all $\theta\in \bigl[0,\frac{\delta}{i(\rho)+1}\bigr)$, and $(\nabla_{\mathbb G}^2 L^{-1})^*1\in \mathrm{BMO}_{\rho}^{\log}(\mathbb G)$. Therefore, by using Theorems \ref{thm-main thm 2 characterization BMO log}, \ref{thm-main thm 3 chracterize BMO log by Tj}, we have:
	
	\begin{theorem}\label{08:31, 30/01/2026}
		Let $V\in \mathrm{RH}_{Q/2}(\mathbb G, d,\mu)$, $0\leq \theta<\min\left\{\frac{2-\frac{Q}{\tau(V)}}{i(\rho)+1},\frac{1}{i(\rho)+1}\right\}$   and $\rho$ be as in \eqref{eq-rho Lie group}.  Then the following assertions hold:
		\begin{enumerate}[\rm (i)]
			\item If $b\in \mathrm{BMO}_{\rho,\theta}(\mathbb G)$, then the commutator $[b, \nabla_{\mathbb G}^2 L^{-1}]$ is bounded from $H^1_{L}(\mathbb G)$ into $L^{1,\infty}(\mathbb G)$ .
			
			\item If $b\in \mathrm{BMO}^{\log}_{\rho,\theta}(\mathbb G)$, then the commutator $[b, \nabla_{\mathbb G}^2 L^{-1}]$ is bounded on $H^1_{L}(\mathbb G)$.
		\end{enumerate}
		
	\end{theorem}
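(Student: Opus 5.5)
The plan is to obtain Theorem \ref{08:31, 30/01/2026} directly from the abstract results, applied to $T=\nabla_{\mathbb G}^2 L^{-1}$, by checking their hypotheses with the preceding proposition. Fix $\theta$ in the stated range. Since $\tau(V)>Q/2$, we have $\theta(i(\rho)+1)<\min\{2-\frac{Q}{\tau(V)},1\}$, so we can choose $\delta$ with
\[
\theta(i(\rho)+1)<\delta<\min\Bigl\{2-\frac{Q}{\tau(V)},1\Bigr\}.
\]
Then $\theta\in[0,\frac{\delta}{i(\rho)+1})$ and also $\theta<\frac{1}{i(\rho)+1}$. Pick any $s\in(1,\tau(V))$. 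By the preceding proposition, $T$ is an $(s,\delta,\rho)$-Calder\'on--Zygmund operator which is bounded on $H^1_L(\mathbb G)=H^1_\rho(\mathbb G)$.

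For (i), Remark \ref{rem 1.4}\eqref{10:19, 22/12/2023} gives that $T$ is of weak type $(1,1)$, is bounded on $L^q$ for $q\in(1,s)$, and belongs to $\K_{\rho,\theta,s}$. The only point needing care is that Definition \ref{07:59, 31/12/2025} requires $L^s$-boundedness, whereas a priori $T$ is only $L^s\to L^{s,\infty}$. I would circumvent this by choosing $s'\in(s,\tau(V))$: then $T$ is also $(s',\delta,\rho)$-Calder\'on--Zygmund, hence bounded on $L^s$. So $T\in\K_{\rho,\theta,s}$ with genuine $L^s$-boundedness, and Theorem \ref{thm-main thm 1 from H1 to weak L1} (valid since $\theta<\frac{1}{i(\rho)+1}$) yields $[b,T]:H^1_L(\mathbb G)\to L^{1,\infty}(\mathbb G)$ for $b\in\mathrm{BMO}_{\rho,\theta}(\mathbb G)$. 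Alternatively, Corollary \ref{cor:commutator-sCZ-H1-L1}(i) gives this at once.

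For (ii), Proposition \ref{prop- bounded on H1 and T1} applied to the $H^1_\rho$-bounded operator $T$ gives $T^*1\in\mathrm{BMO}^{\log}_\rho(\mathbb G)$. The first half of Theorem \ref{thm-main thm 4 chracterize BMO log by Tj on Hardy space}, with our $\delta$, $\theta$, $s$, then shows that $[b,T]$ is bounded on $H^1_\rho(\mathbb G)=H^1_L(\mathbb G)$ whenever $b\in\mathrm{BMO}^{\log}_{\rho,\theta}(\mathbb G)$. The identification of norms follows from \cite[Theorem 5.2]{YZ}.

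The main obstacle is only bookkeeping: a single $\delta$ must serve both the Calder\'on--Zygmund property (which needs $\delta<2-Q/\tau(V)$) and the constraint $\theta<\delta/(i(\rho)+1)$, and the $L^s$-boundedness must be secured for membership in $\K_{\rho,\theta,s}$. Both are handled by the choices above. Everything else is a direct citation.
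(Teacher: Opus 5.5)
Your proposal is correct and follows the paper's route. You use the preceding proposition to see that $\nabla_{\mathbb G}^2L^{-1}$ is an $(s,\delta,\rho)$-Calder\'on--Zygmund operator bounded on $H^1_L(\mathbb G)=H^1_\rho(\mathbb G)$, deduce $(\nabla_{\mathbb G}^2L^{-1})^*1\in\mathrm{BMO}^{\log}_\rho(\mathbb G)$ from Proposition~\ref{prop- bounded on H1 and T1}, and then apply Theorem~\ref{thm-main thm 1 from H1 to weak L1} (or Corollary~\ref{cor:commutator-sCZ-H1-L1}(i)) for (i) and the first half of Theorem~\ref{thm-main thm 4 chracterize BMO log by Tj on Hardy space} for (ii).

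The paper's text cites Theorems~\ref{thm-main thm 2 characterization BMO log} and~\ref{thm-main thm 3 chracterize BMO log by Tj} at this point, which is evidently a slip; the parallel results for $V^\alpha L^{-\alpha}$ and $m_a(L)$ cite the same two theorems you use. Your explicit choice of $\delta$, and of an exponent $s'\in(s,\tau(V))$ to get genuine $L^s$-boundedness for membership in $\K_{\rho,\theta,s}$, is more careful than the original.
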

	
	\begin{remark}
		Theorem \ref{08:31, 30/01/2026} is new. Moreover, Theorems \ref{07:53, 30/01/2026}, \ref{07:44, 30/01/2026}, and \ref{08:31, 30/01/2026}  extend and improve \cite[Theorems 3.5 and 4.1]{LP}.
	\end{remark}

We conclude this subsection by considering the the \emph{Laplace transform type multiplier} of $L$.  Given a bounded function $a: [0,\infty) \rightarrow \mathbb{C}$, we define the \emph{Laplace transform type multiplier $m_a(L)$} by
	\begin{equation}
		\label{eq-Spectral}
		m_a(L)f(x)=\int_0^\infty a(t)Le^{-tL}f(x)dt,
	\end{equation}
	which is bounded on $L^2(\mathbb G)$. A typical example is provided by the imaginary powers $m_a(L)=L^{i\nu}$, which correspond to the kernel $a(t)=-\frac{1}{\Gamma(i\nu)}t^{-i\nu}$ for $\nu\in\mathbb R$.	
	
	By combining the heat kernel estimates for $L$ from \cite[Subsection 6.1]{BDK18} and an argument similar to that in the proof of \cite[Theorem 1.5]{BLL}, we obtain the following proposition.
	
	\begin{proposition} 
		Let $V\in \mathrm{RH}_{Q/2}(\mathbb G, d,\mu)$, $0<\delta<\min\left\{2-\frac{Q}{\tau(V)},1\right\}$,   and $\rho$ be as in \eqref{eq-rho Lie group}. Let $m_a(L)$ be defined as in \eqref{eq-Spectral}. Then, $m_a(L)$ is a $(\delta,\rho)$-Calder\'on--Zygmund operator and is bounded on $H^1_{L}(\mathbb G)$. 
	\end{proposition}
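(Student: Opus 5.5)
The claim has two parts: the kernel of $m_a(L)$ satisfies the two estimates defining a $(\delta,\rho)$-Calder\'on--Zygmund operator, and $m_a(L)$ is bounded on $H^1_L(\mathbb G)=H^1_\rho(\mathbb G)$. The plan follows \cite[Theorem 1.5]{BLL}.

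\textbf{$L^2$-boundedness.} This comes from the spectral theorem: $m_a(L)=m(L)$ with $m(\lambda)=\lambda\int_0^\infty a(t)e^{-t\lambda}\,dt$, and $|m(\lambda)|\le\|a\|_\infty$.

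\textbf{Kernel representation.} Write the kernel as
\[
K(x,y)=\int_0^\infty a(t)\,\partial_t p_t(x,y)\,dt ,
\]
up to sign, where $p_t$ is the heat kernel of $L$. From \cite[Subsection 6.1]{BDK18} I would take three ingredients: a Gaussian upper bound with extra decay $(1+\sqrt t/\rho(x))^{-N}$ for $p_t$; the same bound for the time derivative $t\,\partial_t p_t$; and a H\"older regularity estimate in space of order $\delta<\min\{2-Q/\tau(V),1\}$. The time-derivative bound follows via analyticity of the semigroup (Cauchy integral formula on a sector).

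\textbf{Size estimate.} Integrate
\[
|K(x,y)|\le\|a\|_\infty\int_0^\infty t^{-1}\,t^{-Q/2}e^{-c\,d(x,y)^2/t}\bigl(1+\sqrt t/\rho(x)\bigr)^{-N}dt .
\]
The factor $e^{-cd^2/t}$ localizes the integral to $t\gtrsim d(x,y)^2$, and there the factor $(1+\sqrt t/\rho(x))^{-N}$ is at most $(1+d(x,y)/\rho(x))^{-N}$. This yields $d(x,y)^{-Q}(1+d(x,y)/\rho(x))^{-N}$, which is the required bound since $\mu(B(x,r))\simeq r^Q$.

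\textbf{Smoothness estimate.} For $d(x,x')\le d(x,y)/(2\kappa)$, use the H\"older bound $\bigl(d(x,x')/\sqrt t\bigr)^\delta$ times a Gaussian for $t\partial_tp_t$. Integrating in $t$ gives $\bigl(d(x,x')/d(x,y)\bigr)^\delta d(x,y)^{-Q}$. The estimate in the second variable follows because $L$ is self-adjoint, so the kernel is symmetric.

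\textbf{Main obstacle: the H\"older estimate for $\partial_t p_t$.} The plan is to write $t\partial_t p_t=2\,p_{t/2}\circ\bigl(\tfrac t2\partial_t p_{t/2}\bigr)$ via the semigroup property. The H\"older regularity is then placed on the first factor $p_{t/2}$, and the Gaussian size bound on the second. This is what \cite{BDK18} supplies; the restriction on $\delta$ comes from the perturbation formula comparing $p_t$ with the free heat kernel of $-\Delta_{\mathbb G}$.

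\textbf{$H^1_L$-boundedness.} By Proposition \ref{prop- bounded on H1 and T1}, it suffices to show $m_a(L)^*1=m_{\bar a}(L)1\in\mathrm{BMO}^{\log}_\rho(\mathbb G)$. Two routes are available:
\begin{enumerate}[(a)]
\item Check directly, using the decay of $\partial_t e^{-tL}1$ from \cite{BDK18}, that $\int_{\mathbb G}K(x,y)\,d\mu(y)$ defines a function satisfying the log-BMO oscillation bound.
\item Verify on atoms. For atoms with $r<\rho(x_0)/4$, vanishing moment together with the kernel bounds yields a molecule with good decay. For atoms with $r\simeq\rho(x_0)$, use $\int_{\mathbb G} Le^{-tL}f\,d\mu=-\int_{\mathbb G} Ve^{-tL}f\,d\mu$: integrating in $t$ controls $\bigl|\int m_a(L)\a\,d\mu\bigr|$ by $\|a\|_\infty\int_{\mathbb G} V L^{-1}|\a|\,d\mu\lesssim1$. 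This gives an $(H^1_\rho,q,\varepsilon)$-molecule of log-type, and Lemma \ref{07:01, 14/10/2023} concludes.
\end{enumerate}
I expect route (b) to be the more robust one.
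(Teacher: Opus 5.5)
The Calder\'on--Zygmund half of your plan matches what the paper's one-line proof has in mind; that proof simply cites \cite[Subsection 6.1]{BDK18} and \cite[Theorem 1.5]{BLL}. This covers the spectral $L^2$ bound, $K=-\int_0^\infty a(t)\,\partial_tp_t\,dt$, the size and H\"older bounds from the heat kernel estimates, and the symmetry of $p_t$. Reducing $H^1_L$-boundedness to $m_a(L)^*1\in\mathrm{BMO}^{\log}_\rho(\mathbb G)$ via Proposition~\ref{prop- bounded on H1 and T1} is also correct.

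\textbf{The gap is in route (b), the one you prefer.} For an atom with $r<\rho(x_0)/4$, a log-type molecule needs $\bigl|\int_{\mathbb G}m_a(L)\mathfrak a\,d\mu\bigr|\lesssim1/\log(e+\rho(x_0)/r)$. This does \emph{not} follow from the vanishing moment of $\mathfrak a$ plus the kernel bounds. Those give the annular decay, but for the integral they give only $O(1)$: already the part over $2\kappa B$ is controlled only by $\mu(2\kappa B)^{1/q'}\|m_a(L)\mathfrak a\|_{L^q}\lesssim1$. If they sufficed, every $(\delta,\rho)$-Calder\'on--Zygmund operator would be bounded on $H^1_\rho$. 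But multiplication by any $h\in L^\infty\setminus\mathrm{BMO}^{\log}_\rho(\mathbb G)$ has vanishing off-diagonal kernel and is not bounded, by the necessity half of Proposition~\ref{prop- bounded on H1 and T1}. Your $V$-identity, used with absolute values, gives only $\|a\|_\infty\int_{\mathbb G}VL^{-1}|\mathfrak a|\,d\mu\le\|a\|_\infty$. That suffices only when $r\simeq\rho(x_0)$.

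\textbf{What is missing is a cancellation estimate below the critical scale.} Let $h_t(y):=\int_{\mathbb G}V(x)p_t(x,y)\,d\mu(x)=-\partial_t(e^{-tL}1)(y)\ge0$ and $g:=\int_0^\infty a(t)h_t\,dt$. Then $m_a(L)^*1=\overline g$, and for small atoms $\int m_a(L)\mathfrak a\,d\mu=\int\mathfrak a\,(g-g(x_0))\,d\mu$. You need two estimates.
\begin{enumerate}[(1)]
\item $h_t(y)\lesssim t^{-1}(\sqrt t/\rho(y))^{\delta}$ for $\sqrt t\le\rho(y)$. This follows from the Gaussian bound and $r^{2-Q}\int_{B(y,r)}V\,d\mu\lesssim(r/\rho(y))^{2-Q/q}$ for $r\le\rho(y)$, with $q<\tau(V)$ chosen so that $2-Q/q>\delta$.
\item $|h_t(y)-h_t(y')|\lesssim t^{-1}(d(y,y')/\sqrt t)^{\delta}$ for $d(y,y')\le\sqrt t$. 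This follows from the H\"older bound for $p_t$ \emph{including} its factor $(1+\sqrt t/\rho(y))^{-N}$, which absorbs the growth of $\int_{B(y,R)}V$ for $R>\rho(y)$.
\end{enumerate}
Using (1) for $t<d(y,y')\rho(y)$ and (2) for $t>d(y,y')\rho(y)$ yields $|g(y)-g(y')|\lesssim\|a\|_\infty(d(y,y')/\rho(y))^{\delta/2}$ when $d(y,y')<\rho(y)$. Also $\|g\|_\infty\le\|a\|_\infty$, because $\int_\epsilon^Th_t\,dt=e^{-\epsilon L}1-e^{-TL}1\le1$. This handles balls with $r\ge\rho(x_0)$, and it is also what justifies your bound on $\int VL^{-1}|\mathfrak a|$. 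Hence $\mathrm{MO}(g,B(x_0,r))\lesssim(r/\rho(x_0))^{\delta/2}\lesssim1/\log(e+\rho(x_0)/r)$ for $r<\rho(x_0)$, i.e.\ $m_a(L)^*1\in\mathrm{BMO}^{\log}_\rho(\mathbb G)$.

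This is your route (a) made precise. As written, (a) only gestures at ``the decay of $\partial_te^{-tL}1$'' and (b) omits the estimate altogether. Yet it is precisely the property that separates $m_a(L)$ from a general $(\delta,\rho)$-Calder\'on--Zygmund operator.
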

	
	By Remark \ref{rem 1.4}\eqref{06:22, 26/12/2023} and Proposition \ref{prop- bounded on H1 and T1}, we have $m_a(L) \in \K_{\rho,\theta,s}$ for all $\theta\in \big[0,\frac{\delta}{i(\rho)+1}\big)$ and  $s\in (1,\infty)$ and  $(m_a(L))^*1\in \mathrm{BMO}_{\rho}^{\log}(\mathbb G)$. As a direct consequence of  Theorems \ref{thm-main thm 1 from H1 to weak L1} and \ref{thm-main thm 4 chracterize BMO log by Tj on Hardy space}, have:
	
	\begin{theorem}\label{10:28, 30/01/2026}
		Let $V\in \mathrm{RH}_{Q/2}(\mathbb G, d,\mu)$, $0\leq \theta<\min\left\{\frac{2-\frac{Q}{\tau(V)}}{i(\rho)+1},\frac{1}{i(\rho)+1}\right\}$  and $\rho$ be as in \eqref{eq-rho Lie group}. Let $m_a(L)$ be defined as in \eqref{eq-Spectral}. Then the following assertions hold:
		\begin{enumerate}[\rm (i)]
			\item If $b\in \mathrm{BMO}_{\rho,\theta}(\mathbb G)$, then the commutator $[b, m_a(L)]$ is bounded from $H^1_{L}(\mathbb G)$ into $L^{1,\infty}(\mathbb G)$ .
			
			\item If $b\in \mathrm{BMO}^{\log}_{\rho,\theta}(\mathbb G)$, then the commutator $[b, m_a(L)]$ is bounded on $H^1_{L}(\mathbb G)$.
		\end{enumerate}
		
	\end{theorem}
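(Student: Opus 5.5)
The statement to be proved is Theorem \ref{10:28, 30/01/2026}. The plan is to read it off from the general results (Theorems \ref{thm-main thm 1 from H1 to weak L1} and \ref{thm-main thm 4 chracterize BMO log by Tj on Hardy space}) by checking their hypotheses for $T=m_a(L)$, using the proposition just stated.

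First fix $\theta$ as in the statement. Since $\theta<\frac{2-Q/\tau(V)}{i(\rho)+1}$ and $\theta<\frac{1}{i(\rho)+1}$, we can pick $\delta$ with $(i(\rho)+1)\theta<\delta<\min\{2-\frac{Q}{\tau(V)},1\}$. With this $\delta$, the preceding proposition shows that $m_a(L)$ is a $(\delta,\rho)$-Calder\'on--Zygmund operator and is bounded on $H^1_L(\mathbb G)=H^1_\rho(\mathbb G)$.

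For part (i), Remark \ref{rem 1.4}\eqref{06:22, 26/12/2023} gives three facts about $m_a(L)$:
\begin{itemize}
\item it is an $(s,\delta,\rho)$-Calder\'on--Zygmund operator for every $s\in(1,\infty)$;
\item it is of weak type $(1,1)$;
\item it lies in $\K_{\rho,\theta,s}$, because $\theta<\frac{\delta}{i(\rho)+1}$.
\end{itemize}
Also $\theta<\frac{1}{i(\rho)+1}$. So Theorem \ref{thm-main thm 1 from H1 to weak L1} applies with $q=s$ and gives $[b,m_a(L)]:H^1_\rho(\mathbb G)\to L^{1,\infty}(\mathbb G)$ for every $b\in \mathrm{BMO}_{\rho,\theta}(\mathbb G)$. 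Since $H^1_L(\mathbb G)=H^1_\rho(\mathbb G)$ with equivalent norms, this is (i).

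For part (ii), we need $(m_a(L))^*1\in\mathrm{BMO}^{\log}_\rho(\mathbb G)$. Since $m_a(L)$ is an $(s,\delta,\rho)$-Calder\'on--Zygmund operator bounded on $H^1_\rho(\mathbb G)$, Proposition \ref{prop- bounded on H1 and T1} gives exactly this. The first part of Theorem \ref{thm-main thm 4 chracterize BMO log by Tj on Hardy space} then shows that $[b,m_a(L)]$ is bounded on $H^1_\rho(\mathbb G)=H^1_L(\mathbb G)$ whenever $b\in\mathrm{BMO}^{\log}_{\rho,\theta}(\mathbb G)$.

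The only step with real content is the kernel analysis in the preceding proposition: the size bound with arbitrary decay $(1+d(x,y)/\rho(x))^{-N}$ and H\"older regularity of order $\delta$, derived from the heat kernel bounds for $L$ by integrating $a(t)\,\partial_t e^{-tL}$ in $t$. We take that proposition as given, so the main remaining care is choosing $\delta$ strictly between $(i(\rho)+1)\theta$ and $\min\{2-Q/\tau(V),1\}$, which the hypothesis on $\theta$ allows.
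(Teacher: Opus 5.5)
Your proposal is correct and follows the paper's argument. The paper likewise uses the preceding proposition, Remark~\ref{rem 1.4}\eqref{06:22, 26/12/2023} and Proposition~\ref{prop- bounded on H1 and T1} to get three facts: $m_a(L)\in\K_{\rho,\theta,s}$, $m_a(L)$ is of weak type $(1,1)$, and $(m_a(L))^*1\in\mathrm{BMO}^{\log}_\rho(\mathbb G)$. It then invokes Theorems~\ref{thm-main thm 1 from H1 to weak L1} and~\ref{thm-main thm 4 chracterize BMO log by Tj on Hardy space}, and your explicit choice of $\delta$ strictly between $(i(\rho)+1)\theta$ and $\min\{2-Q/\tau(V),1\}$ just spells out a step the paper leaves implicit.
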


	\subsection{Riesz transforms associated to Laguerre operators}
	
	Let $n\geq 1$ be a fixed integer and let $\alpha = (\alpha_1, \dots, \alpha_n) \in (-1,\infty)^n$. We consider the space $\mathcal{X}=(0,\infty)^n$ endowed with the measure $d\mu_{\alpha}(x) = x^{\alpha} dx$ and the standard Euclidean distance $d(x,y)=|x-y|$. Then, the following volume estimate holds
	\begin{equation}\label{volume-Bessel}
		\mu_{\alpha}(B(x,r))\simeq r^n
		\prod_{j=1}^n(x_j+r)^{\alpha_j}
	\end{equation}
	for all $x=(x_1,\ldots,x_n)\in \X$ and all $r>0$. This implies that
	\begin{equation}\label{doubling2}
		\lambda^\gamma \mu_{\alpha}(B(x,r)) \lesssim\mu_\alpha(B(x,\lambda r))\lesssim \lambda^D \mu_{\alpha}(B(x,r))
	\end{equation}
	for all $x\in\X$, $r>0$ and $\lambda\geq 1$, where $\gamma:=n+\sum_{j=1}^{n} \min\{\alpha_j,0\}>0$ and $D:=n+\sum_{j=1}^{n} \max\{\alpha_j,0\}$. Thus, $(\X,d,\mu_\alpha)$ is a space of homogeneous type, and in fact an RD-space in the sense of Han, M\"uller and Yang \cite{HMY}. Moreover,
	\begin{equation}\label{doubling3}
		\mu_{\alpha}(B(x,r))\lesssim \Big(1+\frac{d(x,y)}{r}\Big)^D \mu_{\alpha}(B(y,r))\quad\text{for all } x,y\in \X,\ r>0.
	\end{equation}

	Given a multi-index $\mathbf{k} = (k_1, \dots, k_n) \in \mathbb{N}^n$, we consider the \emph{multivariate Laguerre functions} defined by the tensor product
	\begin{equation}
		\Psi_{\mathbf{k}}^{(\alpha-1)/2}(x) 
		:= \prod_{j=1}^n \psi_{k_j}^{(\alpha_j-1)/2}(x_j), \quad x=(x_1,\ldots,x_n) \in \X,
	\end{equation}
	where each $\psi_{k_j}^{(\alpha_j-1)/2}$ is the one-dimensional Laguerre function
	\begin{equation}
		\psi_{k_j}^{(\alpha_j-1)/2}(x_j) 
		= \Bigg(\frac{2k_j!}{\Gamma(k_j+\alpha_j/2+1/2)}\Bigg)^{1/2} 
		L_{k_j}^{(\alpha_j-1)/2}(x_j^2) e^{-x_j^2/2}.
	\end{equation}
	
	The \emph{multivariate Laguerre operator} $L_\alpha$ is then defined by
	\begin{equation}
		L_\alpha = - \sum_{j=1}^n \frac{\partial^2}{\partial x_j^2} 
		- \sum_{j=1}^n \frac{\alpha_j}{x_j} \frac{\partial}{\partial x_j} 
		+ \sum_{j=1}^n x_j^2.
	\end{equation}
	Its domain is given spectrally by
	\begin{equation}
		D(L_\alpha) = \Bigg\{ f \in L^2(\X) : 
		\sum_{\mathbf{k} \in \mathbb{N}^n} 
		\Big( \sum_{j=1}^n (4k_j + \alpha_j +1) \Big)^2 
		\big| \langle f, \Psi_{\mathbf{k}}^{(\alpha-1)/2} \rangle \big|^2 < \infty \Bigg\}.
	\end{equation}
	Moreover, the following spectral property holds:
	\begin{equation}
		L_\alpha \Psi_{\mathbf{k}}^{(\alpha-1)/2} = 
		\Big( \sum_{j=1}^n (4k_j + \alpha_j +1) \Big) \Psi_{\mathbf{k}}^{(\alpha-1)/2},
		\quad \mathbf{k}=(k_1,\ldots,k_n) \in \mathbb{N}^n.
	\end{equation}

	It is known  that $L_\alpha$ is a non-negative self-adjoint operator on $L^2(\X, d\mu_\alpha)$.
	Moreover, let $p^\alpha_t(x,y)$ denote the kernel of the semigroup $e^{-tL_\alpha}$. Then
	\[
	p^\alpha_t(x,y) =\prod_{j=1}^n p^{\alpha_j}_t(x_j,y_j)
	\]
	for all $t>0$, $x=(x_1,\ldots, x_n) ,y=(y_1,\ldots,y_n)\in \X$,  where
	\begin{equation}\label{pt Laguerre}
		p^{\alpha_j}_t(x_j,y_j)=\f{2e^{-2t}}{1-e^{-4t}}\exp{\Big(-\f{1}{2}\f{1+e^{-4t}}{1-e^{-4t}}(x_j^2+y_j^2)\Big)}(x_jy_j)^{-(\alpha_j-1)/2}I_{(\alpha_j-1)/2}\Big(\f{2e^{-2t}}{1-e^{-4t}}x_jy_j\Big)
	\end{equation}
	for all  $j=1,\ldots,n$, where  $I_{(\alpha-1)/2}$ is the Bessel function. See, for example, \cite{Le}.
	
	We define the admissible function $\rho$ on $\X$ by
	\begin{equation}\label{eq-rho laguer}
		\rho(x)=\frac{1}{8} \min\{1, |x|^{-1}\},\quad x\in\X.
	\end{equation}
	It is straightforward to verify that the admissibility index of $\rho$ is
	\begin{equation}\label{eq-rho laguer_index}
		i(\rho)	= 1.
	\end{equation}
	
	For each $j=1,\ldots, n$,  define
	\[
	\delta_j = \f{\partial}{\partial x_j} + x_j.
	\]
	It is well known that the (formal) adjoint of $\delta_j$ in $L^2(\X, d\mu_\alpha)$ is given by
	\[
	\delta_j^*=-\f{\partial}{\partial x_j} + x_j-\f{\alpha_j}{x_j}.
	\]
We start by proving some kernel estimates for the Laguerre operator $L_\alpha$.	
	\begin{proposition}\label{prop-derivaitive of pt}
		\begin{enumerate}[\rm (i)]
			
			\item\label{prop-derivaitive of pt_1} For any multi-index $\beta$  and $N>0$, we have 
			\[
			|  x^\beta p^{\alpha}_t(x,y)|\lesssim \f{1}{t^{|\beta|/2}\mu_{\alpha}(B(x,\sqrt{t}))}\exp\Big(-\f{|x-y|^2}{ct}\Big)\Big(1+\f{\sqrt{t}}{\rho(x)}+\f{\sqrt{t}}{\rho(y)}\Big)^{-N}
			\] 
			for all $x,y\in \X$ and $t>0$.
			
			\item\label{prop-derivaitive of pt_2} For any $j\in \{1,\ldots, n\}$, $k\in \{0,1,2\}$, and $N>0$, we have 
			\[
			| \partial_{j}^k p^{\alpha}_t(x,y)|\lesssim \f{1}{t^{k/2}\mu_{\alpha}(B(x,\sqrt{t}))}\exp\Big(-\f{|x-y|^2}{ct}\Big)\Big(1+\f{\sqrt{t}}{\rho(x)}+\f{\sqrt{t}}{\rho(y)}\Big)^{-N}
			\]
			for all $x,y\in \X$ and $t>0$.
		\end{enumerate}
	\end{proposition}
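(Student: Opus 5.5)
The plan is to reduce everything to the one-dimensional Laguerre kernel $p^{\alpha_j}_t(x_j,y_j)$ via the product structure $p^\alpha_t(x,y)=\prod_{j=1}^n p^{\alpha_j}_t(x_j,y_j)$ and the product form of the volume, $\mu_\alpha(B(x,\sqrt t))\simeq \prod_j \sqrt t\,(x_j+\sqrt t)^{\alpha_j}$ from \eqref{volume-Bessel}. Both bounds in (i) and (ii) factor accordingly. The Gaussian factor is a product of one-dimensional Gaussians. The decay factor $\bigl(1+\frac{\sqrt t}{\rho(x)}+\frac{\sqrt t}{\rho(y)}\bigr)^{-N}$ is dominated by the product of one-dimensional factors of the form $\bigl(1+\sqrt t+\sqrt t|x_j|+\sqrt t|y_j|\bigr)^{-N}$. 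Indeed, $1/\rho(x)\simeq 1+|x|\simeq 1+\sum_j x_j$, so it suffices to have one-dimensional decay $(1+\sqrt t(1+x_j+y_j))^{-N}$ in each coordinate. So the main work is the case $n=1$.

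For $n=1$ with $\nu=(\alpha-1)/2>-1$, I would substitute $s=e^{-2t}$ (so $\frac{2e^{-2t}}{1-e^{-4t}}=\frac{1}{\sinh 2t}$ and $\frac{1+e^{-4t}}{1-e^{-4t}}=\coth 2t$). I would then use the standard Bessel asymptotics: $I_\nu(z)\simeq z^\nu$ for $z\le1$, and $I_\nu(z)=\frac{e^z}{\sqrt{2\pi z}}(1+O(1/z))$ for $z\ge1$. I would split into the two regimes $z=\frac{xy}{\sinh 2t}\le 1$ and $z\ge1$.

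\emph{Small times $t\le1$.} Here $\sinh2t\simeq t$ and $\coth 2t=\frac{1}{2t}+O(t)$. When $z\ge1$, the exponent becomes
\[
-\tfrac12\coth(2t)(x^2+y^2)+\tfrac{xy}{\sinh2t}=-\tfrac{(x-y)^2}{4t}\cdot(1+O(t^2))-\tfrac{t}{2}\tanh(t)\,(x^2+y^2)\cdot O(1).
\]
More precisely, it equals $-\frac{(x-y)^2}{2}\coth 2t-xy\bigl(\coth2t-\frac1{\sinh 2t}\bigr)=-\frac{(x-y)^2}{2}\coth2t-xy\tanh t$. This yields the Gaussian together with an extra factor $e^{-c\,t xy}$. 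Combined with the Gaussian, this gives decay in $t(x^2+y^2)$, which dominates any power of $(1+\sqrt t(x+y))^{-1}$ since $\sqrt t\,x\le 1+ t x^2$ up to Gaussian losses. The prefactor is $\frac{1}{t}(xy)^{-\nu}(xy/t)^{-1/2}$, and one checks it is $\simeq \frac{1}{\sqrt t\,(x+\sqrt t)^{\alpha}}$ when $x\simeq y$ (the regime in which the Gaussian is not already small; otherwise I trade Gaussian decay for the doubling factor \eqref{doubling3}). When $z\le1$, the kernel is $\simeq t^{-1-\nu}e^{-\coth(2t)(x^2+y^2)/2}$. In this case $x,y\lesssim \sqrt t$ or the exponential is tiny, and $t^{-1-\nu}=t^{-(\alpha+1)/2}\simeq 1/\mu_\alpha(B(x,\sqrt t))$.

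\emph{Large times $t\ge1$.} The prefactor is $\simeq e^{-2t}$, $z\lesssim e^{-2t}xy$, and the Gaussian factor is $e^{-c(x^2+y^2)}$. This gives decay $e^{-ct}e^{-c(x^2+y^2)}$, which beats both $\mu_\alpha(B(x,\sqrt t))^{-1}$ (a polynomial in $t$ and $x$), any factor $(1+\sqrt t(1+x+y))^{-N}$, and $\exp(|x-y|^2/ct)\le e^{C(x^2+y^2)/t}$.

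For (i), the factor $x^\beta$ (with $\beta_j$ acting on $x_j$) is absorbed in the same way. Since $x_j^{\beta_j}\le t^{-\beta_j/2}(\sqrt t\,x_j)^{\beta_j}$ and $(\sqrt t x_j)^{\beta_j}\lesssim (1+\sqrt t/\rho(x))^{\beta_j}$, it costs only finitely many powers of the decay factor, which is available for every $N$. For (ii), I would differentiate the explicit formula in $x$, using $\frac{d}{dz}(z^{-\nu}I_\nu(z))=z^{-\nu}I_{\nu+1}(z)$. Each derivative then produces a combination of terms: $\coth(2t)\,x$, and $\frac{y}{\sinh2t}\cdot\frac{I_{\nu+1}}{I_\nu}(z)$, and second-derivative analogues. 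In the large-$z$ regime, $I_{\nu+1}/I_\nu=1-\frac{2\nu+1}{2z}+O(z^{-2})$, so the leading terms combine to $-\frac{x-y}{2t}$ plus $O(x\,t)+O(1/x)$-type errors. The term $\frac{x-y}{t}$ is absorbed by the Gaussian into $t^{-1/2}$. The $1/x$-type error is bounded by $1/\sqrt t$ when $z\ge1$ and $x\simeq y$, since then $x^2\gtrsim t$. The $xt$-type error is handled as in (i).

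The main obstacle is the second derivative $k=2$. There the cancellation between the two leading terms must be carried to second order, and the error terms of size $\nu/x^2$ must be controlled uniformly near the boundary $x\to0$ for $\alpha\in(-1,\infty)$. I would handle this by writing $\partial_x^2$ through the operator identity $\partial_x^2=-L_\alpha+x^2-\frac{\alpha}{x}\partial_x$ in the one-dimensional case. Then $L_\alpha p_t=-\partial_tp_t$, which is estimated by differentiating the explicit formula in $t$, again giving a $1/t$ factor. The $\frac{\alpha}{x}\partial_x$ term would be treated using the first-derivative analysis in the regime $x\gtrsim\sqrt t$, and directly from the small-$z$ series of $z^{-\nu}I_\nu$ (which is even and analytic in $z$, so $\partial_x$ produces a factor $x/t^2\cdot t$) in the regime $x\lesssim\sqrt t$.
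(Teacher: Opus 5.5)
Your argument is correct, but for $k=2$ it takes a genuinely different route from the paper.

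\emph{Where you agree or match in substance.} The paper does not rederive the basic bound; it quotes \eqref{eq-pt} from \cite{BDK18}. Your self-contained derivation via $-\tfrac12\coth(2t)(x^2+y^2)+\frac{xy}{\sinh 2t}=-\tfrac12(x-y)^2\coth 2t-xy\tanh t$ is fine. Part (i) is handled exactly as you do it. For $k=1$ the paper writes $\partial_xp^\alpha_t=-\coth(2t)\,x\,p^\alpha_t+\frac{xy^2}{\sinh 2t}\,p^{\alpha+2}_t$ and splits into the cases $t\ge1$; $x>2y$ or $y>2x$; and $x\simeq y$ with $x<\sqrt t$ or $x\ge\sqrt t$. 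In the last case it uses \eqref{eq4-Bessel} in the form $|p^\alpha_t-xy\,p^{\alpha+2}_t|\lesssim t\,p^{\alpha+2}_t$, which carries the same information as your $I_{\nu+1}/I_\nu=1+O(1/z)$. It then applies \eqref{eq-pt} with $\alpha+2$ in place of $\alpha$.

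\emph{Where you differ.} For $k=2$ the paper differentiates once more, writing $\partial_x^2p^\alpha_t=F_1+F_2$ with $F_1=\frac1x\partial_xp^\alpha_t$. The term $F_2$ forces the cancellation to be carried to second order, through the $A-B$ estimate using \eqref{eq4-Bessel} and \eqref{eq2-Bessel}. Your heat-equation identity $\partial_x^2p_t=\partial_tp_t+x^2p_t-\frac{\alpha}{x}\partial_xp_t$ removes that step entirely.
\begin{itemize}
\item $x^2p_t$ is covered by (i).
\item $\frac1x\partial_xp_t$ is exactly the paper's $F_1$. It is cleanest through the exact formula $\frac1x\partial_xp^\alpha_t=-\coth(2t)\,p^\alpha_t+\frac{y^2}{\sinh 2t}\,p^{\alpha+2}_t$, which holds for all $z$. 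This also covers the sub-regime $x\lesssim\sqrt t$, $z\ge1$, which your small-$z$ split leaves implicit.
\end{itemize}

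\emph{The one step you pass over too quickly.} The price of your route is a separate bound for $\partial_tp_t$, and ``again giving a $1/t$ factor'' hides a cancellation there. Termwise, $\partial_t\log p_t$ contains $\frac{x^2+y^2}{\sinh^2 2t}$ and $-2z\coth(2t)\,\frac{I_\nu'(z)}{I_\nu(z)}$, each of size up to $(x^2+y^2)/t^2$. Only after using $I_\nu'/I_\nu=1+O(1/z)$ for $z\ge1$ do they combine into $\frac{(x-y)^2}{\sinh^2 2t}-\frac{xy}{\cosh^2 t}+O(1/t)$. That combination is then absorbed by the Gaussian and the factor $e^{-xy\tanh t}$.

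\emph{What each route buys.} Your extra cancellation is only first order, of the same kind as for $k=1$, so your route is lighter than the paper's second-order computation. The paper's route, in exchange, stays entirely with $x$-derivatives and the $p^\alpha\to p^{\alpha+2}$ recurrence. It reuses a single known kernel bound and needs no time-derivative estimate.
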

	
	\begin{proof}
		\eqref{prop-derivaitive of pt_1} We first recall some basic properties of the Bessel functions $I_\nu$ with $\nu>-1$. It is known (see, e.g., \cite{Le}) that
		\begin{equation}\label{eq1-Bessel}		
			z^{-\nu}I_\nu(z)\simeq 2^{-\nu},\quad z\in (0,1];
		\end{equation}
		\begin{equation}\label{eq2-Bessel}
			I_\nu(z)\lesssim z^{-1/2}e^z,\quad z\geq 1;
		\end{equation}
		moreover, for all $\nu>-1, z>0$,
		\begin{equation}\label{eq3-Bessel}
			\f{d}{dz}(z^{-\nu}I_\nu(z))=z^{-\nu}I_{\nu+1}(z),
		\end{equation}
		\begin{equation}
			\label{eq4-Bessel}
			|I_\nu(z)-I_{\nu+1}(z)|<(4\nu +6)\f{I_{\nu+1}(z)}{z}.
		\end{equation}
		
		Since $p^{\alpha}_t(x,y) =\prod_{j=1}^n p^{\alpha_j}_t(x_j,y_j)$, it suffices to prove the proposition for $n=1$. Indeed, it was proved in \cite{BDK18} that for any $N>0$, there exists $C=C(N)$ such that	
		\begin{equation}\label{eq-pt}
			|p^{\alpha}_t(x,y)|\leq \f{C}{\mu_\alpha(B(x,\sqrt{t}))}\exp\Big(-\f{|x-y|^2}{ct}\Big)\Big(1+\f{\sqrt{t}}{\rho(x)}+\f{\sqrt{t}}{\rho(y)}\Big)^{-N}
		\end{equation}
		for all $x,y\in (0,\vc)$ and $t>0$.
		
		This estimate, together with the fact $x\rho(x)\lesssim 1$ for all $x\in (0,\vc)$, implies that
		\begin{equation}\label{eq-xy pt}
			| x^\beta y^\gamma p^{\alpha}_t(x,y)|\leq \f{C(N)}{t^{(\beta+\gamma)/2}\mu_\alpha(B(x,\sqrt{t}))}\exp\Big(-\f{|x-y|^2}{ct}\Big)\Big(1+\f{\sqrt{t}}{\rho(x)}+\f{\sqrt{t}}{\rho(y)}\Big)^{-N},
		\end{equation}
		for all $x,y\in (0,\vc)$ and $t, N>0$, which proves \eqref{prop-derivaitive of pt_1}.
		
		We turn to the proof of \eqref{prop-derivaitive of pt_2}. Recall from \eqref{pt Laguerre} that
		\[
		p^{\alpha}_t(x,y)=\f{2e^{-2t}}{1-e^{-4t}}\exp{\Big(-\f{1}{2}\f{1+e^{-4t}}{1-e^{-4t}}(x^2+y^2)\Big)}(xy)^{-(\alpha-1)/2}I_{(\alpha-1)/2}\Big(\f{2e^{-2t}}{1-e^{-4t}}xy\Big).
		\]	
		The case $k=0$ in \eqref{prop-derivaitive of pt_2} follows from \eqref{eq-pt}. We now prove \eqref{prop-derivaitive of pt_2} for $k=1$. To this end, we consider two cases.
		
		\textit{Case 1: $t\ge 1$.}  Using the product rule and \eqref{eq3-Bessel}, we obtain
		\begin{equation}\label{eq-derivative of pt}
			\begin{aligned}
				\partial_x p^{\alpha}_t(x,y) &= -\f{1+e^{-4t}}{1-e^{-4t}} x p_t^\alpha(x,y)\\
				& \ \  +\f{2e^{-2t}}{1-e^{-4t}}y\times \f{2e^{-2t}}{1-e^{-4t}}\exp{\Big(-\f{1}{2}\f{1+e^{-4t}}{1-e^{-4t}}(x^2+y^2)\Big)}(xy)^{-(\alpha-1)/2}I_{(\alpha+1)/2}\Big(\f{2e^{-2t}}{1-e^{-4t}}xy\Big)\\
				&=-\f{1+e^{-4t}}{1-e^{-4t}} x p_t^{\alpha}(x,y) +\f{2e^{-2t}}{1-e^{-4t}}xy^2 p_t^{\alpha+2}(x,y).
			\end{aligned}
		\end{equation}
		Note that   $e^{-2t}\le 1$ and $1-e^{-4t}\le  1$ for $t\ge 1$. Hence, we further imply
		\[
		|\partial_x p^{\alpha}_t(x,y)|\lesssim x p_t^{\alpha}(x,y)+xy^2 p_t^{\alpha+2}(x,y).
		\]
		This, together with \eqref{eq-xy pt}, implies that for all $x,y\in(0,\vc)$ and $t\ge 1$,
		\[
		\begin{aligned}
			|\partial_x p^{\alpha}_t(x,y)| &\lesssim \Big[\f{1}{t^{1/2}\mu_\alpha(B(x,\sqrt t))}+\f{1}{t^{3/2}\mu_{\alpha+2}(B(x,\sqrt t))}\Big]\exp\Big(-\f{|x-y|^2}{ct}\Big)\Big(1+\f{\sqrt{t}}{\rho(x)}+\f{\sqrt{t}}{\rho(y)}\Big)^{-N}\\
			&\lesssim  \f{1}{t^{1/2}\mu_\alpha(B(x,\sqrt t))} \exp\Big(-\f{|x-y|^2}{ct}\Big)\Big(1+\f{\sqrt{t}}{\rho(x)}+\f{\sqrt{t}}{\rho(y)}\Big)^{-N}, 
		\end{aligned}
		\] 
		as desired, where in the last inequality we used 
		\[
		\mu_{\alpha+2}(B(x,\sqrt t))\simeq (|x|+\sqrt t)^2\mu_{\alpha}(B(x,\sqrt t))\ge \mu_{\alpha}(B(x,\sqrt t)), \ \ t\ge 1. 
		\]	
		
		\textit{Case 2: $t\in (0,1)$.} In this case, $e^{-4t}$ and $e^{-2t}$ are comparable to $1$, and $1-e^{-4t}\simeq t$. Combining these facts with \eqref{eq-derivative of pt}, we obtain
		\begin{equation}\label{eq-derivative of pt t small}
			\begin{aligned}
				|\partial_x p^{\alpha}_t(x,y)| &\lesssim \f{x}{t} p_t^{\alpha}(x,y) +\f{xy^2}{t}  p_t^{\alpha+2}(x,y).
			\end{aligned}
		\end{equation}
		We now consider three subcases.
		
		\underline{\textit{Subcase 2.1: $x>2y$ or $y>2x$}}. In this case $|x-y|\gtrsim \max\{x,y\}$. Hence, by \eqref{eq-pt},
		\[
		\begin{aligned}
			|\partial_x p^{\alpha}_t(x,y)| &\lesssim \Big[\f{x}{t}\f{1}{\mu_\alpha(B(x,\sqrt t))}+ \f{xy^2}{t}\f{1}{\mu_{\alpha+2}(B(x,\sqrt t))}\Big]\exp\Big(-\f{|x-y|^2}{ct}\Big)\Big(1+\f{\sqrt{t}}{\rho(x)}+\f{\sqrt{t}}{\rho(y)}\Big)^{-N}\\
			&\lesssim \Big[\f{x}{t}\f{\sqrt t}{|x-y|}\f{1}{\mu_\alpha(B(x,\sqrt t))}+ \f{xy^2}{t}\f{t^{3/2}}{|x-y|^3}\f{1}{\mu_{\alpha+2}(B(x,\sqrt t))}\Big]\\
			& \ \ \times \exp\Big(-\f{|x-y|^2}{2ct}\Big)\Big(1+\f{\sqrt{t}}{\rho(x)}+\f{\sqrt{t}}{\rho(y)}\Big)^{-N}\\
			&\lesssim \Big[ \f{1}{\sqrt t\mu_\alpha(B(x,\sqrt t))}+  \f{\sqrt t}{\mu_{\alpha+2}(B(x,\sqrt t))}\Big] \exp\Big(-\f{|x-y|^2}{2ct}\Big)\Big(1+\f{\sqrt{t}}{\rho(x)}+\f{\sqrt{t}}{\rho(y)}\Big)^{-N}\\
			&\lesssim   \f{1}{\sqrt t\mu_\alpha(B(x,\sqrt t))} \exp\Big(-\f{|x-y|^2}{2ct}\Big)\Big(1+\f{\sqrt{t}}{\rho(x)}+\f{\sqrt{t}}{\rho(y)}\Big)^{-N}
		\end{aligned}
		\]
		for all $x,y\in (0,\vc)$ such that $x>2y$ or $y>2x$, where in the last inequality we used
		\begin{equation}\label{eq- mu alpha 2 and alpha}
			\mu_{\alpha+2}(B(x,\sqrt t))\simeq (\sqrt t+|x|)^2\mu_{\alpha}(B(x,\sqrt t))\gtrsim t\mu_{\alpha}(B(x,\sqrt t)).
		\end{equation}

		\medskip
		
		\underline{\textit{Subcase 2.2: $y/2\le x\le 2y$ and $x< \sqrt t.$}} From \eqref{eq-derivative of pt t small},  \eqref{eq-pt} and \eqref{eq- mu alpha 2 and alpha}, we obtain
		\[
		\begin{aligned}
			|\partial_x p^{\alpha}_t(x,y)| 			&\lesssim \Big[ \f{1}{\sqrt t\mu_\alpha(B(x,\sqrt t))}+  \f{\sqrt t}{\mu_{\alpha+2}(B(x,\sqrt t))}\Big] \exp\Big(-\f{|x-y|^2}{2ct}\Big)\Big(1+\f{\sqrt{t}}{\rho(x)}+\f{\sqrt{t}}{\rho(y)}\Big)^{-N}\\
			&\lesssim   \f{1}{\sqrt t\mu_\alpha(B(x,\sqrt t))} \exp\Big(-\f{|x-y|^2}{2ct}\Big)\Big(1+\f{\sqrt{t}}{\rho(x)}+\f{\sqrt{t}}{\rho(y)}\Big)^{-N}.
		\end{aligned}
		\]
		
		\medskip
		
		\underline{\textit{Subcase 2.3: $y/2\le x\le 2y$ and $x\ge \sqrt t.$}} In this case, $x\simeq y \gtrsim \sqrt t$. From \eqref{eq-derivative of pt}, we obtain
		\begin{equation*} 
			\begin{aligned}
				|\partial_x p^{\alpha}_t(x,y)|		&\le  \f{1-2e^{-2t}+e^{-4t}}{1-e^{-4t}} x p_t^{\alpha}(x,y) +\f{2e^{-2t}}{1-e^{-4t}}|x-y|p_t^{\alpha}(x,y) \\
				& \ \ +\f{2e^{-2t}}{1-e^{-4t}}y |p_t^{\alpha}(x,y)-xyp_t^{\alpha+2}(x,y)|\\
				&\le  \f{1-e^{-2t}}{1+e^{-2t}} x p_t^{\alpha}(x,y) +\f{2e^{-2t}}{1-e^{-4t}}|x-y|p_t^{\alpha}(x,y)  +\f{2e^{-2t}}{1-e^{-4t}}y \left|p_t^{\alpha}(x,y)-xyp_t^{\alpha+2}(x,y)\right|\\
				&=:E_1+E_2+E_3.
			\end{aligned}
		\end{equation*}
		
		Combining the estimates $e^{-2t}\simeq 1$ and $1-e^{-2t}\simeq 1-e^{-4t}\simeq t$
		with \eqref{eq-xy pt}, \eqref{eq-pt}, and the estimates $x\simeq y\gtrsim \sqrt t$, we have
		\[
		\begin{aligned}
			E_1+E_2&\simeq tx p_t^\alpha(x,y) +\f{|x-y|}{t}p_t^\alpha(x,y)\\
			&\lesssim \Big[\f{\sqrt t}{\mu_\alpha(B(x,\sqrt t))}+\f{1}{\sqrt t\mu_\alpha(B(x,\sqrt t))}\Big] \exp\Big(-\f{|x-y|^2}{2ct}\Big)\Big(1+\f{\sqrt{t}}{\rho(x)}+\f{\sqrt{t}}{\rho(y)}\Big)^{-N}\\
			&\lesssim \f{1}{\sqrt t\mu_\alpha(B(x,\sqrt t))}  \exp\Big(-\f{|x-y|^2}{2ct}\Big)\Big(1+\f{\sqrt{t}}{\rho(x)}+\f{\sqrt{t}}{\rho(y)}\Big)^{-N}.
		\end{aligned}
		\]
		It remains to estimate $E_3$. Using \eqref{eq4-Bessel}, we obtain
		\[
		\begin{aligned}
			|p_t^{\alpha}(x,y)-xyp_t^{\alpha+2}(x,y)|=& \f{2e^{-2t}}{1-e^{-4t}}\exp{\Big(-\f{1}{2}\f{1+e^{-4t}}{1-e^{-4t}}(x^2+y^2)\Big)}(xy)^{-(\alpha-1)/2}\\
			& \ \ \ \times \Big|I_{(\alpha+1)/2}\Big(\f{2e^{-2t}}{1-e^{-4t}}xy\Big)-I_{(\alpha-1)/2}\Big(\f{2e^{-2t}}{1-e^{-4t}}xy\Big)\Big|\\
			&\lesssim \f{1-e^{-4t}}{2e^{-2t}}\times \f{2e^{-2t}}{1-e^{-4t}}\exp{\Big(-\f{1}{2}\f{1+e^{-4t}}{1-e^{-4t}}(x^2+y^2)\Big)}(xy)^{-(\alpha+1)/2} I_{(\alpha+1)/2}\Big(xy\Big) \\
			&\lesssim tp_t^{\alpha+2}(x,y),
		\end{aligned}
		\]
		where in the last inequality we used $e^{-2t}\simeq 1$ and $1-e^{-4t}\simeq t$ as $t\in (0,1)$.
		
		Plugging this into the expression of $E_3$ and using the fact $e^{-2t}\simeq 1$ and $1-e^{-4t}\simeq t$ as $t\in (0,1)$, we further obtain
		\[
		\begin{aligned}
			|E_3|\lesssim y p_t^{\alpha+2}(x,y),
		\end{aligned}
		\]
		which, together with \eqref{eq-pt} and \eqref{doubling3}, implies
		\[
		\begin{aligned}
			|E_3|&\lesssim \f{y}{\mu_{\alpha+2}(B(y,\sqrt t))}  \exp\Big(-\f{|x-y|^2}{2ct}\Big)\Big(1+\f{\sqrt{t}}{\rho(x)}+\f{\sqrt{t}}{\rho(y)}\Big)^{-N}\\
			&\lesssim \f{1}{\sqrt t\mu_{\alpha}(B(y,\sqrt t))}  \exp\Big(-\f{|x-y|^2}{2ct}\Big)\Big(1+\f{\sqrt{t}}{\rho(x)}+\f{\sqrt{t}}{\rho(y)}\Big)^{-N}\\
			&\lesssim \f{1}{\sqrt t\mu_{\alpha}(B(x,\sqrt t))}  \exp\Big(-\f{|x-y|^2}{2ct}\Big)\Big(1+\f{\sqrt{t}}{\rho(x)}+\f{\sqrt{t}}{\rho(y)}\Big)^{-N},
		\end{aligned}
		\]
		where in the second  inequality we used
		\[
		\mu_{\alpha+2}(B(y,\sqrt t))\simeq (\sqrt t + y)^2\mu_{\alpha}(B(y,\sqrt t))\gtrsim y\sqrt t\mu_{\alpha}(B(y,\sqrt t)). 
		\]
		This completes the proof of \eqref{prop-derivaitive of pt_2} for $k=1$.

		It remains to prove \eqref{prop-derivaitive of pt_2} for $k=2$. By \eqref{eq-derivative of pt}, we have
		\begin{equation}
			\begin{aligned}
				|\partial^2_x p^{\alpha}_t(x,y)|  
				&\le\Big|-\f{1+e^{-4t}}{1-e^{-4t}}  p_t^{\alpha}(x,y)+\f{2e^{-2t}}{1-e^{-4t}}y^2 p_t^{\alpha+2}(x,y)\Big|\\
				&\ \   +\Big|-\f{1+e^{-4t}}{1-e^{-4t}} x \partial_x p_t^{\alpha}(x,y)+\f{2e^{-2t}}{1-e^{-4t}}xy^2 \partial_x p_t^{\alpha+2}(x,y)\Big|\\
				&=:F_1 +F_2.
			\end{aligned}
		\end{equation}	
		Using a similar argument as in the estimate of the first derivative
		$\partial_x p^{\alpha}_t(x,y)$ for the case $k=1$, we obtain
		\[
		|\partial^2_x p^{\alpha}_t(x,y) |\lesssim \f{1}{ t\mu_{\alpha}(B(x,\sqrt t))}  \exp\Big(-\f{|x-y|^2}{2ct}\Big)\Big(1+\f{\sqrt{t}}{\rho(x)}+\f{\sqrt{t}}{\rho(y)}\Big)^{-N},
		\]
		for all $t>0$ satisfying one of the following conditions:
		$t\ge1$; or $t\in(0,1)$ with $x>2y$ or $y>2x$; or
		$t\in(0,1)$ with $y/2\le x\le 2y$ and $x\le\sqrt{t}$. Hence, it remains to consider the case where $y/2 \le x \le 2y$ and $x > \sqrt{t}$. In this case, again by \eqref{eq-derivative of pt}, we have
		\[	F_1 = \f{1}{x}|\partial_x p_t^\alpha(x,y)|\lesssim \f{1}{ t\mu_{\alpha}(B(x,\sqrt t))}  \exp\Big(-\f{|x-y|^2}{2ct}\Big)\Big(1+\f{\sqrt{t}}{\rho(x)}+\f{\sqrt{t}}{\rho(y)}\Big)^{-N}.\]
		
		It remains to estimate $F_2$. To do this, we write
		\[
		\begin{aligned}
			F_2&\le \Big|\f{1-2e^{-2t}+e^{-4t}}{1-e^{-4t}} x \partial_x p_t^{\alpha}(x,y)\Big| +|\f{ 2e^{-2t} }{1-e^{-4t}} |x-y| |\partial_x p_t^{\alpha}(x,y)| \\
			& \ \  +\f{2e^{-2t}}{1-e^{-4t}}y\big| \partial_x p_t^{\alpha}(x,y)-xy \partial_x p_t^{\alpha+2}(x,y)\big|\\
			&\le \Big|\f{1- e^{-2t} }{1+e^{-2t}} x \partial_x p_t^{\alpha}(x,y)\Big| +|\f{ 2e^{-2t} }{1-e^{-4t}} |x-y| |\partial_x p_t^{\alpha}(x,y)| \\
			& \ \  +\f{2e^{-2t}}{1-e^{-4t}}y\big| \partial_x p_t^{\alpha}(x,y)-xy \partial_x p_t^{\alpha+2}(x,y)\big|\\
			&\lesssim t x \partial_x p_t^{\alpha}(x,y)  +\f{|x-y|}{t} |\partial_x p_t^{\alpha}(x,y)| +\f{y}{t}\big| \partial_x p_t^{\alpha}(x,y)-xy \partial_x p_t^{\alpha+2}(x,y)\big|\\
			&=:F_{21}+F_{22}+F_{23}.
		\end{aligned}
		\]
		Similarly to $E_1$ và $E_2$, we have
		\[
		F_{21}+F_{22}\lesssim \f{1}{ t\mu_{\alpha}(B(x,\sqrt t))}  \exp\Big(-\f{|x-y|^2}{2ct}\Big)\Big(1+\f{\sqrt{t}}{\rho(x)}+\f{\sqrt{t}}{\rho(y)}\Big)^{-N}.
		\]
		For $F_{23}$, using the expression \eqref{eq-pt}, the product rule, \eqref{eq3-Bessel} and the facts $e^{-2t}\simeq e^{-4t}\simeq 1$ and $1-e^{-4t}\simeq t$, we have
		\[
		\begin{aligned}
			F_{23}&\lesssim \f{y}{t}\f{1+e^{-4t}}{1-e^{-4t}} x |p_t^\alpha(x,y)-xyp_t^{\alpha+2}(x,y)| + \f{y^2}{t}\f{2e^{-2t}}{1-e^{-4t}}|A-B|\\
			&=:F_{231}+F_{232},
		\end{aligned}
		\]
		where
		\[
		A=\f{2e^{-2t}}{1-e^{-4t}}\exp{\Big(-\f{1}{2}\f{1+e^{-4t}}{1-e^{-4t}}(x^2+y^2)\Big)}(xy)^{-(\alpha-1)/2}I_{(\alpha+1)/2}\Big(\f{2e^{-2t}}{1-e^{-4t}}xy\Big)
		\]
		and
		\[
		B=\f{2e^{-2t}}{1-e^{-4t}}\exp{\Big(-\f{1}{2}\f{1+e^{-4t}}{1-e^{-4t}}(x^2+y^2)\Big)}(xy)^{-(\alpha-1)/2}I_{(\alpha+3)/2}\Big(\f{2e^{-2t}}{1-e^{-4t}}xy\Big).
		\]
		Similarly to the estimate of $E_3$, 
		\[
		F_{231}\lesssim \f{1}{ t\mu_{\alpha}(B(x,\sqrt t))}  \exp\Big(-\f{|x-y|^2}{2ct}\Big)\Big(1+\f{\sqrt{t}}{\rho(x)}+\f{\sqrt{t}}{\rho(y)}\Big)^{-N}.
		\]
		
		For the bound of $F_{232}$, using \eqref{eq4-Bessel} and \eqref{eq2-Bessel}, 
		\[
		\begin{aligned}
			|A-B|&\lesssim \f{1}{xy}\f{1-e^{-4t}}{2e^{-2t}}\Big(\f{2e^{-2t}}{1-e^{-4t}}xy\Big)^{-1/2}\exp\Big(\f{2e^{-2t}}{1-e^{-4t}}xy\Big)\\
			& \ \ \ \times \f{2e^{-2t}}{1-e^{-4t}}\exp{\Big(-\f{1}{2}\f{1+e^{-4t}}{1-e^{-4t}}(x^2+y^2)\Big)}(xy)^{-(\alpha-1)/2}.
		\end{aligned}
		\]
		Using the facts $x\simeq y$, $1-e^{-4t}\simeq t$, $e^{-2t}\simeq 1$, and \eqref{eq-rho laguer}, we further obtain
		\[
		\begin{aligned}
			|A-B|&\lesssim \f{\sqrt t}{x^{2+\alpha}} \exp\Big(\f{2e^{-2t}}{1-e^{-4t}}xy\Big) \exp{\Big(-\f{1}{2}\f{1+e^{-4t}}{1-e^{-4t}}(x^2+y^2)\Big)}\\
			&\lesssim \f{\sqrt t}{x^{2+\alpha}} \exp\Big(\f{2e^{-2t}}{1-e^{-4t}}xy-\f{e^{-2t}}{1-e^{-4t}}(x^2+y^2)\Big) \exp{\Big(-\f{1}{2}\f{1-2e^{-2t}+e^{-4t}}{1-e^{-4t}}(x^2+y^2)\Big)}\\
			&\lesssim \f{\sqrt t}{x^{2+\alpha}} \exp\Big( -\f{e^{-2t}}{1-e^{-4t}}|x-y|^2\Big) \exp{\Big(-\f{1}{2}\f{1-e^{-2t} }{1+e^{-2t}}(x^2+y^2)\Big)}\\
			&\lesssim \f{\sqrt t}{x^{2+\alpha}} \exp\Big( -c\f{|x-y|^2}{t}\Big) \exp{\Big(-ct(x^2+y^2)\Big)}\\
			&\lesssim \f{\sqrt t}{x^{2+\alpha}}\exp\Big(-\f{|x-y|^2}{2ct}\Big)\Big(1+\f{\sqrt{t}}{\rho(x)}+\f{\sqrt{t}}{\rho(y)}\Big)^{-N}.
		\end{aligned}
		\]	
		Inserting this into the expression of $F_{232}$ and using the facts that $x\simeq y$, $1-e^{-4t}\simeq t$, $e^{-2t}\simeq 1$, and $x>\sqrt{t}$,
		\[
		\begin{aligned}
			F_{232}&\lesssim \f{x^2}{t^2}\f{\sqrt t}{x^{2+\alpha}}\exp\Big(-\f{|x-y|^2}{2ct}\Big)\Big(1+\f{\sqrt{t}}{\rho(x)}+\f{\sqrt{t}}{\rho(y)}\Big)^{-N}\\
			&\lesssim \f{1}{t}\f{1}{\sqrt{t} x^{\alpha}}\exp\Big(-\f{|x-y|^2}{2ct}\Big)\Big(1+\f{\sqrt{t}}{\rho(x)}+\f{\sqrt{t}}{\rho(y)}\Big)^{-N}\\
			&\simeq \f{1}{\mu_\alpha(B(x,\sqrt t))}\exp\Big(-\f{|x-y|^2}{2ct}\Big)\Big(1+\f{\sqrt{t}}{\rho(x)}+\f{\sqrt{t}}{\rho(y)}\Big)^{-N},
		\end{aligned}
		\]
		where in the last inequality we used 
		\[
		\mu_\alpha(B(x,\sqrt t))\simeq \sqrt t(\sqrt t +x)^\alpha \simeq \sqrt{t} x^{\alpha}. 
		\]
		
		This completes the proof of \eqref{prop-derivaitive of pt_2} for $k=2$, and thus the proof of Proposition \ref{prop-derivaitive of pt}.
	\end{proof}
	
	\begin{theorem}\label{thm-all Riesz are rho CZ}
		For each $j=1,\ldots,n$, the Riesz transforms $x_j L_\alpha^{-1/2}$, $\partial_j L_\alpha^{-1/2}$, and $\delta_j L_\alpha^{-1/2}$ are $(\delta, \rho)$-Calder\'on--Zygmund operators with  $\delta=1$.
	\end{theorem}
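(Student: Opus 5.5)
We will use the subordination formula
\[
L_\alpha^{-1/2}=\frac{1}{\sqrt\pi}\int_0^\infty e^{-tL_\alpha}\,\frac{dt}{\sqrt t},
\]
so that for $D\in\{x_j,\partial_j,\delta_j\}$ the kernel of $DL_\alpha^{-1/2}$ is
\[
K_D(x,y)=\frac{1}{\sqrt\pi}\int_0^\infty D_x p^\alpha_t(x,y)\,\frac{dt}{\sqrt t}.
\]
Since $\delta_j=\partial_j+x_j$, the case $\delta_j$ follows from the other two. We therefore treat $x_j$ and $\partial_j$.

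\textbf{$L^2$-boundedness.} This follows by the standard quadratic-form argument. For $f\in D(L_\alpha)$ we have
\[
\|\partial_j f\|_{L^2}^2+\|x_jf\|_{L^2}^2\le \sum_i\bigl(\|\partial_i f\|^2+\|x_if\|^2\bigr)=\langle L_\alpha f,f\rangle=\|L_\alpha^{1/2}f\|^2 .
\]
Integration by parts is legitimate with respect to $x^\alpha dx$ because the drift term is exactly $-\frac{\alpha_i}{x_i}\partial_i$. Alternatively, $\|\delta_jf\|^2=\langle\delta_j^*\delta_j f,f\rangle\le\langle L_\alpha f,f\rangle$ on the spectral basis. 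Either way, $x_jL_\alpha^{-1/2}$, $\partial_jL_\alpha^{-1/2}$ and $\delta_jL_\alpha^{-1/2}$ are bounded on $L^2(\X,d\mu_\alpha)$.

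\textbf{Size condition \eqref{11:06, 13/12/2023}.} Apply Proposition~\ref{prop-derivaitive of pt}\eqref{prop-derivaitive of pt_1} with $|\beta|=1$ and \eqref{prop-derivaitive of pt_2} with $k=1$. Both give
\[
|D_xp^\alpha_t(x,y)|\lesssim t^{-1/2}\mu_\alpha(B(x,\sqrt t))^{-1}e^{-|x-y|^2/ct}\bigl(1+\sqrt t/\rho(x)\bigr)^{-N}.
\]
Hence
\[
|K_D(x,y)|\lesssim\int_0^\infty \frac{1}{t\,\mu_\alpha(B(x,\sqrt t))}e^{-|x-y|^2/ct}\Bigl(1+\frac{\sqrt t}{\rho(x)}\Bigr)^{-N}dt .
\]
Split at $t=|x-y|^2$.
\begin{itemize}
\item For $t\le |x-y|^2$, use $\mu_\alpha(B(x,|x-y|))\lesssim (|x-y|/\sqrt t)^D\mu_\alpha(B(x,\sqrt t))$ from \eqref{doubling2}. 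The exponential absorbs the polynomial factor, and $(1+\sqrt t/\rho)^{-N}$ is trivially handled via $e^{-|x-y|^2/ct}\lesssim (\sqrt t/|x-y|)^{M}$ combined with $(1+|x-y|/\rho(x))^{-N}$. More precisely, we bound $e^{-s^2}\lesssim (1+s)^{-2N}$ and use $(1+|x-y|/\sqrt t)^{-N}(1+\sqrt t/\rho)^{-N}\le (1+|x-y|/\rho)^{-N}$-type inequalities.
\item For $t\ge|x-y|^2$, use $\mu_\alpha(B(x,\sqrt t))\gtrsim\mu_\alpha(B(x,|x-y|))$ together with $(1+\sqrt t/\rho(x))^{-N}\le(1+|x-y|/\rho(x))^{-N}(1+\sqrt t/\rho)^{-N/2}$-type splitting, and integrate $dt/t$ against the $\gamma$-growth from \eqref{doubling2}.
\end{itemize}
Both pieces give $\mu_\alpha(B(x,|x-y|))^{-1}(1+|x-y|/\rho(x))^{-N}$.

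\textbf{Smoothness condition \eqref{20:56, 15/12/2023} with $\delta=1$.} We need gradient bounds in both variables for $D_xp_t^\alpha(x,y)$, namely
\[
|\nabla_x D_xp^\alpha_t|+|\nabla_y D_xp^\alpha_t|\lesssim t^{-1}\mu_\alpha(B(x,\sqrt t))^{-1}e^{-|x-y|^2/ct}.
\]
In the $x$-variable:
\begin{itemize}
\item $\partial_j^2p_t$ is covered by Proposition~\ref{prop-derivaitive of pt}\eqref{prop-derivaitive of pt_2} with $k=2$.
\item $\partial_j(x_jp_t)=p_t+x_j\partial_jp_t$, where $|x_j|\lesssim\rho(x)^{-1}$ and the extra decay factor $(1+\sqrt t/\rho)^{-N}$ converts $x_j$ into $t^{-1/2}$.
\item Mixed derivatives $\partial_i\partial_j$ with $i\neq j$ factor through the product structure $p^\alpha_t=\prod p^{\alpha_i}_t$ and reduce to the one-dimensional $k=1$ bounds.
\end{itemize}
In the $y$-variable, symmetry $p_t(x,y)=p_t(y,x)$ gives $\partial_{y_i}$ bounds directly, and mixed $\partial_{y_i}\partial_{x_j}$ are handled by product structure when $i\neq j$. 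When $i=j$, the identity \eqref{eq-derivative of pt} expresses $\partial_x p_t$ via $x p^\alpha_t$ and $xy^2p^{\alpha+2}_t$; one more $y$-derivative is estimated by repeating the subcase analysis of Proposition~\ref{prop-derivaitive of pt}. This is the main obstacle we anticipate: the diagonal case $x\simeq y\gtrsim\sqrt t$ requires the Bessel difference bound \eqref{eq4-Bessel} exactly as in $E_3$ and $F_{23}$.

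\textbf{Conclusion.} Given the gradient bounds, apply the mean value theorem along the segment from $x$ to $x'$; this is valid because $(0,\infty)^n$ is convex and $|x-x'|\le|x-y|/2$. Points on the segment satisfy $|z-y|\simeq|x-y|$, and doubling \eqref{doubling3} lets us compare $\mu_\alpha(B(z,\sqrt t))$ with $\mu_\alpha(B(x,\sqrt t))$. Integrating in $t$ as in the size estimate gives
\[
\frac{|x-x'|}{|x-y|}\cdot\frac{1}{\mu_\alpha(B(x,|x-y|))},
\]
which is \eqref{20:56, 15/12/2023} with $\delta=1$.
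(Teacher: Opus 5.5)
Your outline follows the paper's proof in its essentials: the subordination formula, the bounds of Proposition~\ref{prop-derivaitive of pt} integrated against $dt/t$ for the size and gradient estimates, and the reduction $\delta_j=\partial_j+x_j$.

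The one genuinely different ingredient is $L^2$-boundedness. The paper handles the three operators as follows:
\begin{itemize}
\item it dominates $x_jL_\alpha^{-1/2}f$ pointwise by $\mathcal Mf$, which gives every $L^p$ at once;
\item it imports the $L^2$-boundedness of $\delta_jL_\alpha^{-1/2}$ from Nowak--Stempak;
\item it obtains $\partial_jL_\alpha^{-1/2}$ as the difference of the two.
\end{itemize}
You instead use the identity $\langle L_\alpha f,f\rangle=\sum_i(\|\partial_if\|_{L^2}^2+\|x_if\|_{L^2}^2)$ on the span of the Laguerre functions. The boundary terms $x_i^{\alpha_i}\partial_if\,\bar f$ vanish at $x_i=0$ because these functions depend only on $x_i^2$ and $\alpha_i>-1$; you should say this explicitly. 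Your argument is self-contained and treats all three operators at once. $L^2$ is all the definition asks for.

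Two remarks on the smoothness step.

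First, you are right that the second-variable half of \eqref{20:56, 15/12/2023} for $\partial_jL_\alpha^{-1/2}$ needs the mixed derivative $\partial_{y_j}\partial_{x_j}p^\alpha_t$. Proposition~\ref{prop-derivaitive of pt}\eqref{prop-derivaitive of pt_2} only controls derivatives in the first variable, and the paper does not treat this term separately. You do not need to redo the Bessel subcase analysis for it. By the semigroup law and the symmetry of the kernel,
\[
\partial_{y_j}\partial_{x_j}p^\alpha_t(x,y)=\int_{\X}\partial_{x_j}p^\alpha_{t/2}(x,z)\,\partial_{y_j}p^\alpha_{t/2}(y,z)\,d\mu_\alpha(z).
\]
Apply the case $k=1$ of the proposition to both factors. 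Then use $|x-y|^2\le 2|x-z|^2+2|z-y|^2$ and $\int_{\X}e^{-|z-y|^2/ct}\,d\mu_\alpha(z)\lesssim\mu_\alpha(B(y,\sqrt t))$. This gives
\[
|\partial_{y_j}\partial_{x_j}p^\alpha_t(x,y)|\lesssim t^{-1}\mu_\alpha(B(x,\sqrt t))^{-1}e^{-|x-y|^2/c't},
\]
which is exactly the bound your plan requires.

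Second, in $\partial_j(x_jp^\alpha_t)=p^\alpha_t+x_j\partial_jp^\alpha_t$ you only discuss the second term. The first term is also of size $t^{-1}\mu_\alpha(B(x,\sqrt t))^{-1}$ times a Gaussian. This uses $\rho\le 1/8$, through $(1+\sqrt t/\rho(x))^{-2}\le\rho(x)^2/t$, which is how the paper handles it.
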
	
	
	\begin{proof}
		We first prove that the Riesz transform $x_j L_\alpha^{-1/2}$ is a $(\delta, \rho)$-Calder\'on--Zygmund operator with $\delta=1$.
		
		Using the subordination formula, we can write
		\[
		L_\alpha^{-1/2}=c\int_0^\vc \sqrt t e^{-tL_\alpha}\f{dt}{t},
		\] 
		which implies
		\begin{equation}\label{eq-Riesz xL}
			x_jL_\alpha^{-1/2}f(x)=c\int_0^\vc \sqrt t x_j e^{-tL_\alpha}f(x)\f{dt}{t}.
		\end{equation}
		By \eqref{eq-pt},
		\[
		\begin{aligned}
			|x_jL_\alpha^{-1/2}f(x)|&\lesssim \int_{\X}\int_0^\vc \sqrt t x_j  \f{1}{ \mu_{\alpha}(B(x,\sqrt t))}  \exp\Big(-\f{|x-y|^2}{ct}\Big)\Big(1+\f{\sqrt{t}}{\rho(x)}+\f{\sqrt{t}}{\rho(y)}\Big)^{-2}|f(y)|\f{dt}{t}d\mu_\alpha(y).
		\end{aligned}
		\]
		
		It is straightforward that
		\[
		\begin{aligned}
			\int_0^\vc \sqrt t x_j  &\f{1}{ \mu_{\alpha}(B(x,\sqrt t))}  \exp\Big(-\f{|x-y|^2}{ct}\Big)\Big(1+\f{\sqrt{t}}{\rho(x)}+\f{\sqrt{t}}{\rho(y)}\Big)^{-2}|f(y)|\f{dt}{t}\\
			&\lesssim  |x-y|x_j\f{1}{ \mu_{\alpha}(B(x,|x-y|))}\Big(1+\f{|x-y|}{\rho(x)}\Big)^{-2}\\
			&\lesssim  \f{|x-y|}{\rho(x)}\f{1}{ \mu_{\alpha}(B(x,|x-y|))}\Big(1+\f{|x-y|}{\rho(x)}\Big)^{-2},
		\end{aligned}
		\]
		where in the last inequality we used $x_j\lesssim \f{1}{\rho(x)}$. Consequently,
		\[
		\begin{aligned}
			|x_jL_\alpha^{-1/2}f(x)|&\lesssim \int_{\X}\f{|x-y|}{\rho(x)}\f{1}{ \mu_{\alpha}(B(x,|x-y|))}\Big(1+\f{|x-y|}{\rho(x)}\Big)^{-2}|f(y)| d\mu_\alpha(y)\\
			&= \int_{|x-y|<\rho(x)}\ldots +\int_{|x-y|\ge \rho(x)}\ldots\\
			&=:E(x)+F(x).
		\end{aligned}
		\]
		
		For the first term, we have
		\[
		\begin{aligned}
			E(x)&\simeq \int_{|x-y|<\rho(x)}\f{|x-y|}{\rho(x)}\f{1}{ \mu_{\alpha}(B(x,|x-y|))}|f(y)| d\mu_\alpha(y)\\
			&\lesssim \mathcal M f(x),
		\end{aligned}
		\]
		where $\mathcal M$ is the Hardy--Littlewood maximal operator.
		
		For the second term, using \eqref{doubling2}, we have
		\[
		\begin{aligned}
			F(x)&\simeq \int_{|x-y|\ge \rho(x)} \f{\rho(x)}{|x-y|} \f{1}{ \mu_{\alpha}(B(x,|x-y|))}|f(y)| d\mu_\alpha(y)\\
			&\lesssim \mathcal M f(x).
		\end{aligned}
		\]
		
		Consequently,
		\[
		|x_jL_\alpha^{-1/2}f(x)|\lesssim \mathcal M f(x).
		\]
		This, together with the $L^p$-boundedness of the maximal function $\mathcal M$ for $1<p<\vc$, implies that the Riesz transform is bounded on $L^p(\X)$ in this range.	
		
		It remains to show that $x_jL^{-1/2}$ satisfies \eqref{11:06, 13/12/2023} and \eqref{20:56, 15/12/2023} with $\delta=1$. 
		
		Let $K_j(x,y)$ be the associated kernel of $x_j L^{-1/2}$. By \eqref{eq-Riesz xL} and Proposition \ref{prop-derivaitive of pt}\eqref{prop-derivaitive of pt_1}, we obtain, for $N>0$,
		\begin{align}\label{eq-size xL}
			|K_j(x,y)|&\lesssim \int_0^\vc \f{1}{ \mu_{\alpha}(B(x,\sqrt t))}  \exp\Big(-\f{|x-y|^2}{ct}\Big)\Big(1+\f{\sqrt{t}}{\rho(x)}+\f{\sqrt{t}}{\rho(y)}\Big)^{-N}|f(y)|\f{dt}{t}\nonumber\\
			&\lesssim \f{1}{ \mu_{\alpha}(B(x,|x-y|))}   \Big(1+\f{|x-y|}{\rho(x)}+\f{|x-y|}{\rho(y)}\Big)^{-N}.
		\end{align}	
		Next, using Proposition \ref{prop-derivaitive of pt}\eqref{prop-derivaitive of pt_2} and the estimate $x_j\rho(x)\lesssim 1$, we obtain, for $k\neq j$ and $N>0$,
		\[
		\begin{aligned}
			|\partial_{k}x_jp^\alpha_t(x,y)|&=|x_j\partial_{k}p_t(x,y)|\\
			&\lesssim x_j\f{1}{\sqrt t \mu_{\alpha}(B(x,\sqrt t))}  \exp\Big(-\f{|x-y|^2}{ct}\Big)\Big(1+\f{\sqrt{t}}{\rho(x)}+\f{\sqrt{t}}{\rho(y)}\Big)^{-N-1}\\
			&\lesssim \f{1}{ t\mu_{\alpha}(B(x,\sqrt t))}  \exp\Big(-\f{|x-y|^2}{ct}\Big)\Big(1 +\f{\sqrt{t}}{\rho(y)}\Big)^{-N}.
		\end{aligned}
		\]
		If $k=j$, then we have
		\[
		\partial_{j}[x_jp^\alpha_t(x,y)]=p^\alpha_t(x,y) +x_j\partial_jp^\alpha_t(x,y).
		\]
		Similarly to the previous estimate,
		\[
		|x_j\partial_jp^\alpha_t(x,y)|\lesssim  \f{1}{ t\mu_{\alpha}(B(x,\sqrt t))}  \exp\Big(-\f{|x-y|^2}{ct}\Big)\Big(1 +\f{\sqrt{t}}{\rho(y)}\Big)^{-N}.
		\]
		In addition, using \eqref{eq-pt} and the fact $\rho(x)\lesssim 1$, 
		\[
		\begin{aligned}
			| p^\alpha_t(x,y)| &\lesssim  \f{1}{  \mu_{\alpha}(B(x,\sqrt t))}  \exp\Big(-\f{|x-y|^2}{ct}\Big)\Big(1+\f{\sqrt{t}}{\rho(x)}+\f{\sqrt{t}}{\rho(y)}\Big)^{-N-2}\\
			&\lesssim \f{1}{ t\mu_{\alpha}(B(x,\sqrt t))}  \exp\Big(-\f{|x-y|^2}{ct}\Big)\Big(1 +\f{\sqrt{t}}{\rho(y)}\Big)^{-N}.
		\end{aligned}
		\]
		Consequently,
		\[
		|\partial_{j}[x_jp^\alpha_t(x,y)]|\lesssim \f{1}{ t\mu_{\alpha}(B(x,\sqrt t))}  \exp\Big(-\f{|x-y|^2}{ct}\Big)\Big(1 +\f{\sqrt{t}}{\rho(y)}\Big)^{-N}.
		\]
		Therefore, for any $N>0$,
		\[
		|\nabla_x[x_jp^\alpha_t(x,y)]|\lesssim \f{1}{ t\mu_{\alpha}(B(x,\sqrt t))}  \exp\Big(-\f{|x-y|^2}{ct}\Big)\Big(1 +\f{\sqrt{t}}{\rho(y)}\Big)^{-N}.
		\]
		Using this together with \eqref{eq-Riesz xL}, we obtain, for any $N>0$,
		\begin{align}\label{eq-Holder xL}
			|\nabla_x K_j(x,y)|&=c\Big|\int_0^\vc \sqrt t \nabla[x_jp^\alpha_t(x,y)]\f{dt}{t}\Big|\nonumber\\
			&\lesssim \int_0^\vc \f{1}{ \sqrt t\mu_{\alpha}(B(x,\sqrt t))}  \exp\Big(-\f{|x-y|^2}{ct}\Big)\Big(1 +\f{\sqrt{t}}{\rho(y)}\Big)^{-N}\f{dt}{t}\nonumber\\
			&\lesssim \f{1}{|x-y|}\f{1}{ \mu_{\alpha}(B(x,|x-y|))}\Big(1 +\f{|x-y|}{\rho(y)}\Big)^{-N}.
		\end{align}
		Similarly,
		\[
		|\nabla_y K_j(x,y)| \lesssim \f{1}{|x-y|}\f{1}{ \mu_{\alpha}(B(x,|x-y|))}\Big(1 +\f{|x-y|}{\rho(x)}\Big)^{-N}.
		\]
		It follows that $K_j(x,y)$ satisfies \eqref{20:56, 15/12/2023} with $\delta=1$.
		
		Thus, we have proved that $x_j L_\alpha^{-1/2}$ is a
		$(1,\rho)$-Calder\'on--Zygmund operator.
		
		We next prove that the Riesz transform $\partial_j L_\alpha^{-1/2}$ is a $(1,\rho)$-Calder\'on–Zygmund operator. First, we note that
		\[
		\partial_j L_\alpha^{-1/2} = \delta_j L_\alpha^{-1/2} -x_jL_\alpha^{-1/2}.
		\]
		As proved above, the Riesz transform $x_j L_\alpha^{-1/2}$ is
		bounded on $L^2(\X)$, while the $L^2$-boundedness of the Riesz transform
		$\delta_j L_\alpha^{-1/2}$ was established in \cite{NS2}.
		Consequently, $\partial_j L_\alpha^{-1/2}$ is bounded on $L^2(\X)$. Moreover, using the formula 
		\[
		\partial_j L_\alpha^{-1/2}=c\int_0^\vc \sqrt t \partial_j e^{-t L_\alpha} \f{dt}{t}
		\]
		and Proposition \ref{prop-derivaitive of pt}\eqref{prop-derivaitive of pt_2}, we deduce that the associated kernel of $\partial_j L_\alpha^{-1/2}$ satisfies \eqref{11:06, 13/12/2023} and \eqref{20:56, 15/12/2023} with $\delta=1$. 
		
		Thus, we have proved that $\partial_j L_\alpha^{-1/2}$ is a
		$(1,\rho)$-Calder\'on--Zygmund operator.	
		
		Finally, since
		\[
		\delta_j L_\alpha^{-1/2} = \partial_j L_\alpha^{-1/2} +x_j L_\alpha^{-1/2}, 
		\]
		it follows that $\delta_j L_\alpha^{-1/2}$ is a $(1,\rho)$-Calder\'on--Zygmund operator.
		
		This completes our proof.
		
	\end{proof}

	From   Theorems \ref{thm-all Riesz are rho CZ}, \ref{thm-main thm 1 from H1 to weak L1} and \ref{thm-main thm 2 characterization BMO log}, we have:
	\begin{theorem}\label{thm-thm 1 Laguerre}
		Let  $\theta\in [0,\frac{1}{2})$ and $\rho$ be as in \eqref{eq-rho laguer}.  Then the following assertions hold:
		\begin{enumerate}[\rm (i)]
			\item If $b\in \mathrm{BMO}_{\rho,\theta}(\X)$, then the commutator $[b, T]$ is bounded from $H^1_\rho(\X)$ to $L^{1,\vc}(\X)$, where $T$ is any of the following Riesz transforms $\partial_j L_\alpha^{-1/2}$, $x_j L_\alpha^{-1/2}$, or $\delta_j L_\alpha^{-1/2}$ for $j=1,\ldots,n$.
			
			\item If $b\in \mathrm{BMO}^{\log}_{\rho,\theta}(\X)$, then the commutator $[b, T]$ is bounded from $H^1_\rho(\X)$ to $L^{1}(\X)$, where $T$ is any of the following Riesz transforms $\partial_j L_\alpha^{-1/2}$, $x_j L_\alpha^{-1/2}$, or $\delta_j L_\alpha^{-1/2}$ for $j=1,\ldots,n$.
		\end{enumerate}
			
	\end{theorem}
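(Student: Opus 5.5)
The plan is to derive Theorem \ref{thm-thm 1 Laguerre} as a direct specialization of the abstract results, after checking their hypotheses in the Laguerre setting. First, $(\X,d,\mu_\alpha)$ with $\X=(0,\infty)^n$ and the Euclidean metric is a metric space of homogeneous type. This follows from \eqref{doubling2}, and \eqref{Macias-Segovia} holds with $\alpha_0=1$ by the triangle inequality. The function $\rho$ in \eqref{eq-rho laguer} is admissible with $i(\rho)=1$ by \eqref{eq-rho laguer_index}. Hence $\frac{\delta}{i(\rho)+1}=\frac12$ when $\delta=1$, and the range $\theta\in[0,\frac12)$ is exactly $\theta\in[0,\frac{\delta}{i(\rho)+1})$.

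Next, fix $T$ to be one of $\partial_jL_\alpha^{-1/2}$, $x_jL_\alpha^{-1/2}$, $\delta_jL_\alpha^{-1/2}$. By Theorem \ref{thm-all Riesz are rho CZ}, $T$ is a $(1,\rho)$-Calder\'on--Zygmund operator. Then Remark \ref{rem 1.4}\eqref{06:22, 26/12/2023} applies and gives two facts: $T$ is of weak type $(1,1)$, and $T\in\K_{\rho,\theta,s}$ for every $s\in(1,\infty)$ and every $\theta\in[0,\frac12)$. This reduces to Lemma \ref{15:47, 09/12/2023}\eqref{09:32, 15/11/2025} together with the $H^1_\rho\to L^1$ boundedness from \cite[Theorem 5.1]{Ky25}.

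For part (i), I would apply Theorem \ref{thm-main thm 1 from H1 to weak L1} with $q=s$. Its hypothesis $\theta<\frac{1}{i(\rho)+1}=\frac12$ is satisfied, and it yields that $[b,T]$ maps $H^1_\rho(\X)$ boundedly into $L^{1,\infty}(\X)$.

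For part (ii), I would use the implication \eqref{item:BMO-log-condition}$\Rightarrow$\eqref{item:commutator-L1-bounded} of Theorem \ref{thm-main thm 2 characterization BMO log}. This step has a subtlety, since that theorem requires $\theta<\frac{\sigma_0}{i(\rho)+1}$. Because $\X$ is a metric space, the remark after the theorem gives $\sigma_0=1$, so the condition is again $\theta<\frac12$. It is also worth noting that this implication uses only $T\in\K_{\rho,\theta,q}$ and Lemma \ref{07:01, 12/10/2023}, not the operators $S_k$. So I could instead invoke Corollary \ref{cor:commutator-sCZ-H1-L1}(ii) directly, with $\delta=1$, which has no $\sigma_0$ restriction. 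Either route gives $\|[b,T]\mathfrak a\|_{L^1}\lesssim\|b\|_{\mathrm{BMO}^{\log}_{\rho,\theta}}$ uniformly over atoms $\mathfrak a$, and \cite[Proposition 3.2]{YZ} then extends this to all of $H^1_\rho(\X)$.

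I expect no serious obstacle. The only points needing care are confirming $i(\rho)=1$ for the $\rho$ in \eqref{eq-rho laguer}, and confirming $\sigma_0=1$ (or bypassing it as above).
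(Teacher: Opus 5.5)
Your proposal is correct and follows the paper's route. The paper also uses Theorem \ref{thm-all Riesz are rho CZ} to make each Riesz transform a $(1,\rho)$-Calder\'on--Zygmund operator, then applies Theorem \ref{thm-main thm 1 from H1 to weak L1} for (i) and the implication (i)$\Rightarrow$(ii) of Theorem \ref{thm-main thm 2 characterization BMO log} for (ii), with $i(\rho)=1$ and $\sigma_0=1$. You are also right that this implication uses only $T\in\K_{\rho,\theta,q}$ and Lemma \ref{07:01, 12/10/2023}, so the $\sigma_0$ restriction is not needed there.
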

	In addition, we have the following results regarding the Riesz transforms $\partial_j L_\alpha^{-1/2}$.
\begin{theorem}
	Let  $\theta\in [0,\frac{1}{2})$ and $\rho$ be as in \eqref{eq-rho laguer}.  If $b\in \mathrm{BMO}_{\rho,\theta}(\X)$ is such that all commutators $[b, \partial_j L_\alpha^{-1/2}], j=1,\ldots,n,$ are bounded from $H^1_\rho(\X)$ to $L^1(\X)$, then $b\in \mathrm{BMO}_{\rho,\theta}^{\log}(\X)$. Moreover, 
	\[
	\|b\|_{{\rm BMO}_{\rho,\theta}^{\log}}\simeq \|b\|_{{\rm BMO}_{\rho,\theta}} + \sum_{j=1}^{n} \|[b, \partial_j L_\alpha^{-1/2}]\|_{H^1_\rho\to L^1}.
	\]	
\end{theorem}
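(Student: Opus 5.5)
This theorem will follow from Theorem \ref{thm-main thm 3 chracterize BMO log by Tj}, applied with $T_j=\partial_j L_\alpha^{-1/2}$, $j=1,\ldots,n$, once its two hypotheses are checked. We must check that each $T_j$ is an $(s,\delta,\rho)$-Calder\'on--Zygmund operator with $\delta=1$ and $\theta<\frac{\delta}{i(\rho)+1}$, and that $H^1_\rho(\X)$ admits a characterization in terms of $\{T_j\}_{j=1}^n$.

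For the first hypothesis, Theorem \ref{thm-all Riesz are rho CZ} shows that $\partial_j L_\alpha^{-1/2}$ is a $(1,\rho)$-Calder\'on--Zygmund operator. By Remark \ref{rem 1.4}\eqref{06:22, 26/12/2023}, it is therefore an $(s,1,\rho)$-Calder\'on--Zygmund operator for every $s\in(1,\infty)$. By \eqref{eq-rho laguer_index}, $i(\rho)=1$, so $\frac{\delta}{i(\rho)+1}=\frac12$. Hence the range $\theta\in[0,\frac12)$ in the statement is exactly the one allowed in Theorem \ref{thm-main thm 3 chracterize BMO log by Tj}.

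The second hypothesis is the substantive point and the expected main obstacle. We need
\[
\|f\|_{H^1_\rho}\simeq \|f\|_{L^1}+\sum_{j=1}^n\|\partial_jL_\alpha^{-1/2}f\|_{L^1}.
\]

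For the upper bound, each $\partial_jL_\alpha^{-1/2}$ is bounded from $H^1_\rho(\X)$ to $L^1(\X)$; this is part of membership in $\K_{\rho,\theta,s}$ by Remark \ref{rem 1.4}. The embedding $H^1_\rho\subset L^1$ is immediate from the atomic decomposition.

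For the lower bound, I would first identify $H^1_\rho(\X)$ with the Hardy space $H^1_{L_\alpha}(\X)$ defined through the heat maximal function of $L_\alpha$. This follows from the Gaussian bounds \eqref{eq-pt} with the decay factor $(1+\sqrt t/\rho)^{-N}$, arguing as in \cite{BDK18} and \cite{YZ}.

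Next I would invoke the known Riesz transform characterization of $H^1_{L_\alpha}$ for Laguerre operators of convolution type. My expectation from the literature is that this characterization is stated in terms of the transforms $\delta_jL_\alpha^{-1/2}$ rather than $\partial_jL_\alpha^{-1/2}$.

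To pass from $\delta_j$ to $\partial_j$, I would use the identity $\delta_jL_\alpha^{-1/2}=\partial_jL_\alpha^{-1/2}+x_jL_\alpha^{-1/2}$. It then suffices to show that $x_jL_\alpha^{-1/2}$ is bounded from $L^1(\X)$ into $L^1(\X)$.

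For that $L^1$ bound, the kernel estimate \eqref{eq-size xL} should be refined, keeping the extra factor $|x-y|/\rho(x)$ obtained in the proof of Theorem \ref{thm-all Riesz are rho CZ}. This gives
\[
|K_j(x,y)|\lesssim \frac{|x-y|}{\rho(x)}\frac{1}{\mu_\alpha(B(x,|x-y|))}\Big(1+\frac{|x-y|}{\rho(x)}\Big)^{-N}.
\]
The plan is to integrate this bound in $x$ for fixed $y$. Since $\rho(x)\simeq\rho(y)$ when $|x-y|\lesssim\rho(y)$ (by admissibility), we split into dyadic annuli $|x-y|\sim 2^k\rho(y)$. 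For $k\le0$ the factor $|x-y|/\rho$ yields a convergent geometric sum. For $k>0$ the decay $(1+|x-y|/\rho)^{-N}$ does the job; here admissibility is used to convert $\rho(x)$ to $\rho(y)$ at the cost of a polynomial factor, which is absorbed by taking $N$ large. The result is $\sup_y\int|K_j(x,y)|\,d\mu_\alpha(x)<\infty$, and hence the $L^1$ boundedness.

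Combining the two directions gives the required characterization by $\{\partial_jL_\alpha^{-1/2}\}_{j=1}^n$. Theorem \ref{thm-main thm 3 chracterize BMO log by Tj} then yields $b\in\mathrm{BMO}^{\log}_{\rho,\theta}(\X)$ together with the lower bound for $\|b\|_{\mathrm{BMO}^{\log}_{\rho,\theta}}$. The reverse inequality follows from Theorem \ref{thm-thm 1 Laguerre}(ii), or equivalently from Theorem \ref{thm-main thm 2 characterization BMO log}.
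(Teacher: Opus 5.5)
Your proposal is correct and follows the paper's route. The paper also applies Theorem~\ref{thm-main thm 3 chracterize BMO log by Tj} to $T_j=\partial_jL_\alpha^{-1/2}$, with $\delta=1$ from Theorem~\ref{thm-all Riesz are rho CZ} and $i(\rho)=1$, and simply cites \cite{Preisner} for the characterization of $H^1_\rho(\X)$ by $\{\partial_jL_\alpha^{-1/2}\}_{j=1}^n$. Your extra transfer from $\delta_jL_\alpha^{-1/2}$, via $L^1$-boundedness of $x_jL_\alpha^{-1/2}$, is valid and makes the argument independent of which form the cited characterization takes. One caveat: the refined pointwise kernel bound you borrow misses a logarithmic factor when $n=1$, which is harmless for your dyadic sum. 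A cleaner route is Tonelli with $p^\alpha_t\ge 0$: the bound $\int_{\X}x_jp^\alpha_t(x,y)\,d\mu_\alpha(x)\lesssim\rho(y)^{-1}\bigl(1+\sqrt t/\rho(y)\bigr)^{-N}$ gives $\sup_y\int_{\X}|K_j(x,y)|\,d\mu_\alpha(x)<\infty$ directly.
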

	\begin{proof}
		It was proved in \cite{Preisner} that the Hardy space $H^1_\rho(\X)$ admits a characterization in terms of $\{\partial_j L_\alpha^{-1/2}\}_{j=1}^n$. Hence, the theorem follows directly from Theorem \ref{thm-main thm 3 chracterize BMO log by Tj}.
	\end{proof}
Finally, we have the boundedness on $H^1_\rho(\X)$ for the commutator of the Riesz transform $x_j L_\alpha^{-1/2}$, which improves the corresponding result in Theorem \ref{thm-thm 1 Laguerre}(ii).
	\begin{theorem}
		Let  $\theta\in [0,\frac{1}{2})$ and $\rho$ be as in \eqref{eq-rho laguer}.  If $b\in \mathrm{BMO}_{\rho,\theta}^{\log}(\X)$, then the commutator $[b, x_j L_\alpha^{-1/2}]$ is bounded on $H^1_\rho(\X)$ for all $j=1,\ldots,n$.
	\end{theorem}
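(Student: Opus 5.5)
The plan is to apply the first part of Theorem \ref{thm-main thm 4 chracterize BMO log by Tj on Hardy space} with $T=x_j L_\alpha^{-1/2}$. By Theorem \ref{thm-all Riesz are rho CZ}, $T$ is a $(1,\rho)$-Calder\'on--Zygmund operator. Remark \ref{rem 1.4}\eqref{06:22, 26/12/2023} then makes $T$ an $(s,1,\rho)$-Calder\'on--Zygmund operator for every $s\in(1,\infty)$. Since $i(\rho)=1$ by \eqref{eq-rho laguer_index}, the admissible range $\theta\in[0,\frac{\delta}{i(\rho)+1})$ with $\delta=1$ is exactly $[0,\frac12)$, matching the hypothesis. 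So the only remaining input is
\[
T^*1=(x_jL_\alpha^{-1/2})^*1\in \mathrm{BMO}^{\log}_\rho(\X).
\]
By Proposition \ref{prop- bounded on H1 and T1}, this is equivalent to showing that $x_jL_\alpha^{-1/2}$ is bounded on $H^1_\rho(\X)$, and I would prove that instead.

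To show $H^1_\rho$-boundedness, I would check that $T$ maps each $(H^1_\rho,q)$-atom $\a$ supported in $B=B(x_0,r)$, with $r<\rho(x_0)$, to a uniform multiple of an $(H^1_\rho,q,\varepsilon)$-molecule of log-type. Lemma \ref{07:01, 14/10/2023} together with \cite[Proposition 3.2]{YZ} then gives the result. The size condition $\|T\a\|_{L^q(U_k(B))}\lesssim (2\kappa)^{-k}\mu((2\kappa)^kB)^{1/q-1}$ follows directly from Lemma \ref{15:47, 09/12/2023}\eqref{09:31, 15/11/2025}. It remains to bound the integral:
\[
\Bigl|\int_\X T\a\,d\mu_\alpha\Bigr|\lesssim \log\Bigl(e+\frac{\rho(x_0)}{r}\Bigr)^{-1}.
\]
When $r\ge\rho(x_0)/4$ this is immediate from the $L^1$ bound on $T\a$. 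The hard case is $r<\rho(x_0)/4$, where $\int\a=0$.

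The cancellation case is the main obstacle, and my approach is to exploit the factor $x_j$ explicitly. Write
\[
\int_\X T\a(x)\,d\mu_\alpha(x)=\int_B \a(y)\,\bigl(T^*1(y)-T^*1(x_0)\bigr)\,d\mu_\alpha(y),
\]
with the formal $T^*1(y)=c\int_0^\infty\sqrt t\int_\X x_j\,p^\alpha_t(x,y)\,d\mu_\alpha(x)\,\frac{dt}{t}$. This integral converges absolutely because of the decay factor $(1+\sqrt t/\rho(y))^{-N}$ in Proposition \ref{prop-derivaitive of pt}\eqref{prop-derivaitive of pt_1}. It therefore suffices to show $|\nabla_y T^*1(y)|\lesssim 1/\rho(y)$, since then
\[
|T^*1(y)-T^*1(x_0)|\lesssim \frac{r}{\rho(x_0)}\lesssim \log\Bigl(e+\frac{\rho(x_0)}{r}\Bigr)^{-1}
\]
for $y\in B$, using $\rho(y)\simeq\rho(x_0)$ there. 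To get the gradient bound, differentiate in $y$ under the integral, using the $y$-analogue of Proposition \ref{prop-derivaitive of pt}\eqref{prop-derivaitive of pt_2} (the kernel is symmetric) and $x_j\lesssim 1/\rho(x)\lesssim (1+\sqrt t/\rho(y))/\rho(y)$ on the Gaussian region. The $t$-integral is then bounded by
\[
\int_0^\infty \frac{\sqrt t}{\sqrt t}\,\frac{1}{\rho(y)}\Bigl(1+\frac{\sqrt t}{\rho(y)}\Bigr)^{-N}\frac{dt}{t}.
\]
This diverges at $t\to0$, so a more careful treatment is needed there. For $t\le\rho(y)^2$, I would write $x_j=y_j+(x_j-y_j)$ and use that $\int_\X \partial_{y}p^\alpha_t(x,y)\,d\mu_\alpha(x)=\partial_y\bigl(e^{-tL_\alpha}1\bigr)(y)$. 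By the explicit Laguerre kernel, this is $O\bigl(\sqrt t/\rho(y)^2\bigr)$ for small $t$, because $e^{-tL_\alpha}1$ deviates from $1$ only through the potential $|x|^2\lesssim\rho^{-2}$. The $(x_j-y_j)$ term gains an extra $\sqrt t$ from the Gaussian. Both pieces then give integrable contributions of order $1/\rho(y)$.

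If this route through small-$t$ cancellation becomes too delicate, the fallback is the weaker logarithmic bound
\[
|T^*1(y)-T^*1(x_0)|\lesssim \log\Bigl(e+\frac{\rho(x_0)}{r}\Bigr)^{-1},
\]
which is all that is needed, obtained by splitting the $t$-integral at $t=r^2$ and $t=\rho(x_0)^2$.
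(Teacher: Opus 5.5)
Your proposal is correct and follows the same chain as the paper: first $H^1_\rho$-boundedness of $x_jL_\alpha^{-1/2}$, then Proposition~\ref{prop- bounded on H1 and T1} to get $(x_jL_\alpha^{-1/2})^*1\in\mathrm{BMO}^{\log}_\rho(\X)$, then the first part of Theorem~\ref{thm-main thm 4 chracterize BMO log by Tj on Hardy space} with $\delta=1$ and $i(\rho)=1$. The paper only appeals to an argument analogous to \cite[Theorem 1.7]{BLL} for the $H^1_\rho$ step, whereas you give a concrete atom-to-molecule check, and your small-$t$ cancellation claim is confirmed by the explicit formula $e^{-tL_\alpha}1(y)=\prod_{k=1}^n(\cosh 2t)^{-(\alpha_k+1)/2}e^{-\tanh(2t)y_k^2/2}$, which gives $|\nabla_y e^{-tL_\alpha}1(y)|\le 2t|y|\lesssim t/\rho(y)$.

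Two small remarks. Absolute convergence of $T^*1(y)$ near $t=0$ comes from $x_j\le |y|+|x-y|\lesssim \rho(y)^{-1}+|x-y|$, not from Proposition~\ref{prop-derivaitive of pt}\eqref{prop-derivaitive of pt_1}, whose factor $t^{-1/2}$ leaves a logarithmic divergence. Since this bound also shows $\|T^*1\|_{L^\infty}\lesssim 1$, your Lipschitz estimate already gives $T^*1\in\mathrm{BMO}^{\log}_\rho(\X)$ directly, so the molecule detour through Proposition~\ref{prop- bounded on H1 and T1} is optional.
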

	\begin{proof}
		From Proposition \ref{prop-derivaitive of pt} and Theorem \ref{thm-all Riesz are rho CZ}, and by an argument analogous to the proof of \cite[Theorem 1.7]{BLL}, one can verify that the Riesz transform $x_j L_\alpha^{-1/2}$ is bounded on $H^1_\rho(\X)$ for each $j$. Proposition \ref{prop- bounded on H1 and T1} then implies that $(x_j L_\alpha^{-1/2})^*1 \in \mathrm{BMO}_{\rho}^{\log}(\X)$. The theorem thus follows directly from Theorem \ref{thm-main thm 4 chracterize BMO log by Tj on Hardy space}.
	\end{proof}
	\subsection{Singular integral operators associated with admissible functions}
	
	Let $s\in (1,\infty)$, $\delta\in (0,1]$ and  $\eta\in C^1(\R)$ be such that $0\leq \eta\leq 1$, $\eta(t)=1$ as $|t|<1$ and $\eta(t)=0$ as $|t|>2$. Following Yang and Zhou \cite{YZ}, we consider a class of singular integral operators $T_{\rho,\eta}$ that are bounded on $L^s(\X)$ and whose kernels $K_{\rho,\eta}(x,y)$ satisfy the following conditions:
	\begin{enumerate}[\rm (i)]
		\item for all $f\in L^\infty(\X)$ with bounded support and all $x\notin$\,supp\,$f$,
		\begin{equation}\label{10:32, 26/12/2023}
			T_{\rho,\eta}(f)(x)=\int_{\X} K_{\rho,\eta}(x,y) f(y) d\mu(y)= \int_{\X} K(x,y)\eta\left(\frac{d(x,y)}{\rho(x)}\right) f(y) d\mu(y);
		\end{equation}
		
		\item there exist constants $C_1, C_2>0$ such that, for all $x\ne y$,
		\begin{equation}\label{12:59, 26/12/2023}
			|K(x,y)|\leq \frac{C_1}{\mu(B(x,d(x,y)))};
		\end{equation}
		and, for all $x,y,y'\in\X$ with $d(y,y')\leq \frac{d(x,y)}{2\kappa}$,
		\begin{equation}\label{13:00, 26/12/2023}
			|K(x,y)-K(x,y')|\leq \frac{C_2}{\mu(B(x,d(x,y)))} \left(\frac{d(y,y')}{d(x,y)}\right)^{\delta}.
		\end{equation}
	\end{enumerate}
	We note that there are many examples of singular integral operators of the form $T_{\rho,\eta}$. For example, on $\mathbb{R}^n$ with $1 \leq j \leq n$, the operator defined by
	$$T_{\rho,\eta}(f)(x)\equiv \text{p.v.}\int_{\R^n} \frac{x_j-y_j}{|x-y|^{n+1}}\eta\left(\frac{|x-y|}{\rho(x)}\right) f(y) dy$$
	is of the form considered in \eqref{10:32, 26/12/2023}. We also note that the motivation for studying these singular integral operators $T_{\rho,\eta}$ is the boundedness of Riesz transforms associated with Schr\"odinger operators and the local Riesz transforms (see \cite[Section 5]{YZ}).
	
	The following result follows from the proof of Theorem 4.10 in \cite{BDK}.
	
	\begin{lemma}\label{13:42, 26/12/2023}
		Let $s\in (1,\infty)$, $\delta\in (0,1]$,  and let $T_{\rho,\eta}$ be as in \eqref{10:32, 26/12/2023}. Then:
		\begin{enumerate}[\rm (i)]		
			\item\label{16:07, 26/12/2023} $T_{\rho,\eta}$ is bounded on $H^1_\rho(\X)$, or equivalently, $T_{\rho,\eta}^*(1)\in \mathrm{BMO}^{\log}_\rho(\X)$.
			
			\item\label{17:15, 26/12/2023} For each $N>0$, there exists a constant $C(N)>0$ such that
			$$|K_{\rho,\eta}(x,y)|\leq \frac{C(N)}{\mu(B(x,d(x,y)))} \left(1+\frac{d(x,y)}{\rho(x)}+ \frac{d(x,y)}{\rho(y)}\right)^{-N}$$
			for all $x\ne y$; and there exists a constant $C>0$ such that
			$$|K_{\rho,\eta}(x,y)-K_{\rho,\eta}(x,y')|\leq \frac{C}{\mu(B(x,d(x,y)))} \left(\frac{d(y,y')}{d(x,y)}\right)^{\delta}$$ 
			for all $x,y,y'\in \X$ with $d(y,y')\leq \frac{d(x,y)}{2\kappa}$.
			
			\item\label{16:30, 26/12/2023} For each $N>0$ and $\delta_0\in (0,\delta)$, there exists a constant $C(N,\delta_0)>0$ such that
			$$|K_{\rho,\eta}(x,y)-K_{\rho,\eta}(x,y')|\leq \frac{C(N,\delta_0)}{\mu(B(x,d(x,y)))} \left(\frac{d(y,y')}{d(x,y)}\right)^{\delta_0} \left(1+\frac{d(x,y)}{\rho(x)}\right)^{-N}$$
			for all $x,y,y'\in\X$ with $d(y,y')\leq \frac{d(x,y)}{2\kappa}$.
		\end{enumerate}
	\end{lemma}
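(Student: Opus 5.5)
Part (ii) is the main work; (i) is a citation and (iii) an interpolation. For (ii) I would use the structure $K_{\rho,\eta}(x,y)=K(x,y)\eta\big(d(x,y)/\rho(x)\big)$. Since $\eta(t)=0$ for $|t|>2$, the kernel vanishes unless $d(x,y)\le 2\rho(x)$. By Remark \ref{20:49, 02/12/2023}, this forces $\rho(y)\lesssim \rho(x)$, and hence $d(x,y)\lesssim \rho(y)$. So on the support of the kernel the factor $\big(1+\frac{d(x,y)}{\rho(x)}+\frac{d(x,y)}{\rho(y)}\big)^{-N}$ is bounded below by a constant depending on $N$. The size bound then follows directly from \eqref{12:59, 26/12/2023}.

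For the $y$-regularity in (ii) I would write
\[
K_{\rho,\eta}(x,y)-K_{\rho,\eta}(x,y')=\big(K(x,y)-K(x,y')\big)\eta\Big(\tfrac{d(x,y)}{\rho(x)}\Big)+K(x,y')\Big[\eta\Big(\tfrac{d(x,y)}{\rho(x)}\Big)-\eta\Big(\tfrac{d(x,y')}{\rho(x)}\Big)\Big].
\]
The first term is controlled by \eqref{13:00, 26/12/2023}. For the second term I use that $\eta\in C^1$ is Lipschitz, together with \eqref{Macias-Segovia}, which gives
\[
|d(x,y)-d(x,y')|\lesssim d(x,y)^{1-\alpha_0}d(y,y')^{\alpha_0}.
\]
The bracket is nonzero only when $d(x,y)\simeq\rho(x)$, since $d(x,y')\simeq d(x,y)$ in the regime $d(y,y')\le d(x,y)/(2\kappa)$. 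There it is bounded by $\big(d(y,y')/d(x,y)\big)^{\alpha_0}$. I also need $|K(x,y')|\lesssim 1/\mu(B(x,d(x,y)))$, which follows from $d(x,y')\simeq d(x,y)$ and doubling.

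This step is the main obstacle. It produces the exponent $\alpha_0$ rather than $\delta$, so when $\alpha_0<\delta$ the stated $\delta$-bound does not come out directly. I would first try to absorb this by observing that the bracket term is supported where $d(x,y)\sim\rho(x)$. If that fails, I would invoke the proof of \cite[Theorem 4.10]{BDK}, which the statement cites, presumably under the implicit convention $\delta\le\alpha_0$ or for metric spaces. In any case the bound with $\min\{\delta,\alpha_0\}$ is immediate.

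For (iii), take $\delta_0<\delta$ and interpolate geometrically between the decay estimate of (ii) and the smoothness estimate of (ii). Bound the difference by
\[
\big(|K_{\rho,\eta}(x,y)|+|K_{\rho,\eta}(x,y')|\big)^{1-\delta_0/\delta}\,|K_{\rho,\eta}(x,y)-K_{\rho,\eta}(x,y')|^{\delta_0/\delta}.
\]
In the first factor I use the size bound with $N$ replaced by $N\delta/(\delta-\delta_0)$, and I use $d(x,y')\simeq d(x,y)$ to handle $K_{\rho,\eta}(x,y')$. This yields the factor $\big(1+d(x,y)/\rho(x)\big)^{-N}$ and the exponent $\delta_0$.

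For (i), I would show that $T_{\rho,\eta}$ is an $(s,\delta,\rho)$-Calderón–Zygmund operator in the adjoint sense; the $L^s$-boundedness is assumed. Then I check $T_{\rho,\eta}^*1\in\mathrm{BMO}^{\log}_\rho$ as in \cite{BDK}, using the truncation and the cancellation estimate (iii), and the stated equivalence follows from Proposition \ref{prop- bounded on H1 and T1}. Alternatively, I would verify directly that $T_{\rho,\eta}$ maps atoms to log-type molecules and apply Lemma \ref{07:01, 14/10/2023}. The molecule decay comes from (iii), and the integral condition comes from the compact support of the kernel at scale $\rho$.
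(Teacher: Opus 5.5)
Your size bound in (ii) is fine. Remark~\ref{20:49, 02/12/2023} gives $d(x,y)\lesssim\rho(y)$ directly when $d(x,y)\le 2\rho(x)$; the intermediate ``$\rho(y)\lesssim\rho(x)$'' points the wrong way for that implication.

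For the smoothness bound, your splitting gives exactly the exponent $\min\{\delta,\alpha_0\}$. Drop the hope of absorbing the rest: restricting to $d(x,y)\simeq\rho(x)$ does not improve the H\"older exponent of $y\mapsto d(x,y)$, and the $\delta$-bound is false in general when $\delta>\alpha_0$. A counterexample:
\begin{itemize}
\item Take $\R^2$ with Lebesgue measure, $\rho\equiv 1$, and $d(x,y)=|x-y|\bigl(1+\tfrac12\psi\bigl(\tfrac{x-y}{|x-y|}\bigr)\bigr)$ with $\psi(\cos t,\sin t)=|\sin t|^{\alpha}$, $\alpha\in(0,1)$. This is a symmetric quasi-metric with $\kappa=3/2$, and it satisfies \eqref{Macias-Segovia} with $\alpha_0=\alpha$.
\item Take $K(x,y)=\frac{x_1-y_1}{|x-y|^3}$. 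It satisfies \eqref{12:59, 26/12/2023}--\eqref{13:00, 26/12/2023} with $\delta=1$, and its smooth truncation is bounded on $L^s$.
\item Pick $t_*$ with $\eta'(t_*)\neq 0$, and set $y=(t_*,0)$, $y'=(t_*,h)$. Then $d(0,y')-d(0,y)\simeq h^{\alpha}$, so $|K_{\rho,\eta}(0,y)-K_{\rho,\eta}(0,y')|\gtrsim h^{\alpha}$, whereas $d(y,y')\simeq h$.
\end{itemize}
So what your argument proves, namely (ii)--(iii) with $\min\{\delta,\alpha_0\}$ in place of $\delta$, is the correct statement. It agrees with the stated one on metric spaces, where $\alpha_0=1$, and no appeal to \cite{BDK} can give more. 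Also, (iii) needs no interpolation: the difference vanishes unless $d(x,y)\lesssim\rho(x)$, and there $(1+d(x,y)/\rho(x))^{-N}\simeq 1$.

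The genuine gap is in (i). In your molecule route, the bound $|\int_{\X}T_{\rho,\eta}\a\,d\mu|\lesssim 1/\log(e+\rho(x_0)/r)$ does not come from the compact support of the kernel at scale $\rho$. For a mean-zero atom with $r\ll\rho(x_0)$, this integral is decided at scales between $r$ and $\rho(x_0)$. There $\eta\equiv 1$, so the truncation is invisible, and the size and smoothness of $K$ give no control. The same objection defeats your other route, deriving $T_{\rho,\eta}^*1\in\mathrm{BMO}^{\log}_\rho$ from the truncation and (iii).

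Indeed, the hypotheses as listed allow $K\equiv 0$ and $T_{\rho,\eta}f=mf$ with $m\in L^\infty$. On $\R$ with $\rho\equiv1$, take $m=\chi_{(0,\infty)}$ and $\a=(2r)^{-1}(\chi_{(0,r)}-\chi_{(-r,0)})$. Pairing $m\a$ with $g_{x_0}$ from \eqref{17:18, 11/10/2023} (with $x_0=0$) gives $\|m\a\|_{H^1_\rho}\gtrsim\log(1/r)$. This is consistent with $T^*1=m\notin\mathrm{BMO}^{\log}_\rho$.

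So (i) does not follow from \eqref{10:32, 26/12/2023}--\eqref{13:00, 26/12/2023} at all. An additional cancellation assumption is needed, and supplying it is what the paper's bare pointer to the proof of \cite[Theorem 4.10]{BDK} has to do. One such assumption: $T_{\rho,\eta}$ is the $\eta$-truncation of a Calder\'on--Zygmund operator $T$ with kernel $K$ and $T^*1=0$, as in the Riesz example.

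With that input your molecule route closes:
\begin{itemize}
\item The decay comes from Lemma~\ref{15:47, 09/12/2023}(i).
\item Write $T_{\rho,\eta}=T-T^{\mathrm{far}}$, where $T^{\mathrm{far}}$ has kernel $K(1-\eta)$. Since $\int T\a\,d\mu=0$, you get $\int T_{\rho,\eta}\a\,d\mu=-\int T^{\mathrm{far}}\a\,d\mu$.
\item Using the mean zero of $\a$, the latter integral is $O\bigl((r/\rho(x_0))^{\min\{\delta,\alpha_0\}}\bigr)$, which is well below $1/\log(e+\rho(x_0)/r)$.
\end{itemize}
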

	
	As a consequence of Remark \ref{rem 1.4}\eqref{10:19, 22/12/2023}, Theorem \ref{thm-main thm 4 chracterize BMO log by Tj on Hardy space}, and Lemma \ref{13:42, 26/12/2023}, we obtain the following result.
	
	\begin{theorem}\label{06:39, 27/12/2023}
		Let $s\in (1,\infty)$, $\delta\in (0,1]$, and let $T_{\rho,\eta}$ be as in \eqref{10:32, 26/12/2023}. Then:	 
		\begin{enumerate}[\rm (i)]
			\item $T_{\rho,\eta}$ is an $(s,\delta,\rho)$-Calder\'on--Zygmund operator, and thus belongs to $\K_{\rho,\theta,s}$ for all $\theta\in \big[0,\frac{\delta}{i(\rho)+1}\big)$.
			
			\item The commutator $[b,T_{\rho,\eta}]$ is bounded on $H^1_\rho(\X)$ for all $\theta\in \big[0,\frac{\delta}{i(\rho)+1}\big)$ and all $b\in \mathrm{BMO}^{\log}_{\rho,\theta}(\X)$.
		\end{enumerate}	 
	\end{theorem}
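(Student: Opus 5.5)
Part (i) requires showing that $T_{\rho,\eta}$ is bounded from $L^s(\X)$ into $L^{s,\infty}(\X)$ and that its kernel $K_{\rho,\eta}$ satisfies \eqref{11:08, 13/12/2023} and \eqref{11:09, 13/12/2023}. Boundedness on $L^s$ is part of the hypothesis, so the weak-type bound is immediate. For the size condition \eqref{11:08, 13/12/2023}, fix $x_0$, $R>0$ and $y$ with $d(y,x_0)\le R/(2\kappa)$. For $x$ in the annulus $R\le d(x,x_0)<2\kappa R$ we have $d(x,y)\simeq R$, so doubling gives $\mu(B(x,d(x,y)))\simeq\mu(B(x_0,R))$. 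Lemma \ref{13:42, 26/12/2023}\eqref{17:15, 26/12/2023} then yields
\[
|K_{\rho,\eta}(x,y)|\lesssim \mu(B(x_0,R))^{-1}\bigl(1+R/\rho(x)\bigr)^{-N}.
\]
To pass from $\rho(x)$ to $\rho(x_0)$, use Remark \ref{20:49, 02/12/2023}: $(1+R/\rho(x))\gtrsim (1+d(x,x_0)/\rho(x))\gtrsim (1+d(x,x_0)/\rho(x_0))^{1/(k_0+1)}\simeq (1+R/\rho(x_0))^{1/(k_0+1)}$. Choosing $N$ large enough and integrating the $s$-th power over a set of measure $\lesssim\mu(B(x_0,R))$ gives \eqref{11:08, 13/12/2023}.

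For \eqref{11:09, 13/12/2023} we need $|K(y,x)-K(x_0,x)|$, which is a difference in the first variable; Lemma \ref{13:42, 26/12/2023} only controls differences in the second variable. I would handle this directly from \eqref{10:32, 26/12/2023} by writing
\[
K_{\rho,\eta}(y,x)-K_{\rho,\eta}(x_0,x) = K(y,x)\Bigl[\eta\Bigl(\tfrac{d(y,x)}{\rho(y)}\Bigr)-\eta\Bigl(\tfrac{d(x_0,x)}{\rho(x_0)}\Bigr)\Bigr] +\eta\Bigl(\tfrac{d(x_0,x)}{\rho(x_0)}\Bigr)\bigl[K(y,x)-K(x_0,x)\bigr].
\]
The second term needs smoothness of $K$ in its first variable. \eqref{13:00, 26/12/2023} only gives the second variable, so one must either assume a symmetric condition or, as I expect the authors do following \cite[Theorem 4.10]{BDK}, exploit the structure there. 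This is the main obstacle. If only second-variable smoothness is available, I would instead establish the adjoint-type condition needed for $T^*$ and check which variable \eqref{11:09, 13/12/2023} actually uses in the atom estimates.

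For the first term, the bound on $\eta$ is handled by the mean value theorem. We have $|d(y,x)-d(x_0,x)|\lesssim R^{1-\alpha_0}d(y,x_0)^{\alpha_0}$ by \eqref{Macias-Segovia}. We also have $|1/\rho(y)-1/\rho(x_0)|$ small relative to $1/\rho(x_0)$ when $d(y,x_0)\ll\rho(x_0)$, by admissibility; in the opposite regime the cutoff forces the support to lie at scale $\lesssim\rho$, and the term is simply bounded by the size estimate. Together these give a factor $(d(y,x_0)/R)^{\delta'}$ for some positive $\delta'$; the loss from $\delta$ to a smaller exponent is acceptable after adjusting the constant. Granted these estimates, Remark \ref{rem 1.4}\eqref{10:19, 22/12/2023} yields $T_{\rho,\eta}\in\K_{\rho,\theta,s}$ for $\theta\in[0,\delta/(i(\rho)+1))$.

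Part (ii) is then immediate. By Lemma \ref{13:42, 26/12/2023}\eqref{16:07, 26/12/2023}, $T_{\rho,\eta}^*1\in\mathrm{BMO}^{\log}_\rho(\X)$. Combined with part (i), the first half of Theorem \ref{thm-main thm 4 chracterize BMO log by Tj on Hardy space} gives the boundedness of $[b,T_{\rho,\eta}]$ on $H^1_\rho(\X)$ for every $b\in\mathrm{BMO}^{\log}_{\rho,\theta}(\X)$.
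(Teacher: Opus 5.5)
Your weak-type bound, your size condition \eqref{11:08, 13/12/2023} (via Lemma \ref{13:42, 26/12/2023}\eqref{17:15, 26/12/2023} and Remark \ref{20:49, 02/12/2023}), and your part (ii) (via Lemma \ref{13:42, 26/12/2023}\eqref{16:07, 26/12/2023}, Remark \ref{rem 1.4}\eqref{10:19, 22/12/2023} and Theorem \ref{thm-main thm 4 chracterize BMO log by Tj on Hardy space}) are fine and match the paper. The gap is the regularity condition \eqref{11:09, 13/12/2023}. You never prove it, and the first-variable route you sketch cannot work.

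Your cut-off term $K(y,x)\bigl[\eta\bigl(d(y,x)/\rho(y)\bigr)-\eta\bigl(d(x_0,x)/\rho(x_0)\bigr)\bigr]$ needs continuity of $\rho$, and admissibility does not provide it. Condition \eqref{admissible function} only gives $\rho(y)\simeq\rho(x_0)$ with fixed constants. For example, on $\R^n$ the function equal to $1$ at $x_0$ and $1/2$ elsewhere is admissible. Take this $\rho$, a smoothly truncated Riesz kernel, $R=1$, and let $y\to x_0$. On the fixed shell $1+\epsilon/2\le d(x,x_0)\le 1+\epsilon$ you then have $\eta(d(y,x)/\rho(y))=0$ but $\eta(d(x_0,x)/\rho(x_0))\ge 1/2$. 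So the first-variable difference stays of size $\simeq 1$ on a set of positive measure, and no factor $(d(y,x_0)/R)^{\delta'}$ can appear. Two further problems: \eqref{13:00, 26/12/2023} gives no smoothness of $K$ in its first variable at all, and losing $\delta$ to some $\delta'<\delta$ is not harmless, since the range of $\theta$ in the statement is tied to $\delta$.

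The resolution is the one you only gesture at in your last sentence. The condition actually needed is the difference in the second (input) variable, integrated in the first variable over the annulus. This is what the paper uses: in Case 1 of the proof of Lemma \ref{15:47, 09/12/2023}\eqref{09:31, 15/11/2025}, the atom's cancellation produces $\int_B[K(x,y)-K(x,x_0)]\,\a(y)\,d\mu(y)$ for $x\in U_j(B)$. The displayed form of \eqref{11:09, 13/12/2023} has to be read in this sense.

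This condition follows in one line from the second estimate of Lemma \ref{13:42, 26/12/2023}\eqref{17:15, 26/12/2023}. If $d(y,x_0)\le R/(2\kappa)$ and $R\le d(x,x_0)<2\kappa R$, then $d(y,x_0)\le d(x,x_0)/(2\kappa)$, hence
\[
|K_{\rho,\eta}(x,y)-K_{\rho,\eta}(x,x_0)|\lesssim \frac{1}{\mu(B(x,d(x,x_0)))}\Bigl(\frac{d(y,x_0)}{d(x,x_0)}\Bigr)^{\delta}\lesssim \Bigl(\frac{d(y,x_0)}{R}\Bigr)^{\delta}\frac{1}{\mu(B(x_0,R))}.
\]
Integrating the $s$-th power over the annulus gives the required bound with the full exponent $\delta$. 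This is the paper's argument. The cut-off never has to be differentiated in the output variable, which is exactly why Lemma \ref{13:42, 26/12/2023} records smoothness only in $y$.
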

	
	Moreover, the conclusions of Theorem \ref{06:39, 27/12/2023} remain valid if the condition $\theta\in \big[0,\frac{\delta}{i(\rho)+1}\big)$ is replaced by the weaker assumption $\theta\in [0,\infty)$. More precisely, we have the following improvement of Theorem \ref{06:39, 27/12/2023}.
	
	\begin{theorem}\label{22:31, 26/12/2023}
		Under the same hypotheses as in Theorem \ref{06:39, 27/12/2023}, we have:	 
		\begin{enumerate}[\rm (i)]
			\item\label{15:43, 26/12/2023} $T_{\rho,\eta}$ belongs to the class $\K_{\rho,\theta,s}$ for all $\theta\in [0,\infty)$.
			
			\item\label{15:44, 26/12/2023} The commutator $[b,T_{\rho,\eta}]$ is bounded on $H^1_\rho(\X)$ for all $\theta\in [0,\infty)$ and all $b\in \mathrm{BMO}^{\log}_{\rho,\theta}(\X)$.
		\end{enumerate}	
	\end{theorem}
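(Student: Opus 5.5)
The key point is that the kernel of $T_{\rho,\eta}$ vanishes once $d(x,y)\ge 2\rho(x)$. So when $\mathfrak a$ is supported in $B=B(x_0,r)$ with $r<\rho(x_0)$, the function $T_{\rho,\eta}\mathfrak a$ lives in a fixed dilate $\lambda B^*$ of $B^*:=B(x_0,\rho(x_0))$. On that set the weight $\bigl(1+\frac{(2\kappa)^jr}{\rho(x_0)}\bigr)^{(k_0+1)\theta}$ from Lemma \ref{14:51, 06/10/2023} is bounded by a constant independent of $\theta$-restrictions. Hence no decay exponent has to beat $(k_0+1)\theta$.

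\emph{Localization.} Let $y\in B$ and $d(x,y)<2\rho(x)$. By Remark \ref{20:49, 02/12/2023},
\[
1+\frac{d(x,x_0)}{\rho(x_0)}\lesssim \Bigl(1+\frac{d(x,x_0)}{\rho(x)}\Bigr)^{k_0+1}.
\]
Combined with $d(x,x_0)\le\kappa(d(x,y)+r)$, this gives $d(x,x_0)\le C_*\rho(x_0)$ for some constant $C_*$ depending only on $\kappa,c_0,k_0$. I would check this by splitting into the cases $d(x,x_0)\le\rho(x_0)$ and $d(x,x_0)>\rho(x_0)$. In the second case, $\rho(x)\lesssim \rho(x_0)(d(x,x_0)/\rho(x_0))^{k_0/(k_0+1)}$, and since $d(x,x_0)\lesssim\rho(x)$ this forces $d(x,x_0)/\rho(x_0)$ to be bounded. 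Therefore $\operatorname{supp} T_{\rho,\eta}\mathfrak a\subset B(x_0,C_*\rho(x_0))$, at least off $\operatorname{supp}\mathfrak a$. On $2\kappa B$ I would not use the kernel; I would use $L^s$-boundedness instead.

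\emph{Part (i).} Condition (i) of Definition \ref{07:59, 31/12/2025}, the $H^1_\rho\to L^1$ boundedness, follows from Lemma \ref{13:42, 26/12/2023}(i). For condition (ii), fix an $(H^1_\rho,s)$-atom $\mathfrak a$ and take $q\in(1,s)$. I decompose $\X$ into the annuli $U_j(B)$, keeping only $j\le J$ with $(2\kappa)^J r\simeq C_*\rho(x_0)$. Lemma \ref{15:47, 09/12/2023}(i), applied with $q<s$ since $T_{\rho,\eta}$ is an $(s,\delta,\rho)$-CZ operator by Theorem \ref{06:39, 27/12/2023}(i), gives
\[
\|T\mathfrak a\|_{L^q(U_j)}\lesssim (2\kappa)^{-j\delta}\mu((2\kappa)^jB)^{1/q-1}.
\]
For all $j\le J$, Lemma \ref{14:51, 06/10/2023} gives
\[
\|b-b_B\|_{L^{q'}((2\kappa)^{j+1}B)}\lesssim (j+2)\,\mu((2\kappa)^{j+1}B)^{1/q'}\,(1+2\kappa C_*)^{(k_0+1)\theta}\,\|b\|_{\mathrm{BMO}_{\rho,\theta}}.
\]
Summing with Hölder yields $\sum_j (j+2)(2\kappa)^{-j\delta}\lesssim1$ for every $\theta\ge0$.

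\emph{Part (ii).} I would repeat the proof of Theorem \ref{thm-main thm 4 chracterize BMO log by Tj on Hardy space}, again truncating the sum at $j\le J$. The $L^p(U_j)$ bounds then hold with decay $(2\kappa)^{-j\delta/2}$, because the $\theta$-factor is a constant. The moment estimate uses Lemma \ref{14:50, 06/10/2023} together with Lemma \ref{23:50, 07/10/2023}, so $(b-b_B)T\mathfrak a$ is a constant multiple of an $(H^1_\rho,p,\delta/2)$-molecule of log-type, and Lemma \ref{07:01, 14/10/2023} controls its $H^1_\rho$ norm. For the other term, $T(\mathfrak a(b-b_B))$, Lemma \ref{07:01, 12/10/2023} places $\mathfrak a(b-b_B)$ in $H^1_\rho$ for any $\theta$, and Lemma \ref{13:42, 26/12/2023}(i) bounds $T$ on $H^1_\rho$.

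The main obstacle is the uniform localization constant $C_*$, which must hold for every $x$ in the support and not merely on average. It relies on the reverse comparison in Remark \ref{20:49, 02/12/2023}, and I expect that is where most of the care is needed.
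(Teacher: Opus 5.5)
Your argument is correct. Its overall architecture matches the paper's: an annular decomposition, H\"older's inequality, a log-type molecule for $(b-b_B)T_{\rho,\eta}\mathfrak a$, and Lemma~\ref{07:01, 12/10/2023} together with $H^1_\rho$-boundedness for $T_{\rho,\eta}(\mathfrak a(b-b_B))$. The difference lies in how you neutralize the growth factor $\bigl(1+\frac{(2\kappa)^{j+1}r}{\rho(x_0)}\bigr)^{(k_0+1)\theta}$.

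The paper never localizes $T_{\rho,\eta}\mathfrak a$. It uses the kernel bounds of Lemma~\ref{13:42, 26/12/2023}(ii)--(iii), which carry an arbitrary factor $(1+d(x,y)/\rho(x))^{-N}$. It takes $N$ of order $(k_0+1)^2\theta$, plus $(k_0+1)\delta/2$ in the non-cancellative case $r\ge\rho(x_0)/4$. Via Remark~\ref{20:49, 02/12/2023} it then gets, on each $U_j(B)$ with $j\ge 1$, a pointwise bound carrying the factor $(2\kappa)^{-j\delta/2}\bigl(1+\frac{(2\kappa)^{j+1}r}{\rho(x_0)}\bigr)^{-(k_0+1)\theta}$. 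This cancels the growth from Lemma~\ref{14:51, 06/10/2023} (resp.\ Lemma~\ref{14:50, 06/10/2023}) term by term over all $j$.

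You instead use the exact vanishing of $\eta$ to discard every annulus beyond scale $C_*\rho(x_0)$. On the surviving annuli the growth factor is $O(1)$, so the generic estimate of Lemma~\ref{15:47, 09/12/2023}(i) suffices. Your route is more transparent, bypasses the smoothness-with-decay bound (iii) and its loss $\delta\to\delta_0<\delta$, and keeps the full exponent $\delta$. The paper's route never uses exact support. It therefore applies unchanged to any operator bounded on $L^s$ and $H^1_\rho$ whose kernel has size and H\"older bounds with arbitrary polynomial decay at scale $\rho$, truncated or not.

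One small fix is needed in your localization step. Splitting at $d(x,x_0)>\rho(x_0)$ does not yield $d(x,x_0)\lesssim\rho(x)$. The inequality $d(x,x_0)<\kappa(2\rho(x)+r)$ says nothing when $d(x,x_0)$ does not dominate $\kappa r$, and $\kappa r$ may exceed $\rho(x_0)$. Split instead at $d(x,x_0)>2\kappa\rho(x_0)$. Then $\kappa r<d(x,x_0)/2$, so $d(x,x_0)<4\kappa\rho(x)$. Remark~\ref{20:49, 02/12/2023} then gives $d(x,x_0)<(c_1(1+4\kappa))^{k_0+1}\rho(x_0)$, and $C_*=\max\{2\kappa,(c_1(1+4\kappa))^{k_0+1}\}$ works.
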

	
	\begin{proof}
		\eqref{15:43, 26/12/2023} Since $T_{\rho,\eta}$ is bounded on $H^1_\rho(\X)$ (see Lemma \ref{13:42, 26/12/2023}\eqref{16:07, 26/12/2023}), it suffices to prove that
		\begin{equation}\label{16:14, 26/12/2023}
			\|(b-b_B)T_{\rho,\eta}(\a)\|_{L^1}\lesssim \|b\|_{\mathrm{BMO}_{\rho,\theta}}
		\end{equation}
		for all $b\in \mathrm{BMO}_{\rho,\theta}(\X)$ and all $(H^1_\rho,s)$-atoms $\a$ related to balls $B$. Indeed, since $\a$ is an $(H^1_\rho,s)$-atom related to the ball $B=B(x_0,r)$, we have
		\begin{equation}\label{16:15, 26/12/2023}
			\text{supp\,$\a\subset B$\quad and \quad $0<r<\rho(x_0)$}.	
		\end{equation}		
		
		Let $k_0>i(\rho)$ be such that \eqref{admissible function} holds. For any $j\in \mathbb{Z}^+$ and any $x\in U_j(B)$, we consider the following two cases:
		
		{\sl Case 1:} $0<r<\frac{\rho(x_0)}{4}$. Then $\int_{\X} \a(x) d\mu(x)=0$. Thus, by Lemma \ref{13:42, 26/12/2023}\eqref{16:30, 26/12/2023}, Remark \ref{20:49, 02/12/2023}, and the comparability $d(x,x_0)\simeq (2\kappa)^{j+1} r$, we obtain
		\begin{align}\label{17:37, 26/12/2023}
			|T_{\rho,\eta}(\a)(x)|&=\left|\int_{B} (K_{\rho,\eta}(x,y)-K_{\rho,\eta}(x,x_0)) \a(y) d\mu(y)\right|\nonumber\\
			&\lesssim \int_{B} \frac{1}{\mu(B(x,d(x,x_0)))} \left(\frac{d(y,x_0)}{d(x,x_0)}\right)^{\delta/2} \left(1+\frac{d(x,x_0)}{\rho(x)}\right)^{-(k_0+1)^2 \theta} |\a(y)| d\mu(y)\nonumber\\
			&\lesssim \frac{1}{\mu(B(x_0,d(x,x_0)))} \left(\frac{r}{d(x,x_0)}\right)^{\delta/2} \left(1+\frac{d(x,x_0)}{\rho(x_0)}\right)^{-(k_0+1) \theta} \|\a\|_{L^1}\nonumber\\
			&\lesssim \mu((2\kappa)^{j+1} B)^{-1} (2\kappa)^{-j \delta/2} \left(1+\frac{(2\kappa)^{j+1} r}{\rho(x_0)}\right)^{-(k_0+1) \theta}.
		\end{align}
		
		{\sl Case 2:} $\frac{\rho(x_0)}{4}\leq r<\rho(x_0)$. Then, by Lemma \ref{13:42, 26/12/2023}\eqref{17:15, 26/12/2023}, Remark \ref{20:49, 02/12/2023}, and the comparabilities $\rho(x_0)\simeq r$ and $d(x,y)\simeq d(x,x_0)\simeq (2\kappa)^{j+1} r$ for all $y\in B$, we obtain
		\begin{align}\label{17:38, 26/12/2023}
			|T_{\rho,\eta}(\a)(x)|&=\left|\int_{B} K_{\rho,\eta}(x,y) \a(y) d\mu(y)\right|\nonumber\\
			& \lesssim \frac{1}{\mu(B(x,d(x,x_0)))}\left(1+\frac{d(x,x_0)}{\rho(x)}\right)^{-[(k_0+1)\delta/2 + (k_0+1)^2 \theta]} \|\a\|_{L^1}\nonumber\\
			&\lesssim \frac{1}{\mu(B(x_0,d(x,x_0)))}\left(1+\frac{d(x,x_0)}{\rho(x_0)}\right)^{-\delta/2}  \left(1+\frac{d(x,x_0)}{\rho(x_0)}\right)^{-(k_0+1) \theta} \nonumber\\
			&\lesssim \mu((2\kappa)^{j+1} B)^{-1} (2\kappa)^{-j \delta/2} \left(1+\frac{(2\kappa)^{j+1} r}{\rho(x_0)}\right)^{-(k_0+1) \theta}.
		\end{align}
		
		By H\"older's inequality, Lemma \ref{14:51, 06/10/2023}, \eqref{16:15, 26/12/2023}, the $L^s$-boundedness of $T_{\rho,\eta}$, \eqref{17:37, 26/12/2023}, and \eqref{17:38, 26/12/2023}, we deduce that
		\begin{align*}
			&\|(b-b_B)T_{\rho,\eta}(\a)\|_{L^1}=  \|(b-b_B)T_{\rho,\eta}(\a)\|_{L^1(U_0(B))} + \sum_{j=1}^\infty \|(b-b_B)T_{\rho,\eta}(\a)\|_{L^1(U_j(B))}\\
			&\leq \|b-b_B\|_{L^{s'}((2\kappa) B)} \|T_{\rho,\eta}(\a)\|_{L^s} + \sum_{j=1}^\infty \|b-b_B\|_{L^{s'}(U_j(B))}\|T_{\rho,\eta}(\a)\|_{L^s(U_j(B))}\\
			&\lesssim \mu((2\kappa)B)^{\frac{1}{s'}}\|b\|_{\mathrm{BMO}_{\rho,\theta}}  \mu(B)^{\frac{1}{s}-1} + \sum_{j=1}^\infty \mu((2\kappa)^{j+1} B)^{\frac{1}{s'}}(j+2)\left(1+\frac{(2\kappa)^{j+1} r}{\rho(x_0)}\right)^{(k_0+1)\theta}\|b\|_{\mathrm{BMO}_{\rho,\theta}}\\
			& \hskip 7cm\times \mu((2\kappa)^{j+1} B)^{\frac{1}{s}-1} (2\kappa)^{-j \delta/2} \left(1+\frac{(2\kappa)^{j+1} r}{\rho(x_0)}\right)^{-(k_0+1) \theta}\\
			&\lesssim \|b\|_{\mathrm{BMO}_{\rho,\theta}} \left(1+\sum_{j=1}^\infty (j+2) (2\kappa)^{-j\delta/2}\right) \lesssim \|b\|_{\mathrm{BMO}_{\rho,\theta}}.
		\end{align*}
		This proves \eqref{16:14, 26/12/2023}, and thus $T_{\rho,\eta}$ belongs to the class $\K_{\rho,\theta,s}$.
		
		\eqref{15:44, 26/12/2023} By \cite[Proposition 3.2]{YZ}, it suffices to prove that
		\begin{equation}\label{22:21, 26/12/2023}
			\|[b,T_{\rho,\eta}](\a)\|_{H^1_\rho}\lesssim \|b\|_{\mathrm{BMO}^{\log}_{\rho,\theta}}
		\end{equation}
		for all $(H^1_\rho,s)$-atoms $\a$ related to the balls $B$. Indeed, since $\a$ is an $(H^1_\rho,s)$-atom related to the ball $B=B(x_0,r)$, we have
		\begin{equation}\label{21:16, 26/12/2023}
			\text{supp\,$\a\subset B$\quad and \quad $0<r<\rho(x_0)$}.	
		\end{equation}				
		
		Let $p:=\frac{s+1}{2}\in (1,\infty)$ and $m:=\frac{s(s+1)}{s-1}\in (1,\infty)$. Then $\frac{1}{p}=\frac{1}{m} +\frac{1}{s}$. By  the generalized H\"older inequality, Lemma \ref{14:51, 06/10/2023}, \eqref{21:16, 26/12/2023}, and the $L^s$-boundedness of $T_{\rho,\eta}$, we obtain
		\begin{align}\label{14:41, 27/12/2023}
			\|(b-b_B)T_{\rho,\eta}(\a)\|_{L^p(U_0(B))}&\leq \|b-b_B\|_{L^m(U_0(B))} \|T_{\rho,\eta}(\a)\|_{L^s(U_0(B))}\nonumber\\
			&\lesssim \mu((2\kappa)B)^{\frac{1}{m}} \|b\|_{\mathrm{BMO}_{\rho,\theta}} \mu(B)^{\frac{1}{s}-1}\lesssim \mu(B)^{\frac{1}{p}-1} \|b\|_{\mathrm{BMO}^{\log}_{\rho,\theta}}
		\end{align}	
		For any $j\in\mathbb{Z}^+$, by the generalized H\"older inequality, Lemma \ref{14:51, 06/10/2023}, \eqref{17:37, 26/12/2023}, \eqref{17:38, 26/12/2023}, and the estimate $(j+2)\lesssim (2\kappa)^{j\delta/4}$ (see Lemma \ref{23:50, 07/10/2023}), we obtain
		\begin{align}\label{22:22, 26/12/2023}
			\|(b-b_B)T_{\rho,\eta}(\a)\|_{L^p(U_j(B))}&\leq \|b-b_B\|_{L^m(U_j(B))} \|T\a\|_{L^s(U_j(B))}\nonumber\\
			&\lesssim \mu((2\kappa)^{j+1} B)^{\frac{1}{m}} (j+2)\left(1+\frac{(2\kappa)^{j+1} r}{\rho(x_0)}\right)^{(k_0+1)\theta}\|b\|_{\mathrm{BMO}_{\rho,\theta}}\nonumber\\
			&\hskip1cm \times \mu((2\kappa)^{j+1} B)^{\frac{1}{s}-1} (2\kappa)^{-j \delta/2} \left(1+\frac{(2\kappa)^{j+1} r}{\rho(x_0)}\right)^{-(k_0+1) \theta}\nonumber\\
			&\lesssim (2\kappa)^{-j\delta/4} \mu((2\kappa)^j B)^{\frac{1}{p}-1} \|b\|_{\mathrm{BMO}^{\log}_{\rho,\theta}}.
		\end{align}	
		
		On the other hand, by H\"older's inequality, Lemma \ref{14:50, 06/10/2023}, \eqref{21:16, 26/12/2023}, the $L^s$-boundedness of $T_{\rho,\eta}$, \eqref{17:37, 26/12/2023}, \eqref{17:38, 26/12/2023}, and the estimate $\log\left(e+(2\kappa)^{j+1} t\right)\lesssim (2\kappa)^{j\delta/4} \log\left(e+ t\right)$ (see Lemma \ref{23:50, 07/10/2023}), it follows that
		\begin{align*}
			&\left|\int_{\X} (b(x)-b_B)T_{\rho,\eta}(\a)(x)d\mu(x)\right|\leq \sum_{j=0}^\infty \int_{U_j(B)} |b(x)-b_B| |T\a(x)|d\mu(x)\\
			&\leq \|b-b_B\|_{L^{s'}((2\kappa)B)} \|T_{\rho,\eta}(\a)\|_{L^s((2\kappa)B)} + \sum_{j=1}^\infty  \|b-b_B\|_{L^{s'}(U_j(B))}\|T_{\rho,\eta}(\a)\|_{L^s(U_j(B))}\\
			&\lesssim  \frac{\mu((2\kappa)B)^{\frac{1}{s'}}\|b\|_{\mathrm{BMO}^{\log}_{\rho,\theta}}}{\log\left(e+\frac{\rho(x_0)}{(2\kappa) r}\right)} \|\a\|_{L^s} + \sum_{j=1}^\infty \mu((2\kappa)^{j+1} B)^{\frac{1}{q'}}(j+2) \frac{\left(1+\frac{(2\kappa)^{j+1} r}{\rho(x_0)}\right)^{(k_0+1)\theta}}{\log\left(e+\frac{\rho(x_0)}{(2\kappa)^{j+1} r}\right)} \|b\|_{\mathrm{BMO}^{\log}_{\rho,\theta}} \\
			&\hskip6cm  \times  \mu((2\kappa)^{j+1} B)^{\frac{1}{q}-1} (2\kappa)^{-j \delta/2} \left(1+\frac{(2\kappa)^{j+1} r}{\rho(x_0)}\right)^{-(k_0+1) \theta}\\
			&\lesssim \frac{\|b\|_{\mathrm{BMO}^{\log}_{\rho,\theta}}}{\log\left(e+\frac{\rho(x_0)}{r}\right)}\left(1+\sum_{j=1}^\infty (j+2)(2\kappa)^{-j\delta/4}\right)\\
			&\lesssim  \frac{\|b\|_{\mathrm{BMO}^{\log}_{\rho,\theta}}}{\log\left(e+\frac{\rho(x_0)}{r}\right)}. 
		\end{align*}
		This, together with \eqref{14:41, 27/12/2023} and \eqref{22:22, 26/12/2023}, shows that $(b-b_B)T_{\rho,\eta}(\a)$ is, up to a factor $C \|b\|_{\mathrm{BMO}^{\log}_{\rho,\theta}}$, an $(H^1_\rho,p,\delta/4)$-molecule related to the ball $B$. Therefore, by Lemma \ref{07:01, 14/10/2023},
		$$\|(b-b_B)T_{\rho,\eta}(\a)\|_{H^1_\rho}\lesssim \|b\|_{\mathrm{BMO}^{\log}_{\rho,\theta}}.$$
		Thus, by Lemma \ref{07:01, 12/10/2023} and Lemma \ref{13:42, 26/12/2023}\eqref{16:07, 26/12/2023},
		\begin{align*}
			\|[b,T_{\rho,\eta}](\a)\|_{H^1_\rho}&\leq \|(b-b_B)T_{\rho,\eta}(\a)\|_{H^1_\rho}+ \|T_{\rho,\eta}(\a(b-b_B))\|_{H^1_\rho}\\
			&\lesssim \|b\|_{\mathrm{BMO}^{\log}_{\rho,\theta}} + \|T_{\rho,\eta}\|_{H^1_\rho\to H^1_\rho} \|\a(b-b_B))\|_{H^1_\rho} \lesssim \|b\|_{\mathrm{BMO}^{\log}_{\rho,\theta}}.
		\end{align*}
		This proves \eqref{22:21, 26/12/2023}, and thus completes the proof of Theorem \ref{22:31, 26/12/2023}.
		
	\end{proof}

	\subsection{Grand maximal operators $\M_{\rho,\epsilon,\beta,\gamma}$}

\begin{definition}\label{definition for test functions}
	Let $x_0\in\mathcal X$, $r>0$, $\beta\in (0,1]$ and $\gamma >0$. A function $\phi$ is said to belong to the space of test functions $\mathcal G(x_0,r,\beta,\gamma)$ if there exists a constant $C_\phi>0$ such that
	\begin{enumerate}[\rm (i)]
		\item\label{16:03, 02/12/2023} $|\phi(x)| \leq  \frac{C_\phi}{\mu(B(x_0,r)) + \mu(B(x_0, d(x,x_0)))}\left(\frac{r}{r+ d(x,x_0)}\right)^\gamma$ for all $x\in\mathcal X$;
		
		\item\label{16:02, 02/12/2023} $|\phi(x) - \phi(y)|\leq \frac{C_\phi}{\mu(B(x_0,r)) + \mu(B(x_0, d(x,x_0)))}\left(\frac{r}{r+ d(x,x_0)}\right)^\gamma \left(\frac{d(x,y)}{r+ d(x,x_0)}\right)^\beta$ for all $x,y\in \mathcal X$ satisfying $d(x,y)\leq \frac{r + d(x,x_0)}{2\kappa}$.
		
	\end{enumerate}
	Moreover, for any $\phi\in \mathcal G(x_0,r,\beta,\gamma)$, we define its norm by	
	$$\|\phi\|_{\mathcal G(x_0,r,\beta,\gamma)}:= \inf \left\{C_\phi: \mbox{\eqref{16:03, 02/12/2023} and \eqref{16:02, 02/12/2023} hold}\right\}.$$
\end{definition}

Throughout this subsection, we fix $x_0\in \mathcal X$. In Definition \ref{definition for test functions}, it is easy to see that $\mathcal G(x_0,1,\beta,\gamma)$ is a Banach space. Furthermore, for any  $x\in \mathcal X$ and $r>0$, we have $\mathcal G(x,r,\beta,\gamma)= \mathcal G(x_0,1,\beta,\gamma)$  with equivalent norms (but of course the constants are depending on $x$ and $r$). For simplicity, we write $\mathcal G(\beta,\gamma)$ instead of $\mathcal G(x_0,1,\beta,\gamma)$.

Given $\epsilon\in (0,1]$ and $\beta,\gamma\in (0,\epsilon]$, we define the space $\mathcal G^\epsilon_0(\beta,\gamma)$ to be the completion of $\mathcal G(\epsilon,\epsilon)$ in $\mathcal G(\beta,\gamma)$, and denote by $(\mathcal G^\epsilon_0(\beta,\gamma))'$ the space of all continuous linear functionals on $\mathcal G^\epsilon_0(\beta,\gamma)$. We say that $f$ is a distribution if $f\in (\mathcal G^\epsilon_0(\beta,\gamma))'$. For a distribution $f$, the  grand maximal function  $\M_{\rho,\epsilon,\beta,\gamma}(f)$ is defined by
$$\M_{\rho,\epsilon,\beta,\gamma}(f)(x) = \sup\{|\langle f,\phi \rangle|: \phi\in \mathcal G^\epsilon_0(\beta,\gamma), \|\phi\|_{\mathcal G(x,r,\beta,\gamma)}\leq 1\; \mbox{for some}\; r\in (0,\rho(x))\}.$$

\begin{definition}\label{11:42, 31/12/2025}
	Let $\epsilon\in (0,1)$ and $\beta,\gamma\in (0,\epsilon)$. The Hardy space $H^1_{\rho,\epsilon,\beta,\gamma}(\mathcal X)$ is then defined as the space of all distributions $f\in (\mathcal G^\epsilon_0(\beta,\gamma))'$ such that
	$$\|f\|_{H^1_{\rho,\epsilon,\beta,\gamma}}:=\|\M_{\rho,\epsilon,\beta,\gamma}(f)\|_{L^1}<\infty.$$
\end{definition}

We recall the following result from \cite[Theorem 3.2]{YZ}.

\begin{theorem}\label{thm:H1-equiv}
	Let $\epsilon\in (0,1)$ and $\beta,\gamma\in (0,\epsilon)$. Then $H^1_{\rho,\epsilon,\beta,\gamma}(\mathcal X)= H^1_\rho(\X)$ with equivalent norms.
\end{theorem}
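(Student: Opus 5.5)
The proof consists of the two inclusions $H^1_\rho(\X)\subset H^1_{\rho,\epsilon,\beta,\gamma}(\X)$ and $H^1_{\rho,\epsilon,\beta,\gamma}(\X)\subset H^1_\rho(\X)$, each with a norm bound. Since $H^1_\rho(\X)=H^{1,q}_\rho(\X)$ for every $q\in(1,\infty]$, the first inclusion reduces to a uniform estimate
$$\|\M_{\rho,\epsilon,\beta,\gamma}(\a)\|_{L^1}\le C$$
for all $(H^1_\rho,2)$-atoms $\a$. This estimate then passes to $f=\sum_j\lambda_j\a_j$, because $\M_{\rho,\epsilon,\beta,\gamma}$ is sublinear and the series converges in $L^1$, hence in $(\mathcal G^\epsilon_0(\beta,\gamma))'$.

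\emph{Step 1: estimate on a single atom.} Let $\a$ be supported in $B=B(x_0,r)$ with $r<\rho(x_0)$. On $U_0(B)$ I would use $\M_{\rho,\epsilon,\beta,\gamma}(\a)\lesssim\mathcal M\a$ together with the $L^2$-boundedness of $\mathcal M$ and H\"older's inequality. For $x\in U_j(B)$ with $j\ge1$, take any $\phi$ admissible in the supremum, that is, with $\|\phi\|_{\mathcal G(x,t,\beta,\gamma)}\le1$ for some $t<\rho(x)$.
\begin{itemize}
\item If $r<\rho(x_0)/4$, then $\a$ has mean zero, so $\langle \a,\phi\rangle=\int_B \a(y)\,(\phi(y)-\phi(x_0))\,d\mu(y)$. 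The regularity condition (ii) of Definition~\ref{definition for test functions} then gives the bound $(2\kappa)^{-j\beta}\mu((2\kappa)^jB)^{-1}$.
\item If $\rho(x_0)/4\le r<\rho(x_0)$, I would use only the size condition (i). Since $t<\rho(x)$, it gives the factor $\bigl(\rho(x)/d(x,x_0)\bigr)^{\gamma}$. Converting $\rho(x)$ to $\rho(x_0)\simeq r$ via \eqref{admissible function} and Remark~\ref{20:49, 02/12/2023} gives decay $(2\kappa)^{-j\gamma/(k_0+1)}\mu((2\kappa)^jB)^{-1}$.
\end{itemize}
In both cases, summing over $j$ yields the uniform $L^1$ bound.

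\emph{Step 2: the converse inclusion.} Let $f\in(\mathcal G^\epsilon_0(\beta,\gamma))'$ with $\M_{\rho,\epsilon,\beta,\gamma}(f)\in L^1$.
\begin{enumerate}
\item Localize. Using admissibility, I would choose a covering of $\X$ by critical balls $B_k=B(x_k,\rho(x_k))$ with bounded overlap, and a partition of unity $\{\psi_k\}$ subordinate to $\{2B_k\}$ whose members are H\"older of order $\ge\beta$ at scale $\rho(x_k)$.
\item Control each piece. For $x$ near $B_k$, the function $\psi_k\phi$ is, up to a constant, again an admissible test function at scales below $\rho(x)\simeq\rho(x_k)$. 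This gives $\M_{\rho,\epsilon,\beta,\gamma}(\psi_kf)\lesssim \M_{\rho,\epsilon,\beta,\gamma}(f)$ near $2B_k$. Away from $2B_k$, the quantity $\M_{\rho,\epsilon,\beta,\gamma}(\psi_kf)$ vanishes or is negligible, since the admissible scales are below $\rho(x)$.
\item Decompose locally. Each $\psi_kf$ is then a local Hardy function at scale $\rho(x_k)$. I would run the Calder\'on--Zygmund decomposition on the level sets $\{\M_{\rho,\epsilon,\beta,\gamma}(\psi_kf)>2^i\}$, using Whitney balls and test-function-adapted bump functions, as in the Coifman--Weiss and Han--M\"uller--Yang theories. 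Whitney balls of radius much smaller than $\rho(x_k)$ yield mean-zero atoms. The remainder at scale $\sim\rho(x_k)$ yields atoms with $r\simeq\rho$, for which condition (c) of Definition~\ref{def:Hrho-atom} is not required. The coefficients are controlled by $\sum_i2^i\mu(\{\M_{\rho,\epsilon,\beta,\gamma}(\psi_kf)>2^i\})\lesssim\|\M_{\rho,\epsilon,\beta,\gamma}(\psi_kf)\|_{L^1}$.
\item Sum over $k$, using the bounded overlap of the $2B_k$.
\end{enumerate}

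\emph{Expected obstacle.} Step 2 is the main difficulty. It requires showing that the decomposition of a distribution converges, and that the pieces are genuine $L^\infty$ atoms. In particular, one must verify that $f\in L^1$, which follows from $|f|\lesssim \M_{\rho,\epsilon,\beta,\gamma}(f)$ almost everywhere via approximations to the identity such as the $S_k$ of Lemma~\ref{lem:exp-ATI}. One must also handle the truncation of the maximal function at scale $\rho$ inside the Whitney argument. I would first try to reduce to the known RD-space local Hardy space theory on each $2B_k$, and otherwise follow the argument of \cite[Theorem 3.2]{YZ}.
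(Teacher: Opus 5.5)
\textbf{Step 1 (the easy inclusion).} The paper gives no argument for this statement; it quotes it from \cite[Theorem 3.2]{YZ}. Your Step 1 is a complete proof of $H^1_\rho(\X)\subset H^1_{\rho,\epsilon,\beta,\gamma}(\X)$. It is the same two-case estimate the paper carries out in the proof of Proposition \ref{prop:grand-maximal-Kclass}. The passage to $f=\sum_j\lambda_j\a_j$ is legitimate: test functions are bounded, so $L^1$-convergence gives $\langle f,\phi\rangle=\sum_j\lambda_j\langle\a_j,\phi\rangle$.

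\textbf{The gap: the converse inclusion is not proved.} Step 2(3) carries its whole weight, namely an atomic decomposition of a distribution whose $\rho$-truncated grand maximal function is integrable. You only name the tools for it, and your fallback, ``follow the argument of \cite[Theorem 3.2]{YZ}'', is the statement itself. The parts of Step 2 you did write also need two repairs.
\begin{enumerate}
\item $\M_{\rho,\epsilon,\beta,\gamma}(\psi_kf)$ does not vanish off $2B_k$, because test functions only have the polynomial tail $\bigl(t/(t+d(x,y))\bigr)^\gamma$. What is true, and what your summation needs, is $\|\M_{\rho,\epsilon,\beta,\gamma}(\psi_kf)\|_{L^1(\X\setminus CB_k)}\lesssim\int_{B_k}\M_{\rho,\epsilon,\beta,\gamma}(f)\,d\mu$. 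To see it, take $D=d(x,x_k)\ge C\rho(x_k)$. Then $\psi_k\phi$ is at most $\frac{\mu(B_k)}{\mu(B(x_k,D))}\bigl(1+D/\rho(x_k)\bigr)^{-\gamma/(k_0+1)}$ times a normalized test function of scale $\simeq\rho(x_k)$ centred at any point of $B_k$ (by Remark \ref{20:49, 02/12/2023}). This factor is integrable in $x$.
\item $\psi_k$ must be H\"older of order at least $\epsilon$, not merely $\beta$. The space $\mathcal G^\epsilon_0(\beta,\gamma)$ is the closure of $\mathcal G(\epsilon,\epsilon)$ in $\mathcal G(\beta,\gamma)$, a little-H\"older-type space. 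With an exactly $\beta$-H\"older cutoff, $\psi_k\phi$ need not be a test function, so $\psi_kf$ is undefined.
\end{enumerate}
The same issue arises when you use the $S_k$ of Lemma \ref{lem:exp-ATI} to get $f\in L^1$ with $|f|\lesssim\M_{\rho,\epsilon,\beta,\gamma}(f)$. You need their kernels to lie in $\mathcal G^\epsilon_0(\beta,\gamma)$, which is clear when $\sigma\ge\epsilon$, but $\sigma<\sigma_0$ may be small. In the metric RD-space setting of \cite{YZ} all of this is automatic (Lipschitz bumps exist), but in a general quasi-metric space it is not. For example, with $d(x,y)=|x-y|^2$ on $\R$ and $\epsilon>1/2$ one has $\mathcal G(\epsilon,\epsilon)=\{0\}$, so a restriction of this kind is implicit in the quoted statement too.

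\textbf{A way to close the gap.} Once you have $f\in L^1$ with $|f|\lesssim\M_{\rho,\epsilon,\beta,\gamma}(f)$ a.e., you can avoid running a new Whitney argument.
\begin{enumerate}
\item Set $\lambda_k=\int_\X\psi_kf\,d\mu$ and $h_k=\psi_kf-\lambda_k\,\mu(2B_k)^{-1}\chi_{2B_k}$.
\item Bound the \emph{global} grand maximal function of $h_k$ to get $\|h_k\|_{H^1(\X)}\lesssim\int_{CB_k}\M_{\rho,\epsilon,\beta,\gamma}(f)\,d\mu$. At points of $CB_k$, scales below $c\rho(x_k)$ are controlled by your localization, and scales above $c\rho(x_k)$ by $\|h_k\|_{L^1}/\mu(B_k)$. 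Outside $CB_k$, use $\int h_k\,d\mu=0$ and the $\beta$-regularity of the test functions.
\item Apply the atomic decomposition of $H^1(\X)$ \cite{GLY} and the elementary embedding $H^1(\X)\subset H^1_\rho(\X)$. For the embedding, split each atom on $B(z,R)$ with $R\ge\rho(z)$ over a bounded-overlap cover by balls $B(z_i,\rho(z_i)/2)$, using $\rho(z_i)\lesssim R$ for $z_i\in B(z,R)$. Split $\lambda_k\mu(2B_k)^{-1}\chi_{2B_k}$ the same way.
\end{enumerate}
Together with $|\lambda_k|\le\int_{2B_k}|f|\,d\mu$ and the bounded overlap of the $CB_k$, this gives $\|f\|_{H^1_\rho}\lesssim\|\M_{\rho,\epsilon,\beta,\gamma}(f)\|_{L^1}$.
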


The following proposition provides an additional example of operators 
belonging to the class $\K_{\rho,\theta,q}$.

\begin{proposition}\label{prop:grand-maximal-Kclass}
	For all $\epsilon\in (0,1)$, $\beta,\gamma\in (0,\epsilon)$, $0\leq\theta<\min\left\{\frac{\beta}{i(\rho)+1},\frac{\gamma}{[i(\rho)+1]^2}\right\}$, and $q\in (1,\infty]$, the grand maximal operators $\M_{\rho,\epsilon,\beta,\gamma}$ belong to the class $\K_{\rho,\theta,q}$ .
\end{proposition}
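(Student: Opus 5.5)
We follow the scheme of the proof of Proposition \ref{prop:radial-maximal-Krho-class}. First, $\M_{\rho,\epsilon,\beta,\gamma}$ is sublinear. It is bounded on $L^q(\X)$ for $q\in(1,\infty]$, because $\M_{\rho,\epsilon,\beta,\gamma}(f)\lesssim \mathcal Mf$ pointwise: every admissible $\phi$ with $\|\phi\|_{\mathcal G(x,r,\beta,\gamma)}\le1$ is dominated by $\mu(B(x,r))^{-1}(r/(r+d(\cdot,x)))^{\gamma}$, and summing over annuli gives the Hardy--Littlewood maximal function. Property (i) of Definition \ref{07:59, 31/12/2025} follows directly from Theorem \ref{thm:H1-equiv}, since $\|\M_{\rho,\epsilon,\beta,\gamma}f\|_{L^1}=\|f\|_{H^1_{\rho,\epsilon,\beta,\gamma}}\simeq\|f\|_{H^1_\rho}$.

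It therefore remains to prove $\|(b-b_B)\M_{\rho,\epsilon,\beta,\gamma}(\a)\|_{L^1}\lesssim\|b\|_{\mathrm{BMO}_{\rho,\theta}}$ for every $(H^1_\rho,q)$-atom $\a$ supported in $B=B(x_0,r_0)$ with $r_0<\rho(x_0)$. Split $\X=\bigcup_j U_j(B)$. On $U_0(B)$, combine H\"older's inequality with the $L^q$-bound for $\mathcal M$ and Lemma \ref{14:51, 06/10/2023}. For $x\in U_j(B)$ with $j\ge1$, aim for the pointwise bound
\[
\M_{\rho,\epsilon,\beta,\gamma}(\a)(x)\lesssim \frac{(2\kappa)^{-j\tau}}{\mu((2\kappa)^jB)},
\]
where $\tau>(k_0+1)\theta$ and $k_0>i(\rho)$ is chosen close to $i(\rho)$. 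Given this bound, the sum $\sum_j (j+2)(2\kappa)^{j(k_0+1)\theta-j\tau}$ converges, exactly as in the final display of the proof of Proposition \ref{prop:radial-maximal-Krho-class}.

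For the pointwise bound, fix $\phi$ with $\|\phi\|_{\mathcal G(x,r,\beta,\gamma)}\le1$ and $r<\rho(x)$. Note that $d(y,x)\simeq d(x,x_0)\simeq(2\kappa)^jr_0$ for $y\in B$. Consider two cases.

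In \emph{Case 1} ($r_0<\rho(x_0)/4$), the atom has mean zero, so write $\langle\a,\phi\rangle=\int_B\a(y)(\phi(y)-\phi(x_0))\,d\mu(y)$ and apply the regularity condition (ii) of Definition \ref{definition for test functions}, which is applicable since $d(y,x_0)<r_0\lesssim (r+d(y,x))/(2\kappa)$ for $j$ large; small $j$ are absorbed into the constant. This yields
\[
\frac{1}{\mu(B(x,d(x,x_0)))}\Big(\frac{r_0}{d(x,x_0)}\Big)^{\beta}\Big(\frac{r}{r+d(x,x_0)}\Big)^{\gamma},
\]
which gives decay $(2\kappa)^{-j\beta}$. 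Since $\theta<\beta/(i(\rho)+1)$, this suffices with $\tau=\beta$.

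In \emph{Case 2} ($r_0\simeq\rho(x_0)$), there is no cancellation, so use only the size condition (i): $|\langle\a,\phi\rangle|\lesssim\mu(B(x,d(x,x_0)))^{-1}(r/d(x,x_0))^{\gamma}$. Since $r<\rho(x)$, Remark \ref{20:49, 02/12/2023} gives $\rho(x)\lesssim\rho(x_0)(1+d(x,x_0)/\rho(x_0))^{k_0/(k_0+1)}$, and hence
\[
\frac{r}{d(x,x_0)}\lesssim\Big(1+\frac{d(x,x_0)}{\rho(x_0)}\Big)^{-1/(k_0+1)}\simeq(2\kappa)^{-j/(k_0+1)}.
\]
The decay is therefore $(2\kappa)^{-j\gamma/(k_0+1)}$, and the requirement $\gamma/(k_0+1)>(k_0+1)\theta$ is precisely the hypothesis $\theta<\gamma/[i(\rho)+1]^2$ once $k_0$ is taken close to $i(\rho)$. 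We expect this step to be the main obstacle: the admissibility comparison of $\rho(x)$ and $\rho(x_0)$ has to be carried out carefully, and one must check that the $\mu$-factors $\mu(B(x,\cdot))$ and $\mu((2\kappa)^jB)$ are comparable by doubling. With $\tau=\min\{\beta,\gamma/(k_0+1)\}$, both cases give the pointwise bound, and the summation completes the proof.
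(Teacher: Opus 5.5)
Your proposal matches the paper's proof essentially step for step. It uses Theorem \ref{thm:H1-equiv} for the $H^1_\rho(\X)\to L^1(\X)$ bound, the same two cases (cancellation plus the $\beta$-regularity giving $(2\kappa)^{-j\beta}$ when $r_0<\rho(x_0)/4$; the size condition plus Remark \ref{20:49, 02/12/2023} giving $(2\kappa)^{-j\gamma/(k_0+1)}$ when $\rho(x_0)/4\le r_0<\rho(x_0)$), and the same summation via Lemma \ref{14:51, 06/10/2023} with $k_0$ close to $i(\rho)$.

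One small correction: to get $\mathcal M_{\rho,\epsilon,\beta,\gamma}\lesssim\mathcal M$ you must keep the full factor $[\mu(B(x,r))+\mu(B(x,d(\cdot,x)))]^{-1}$ from Definition \ref{definition for test functions}. The majorant $\mu(B(x,r))^{-1}(r/(r+d(\cdot,x)))^{\gamma}$ alone does not sum over dyadic annuli to $\mathcal M f$, because the volume ratio $\mu(B(x,2^kr))/\mu(B(x,r))$ is not compensated by $2^{-k\gamma}$. Also, if you measure the regularity against $d(x,x_0)$ rather than $d(y,x)$, as the paper does, condition (ii) applies for every $j\ge1$, so you need no separate treatment of small $j$.
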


\begin{proof}
	By Theorem \ref{thm:H1-equiv}, $\M_{\rho,\epsilon,\beta,\gamma}$ is bounded from $H^1_\rho(\X)$ into $L^1(\X)$. Thus, it suffices to prove that
	\begin{equation}\label{14:09, 14/12/2023}
		\|(b-b_B)\M_{\rho,\epsilon,\beta,\gamma}(\a)\|_{L^1}\lesssim \|b\|_{\mathrm{BMO}_{\rho,\theta}}
	\end{equation}
	for all $b\in \mathrm{BMO}_{\rho,\theta}(\X)$ and all $(H^1_\rho,q)$-atoms $\a$ related to the balls $B$. Indeed, since $0\leq\theta<\min\left\{\frac{\beta}{i(\rho)+1},\frac{\gamma}{[i(\rho)+1]^2}\right\}$ and $\a$ is an $(H^1_\rho,q)$-atom related to the ball $B=B(x_0,r_0)$, there exists $k_0>i(\rho)$ such that \eqref{admissible function} holds and
	\begin{equation}\label{17:24, 09/12/2023}		
		\varepsilon_0:= \min\left\{\beta,\frac{\gamma}{k_0+1}\right\} -(k_0+1)\theta>0,\quad 0<r_0<\rho(x_0).
	\end{equation}		
	For any $j\in\mathbb Z^+$ and $x\in U_j(B)$, we consider the following two cases:
	
	{\sl Case 1:} $0<r_0<\frac{\rho(x_0)}{4}$. Then $\int_{\X} \a(y)d\mu(y)=0$. Hence, from Definition \ref{definition for test functions}\eqref{16:02, 02/12/2023} and the comparability $\mu(B(x, d(x,x_0)))\sim \mu(B(x_0, d(x,x_0)))\sim \mu((2\kappa)^j B)$, we obtain
	\begin{align*}
		\left|\langle\a,\phi\rangle \right|&=\left|\int_{B} \a(y)(\phi(y)-\phi(x_0))d\mu(y)\right|\\
		&\leq \int_{B}|\a(y)| \|\phi\|_{\G(x,r,\beta,\gamma)} \left(\frac{d(y,x_0)}{r+d(x,x_0)}\right)^\beta \frac{1}{\mu(B(x, d(x,x_0)))} d\mu(y)\\
		&\lesssim (2\kappa)^{-j \beta} \frac{1}{\mu((2\kappa)^j B)}
	\end{align*}
	holds for all $\phi\in \mathcal G^\epsilon_0(\beta,\gamma)$, $\|\phi\|_{\mathcal G(x,r,\beta,\gamma)}\leq 1$ for some $r\in (0,\rho(x))$. This implies that
	\begin{equation}\label{21:22, 02/12/2023}
		\M_{\rho,\epsilon,\beta,\gamma}(\a)(x)\lesssim  \frac{(2\kappa)^{-j \beta}}{\mu((2\kappa)^j B)}.
	\end{equation}

	{\sl Case 2:} $\frac{\rho(x_0)}{4}\leq r_0<\rho(x_0)$. Then $d(x,y)\sim d(x,x_0)\sim (2\kappa)^j r_0\sim (2\kappa)^j \rho(x_0)$ for all $y\in B=B(x_0,r_0)$. Therefore, by Definition \ref{definition for test functions}\eqref{16:03, 02/12/2023}, the comparability $\mu(B(x, d(x,x_0)))\sim \mu(B(x_0, d(x,x_0)))\sim \mu((2\kappa)^j B)$, and Remark \ref{20:49, 02/12/2023}, we obtain
	\begin{align*}
		\left|\langle\a,\phi\rangle \right|&=\left|\int_{B} \a(y)\phi(y) d\mu(y)\right|\\
		&\leq \int_{B}|\a(y)| \|\phi\|_{\G(x,r,\beta,\gamma)} \frac{1}{\mu(B(x, d(x,y)))} \left(\frac{r}{r+d(x,y)}\right)^\gamma d\mu(y)\\
		&\lesssim \int_{B}|\a(y)| \frac{1}{\mu(B(x, d(x,x_0)))} \left(\frac{1}{1+\frac{d(x,x_0)}{\rho(x)}}\right)^\gamma d\mu(y)\\
		&\lesssim \frac{1}{\mu((2\kappa)^j B)} \left(1+\frac{d(x,x_0)}{\rho(x_0)}\right)^{-\frac{\gamma}{k_0+1}} \lesssim(2\kappa)^{-j\frac{\gamma}{k_0+1}} \frac{1}{\mu((2\kappa)^j B)}
	\end{align*}
	holds for all $\phi\in \mathcal G^\epsilon_0(\beta,\gamma)$, $\|\phi\|_{\mathcal G(x,r,\beta,\gamma)}\leq 1$ for some $r\in (0,\rho(x))$. This implies that
	\begin{equation}\label{21:23, 02/12/2023}
		\M_{\rho,\epsilon,\beta,\gamma}(\a)(x)\lesssim  \frac{(2\kappa)^{-j\frac{\gamma}{k_0+1}}}{\mu((2\kappa)^j B)}.
	\end{equation}
	
	By H\"older's inequality, Remark \ref{20:52, 19/12/2023}, the $L^q$-boundedness of $M$, together with \eqref{21:22, 02/12/2023}, \eqref{21:23, 02/12/2023}, Lemma \ref{14:51, 06/10/2023}, and \eqref{17:24, 09/12/2023}, we obtain
	\begin{align*}
		\|(b-b_B) \M_{\rho,\epsilon,\beta,\gamma}(\a)\|_{L^1}&=\sum_{j=0}^{\infty} \|(b-b_B) \M_{\rho,\epsilon,\beta,\gamma}(\a)\|_{L^1(U_j)}\\
		&\lesssim \|b-b_B\|_{L^{q'}(U_0)}\|M\a\|_{L^q(U_0)} +\\
		&\hskip1cm + \sum_{j=1}^{\infty} \|(b-b_B)\|_{L^1(U_j)}\frac{(2\kappa)^{-j\min\left\{\beta,\frac{\gamma}{k_0+1}\right\}}}{\mu((2\kappa)^j B)}\\
		&\lesssim \|b\|_{\mathrm{BMO}_{\rho,\theta}}+ \|b\|_{\mathrm{BMO}_{\rho,\theta}}\sum_{j=1}^{\infty} (j+2) (2\kappa)^{-j\varepsilon_0} \lesssim \|b\|_{\mathrm{BMO}_{\rho,\theta}}.
	\end{align*}
	This proves \eqref{14:09, 14/12/2023}, and thus $\M_{\rho,\epsilon,\beta,\gamma}$ belongs to the class $\K_{\rho,\theta,q}$.
	
\end{proof}

The following result can be proved by arguments similar to those 
used in the proof of Theorem \ref{thm-main thm 2 characterization BMO log}; 
we omit the details.

\begin{theorem}
	Let $\epsilon\in (0,1)$, $\beta,\gamma\in (0,\epsilon)$, $0\leq\theta<\min\left\{\frac{\beta}{i(\rho)+1},\frac{\gamma}{[i(\rho)+1]^2}\right\}$, and $b\in \mathrm{BMO}_{\rho,\theta}(\X)$. Then the following statements are equivalent:
	\begin{enumerate}[\rm (i)]
		\item $b\in \mathrm{BMO}_{\rho,\theta}^{\log}(\X)$;
		
		\item The commutator $[b, \M_{\rho,\epsilon,\beta,\gamma}]$ is bounded from $H^1_\rho(\X)$ into $L^1(\X)$.
	\end{enumerate}
	Moreover, in this case,
	\[\|b\|_{\mathrm{BMO}_{\rho,\theta}^{\log}}\simeq \|b\|_{\mathrm{BMO}_{\rho,\theta}} + \|[b, \M_{\rho,\epsilon,\beta,\gamma}]\|_{H^1_\rho\to L^1}.\]
\end{theorem}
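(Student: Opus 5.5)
The plan mirrors the proof of Theorem \ref{thm-main thm 2 characterization BMO log}, with the radial maximal operator $S^+_\rho$ replaced by the grand maximal operator $\M:=\M_{\rho,\epsilon,\beta,\gamma}$. Theorem \ref{thm:H1-equiv} gives $\|f\|_{H^1_\rho}\simeq \|\M f\|_{L^1}$, and Proposition \ref{prop:grand-maximal-Kclass} gives $\M\in\K_{\rho,\theta,q}$ for every $q\in(1,\infty]$ in the stated range of $\theta$. These two facts are the only inputs we need beyond the lemmas of Section \ref{17:02, 30/12/2025}. Fix some $q\in(1,\infty]$, say $q=2$.

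\emph{(i) $\Rightarrow$ (ii).} Let $\a$ be an $(H^1_\rho,q)$-atom related to $B$. By sublinearity of $\M$ we have pointwise
\[
|[b,\M](\a)|\le |b-b_B|\,\M(\a)+\M\big((b-b_B)\a\big).
\]
The first term is bounded in $L^1$ by $C\|b\|_{\mathrm{BMO}_{\rho,\theta}}$, because $\M\in\K_{\rho,\theta,q}$. For the second term, Theorem \ref{thm:H1-equiv} and Lemma \ref{07:01, 12/10/2023} give
\[
\|\M((b-b_B)\a)\|_{L^1}\simeq\|(b-b_B)\a\|_{H^1_\rho}\lesssim\|b\|_{\mathrm{BMO}^{\log}_{\rho,\theta}}.
\]
We then pass from atoms to all of $H^1_\rho(\X)$ via \cite[Proposition 3.2]{YZ}, exactly as in Theorem \ref{thm-main thm 2 characterization BMO log}. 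This step needs some care because $\M$ is only sublinear. We would decompose $f=\sum_j\lambda_j\a_j$ and use
\[
|[b,\M]f|\le\sum_j|\lambda_j|\,|b-b_{B_j}|\,\M\a_j+\sum_j|\lambda_j|\,\M\big((b-b_{B_j})\a_j\big).
\]
This pointwise inequality follows from the identity $bf-\sum_j\lambda_j b_{B_j}\a_j=\sum_j\lambda_j(b-b_{B_j})\a_j$, together with the fact that $\M$ is sublinear and countably subadditive on convergent sums in $H^1_\rho$. Hence each term is controlled atom by atom, and this gives $\|[b,\M]\|_{H^1_\rho\to L^1}\lesssim\|b\|_{\mathrm{BMO}^{\log}_{\rho,\theta}}$.

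\emph{(ii) $\Rightarrow$ (i).} For an atom $\a$ related to $B$, sublinearity gives
\[
\M((b-b_B)\a)\le |[b,\M]\a|+|b-b_B|\,\M\a
\]
pointwise. Combining this with Theorem \ref{thm:H1-equiv} and the $\K_{\rho,\theta,q}$ property yields
\[
\|(b-b_B)\a\|_{H^1_\rho}\lesssim\|[b,\M]\|_{H^1_\rho\to L^1}+\|b\|_{\mathrm{BMO}_{\rho,\theta}}.
\]
Lemma \ref{07:01, 12/10/2023} then gives $b\in\mathrm{BMO}^{\log}_{\rho,\theta}(\X)$, together with the norm equivalence.

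The only delicate point is making these pointwise inequalities rigorous for the distribution-defined $\M$. Concretely, one has to check that $(b-b_B)\a\in L^1$ defines an element of $(\mathcal G^\epsilon_0(\beta,\gamma))'$, and that $\M$ is subadditive on such functions. Both follow directly from the definition of $\M$ as a supremum of $|\langle\cdot,\phi\rangle|$.
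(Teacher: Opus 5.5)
Your proposal is correct and is essentially the paper's argument. The paper omits this proof as being similar to that of Theorem \ref{thm-main thm 2 characterization BMO log}, that is, the same argument with $S^+_\rho$, Theorem \ref{21:55, 19/12/2023} and Proposition \ref{prop:radial-maximal-Krho-class} replaced by $\M_{\rho,\epsilon,\beta,\gamma}$, Theorem \ref{thm:H1-equiv} and Proposition \ref{prop:grand-maximal-Kclass}, which is exactly what you do. The one loose point is your infinite-sum identity for a general $f\in H^1_\rho(\X)$, which is unjustified because $bf$ need not be integrable; it is also unnecessary, since the atomwise bound (or the same splitting applied to finite combinations of $(H^1_\rho,2)$-atoms) together with \cite[Proposition 3.2]{YZ} already gives the extension.
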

	
	\begin{acknowledgement}
		The Anh Bui and Xuan Thinh Duong were supported by the research grant ARC DP260101083 from the Australian Research Council. Luong Dang Ky is supported by the Vietnam National Foundation for Science and Technology Development (NAFOSTED) under grant number 101.02-2023.09.
	\end{acknowledgement}


\begin{thebibliography}{MTW1}
		\bibitem{AH}
		P. Auscher and T. Hyt\"onen, \textit{Orthonormal bases of regular wavelets in spaces of homogeneous type}. Appl. Comput. Harmon. Anal. 34 (2013), no. 2, 266--296.
		
		\bibitem{BPQ} F. Berra, G. Pradolini and P. Quijano, \textit{Mixed inequalities for operators associated to critical radius functions with applications to Schr\"odinger type operators}. Potential Anal. 60 (2024), no. 1, 253--283. 
		
		\bibitem{BHQ} B. Bongioanni, E. Harboure and P. Quijano, \textit{Weighted inequalities for Schr\"odinger type singular integrals}. J. Fourier Anal. Appl. 25 (2019), no. 3, 595--632.
		
		\bibitem{BHS} B. Bongioanni, E. Harboure and O. Salinas,  \textit{Commutators of Riesz transforms related to Schr\"odinger operators}. J. Fourier Anal. Appl. 17 (2011), no. 1, 115--134.	
		
		\bibitem{BDK} T. A. Bui, X. T. Duong and L. D. Ky, \textit{Hardy spaces associated to critical functions and applications to  $T1$  theorems}. J. Fourier Anal. Appl. 26 (2020), no. 2, Paper No. 27, 67 pp.
		
		\bibitem{BDK18} T. A. Bui, X. T. Duong and F. K. Ly, \textit{Maximal function characterizations for new local Hardy type spaces on spaces of homogeneous type}. Trans. Amer. Math. Soc. 370 (2018), no. 10, 7229--7292.
		
		\bibitem{BLL}  T. A. Bui, J. Li and F. K. Ly, \textit{$T1$ criteria for generalised Calder\'on--Zygmund type operators on Hardy and $\mathrm{BMO}$ spaces associated to Schr\"odinger operators and applications}. Ann. Sc. Norm. Super. Pisa Cl. Sci. (5) 18 (2018), no. 1, 203--239.
		
		
		\bibitem{CRW} R. R. Coifman, R. Rochberg and G. Weiss,
		\textit{Factorization theorems for Hardy spaces in several variables}.	Ann. of Math. (2) 103 (1976), no. 3, 611--635.
		
		\bibitem{CW} R. R. Coifman and G. Weiss,  \textit{Extensions of Hardy spaces and their use in analysis}. Bull. Amer. Math. Soc. 83 (1977), no. 4, 569--645.  
		
		\bibitem{DLPV} G. Dafni, C. H. Lau, T. Picon and C. Vasconcelos, \textit{Inhomogeneous cancellation conditions and Calder\'on--Zygmund type operators on $h^p$}. Nonlinear Anal. 225 (2022), Paper No. 113110, 22 pp.
		
		
		\bibitem{DZ99} J. Dziuba\'nski and J. Zienkiewicz, \textit{Hardy space $H^1$ associated to Schr\"odinger operator with potential satisfying reverse H\"older inequality}. Rev. Mat. Ibero. 15 (1999), 279--296. 
		
		\bibitem{DZ03} J. Dziuba\'nski and J. Zienkiewicz, \textit{$H^p$ spaces associated with Schr\"odinger operators with potentials from reverse H\"older classes}. Colloq. Math. 98 (2003), no. 1, 5--38.
		
		%
		%
		\bibitem{F} C. Fefferman, \textit{The uncertainty principle}. Bull. Amer. Math. Soc. 9 (1983), 129--206.
		
		\bibitem{GLY} L. Grafakos, L. Liu and D. Yang, \textit{Maximal function characterizations of Hardy sapces on RD-spaces and their applications}. Sci. China Ser. A 51 (2008), 2253--2284.
		
		\bibitem{GLP} Z. Guo, P. Li and L. Peng, \textit{$L^p$ boundedness of commutators of Riesz transforms associated to Schr\"odinger operator}. J. Math. Anal. Appl. 341 (2008), no. 1, 421--432.
		
		\bibitem{HMY} Y. Han, D. M\"uller and  D. Yang,  \textit{A theory of Besov and Triebel-Lizorkin spaces on metric measure spaces modeled on Carnot-Carath\'eodory spaces}. Abstr. Appl. Anal. 2008, Art. ID 893409, 250 pp.
		
		\bibitem{HST} E. Harboure, C. Segovia and J. L. Torrea, \textit{Boundedness of commutators of fractional and singular integrals for the extreme values of $p$}. Illinois J. Math. 41 (1997), no. 4, 676--700. 
		
		\bibitem{HW} Y. Hu and Y. Wang, \textit{Hardy type estimates for Riesz transforms associated with Schr\"odinger operators}. Anal. Math. Phys. 9 (2019), no. 1, 275--287.
		
		\bibitem{JLiu} G. Jiang and Y. Liu, \textit{$L^p$ estimates for commutators of Riesz transforms associated with Schr\"odinger operators on stratified groups}. Anal. Math. Phys. 9 (2019), no. 1, 531--553.
		
		 
		\bibitem{Ky13} L. D. Ky, \textit{Bilinear decompositions and commutators of singular integral operators}. Trans. Amer. Math. Soc. 365 (2013), no. 6, 2931--2958.		
		
		\bibitem{Ky15} L. D. Ky,  \textit{Endpoint estimates for commutators of singular integrals related to Schr\"odinger operators}. Rev. Mat. Iberoam. 31 (2015), no. 4, 1333--1373.
		
		\bibitem{Ky25} L. D. Ky, \textit{Generalized Calder\'on--Zygmund operators on the Hardy space $H^1_\rho(\X)$}. Banach J. Math. Anal. 19 (2025), no. 2, Paper No. 20.
		
		\bibitem{Le} N. N. Lebedev, \textit{Special functions and their applications}. Dover Publications, Inc., New York, 1972.
		
		\bibitem{HQLi} H. Q. Li,  \textit{Estimations $L^p$ des op\'erateurs de Schr\"odinger sur les groupes nilpotents}. J. Funct. Anal. 161 (1999), no. 1, 152--218.
		
		\bibitem{LP} P. Li and L. Peng, \textit{Endpoint estimates for commutators of Riesz transforms associated with Schr\"odinger operators}. Bull. Aust. Math. Soc. 82 (2010), no. 3, 367--389.
		
		\bibitem{LKY} Y. Liang, L. D. Ky and D. Yang, \textit{Weighted endpoint estimates for commutators of Calderón-Zygmund operators}. Proc. Amer. Math. Soc. 144 (2016), no. 12, 5171--5181.
		
		\bibitem{LLL} C-C. Lin, H. Liu and Y. Liu, \textit{Hardy spaces associated with Schr\"odinger operators on the Heisenberg group}, arXiv:1106.4960.
		
		
		
		\bibitem{LD} Y. Liu and J. Dong, \textit{Some estimates of higher order Riesz transform related to Schr\"odinger type operators}. Potential Anal. 32 (2010), no. 1, 41--55.
		
		\bibitem{LHD} Y. Liu, J. Huang and J. Dong, \textit{Commutators of Calder\'on--Zygmund operators related to admissible functions on spaces of homogeneous type and applications to Schr\"odinger operators}. Sci. China Math. 56 (2013), no. 9, 1895--1913. 
		
		
		\bibitem{MSTZ} T. Ma, P. R. Stinga, J. L. Torrea and C. Zhang, \textit{Regularity estimates in H\"older spaces for Schr\"odinger operators via a  $T1$  theorem}. Ann. Mat. Pura Appl. (4) 193 (2014), no. 2, 561--589.
		
		\bibitem{MS} R. A. Macias and C. Segovia, \textit{Lipschitz functions on spaces of homogeneous type}. Adv. in Math. 33 (1979), no. 3, 257--270.
		
		
		\bibitem{Na} E. Nakai, \textit{A generalization of Hardy spaces $H^p$ by using atoms}. Acta Math. Sin. (Engl. Ser.) 24 (2008), no. 8, 1243--1268.
		
		\bibitem{NS2} A. Nowak and K. Stempak,  \textit{Riesz transforms for multi-dimensional Laguerre function expansions}. Adv. Math. 215 (2007), no. 2, 642--678.
		
		
		\bibitem{Pe} C. P\'erez, \textit{Endpoint estimates for commutators of singular integral operators}. J. Funct. Anal. 128 (1995), no. 1, 163--185.
		
		\bibitem{Preisner} M. Preisner, \textit{Riesz transform characterization of $H^1$ spaces associated with certain Laguerre expansions}. J. Approx. Theory 164 (2012), no. 2, 229--252.
		
		
		\bibitem{Sh} Z. Shen, \textit{$L^p$ estimates for Schr\"odinger operators with certain potentials}. Ann. Inst. Fourier (Grenoble) 45 (1995), no. 2, 513--546.
		
		
		
		\bibitem{ST} J-O. Str\"omberg and A. Torchinsky, \textit{Weighted Hardy spaces}. Lecture Notes in Math., 1381 Springer-Verlag, Berlin, 1989.
		
		\bibitem{YYZ} Da. Yang,  Do. Yang and  Y. Zhou,  \textit{Localized Morrey-Campanato spaces on metric measure spaces and applications to Schr\"odinger operators}. Nagoya Math. J. 198 (2010), 77--119.
		
		\bibitem{YZ} D. Yang and Y. Zhou, \textit{Localized Hardy spaces $H^1$ related to admissible functions on RD-spaces and applications to Schr\"odinger operators}. Trans. Amer. Math. Soc. 363 (2011), no. 3, 1197--1239.
		
		
	\end{thebibliography}
\end{document}